\theoremstyle{plain}
    \newtheorem{maintheorem}{Theorem}
    \newtheorem{theorem}{Theorem}[section]
    \newtheorem{proposition}[theorem]{Proposition}
    \newtheorem{corollary}[theorem]{Corollary}
    \newtheorem{lemma}[theorem]{Lemma}
\theoremstyle{definition}
    \newtheorem{definition}[theorem]{Definition}
    \newtheorem{example}[theorem]{Example}
    \newtheorem{question}[theorem]{Question}
\theoremstyle{remark}
	\newtheorem{remark}[theorem]{Remark}%
\crefname{maintheorem}{Theorem}{Theorems}
\crefname{claim}{Claim}{Claims}
\newtheoremstyle{restated}
    {\topsep}{\topsep} %%% space between body and thm
    {\itshape}         %%% Thm body font
    {}                 %%% Indent amount (empty = no indent)
    {\bfseries}        %%% Thm head font
    {.}                %%% Punctuation after thm head
    { }                %%% Space after thm head
    {\thmname{#1} \ref{#3} {\normalfont(Restated)}}
\theoremstyle{restated}
    \newtheorem{restate-theorem}{Theorem}
    \newtheorem{restate-proposition}{Proposition}
    \newtheorem{restate-corollary}{Corollary}
\numberwithin{equation}{section}
\numberwithin{table}{section}
\newcommand{\ZZ}{\mathbb{Z}}
\newcommand{\QQ}{\mathbb{Q}}
\newcommand{\RR}{\mathbb{R}}
\newcommand{\FF}{\mathbb{F}}
\newcommand{\id}{\mathit{id}}
\newcommand{\isom}{\cong}
\newcommand{\htpy}{\simeq}
\newcommand{\del}{\partial}
\renewcommand{\emptyset}{\varnothing}
\renewcommand{\epsilon}{\varepsilon}
\DeclareMathOperator{\Hom}{Hom}
\DeclareMathOperator{\Tor}{Tor}
\DeclareMathOperator{\Mod}{Mod}
\DeclareMathOperator{\fchar}{char}
\DeclareMathOperator{\qdeg}{qdeg}
\DeclareMathOperator{\Cob}{Cob}
\DeclareMathOperator{\Mat}{Mat}
\DeclareMathOperator{\Kom}{Kom}
\DeclareMathOperator{\Kob}{Kob}
\DeclareMathOperator{\Diag}{Diag}
\newcommand{\CKh}{\mathit{CKh}}
\newcommand{\Kh}{\mathit{Kh}}
\newcommand{\ca}{\alpha}
\newcommand{\cb}{\beta}
\newcommand{\lin}{\text{in}}
\newcommand{\lout}{\text{out}}
\newcommand{\sfA}{\mathsf{A}}
\newcommand{\sfB}{\mathsf{B}}
\newcommand{\sfC}{\mathsf{C}}
\newcommand{\sfD}{\mathsf{D}}
\newcommand{\sfE}{\mathsf{E}}
\newcommand{\sfX}{\mathsf{X}}
\newcommand{\sfY}{\mathsf{Y}}
\newcommand{\thickcirc}{\tikz{\draw[line width=1.2pt] (0,.2em) circle(.4em);}}
\newcommand{\dotcircI}{\tikz{\draw[line width=.5pt, dotted]  (0,.2em) circle(0.4em) node [font=\tiny] {$+$};}}
\newcommand{\dotcircX}{\tikz{\draw[line width=.5pt, dotted]  (0,.2em) circle(0.4em) node [font=\tiny] {$-$};}}
\newcommand{\larc}{\tikz{\draw[line width=1.2pt] (0,-.3em) arc(300:420:0.6em);}}
\newcommand{\rarc}{\tikz{\draw[line width=1.2pt] (0,-.3em) arc(240:120:0.6em);}}
\newcommand{\udarc}{\tikz{\draw[line width=1.2pt] (0,0) arc(150:30:0.6em); \draw[line width=1.2pt] (0,1em) arc(210:330:0.6em);}}
\title{
    A diagrammatic approach to the Rasmussen invariant via tangles and cobordisms
}
\author{KeeTaek Kim, Taketo Sano}
\newcommand{\addresses}{{
  \bigskip
  \noindent
  KeeTaek Kim, \textsc{Korea Advanced Institute of Science and Technology (KAIST), Daejeon, South Korea.}\par\nopagebreak
  \noindent
  \textit{E-mail address}: \url{0xorxor0@kaist.ac.kr}
  
  \bigskip
  
  \noindent
  Taketo Sano, \textsc{RIKEN iTHEMS, Wako, Saitama 351-0198, Japan .}\par\nopagebreak
  \noindent
  \textit{E-mail address}: \url{taketo.sano@riken.jp}
}}
\begin{document}
    \maketitle
    \begin{abstract}
    We introduce a diagrammatic approach to Rasmussen's $s$-invariant, based on Bar-Natan's reformulation of Khovanov homology for tangles and cobordisms. This method enables a local computation of $s$ from a tangle decomposition of a knot diagram. As an application, we compute the $s$-invariants of all 3-strand pretzel knots.
\end{abstract}
    
    \setcounter{tocdepth}{2}
    % \tableofcontents
    
    \section{Introduction}
\label{sec:intro}

Rasmussen's \textit{$s$-invariant} is an integer-valued knot concordance invariant derived from a deformed version of Khovanov homology \cite{Rasmussen:2010}. It provided a combinatorial reproof of the \textit{Milnor conjecture} \cite{Milnor:1968}, which was originally proved by Kronheimer and Mrowka using gauge theory \cite{KM:1993}. A notable advantage of $s$ is its algorithmic computability, as demonstrated in Piccirillo's resolution of a long-standing open problem: the Conway knot is not slice \cite{Piccirillo:2020}.

Although $s$ is directly computable and efficient algorithms have been developed \cite{Schuetz:2021}, many interesting examples remain beyond the reach of current implementations. These include, for instance, the 222-crossing link introduced in \cite{FGMW:2010}, which was considered a potential counterexample to the four-dimensional smooth Poincar{\'e} conjecture.
% ; and Whitehead doubles of $(p, p + 1)$-torus knots, which may show the linear independence of $s$-invariants over the prime field of characteristic $p$ \cite[Questions 6.1, 6.2]{LS:2014}, \cite[Conjecture 1.3]{Schuetz:2022}. 
Beyond direct computation, the $s$-invariant is known only for certain families of knots, including (i) torus knots \cite{Rasmussen:2010}, (ii) (quasi-) positive knots \cite{Rasmussen:2010,Livingston:2004}, (iii) (quasi-) alternating knots \cite{Lee:2005,Manolescu-Ozvath:2008}, and (iv) homogeneous knots \cite{Abe:2011}.

The standard definition of the $s$-invariant requires the structure of the entire homology group, or at least its component in homological grading $0$. This global dependence makes structural analysis difficult and limits computational efficiency. It is therefore desirable to develop a method that enables \textit{local computation} of $s$ via tangle decompositions of knot diagrams, which is suitable for both pen-and-paper calculations and algorithmic implementations. To this end, we introduce a \textit{diagrammatic approach} to the $s$-invariant, based on Bar-Natan’s reformulation of Khovanov homology for tangles and cobordisms \cite{BarNatan:2004}. We begin with a brief overview of the key ideas underlying our approach.

\begin{figure}[t]
    \centering
    \begin{subfigure}[t]{0.3\linewidth}
        \centering
        \tikzset{every picture/.style={line width=0.75pt}} %set default line width to 0.75pt        

\begin{tikzpicture}[x=0.75pt,y=0.75pt,yscale=-.8,xscale=.8]
%uncomment if require: \path (0,110); %set diagram left start at 0, and has height of 110

\clip (0,0) rectangle + (100, 100);

%Curve Lines [id:da4534858391601907] 
\draw [line width=1.5]    (70.58,41.39) .. controls (63.5,-18) and (4,25.5) .. (42.36,77.01) ;
%Curve Lines [id:da9744085105392458] 
\draw [line width=1.5]    (53.15,86.58) .. controls (94,115) and (123,32) .. (35.5,45.5) ;
%Curve Lines [id:da5896580142187507] 
\draw [line width=1.5]    (25.13,46.95) .. controls (-12.3,54.35) and (15.68,141.29) .. (69.06,54.44) ;
%Straight Lines [id:da9402680077717771] 
\draw    (54.5,13) -- (47.5,13) ;
\draw [shift={(45.5,13)}, rotate = 360] [color={rgb, 255:red, 0; green, 0; blue, 0 }  ][line width=0.75]    (10.93,-4.9) .. controls (6.95,-2.3) and (3.31,-0.67) .. (0,0) .. controls (3.31,0.67) and (6.95,2.3) .. (10.93,4.9)   ;

\end{tikzpicture}
        \caption{}
        \label{fig:intro-a}
    \end{subfigure}
    \begin{subfigure}[t]{0.3\linewidth}
        \centering
        \tikzset{every picture/.style={line width=0.75pt}} %set default line width to 0.75pt        

\begin{tikzpicture}[x=0.75pt,y=0.75pt,yscale=-.8,xscale=.8]
%uncomment if require: \path (0,110); %set diagram left start at 0, and has height of 110

\clip (0,0) rectangle + (100, 100);

%Shape: Polygon Curved [id:ds35396286239104846] 
\draw  [color={rgb, 255:red, 208; green, 2; blue, 27 }  ,draw opacity=1 ][line width=1.5]  (49.5,13) .. controls (62.5,12) and (67.5,20) .. (69.5,37) .. controls (71.5,54) and (79.27,44.04) .. (89.5,55) .. controls (99.73,65.96) and (98.5,74) .. (96.5,83) .. controls (94.5,92) and (82.5,100) .. (65.5,91) .. controls (48.5,82) and (55.5,83) .. (40.5,92) .. controls (25.5,101) and (16.5,91) .. (10.5,84) .. controls (4.5,77) and (5.5,62) .. (12.5,57) .. controls (19.5,52) and (30.5,52) .. (32.5,37) .. controls (34.5,22) and (36.5,14) .. (49.5,13) -- cycle ;
%Shape: Polygon Curved [id:ds6741754652220737] 
\draw  [color={rgb, 255:red, 0; green, 116; blue, 255 }  ,draw opacity=1 ][line width=1.5]  (49.5,44) .. controls (61.5,44) and (75,41) .. (68.5,63) .. controls (62,85) and (45,87) .. (35,67) .. controls (25,47) and (37.5,44) .. (49.5,44) -- cycle ;

% Text Node
\draw (72,5.4) node [anchor=north west][inner sep=0.75pt]    {$\textcolor[rgb]{0.82,0.01,0.11}{X}$};
% Text Node
\draw (70.5,58.4) node [anchor=north west][inner sep=0.75pt]  [color={rgb, 255:red, 0; green, 117; blue, 255 }  ,opacity=1 ]  {$Y$};

\end{tikzpicture}
        \caption{}
        \label{fig:intro-b}
    \end{subfigure}
    \begin{subfigure}[t]{0.35\linewidth}
        \centering
        \tikzset{every picture/.style={line width=0.75pt}} %set default line width to 0.75pt        

\begin{tikzpicture}[x=0.75pt,y=0.75pt,yscale=-.8,xscale=.8]
%uncomment if require: \path (0,111); %set diagram left start at 0, and has height of 111

\clip (0,0) rectangle + (150, 100);

%Curve Lines [id:da354239760487419] 
\draw [color={rgb, 255:red, 208; green, 2; blue, 27 }  ,draw opacity=1 ][line width=1.5]    (98.22,13.41) .. controls (107.84,12.45) and (111.54,20.08) .. (113.02,36.28) .. controls (114.5,52.49) and (120.25,43) .. (127.82,53.44) .. controls (135.38,63.88) and (134.47,71.55) .. (132.99,80.13) .. controls (131.51,88.71) and (128.99,89.47) .. (124.14,90.98) ;
%Curve Lines [id:da7468146028110322] 
\draw [color={rgb, 255:red, 0; green, 116; blue, 255 }  ,draw opacity=1 ][line width=1.5]    (98.22,42.96) .. controls (107.1,42.96) and (117.09,40.1) .. (112.28,61.07) .. controls (107.47,82.03) and (104.49,78.54) .. (99.21,78.03) ;
%Curve Lines [id:da456514121160539] 
\draw [color={rgb, 255:red, 208; green, 2; blue, 27 }  ,draw opacity=1 ][line width=1.5]    (98.22,13.41) .. controls (71.82,13.41) and (9,1.67) .. (9,56) .. controls (9,110.33) and (96.96,90.49) .. (124.14,90.98) ;
%Shape: Ellipse [id:dp6743408953422624] 
\draw  [fill={rgb, 255:red, 0; green, 0; blue, 0 }  ,fill opacity=1 ] (26.3,58.45) .. controls (26.3,56.76) and (27.67,55.39) .. (29.35,55.39) .. controls (31.04,55.39) and (32.41,56.76) .. (32.41,58.45) .. controls (32.41,60.13) and (31.04,61.5) .. (29.35,61.5) .. controls (27.67,61.5) and (26.3,60.13) .. (26.3,58.45) -- cycle ;
%Curve Lines [id:da5604215334542274] 
\draw [color={rgb, 255:red, 208; green, 2; blue, 27 }  ,draw opacity=1 ][line width=1.5]  [dash pattern={on 1.69pt off 2.76pt}]  (124.14,90.98) .. controls (108.4,90.99) and (108.4,81.84) .. (102.3,82.23) .. controls (96.2,82.61) and (91.24,87.94) .. (91.57,88.71) .. controls (91.89,89.47) and (75.23,88.71) .. (69.89,81.84) .. controls (64.55,74.98) and (65.67,60.11) .. (70.85,55.35) .. controls (76.03,50.58) and (84.17,50.58) .. (85.65,36.28) .. controls (87.13,21.99) and (88.28,16.13) .. (95.31,13.97) ;
%Curve Lines [id:da025390541887372886] 
\draw [color={rgb, 255:red, 0; green, 116; blue, 255 }  ,draw opacity=1 ][line width=1.5]  [dash pattern={on 1.69pt off 2.76pt}]  (100.46,78.03) .. controls (95.91,78.45) and (91.02,70.52) .. (87.43,61.26) .. controls (83.83,51.99) and (87.65,43.7) .. (95.54,43.06) ;
%Curve Lines [id:da030811963727374936] 
\draw [color={rgb, 255:red, 74; green, 144; blue, 226 }  ,draw opacity=1 ][fill={rgb, 255:red, 255; green, 255; blue, 255 }  ,fill opacity=0.8 ][line width=1.5]    (98.22,42.96) .. controls (71.82,42.96) and (49.78,48.32) .. (49.3,59.99) .. controls (48.82,71.67) and (65.84,72.77) .. (100.46,78.03) ;
%Shape: Ellipse [id:dp0036598793116868755] 
\draw  [color={rgb, 255:red, 0; green, 0; blue, 0 }  ,draw opacity=1 ][fill={rgb, 255:red, 255; green, 255; blue, 255 }  ,fill opacity=1 ] (57.92,60.71) .. controls (57.92,59.02) and (59.28,57.66) .. (60.97,57.66) .. controls (62.66,57.66) and (64.02,59.02) .. (64.02,60.71) .. controls (64.02,62.4) and (62.66,63.76) .. (60.97,63.76) .. controls (59.28,63.76) and (57.92,62.4) .. (57.92,60.71) -- cycle ;

% Text Node
\draw (117.65,6.76) node [anchor=north west][inner sep=0.75pt]    {$\textcolor[rgb]{0.82,0.01,0.11}{X}$};
% Text Node
\draw (113.91,63.95) node [anchor=north west][inner sep=0.75pt]  [color={rgb, 255:red, 0; green, 117; blue, 255 }  ,opacity=1 ]  {$Y$};

\end{tikzpicture}
        \caption{}
        \label{fig:intro-c}
    \end{subfigure}
    \caption{The Lee cycle $\ca(K)$ regarded as a dotted cobordism.}
    \label{fig:intro-lee-cycle-cob}    
\end{figure}

Given a knot diagram $K$, its \textit{Lee cycle} $\ca(K)$ is a cycle in a deformed version of the Khovanov complex, defined by a specific coloring of the Seifert circles of $K$ by $X$ or $Y$ (see \Cref{fig:intro-a,fig:intro-b}). In Bar-Natan's framework, the Lee cycle is reinterpreted as a chain map
\[
    \ca(K): [\varnothing] \to [K],
\]
represented by a \textit{dotted cobordism}, as illustrated in \Cref{fig:intro-c}. Here, $[K]$ denotes the \textit{formal Khovanov complex} of $K$, a chain complex in the category of $(1+1)$-dimensional dotted cobordisms.
Now, let $H$ denote the sphere with two dots:
\begin{center}
    \tikzset{every picture/.style={line width=0.75pt}} %set default line width to 0.75pt        

\begin{tikzpicture}[x=0.75pt,y=0.75pt,yscale=-1,xscale=1]
%uncomment if require: \path (0,47); %set diagram left start at 0, and has height of 47

%Shape: Arc [id:dp6319807885803715] 
\draw  [draw opacity=0] (96.94,23.75) .. controls (96.29,26) and (89.45,27.76) .. (81.11,27.76) .. controls (72.35,27.76) and (65.24,25.81) .. (65.24,23.41) .. controls (65.24,23.26) and (65.27,23.11) .. (65.32,22.97) -- (81.11,23.41) -- cycle ; \draw  [color={rgb, 255:red, 128; green, 128; blue, 128 }  ,draw opacity=1 ] (96.94,23.75) .. controls (96.29,26) and (89.45,27.76) .. (81.11,27.76) .. controls (72.35,27.76) and (65.24,25.81) .. (65.24,23.41) .. controls (65.24,23.26) and (65.27,23.11) .. (65.32,22.97) ;  
%Shape: Arc [id:dp056615061649111675] 
\draw  [draw opacity=0][dash pattern={on 0.84pt off 2.51pt}] (65.63,22.33) .. controls (66.9,20.27) and (73.43,18.7) .. (81.28,18.7) .. controls (89.93,18.7) and (96.96,20.6) .. (97.15,22.96) -- (81.28,23.06) -- cycle ; \draw  [color={rgb, 255:red, 128; green, 128; blue, 128 }  ,draw opacity=1 ][dash pattern={on 0.84pt off 2.51pt}] (65.63,22.33) .. controls (66.9,20.27) and (73.43,18.7) .. (81.28,18.7) .. controls (89.93,18.7) and (96.96,20.6) .. (97.15,22.96) ;  
%Shape: Circle [id:dp39111359424094494] 
\draw   (65.12,23.41) .. controls (65.12,14.57) and (72.28,7.41) .. (81.11,7.41) .. controls (89.95,7.41) and (97.11,14.57) .. (97.11,23.41) .. controls (97.11,32.24) and (89.95,39.4) .. (81.11,39.4) .. controls (72.28,39.4) and (65.12,32.24) .. (65.12,23.41) -- cycle ;
%Shape: Ellipse [id:dp9734470792692242] 
\draw  [fill={rgb, 255:red, 0; green, 0; blue, 0 }  ,fill opacity=1 ] (74.63,14.68) .. controls (74.63,13.55) and (75.55,12.63) .. (76.68,12.63) .. controls (77.82,12.63) and (78.73,13.55) .. (78.73,14.68) .. controls (78.73,15.81) and (77.82,16.73) .. (76.68,16.73) .. controls (75.55,16.73) and (74.63,15.81) .. (74.63,14.68) -- cycle ;
%Shape: Ellipse [id:dp1260497035726349] 
\draw  [fill={rgb, 255:red, 0; green, 0; blue, 0 }  ,fill opacity=1 ] (84.23,14.68) .. controls (84.23,13.55) and (85.15,12.63) .. (86.28,12.63) .. controls (87.42,12.63) and (88.33,13.55) .. (88.33,14.68) .. controls (88.33,15.81) and (87.42,16.73) .. (86.28,16.73) .. controls (85.15,16.73) and (84.23,15.81) .. (84.23,14.68) -- cycle ;

% Text Node
\draw (14,14.61) node [anchor=north west][inner sep=0.75pt]    {$H\ =\ $};

\end{tikzpicture}
\end{center}
We define the \textit{$H$-divisibility} $d_H(K)$ of $\ca(K)$ as the maximal $k \geq 0$ such that there exists a chain map $z$ for which $H^k z$ is homotopic (modulo torsion) to $\ca(K)$. It is then shown that the $s$-invariant can be characterized as follows:

\begin{maintheorem}
\label{thm:s-reformulate}
\[
    s(K) = 2d_H(K) + w(K) - r(K) + 1.
\]
\end{maintheorem}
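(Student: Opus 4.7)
The plan is to reduce the statement to the author's previous characterization of $s$ via the $H$-divisibility of the Lee class in the algebraic Bar-Natan--Khovanov homology over $F[H]$, and to transport that characterization through the tautological evaluation functor from Bar-Natan's formal cobordism framework to the algebraic one.

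First I would recall the algebraic characterization from \cite{Sano:2020,Sano-Sato:2023}: if $k \geq 0$ is the maximal integer such that the Lee class $[\ca(K)]$ is divisible by $H^k$ modulo $H$-torsion in the Bar-Natan homology of $K$ over $F[H]$, then $s(K) = \qdeg([x]) + 1$ for any homology class $[x]$ with $H^k[x] = [\ca(K)]$ modulo torsion. Second, I would apply the evaluation functor from the formal category $\Mat(\Cob^3_{\bullet/l}(\varnothing;F))$ into graded $F[H]$-modules, which sends the formal complex $[K]$ to the algebraic Bar-Natan complex of $K$, sends the doubly dotted sphere $H$ to multiplication by the polynomial variable $H$, and is degree-preserving. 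Under this functor the formal cobordism $\ca(K):[\varnothing]\to[K]$ becomes an honest cycle representing the Lee class, and the formal relation $\ca(K)\htpy_s H^k z$ descends to the homology equation $[\ca(K)] = H^k[z]$ modulo $H$-torsion, with $z$ evaluated to a representative of a homology class $[x]$ as above.

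Third, I would check the equivalence of the two maximality conditions: one direction follows from functoriality, while the other requires lifting an algebraic divisibility witness back to a chain map satisfying the stable-equivalence relation. This lifting is supplied by the very definition of $\htpy_s$, which is designed to absorb chain homotopies and $H$-torsion simultaneously. Combining the matched maximality statements with the degree-preservation of the evaluation functor yields $\qdeg(z) = \qdeg([x])$, and the formula $s(K) = \qdeg(z) + 1$ follows from the algebraic characterization. The main obstacle will be precisely this lifting step: verifying that every algebraic $H^k$-divisibility of the Lee class admits a chain-level representative $z$ with $\ca(K)\htpy_s H^k z$ in the formal category, so that the two notions of ``maximal $k$'' genuinely agree. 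Once stable equivalence is unpacked this should be essentially formal, but it is the pivotal technical point where the diagrammatic and algebraic frameworks are bridged.
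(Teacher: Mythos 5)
Your proposal follows essentially the same route as the paper: identify chain maps $[\varnothing]\to[K]$ with cycles in the algebraic Bar-Natan complex via the tautological hom-functor, translate the stable-equivalence relation $\ca(K)\htpy_s H^k z$ into divisibility of the Lee class modulo torsion in homology, and invoke the earlier results of \cite{Sano:2020,Sano-Sato:2023} that the resulting invariant coincides with $s$ over $F$. The ``pivotal lifting step'' you flag is exactly the paper's \Cref{prop:d_h-by-homology}, which settles it by citing the fact from the author's previous work that every torsion class in $\Kh_h$ is $h$-torsion, so divisibility modulo torsion and modulo $h$-torsion agree.
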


Here, $w$ denotes the writhe and $r$ the number of Seifert circles. \Cref{thm:s-reformulate} states that $s$ is essentially determined by the number of doubly dotted spheres that can be factored out from the dotted cobordism $\ca(K)$ up to chain homotopy. This formula is identical to those obtained in \cite{Sano:2020, Sano-Sato:2023}, but is now formalized within the category of dotted cobordisms. This shift in perspective enables the extension of Lee cycles to tangle diagrams.

For a tangle diagram $T$, the Lee cycle is similarly defined as a chain map
\[
    \ca(T): [T^\lin] \to [T]
\]
where $T^\lin$ denotes the crossingless tangle diagram obtained by removing the circle components from the Seifert resolution of $T$ (see \Cref{fig:tangle-lee-cycle-c} in \Cref{sec:reform-s}). Note that we cannot define $\ca(T)$ as an `element' of the complex $[T]$, since $[T]$ is not an ordinary chain complex with an underlying $\ZZ$-module structure. This extension enables the local computation of $s$ from a tangle decomposition.

\begin{figure}[t]
    \centering
    \input{tikzpictures/intro-lee-decomp}
    \caption{Tangle decomposition of the Lee cycle $\ca(K)$.}
    \label{fig:intro-lee-decomp}
\end{figure}

\begin{maintheorem}
\label{thm:lee-cycle-decomp}
    Suppose that a knot diagram $K$ admits a tangle decomposition
    \[
        K = D(T_1, T_2, \ldots, T_d)
    \]
    for some $d$-input planar arc diagram $D$ and tangle diagrams $T_i$. Let $T_0$ denote the crossingless unlink diagram given by the composition
    \[
        T_0 = D(T_1^\lin, T_2^\lin, \ldots, T_d^\lin).
    \]
    Then we have
    \[
        \ca(K) = D(\ca(T_1), \ca(T_2), \ldots, \ca(T_d)) \circ \ca(T_0).
    \]
\end{maintheorem}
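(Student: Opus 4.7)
The plan is to compare both sides as dotted cobordisms sharing the domain $[\varnothing]$ and codomain $[K]$. Both chain maps land in the Seifert resolution $K^{\mathrm{or}}$, viewed as a single vertex of the cube of resolutions for $[K]$, so the verification reduces to an equality of cobordisms $\varnothing \to K^{\mathrm{or}}$ in $\Cob^3_{\bullet/l}$.

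First I would unpack the definition of $\ca(T)$ for a tangle $T$: up to the local relations, it is the disjoint union of the identity cobordism on each Seifert arc of $T^\lin$ with one dotted cup $\varnothing \to S^1$ for every closed Seifert circle of $T^{\mathrm{or}}$, the dot colored $\mathbf{a}$ or $\mathbf{b}$ according to the orientation of that circle. As the closed special case, $\ca(K)$ is a disjoint union of dotted cups, one for each Seifert circle of $K$.

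Next I would use the tangle decomposition $K = D(T_1,\ldots,T_d)$ to partition the Seifert circles of $K$ into two types: type (a), circles lying entirely inside some $T_i^{\mathrm{or}}$ (exactly the circles removed when passing from $T_i^{\mathrm{or}}$ to $T_i^\lin$), and type (b), circles obtained by gluing Seifert arcs of the various $T_i^\lin$ through $D$ (exactly the circles of $T_0 = D(T_1^\lin,\ldots,T_d^\lin)$). Because the orientation on each $T_i$ is the restriction of the orientation on $K$, the $\mathbf{a}/\mathbf{b}$ coloring of any Seifert circle is intrinsic to that circle and agrees whether it is produced by $\ca(K)$ directly, by $\ca(T_0)$, or by some $\ca(T_i)$.

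With this in place, the right-hand side becomes transparent. The factor $\ca(T_0):\varnothing \to T_0$ supplies a dotted cup for each type-(b) circle; the factor $D(\ca(T_1),\ldots,\ca(T_d)): T_0 \to K^{\mathrm{or}}$, the planar composite with slots filled by the individual tangle Lee cycles, contributes identity cobordisms on the Seifert arcs of each $T_i^\lin$ (which splice with the cups from $\ca(T_0)$ into complete dotted cups on the type-(b) circles of $K^{\mathrm{or}}$) together with dotted cups on the type-(a) circles. Taking the disjoint union reproduces precisely $\ca(K)$. The main obstacle, and the only step requiring genuine care, is the bookkeeping around the planar operation $D$: one must verify that inserting the $\ca(T_i)$ into $D$ commutes with disjoint unions and dot placement, and that splicing a cup with an identity collar along its boundary reconstructs a cup on the extended circle. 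Both are consequences of the planar-algebra structure on $\Cob^3_{\bullet/l}$ together with the local character of the dots, so once these compatibilities are recorded the equality follows by piecewise comparison on each Seifert circle of $K$.
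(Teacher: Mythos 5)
Your overall strategy — decompose the underlying cobordism along $D$ and compare the two sides circle by circle of $K^\lout$ — is the same as the paper's, which indeed treats the identity as immediate once the underlying cobordism $S_K$ is factored as $D(S_{T_1},\ldots,S_{T_d})\circ S_{T_0}$. However, your unpacking of the tangle Lee cycle is not the paper's definition, and this is exactly where the only nontrivial content of the statement sits. In \Cref{def:lee-cycle-tangle}, the sheet $\gamma\times I$ lying over each arc of $T_i^\lin$ is \emph{not} a bare identity cobordism: it is decorated with the idempotent $e_X = X/H$ or $e_Y = -Y/H$, so $\ca(T_i)$ is a morphism defined only over $\bar R = \ZZ[H^{\pm 1}]$, and a decorated sheet is not equal to an undecorated one modulo the local relations ($e_X\neq 1$ in $\bar A$). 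Consequently your claim that the planar composite ``contributes identity cobordisms on the Seifert arcs'' which simply splice with the cups of $\ca(T_0)$ is false as stated; to see that the composite over a type-(b) circle is again a single $X$- (resp.\ $Y$-) dotted cup, and that all the $H^{-1}$'s cancel so that the right-hand side actually lands back in $\Kob_\bullet(\underline{0})$ over $\ZZ[H]$, you must invoke the relations $Xe_X=X$, $Ye_Y=Y$ (and the matching of labels, which guarantees you never produce $Xe_Y=Ye_X=0$). This is precisely the ``special care on the non-closed components'' the paper flags, and a proof that never mentions the idempotent decorations has skipped it.

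A secondary imprecision: the $X/Y$ label of a Seifert circle is not intrinsic to the circle and its orientation; it is determined by the checkerboard coloring via the rule ``black region on the left.'' Since deleting the closed Seifert circles (passing from $K^\lout$ to $T_0$) changes the standard checkerboard coloring, the labels used for $\ca(T_0)$ and $\ca(T_i)$ must be taken with respect to the colorings $c_0, c_i$ \emph{induced} from the standard coloring of $K$, as in the paper's restatement $\ca(T,c) = D(\ca(T_1,c_1),\ldots,\ca(T_d,c_d))\circ\ca(T_0,c_0)$; with that bookkeeping in place, and with the idempotent relations above, your piecewise comparison does close up and gives the paper's argument.
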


\Cref{fig:intro-lee-decomp} illustrates \Cref{thm:lee-cycle-decomp} in the case where $K$ is a 3-strand pretzel knot, represented as $d = 3$ twist tangles connected by external arcs. Together with \Cref{thm:s-reformulate,thm:lee-cycle-decomp}, the computation of $s$ reduces to determining the $H$-divisibility of each cycle 
\[
    \ca(T_i) \htpy H^{k_i} z_i,
\]
and that of the reduced composition cycle
\[
    D(z_1, z_2, \ldots, z_d) \circ \ca(T_0).
\]

The effectiveness of this approach is demonstrated through the computation of $s$-invariants for 3-strand pretzel knots. Recall that a 3-strand pretzel link $P(p, q, r)$ ($p, q, r \in \ZZ$) is a knot if and only if either all of $p, q, r$ are odd, or exactly one of them is even. We refer to the former as the \textit{odd} type and the latter as the \textit{even} type.  

\begin{maintheorem}
\label{thm:pretzel}
    (O) For any odd pretzel knot $P = P(p, q, r)$ with $p > 0$,
    \[
        s(P) = \begin{cases}
            -2 & \text{if\ } p + q > 0,\ p + r > 0\text{\ and\ } q + r > 0,\\
            2  & \text{if\ } p + q < 0 \text{\ and\ } p + r < 0, \\
            0  & \text{otherwise}. 
        \end{cases}
    \]
    (E) For any even pretzel knot $P = P(p, q, r)$ with even $p > 0$,
    \[
        s(P) = \begin{cases}
            q + r - 2 & \text{if\ } p + q > 0,\ p + r > 0\text{\ and\ } q + r > 0, \\
            q + r & \text{otherwise}.
        \end{cases}
    \]
\end{maintheorem}

\begin{figure}
    \centering
    \begin{subfigure}{.45\textwidth}
        \centering
        \tikzset{every picture/.style={line width=0.75pt}} %set default line width to 0.75pt        

\begin{tikzpicture}[x=0.5pt,y=0.5pt,yscale=-1,xscale=1]
%uncomment if require: \path (0,300); %set diagram left start at 0, and has height of 300

%Straight Lines [id:da4085929601326327] 
\draw [color={rgb, 255:red, 245; green, 166; blue, 35 }  ,draw opacity=1 ][line width=1.5]  [dash pattern={on 1.69pt off 2.76pt}]  (240,196) -- (0,196) ;
%Straight Lines [id:da5991417279053142] 
\draw [color={rgb, 255:red, 245; green, 166; blue, 35 }  ,draw opacity=1 ][line width=1.5]  [dash pattern={on 1.69pt off 2.76pt}]  (76,270) -- (76,30) ;
%Straight Lines [id:da26272709805947836] 
\draw [color={rgb, 255:red, 245; green, 166; blue, 35 }  ,draw opacity=1 ][line width=1.5]  [dash pattern={on 1.69pt off 2.76pt}]  (240,270) -- (0,30) ;
%Straight Lines [id:da3038111845839939] 
\draw [color={rgb, 255:red, 245; green, 166; blue, 35 }  ,draw opacity=1 ][line width=1.5]    (76,108) -- (76,30) ;
%Straight Lines [id:da8833062771962256] 
\draw [color={rgb, 255:red, 245; green, 166; blue, 35 }  ,draw opacity=1 ][line width=1.5]    (240,196) -- (165.4,196) ;
%Straight Lines [id:da5206321045872273] 
\draw [color={rgb, 255:red, 245; green, 166; blue, 35 }  ,draw opacity=1 ][line width=1.5]    (166.4,196.4) -- (77,107) ;
%Straight Lines [id:da045636047293270754] 
\draw [color={rgb, 255:red, 245; green, 166; blue, 35 }  ,draw opacity=1 ][line width=1.5]    (76,196) -- (0,196) ;
%Straight Lines [id:da7251110044927165] 
\draw [color={rgb, 255:red, 245; green, 166; blue, 35 }  ,draw opacity=1 ][line width=1.5]    (76,270) -- (76,196) ;

%Straight Lines [id:da9877204124523966] 
\draw    (0,150) -- (256,150) ;
\draw [shift={(258,150)}, rotate = 180] [color={rgb, 255:red, 0; green, 0; blue, 0 }  ][line width=0.75]    (10.93,-3.29) .. controls (6.95,-1.4) and (3.31,-0.3) .. (0,0) .. controls (3.31,0.3) and (6.95,1.4) .. (10.93,3.29)   ;
%Straight Lines [id:da2640379809030683] 
\draw    (120,270) -- (120,14) ;
\draw [shift={(120,12)}, rotate = 90] [color={rgb, 255:red, 0; green, 0; blue, 0 }  ][line width=0.75]    (10.93,-3.29) .. controls (6.95,-1.4) and (3.31,-0.3) .. (0,0) .. controls (3.31,0.3) and (6.95,1.4) .. (10.93,3.29)   ;

% Text Node
\draw (163,105.5) node  [font=\scriptsize]  {$-2$};
% Text Node
\draw (192.5,106) node  [font=\scriptsize]  {$-2$};
% Text Node
\draw (223,105.5) node  [font=\scriptsize]  {$-2$};
% Text Node
\draw (163,75.5) node  [font=\scriptsize]  {$-2$};
% Text Node
\draw (192.5,76) node  [font=\scriptsize]  {$-2$};
% Text Node
\draw (223,75.5) node  [font=\scriptsize]  {$-2$};
% Text Node
\draw (163,45.5) node  [font=\scriptsize]  {$-2$};
% Text Node
\draw (192.5,46) node  [font=\scriptsize]  {$-2$};
% Text Node
\draw (223,45.5) node  [font=\scriptsize]  {$-2$};
% Text Node
\draw (257.5,141.6) node [anchor=south] [inner sep=0.75pt]    {$q$};
% Text Node
\draw (107.5,254.5) node  [font=\scriptsize]  {$0$};
% Text Node
\draw (107.5,165) node  [font=\scriptsize]  {$0$};
% Text Node
\draw (107.5,195.5) node  [font=\scriptsize]  {$0$};
% Text Node
\draw (107.5,224.5) node  [font=\scriptsize]  {$0$};
% Text Node
\draw (133,105.5) node  [font=\scriptsize]  {$-2$};
% Text Node
\draw (133,75.5) node  [font=\scriptsize]  {$-2$};
% Text Node
\draw (133,45.5) node  [font=\scriptsize]  {$-2$};
% Text Node
\draw (163,134) node  [font=\scriptsize]  {$-2$};
% Text Node
\draw (192.5,134.5) node  [font=\scriptsize]  {$-2$};
% Text Node
\draw (223,134) node  [font=\scriptsize]  {$-2$};
% Text Node
\draw (133,134) node  [font=\scriptsize]  {$-2$};
% Text Node
\draw (77.5,165) node  [font=\scriptsize]  {$0$};
% Text Node
\draw (47.5,165) node  [font=\scriptsize]  {$0$};
% Text Node
\draw (17.5,165) node  [font=\scriptsize]  {$0$};
% Text Node
\draw (47.5,254.5) node  [font=\scriptsize]  {$2$};
% Text Node
\draw (47.5,224.5) node  [font=\scriptsize]  {$2$};
% Text Node
\draw (17.5,254.5) node  [font=\scriptsize]  {$2$};
% Text Node
\draw (17.5,224.5) node  [font=\scriptsize]  {$2$};
% Text Node
\draw (132.5,22.6) node [anchor=south] [inner sep=0.75pt]    {$r$};
% Text Node
\draw (137.5,165.5) node  [font=\scriptsize]  {$0$};
% Text Node
\draw (107.5,135.5) node  [font=\scriptsize]  {$0$};
% Text Node
\draw (103,105.5) node  [font=\scriptsize]  {$-2$};
% Text Node
\draw (103,75.5) node  [font=\scriptsize]  {$-2$};
% Text Node
\draw (103,45.5) node  [font=\scriptsize]  {$-2$};
% Text Node
\draw (163,164) node  [font=\scriptsize]  {$-2$};
% Text Node
\draw (192.5,164.5) node  [font=\scriptsize]  {$-2$};
% Text Node
\draw (223,164) node  [font=\scriptsize]  {$-2$};
% Text Node
\draw (137.5,254.5) node  [font=\scriptsize]  {$0$};
% Text Node
\draw (137.5,195.5) node  [font=\scriptsize]  {$0$};
% Text Node
\draw (137.5,224.5) node  [font=\scriptsize]  {$0$};
% Text Node
\draw (167.5,254.5) node  [font=\scriptsize]  {$0$};
% Text Node
\draw (167.5,195.5) node  [font=\scriptsize]  {$0$};
% Text Node
\draw (167.5,224.5) node  [font=\scriptsize]  {$0$};
% Text Node
\draw (197.5,254.5) node  [font=\scriptsize]  {$0$};
% Text Node
\draw (197.5,224.5) node  [font=\scriptsize]  {$0$};
% Text Node
\draw (227.5,255.5) node  [font=\scriptsize]  {$0$};
% Text Node
\draw (78.5,135.5) node  [font=\scriptsize]  {$0$};
% Text Node
\draw (48.5,135.5) node  [font=\scriptsize]  {$0$};
% Text Node
\draw (18.5,135.5) node  [font=\scriptsize]  {$0$};
% Text Node
\draw (77.5,105.5) node  [font=\scriptsize]  {$0$};
% Text Node
\draw (48.5,105.5) node  [font=\scriptsize]  {$0$};
% Text Node
\draw (18.5,105.5) node  [font=\scriptsize]  {$0$};
% Text Node
\draw (48.5,75.5) node  [font=\scriptsize]  {$0$};
% Text Node
\draw (18.5,75.5) node  [font=\scriptsize]  {$0$};
% Text Node
\draw (17.5,45.5) node  [font=\scriptsize]  {$0$};
% Text Node
\draw (77.5,75.5) node  [font=\scriptsize]  {$0$};
% Text Node
\draw (76.5,45.5) node  [font=\scriptsize]  {$0$};
% Text Node
\draw (47.5,45.5) node  [font=\scriptsize]  {$0$};
% Text Node
\draw (197.5,195.5) node  [font=\scriptsize]  {$0$};
% Text Node
\draw (227.5,195.5) node  [font=\scriptsize]  {$0$};
% Text Node
\draw (228.5,224.5) node  [font=\scriptsize]  {$0$};
% Text Node
\draw (77.5,254.5) node  [font=\scriptsize]  {$0$};
% Text Node
\draw (77.5,195.5) node  [font=\scriptsize]  {$0$};
% Text Node
\draw (77.5,224.5) node  [font=\scriptsize]  {$0$};
% Text Node
\draw (47.5,194.5) node  [font=\scriptsize]  {$0$};
% Text Node
\draw (17.5,194.5) node  [font=\scriptsize]  {$0$};

\end{tikzpicture}
        \caption{Odd, $p = 3$.}
    \end{subfigure}
    \begin{subfigure}{.45\textwidth}
        \centering
        \tikzset{every picture/.style={line width=0.75pt}} %set default line width to 0.75pt        

\begin{tikzpicture}[x=0.5pt,y=0.5pt,yscale=-1,xscale=1]
%uncomment if require: \path (0,300); %set diagram left start at 0, and has height of 300

%Straight Lines [id:da13934912292490942] 
\draw [color={rgb, 255:red, 245; green, 166; blue, 35 }  ,draw opacity=1 ][line width=1.5]  [dash pattern={on 1.69pt off 2.76pt}]  (240,180) -- (0,180) ;
%Straight Lines [id:da6083637605620927] 
\draw [color={rgb, 255:red, 245; green, 166; blue, 35 }  ,draw opacity=1 ][line width=1.5]  [dash pattern={on 1.69pt off 2.76pt}]  (90,270) -- (90,30) ;
%Straight Lines [id:da3749391967027246] 
\draw [color={rgb, 255:red, 245; green, 166; blue, 35 }  ,draw opacity=1 ][line width=1.5]  [dash pattern={on 1.69pt off 2.76pt}]  (240,270) -- (0,30) ;
%Straight Lines [id:da9769193407625179] 
\draw [color={rgb, 255:red, 245; green, 166; blue, 35 }  ,draw opacity=1 ][line width=1.5]    (90,120) -- (90,30) ;
%Straight Lines [id:da7986017093442541] 
\draw [color={rgb, 255:red, 245; green, 166; blue, 35 }  ,draw opacity=1 ][line width=1.5]    (240,180) -- (150,180) ;
%Straight Lines [id:da29009190285321296] 
\draw [color={rgb, 255:red, 245; green, 166; blue, 35 }  ,draw opacity=1 ][line width=1.5]    (150,180) -- (90,120) ;

% %Shape: Right Triangle [id:dp3132823507634923] 
% \draw  [draw opacity=0][fill={rgb, 255:red, 248; green, 231; blue, 28 }  ,fill opacity=0.2 ] (120,150) -- (0,30) -- (120,30) -- cycle ;
% %Shape: Right Triangle [id:dp40570116708619275] 
% \draw  [draw opacity=0][fill={rgb, 255:red, 248; green, 231; blue, 28 }  ,fill opacity=0.2 ] (240,270) -- (120,150) -- (240,150) -- cycle ;
%Straight Lines [id:da9892770383836467] 
\draw    (0,150) -- (256,150) ;
\draw [shift={(258,150)}, rotate = 180] [color={rgb, 255:red, 0; green, 0; blue, 0 }  ][line width=0.75]    (10.93,-3.29) .. controls (6.95,-1.4) and (3.31,-0.3) .. (0,0) .. controls (3.31,0.3) and (6.95,1.4) .. (10.93,3.29)   ;
%Straight Lines [id:da8215822754287251] 
\draw    (120,270) -- (120,14) ;
\draw [shift={(120,12)}, rotate = 90] [color={rgb, 255:red, 0; green, 0; blue, 0 }  ][line width=0.75]    (10.93,-3.29) .. controls (6.95,-1.4) and (3.31,-0.3) .. (0,0) .. controls (3.31,0.3) and (6.95,1.4) .. (10.93,3.29)   ;

% Text Node
\draw (167,105.5) node  [font=\scriptsize]  {$4$};
% Text Node
\draw (196.5,106) node  [font=\scriptsize]  {$6$};
% Text Node
\draw (227,105.5) node  [font=\scriptsize]  {$8$};
% Text Node
\draw (167,75.5) node  [font=\scriptsize]  {$6$};
% Text Node
\draw (196.5,76) node  [font=\scriptsize]  {$8$};
% Text Node
\draw (223,75.5) node  [font=\scriptsize]  {$10$};
% Text Node
\draw (167,45.5) node  [font=\scriptsize]  {$8$};
% Text Node
\draw (192.5,46) node  [font=\scriptsize]  {$10$};
% Text Node
\draw (223,45.5) node  [font=\scriptsize]  {$12$};
% Text Node
\draw (257.5,141.6) node [anchor=south] [inner sep=0.75pt]    {$q$};
% Text Node
\draw (104.5,254.5) node  [font=\scriptsize]  {$-8$};
% Text Node
\draw (104.5,165) node  [font=\scriptsize]  {$-2$};
% Text Node
\draw (104.5,195.5) node  [font=\scriptsize]  {$-4$};
% Text Node
\draw (104.5,224.5) node  [font=\scriptsize]  {$-6$};
% Text Node
\draw (137,105.5) node  [font=\scriptsize]  {$2$};
% Text Node
\draw (137,75.5) node  [font=\scriptsize]  {$4$};
% Text Node
\draw (137,45.5) node  [font=\scriptsize]  {$6$};
% Text Node
\draw (167,134) node  [font=\scriptsize]  {$2$};
% Text Node
\draw (196.5,134.5) node  [font=\scriptsize]  {$4$};
% Text Node
\draw (227,134) node  [font=\scriptsize]  {$6$};
% Text Node
\draw (137,134) node  [font=\scriptsize]  {$0$};
% Text Node
\draw (74.5,165) node  [font=\scriptsize]  {$-4$};
% Text Node
\draw (44.5,165) node  [font=\scriptsize]  {$-6$};
% Text Node
\draw (14.5,165) node  [font=\scriptsize]  {$-8$};
% Text Node
\draw (44.5,254.5) node  [font=\scriptsize]  {$-12$};
% Text Node
\draw (44.5,224.5) node  [font=\scriptsize]  {$-10$};
% Text Node
\draw (14.5,254.5) node  [font=\scriptsize]  {$-14$};
% Text Node
\draw (14.5,224.5) node  [font=\scriptsize]  {$-12$};
% Text Node
\draw (132.5,22.6) node [anchor=south] [inner sep=0.75pt]    {$r$};
% Text Node
\draw (137.5,165.5) node  [font=\scriptsize]  {$0$};
% Text Node
\draw (107.5,135.5) node  [font=\scriptsize]  {$0$};
% Text Node
\draw (107,105.5) node  [font=\scriptsize]  {$0$};
% Text Node
\draw (107,75.5) node  [font=\scriptsize]  {$2$};
% Text Node
\draw (107,45.5) node  [font=\scriptsize]  {$4$};
% Text Node
\draw (167,165) node  [font=\scriptsize]  {$0$};
% Text Node
\draw (196.5,165.5) node  [font=\scriptsize]  {$2$};
% Text Node
\draw (227,165) node  [font=\scriptsize]  {$4$};
% Text Node
\draw (134.5,254.5) node  [font=\scriptsize]  {$-6$};
% Text Node
\draw (134.5,195.5) node  [font=\scriptsize]  {$-2$};
% Text Node
\draw (134.5,224.5) node  [font=\scriptsize]  {$-4$};
% Text Node
\draw (164.5,254.5) node  [font=\scriptsize]  {$-4$};
% Text Node
\draw (167.5,195.5) node  [font=\scriptsize]  {$0$};
% Text Node
\draw (164.5,224.5) node  [font=\scriptsize]  {$-2$};
% Text Node
\draw (194.5,254.5) node  [font=\scriptsize]  {$-2$};
% Text Node
\draw (197.5,224.5) node  [font=\scriptsize]  {$0$};
% Text Node
\draw (227.5,255.5) node  [font=\scriptsize]  {$0$};
% Text Node
\draw (75.5,135.5) node  [font=\scriptsize]  {$-2$};
% Text Node
\draw (45.5,135.5) node  [font=\scriptsize]  {$-4$};
% Text Node
\draw (13.5,135.5) node  [font=\scriptsize]  {$-6$};
% Text Node
\draw (77.5,105.5) node  [font=\scriptsize]  {$0$};
% Text Node
\draw (45.5,105.5) node  [font=\scriptsize]  {$-2$};
% Text Node
\draw (15.5,105.5) node  [font=\scriptsize]  {$-4$};
% Text Node
\draw (48.5,75.5) node  [font=\scriptsize]  {$0$};
% Text Node
\draw (15.5,77.5) node  [font=\scriptsize]  {$-2$};
% Text Node
\draw (17.5,45.5) node  [font=\scriptsize]  {$0$};
% Text Node
\draw (77.5,75.5) node  [font=\scriptsize]  {$2$};
% Text Node
\draw (76.5,45.5) node  [font=\scriptsize]  {$4$};
% Text Node
\draw (47.5,45.5) node  [font=\scriptsize]  {$2$};
% Text Node
\draw (197.5,195.5) node  [font=\scriptsize]  {$2$};
% Text Node
\draw (227.5,195.5) node  [font=\scriptsize]  {$4$};
% Text Node
\draw (228.5,222.5) node  [font=\scriptsize]  {$2$};
% Text Node
\draw (74.5,254.5) node  [font=\scriptsize]  {$-10$};
% Text Node
\draw (74.5,195.5) node  [font=\scriptsize]  {$-6$};
% Text Node
\draw (74.5,224.5) node  [font=\scriptsize]  {$-8$};
% Text Node
\draw (44.5,194.5) node  [font=\scriptsize]  {$-8$};
% Text Node
\draw (14.5,194.5) node  [font=\scriptsize]  {$-10$};

\end{tikzpicture}
        \caption{Even, $p = 2$.}
        \label{fig:table-s-even}
    \end{subfigure}
    \caption{$s$-invariant for pretzel knots $P(p, q, r)$.}
    \label{fig:table-s}
\end{figure}

\Cref{fig:table-s} illustrates \Cref{thm:pretzel} for both odd and even 3-strand pretzel knots. For fixed $p > 0$, the value $s(P(p, q, r))$ is placed at each odd lattice point $(q, r)$. The orange lines indicate the equations $p + q = 0$, $p + r = 0$, and $q + r = 0$. Note that whenever one of these equalities holds, the pretzel knot $P(p, q, r)$ is ribbon, and hence $s = 0$.

We remark that \Cref{thm:pretzel} is not a new result: Suzuki \cite{Suzuki:2010} directly computed the $s$-invariant for all odd 3-strand pretzel knots, and for a subset of even ones, using Turner's spectral sequence \cite{Turner:2008}. Lewark \cite{Lewark:2014} later determined the $s$-invariants for even 3-strand pretzel knots via Manion's formula for the Khovanov homology of 3-strand pretzel links \cite{Manion:2014}. In contrast to their approaches, our computation is purely combinatorial and elementary: it combines the formal properties of \textit{slice-torus invariants} with the diagrammatic method developed in this paper.

We expect that our method can be applied to a broader class of knots, beginning with those composed of rational tangles, such as Montesinos knots. We also anticipate that it may be useful in studying the behavior of $s$ under various operations, including knot concordances, cobordisms, crossing changes, and satellite constructions.
In a subsequent paper, we further extend our framework to the \textit{equivariant Rasmussen invariant} $(\underline{s}, \overline{s})$ for strongly invertible knots, introduced in earlier work by the second author \cite{Sano:2025}.

\medskip
\noindent 
\textbf{Organization.} This paper is organized as follows. In \Cref{sec:prelim}, we review Bar-Natan’s reformulation of Khovanov homology for tangles \cite{BarNatan:2004} and summarize the relevant results used in the subsequent sections. In \Cref{sec:reform-s}, we reformulate the $s$-invariant within Bar-Natan’s framework, extend the definition of Lee cycles to tangle diagrams, and prove \Cref{thm:s-reformulate,thm:lee-cycle-decomp}. In \Cref{sec:twist-tangles}, we carry out explicit computations for twist tangles. In \Cref{sec:pretzel}, we prove \Cref{thm:pretzel} by applying our diagrammatic method. Finally, in \Cref{sec:appendix}, we present a detailed proof of \Cref{prop:ca-under-reidemeister}, which establishes the invariance of $s$ in this reformulation.

\medskip
\noindent 
\textbf{Acknowledgements.}
TS dedicates this paper to his Ph.D. advisor, Mikio Furuta, on the occasion of his retirement from the University of Tokyo in March 2025. He is especially grateful for Professor Furuta’s encouragement to pursue this research and for the opportunity to present an early version of this work at a seminar in May 2023. He also thanks Nobuo Iida and Jin Miyazawa for organizing that seminar.
KK appreciates continuous support of his advisor, JungHwan Park.
The authors thank Tetsuya Abe and Nobuo Iida for their helpful comments and corrections on an early version of this paper.
TS was supported by JSPS KAKENHI Grant Numbers 23K12982 and academist crowdfunding.
KK is partially supported by the Samsung Science and Technology Foundation (SSTF-BA2102-02) and the NRF
 grant RS-2025-00542968.
    \section{Khovanov homology for tangles}
\label{sec:prelim}

In this section, we review the reformulation of Khovanov homology for tangles given by Bar-Natan in \cite{BarNatan:2004}. Furthermore, we collect and summarize results given in \cite{Naot:2006,BarNatan:2007,Khovanov:2022} that will be used in the subsequent sections. Hereafter, all geometric objects we consider are assumed to be smooth. 

\subsection{Category of dotted cobordisms}
\label{subsec:cob3}

\begin{definition}
    A set of even number of points on the boundary of the unit disk $B \subset \del D^2$ is called a \textit{set of boundary points}. In particular, $B = \varnothing$ is denoted $\underline{0}$, and $B = \{(0, \pm 1)\}$ is denoted $\underline{2}$. 
\end{definition}

In the following, we fix a set of boundary points $B$. 

\begin{definition}
    A preadditive category $\Cob^3(B)$ is defined as follows: an object of $\Cob^3(B)$ is an unoriented compact $1$-manifold $X$ properly embedded in $D^2$ such that $\partial X = B$. A morphism $f: X \rightarrow Y$ in $\Cob^3(B)$ is a $\ZZ$-linear combination of boundary-fixing isotopy classes of cobordisms from $X$ to $Y$. Here, a cobordism from $X$ to $Y$ is an unoriented compact $2$-manifold $S$ properly embedded in $D^2 \times I$ such that $S \cap \partial (D^2 \times I) = X \times \{0\} \cup B \times I \cup Y \times \{1\}$. 
\end{definition}

\begin{definition}
    The \textit{local relations} on $\Cob^3(B)$ are the following three relations (S), (T), (4Tu) defined on each hom-set of $\Cob^3(B)$:
    \begin{center}
        \input{tikzpictures/loc-relations}
    \end{center}
    Here, each picture is assumed to be \textit{local}, i.e.\ it depicts the intersection of a $3$-ball and a cobordism in $D^2 \times I$. The quotient category, $\Cob^3(B)$ modulo local relations is denoted $\Cob^3_{/l}(B)$. 
\end{definition}

\begin{definition}
    $\Cob^3(B)$ is endowed a graded category structure by the \textit{quantum degree function}: for any cobordism $C$, its quantum degree is defined by 
    \[
        \qdeg C = \chi(C) - \frac{1}{2}|B|.
    \]
    Since $\qdeg$ is preserved under the local relations, the grading structure is inherited to $\Cob^3_{/l}(B)$. For any object $X$ and $m \in \ZZ$, the grading shift of $X$ by $m$ will be denoted $X\{m\}$.
\end{definition}

Although the main part of \cite{BarNatan:2004} is based on this category $\Cob^3_{/l}(B)$, there is a more handier version, called the \textit{dotted category}, from which the original Khovanov homology and its deformations can be fully recovered.

\begin{definition}
    A preadditive category $\Cob^3_\bullet(B)$ is defined similarly as $\Cob^3(B)$, except each cobordism is allowed to have finitely many dots in its interior that can move freely within their components. 
\end{definition}

\begin{definition}
    The \textit{local relations} on $\Cob^3_\bullet(B)$ are the following three relations (S), (S$_\bullet$), (NC) defined on each hom-set of $\Cob^3_\bullet(B)$:
    \begin{center}
        \input{tikzpictures/loc-relation-dot}
    \end{center}
    The quotient category, $\Cob^3_\bullet(B)$ modulo local relations is denoted $\Cob^3_{\bullet/l}(B)$. 
\end{definition}

\begin{definition}
\label{def:cob-grad}
    $\Cob^3_\bullet(B)$ is endowed a graded category structure by the \textit{quantum degree function}: each dot is declared to have degree $-2$, and for any cobordism $C$ with $k$ dots, 
    \[
        \qdeg C = \chi(C) - \frac{1}{2}|B| - 2k.
    \]
    The grading structure is inherited to $\Cob^3_{\bullet/l}(B)$.
\end{definition}

A geometric interpretation of the dot $\bullet$ can be given in the context of the \textit{reduced theory}, as explained in \Cref{subsec:reduced-khovanov}. The undotted and the dotted categories are related as follows.

\begin{proposition}
    As graded categories, $\Cob^3(B)$ embeds into $\Cob^3_{\bullet}(B)$, and the local relations in $\Cob^3(B)$ are preserved in $\Cob^3_{\bullet}(B)$. Thus there is a natural functor from $\Cob^3_{/l}(B)$ to $\Cob^3_{\bullet/l}(B)$ such that the following diagram commutes:
    \[
    \begin{tikzcd}
        \Cob^3(B) \arrow[r, hook] \arrow[d, two heads] 
        & \Cob^3_\bullet(B) \arrow[d, two heads] \\
        \Cob^3_{/l}(B) \arrow[r, dashed]
        & \Cob^3_{\bullet/l}(B)
    \end{tikzcd}
    \]
\end{proposition}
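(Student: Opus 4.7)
The plan is to first construct the functor on the un-quotiented categories in the obvious way, then verify that the three local relations of $\Cob^3(B)$ are consequences of (S), (S$_\bullet$), (NC) in $\Cob^3_\bullet(B)$, and finally invoke the universal property of the quotient.

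\textbf{Step 1 (the inclusion).} Define $\iota : \Cob^3(B) \hookrightarrow \Cob^3_\bullet(B)$ on objects by identity, and on morphisms by sending each isotopy class of cobordism $C$ to its class regarded as carrying zero dots, extended $\ZZ$-linearly. This is clearly a $\ZZ$-linear functor, and it is faithful because distinct boundary-fixing isotopy classes of undotted cobordisms remain distinct when viewed as elements of the enlarged hom-groups. It preserves the quantum grading because, on zero-dotted $C$, the formula of \Cref{def:cob-grad} reduces to $\chi(C) - \tfrac{1}{2}|B|$, matching the grading on $\Cob^3(B)$.

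\textbf{Step 2 (local relations pass).} I must check that the images of (S), (T), (4Tu) vanish in $\Cob^3_{\bullet/l}(B)$. Relation (S) is immediate, as the sphere without dots is one of the defining local relations in the dotted category. For (T), I would apply the neck-cutting relation (NC) along a meridian of the torus: the resulting sum expresses the torus as two disjoint spheres, each bearing a single dot, so by (S$_\bullet$) the torus evaluates to $1 + 1 = 2$. For (4Tu), apply (NC) to a tube connecting two of the four disk components in each of the four configurations that appear in the relation; each application produces a sum of two dotted terms, and after bookkeeping the terms pair up so that both sides of (4Tu) reduce to the same linear combination of dotted genus-zero pieces.

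\textbf{Step 3 (factor through the quotient).} Composing $\iota$ with the quotient functor $\Cob^3_\bullet(B) \twoheadrightarrow \Cob^3_{\bullet/l}(B)$ yields a graded functor whose kernel-ideal contains the ideal generated by (S), (T), (4Tu). By the universal property of the quotient category $\Cob^3_{/l}(B)$, the composite descends uniquely to a graded functor $\Cob^3_{/l}(B) \to \Cob^3_{\bullet/l}(B)$, and commutativity of the square is built into the construction.

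The main obstacle is Step 2 in the (4Tu) case: the verification requires a careful bookkeeping of the $2 \times 2 \times 2 \times 2$ dotted terms produced by applying (NC) to each tube, and matching them on both sides of the desired equation. This is a standard but slightly tedious calculation (originally due to Bar-Natan), and the rest of the proof is then essentially formal.
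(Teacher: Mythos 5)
Your proposal is correct and follows essentially the same route as the paper, which simply asserts that (T) and (4Tu) follow from (S), (S$_\bullet$), (NC) and leaves the (standard, Bar-Natan) verification implicit; your Steps 1 and 3 are the formal part, and Step 2 is exactly the intended check. One small imprecision: each application of (NC) produces \emph{three} terms (the two dotted ones plus a term involving the doubly dotted sphere $h$), but this does not affect your conclusions, since for (T) that extra term contains an undotted sphere and vanishes by (S), and for (4Tu) the $h$-terms on the two sides of the relation are identical and cancel.
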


\begin{proof}
    It is easy to see that (T), (4Tu) can be derived from (S), (S$_\bullet$), (NC). 
\end{proof}

In $\Cob^3_{\bullet/l}(B)$, let $h, t$ denote the following closed dotted cobordisms:
\begin{center}
    \tikzset{every picture/.style={line width=0.75pt}} %set default line width to 0.75pt        

\begin{tikzpicture}[x=0.75pt,y=0.75pt,yscale=-1,xscale=1]
%uncomment if require: \path (0,86); %set diagram left start at 0, and has height of 86

%Shape: Arc [id:dp15897402593249266] 
\draw  [draw opacity=0] (89.94,40.75) .. controls (89.29,43) and (82.45,44.76) .. (74.11,44.76) .. controls (65.35,44.76) and (58.24,42.81) .. (58.24,40.41) .. controls (58.24,40.26) and (58.27,40.11) .. (58.32,39.97) -- (74.11,40.41) -- cycle ; \draw  [color={rgb, 255:red, 128; green, 128; blue, 128 }  ,draw opacity=1 ] (89.94,40.75) .. controls (89.29,43) and (82.45,44.76) .. (74.11,44.76) .. controls (65.35,44.76) and (58.24,42.81) .. (58.24,40.41) .. controls (58.24,40.26) and (58.27,40.11) .. (58.32,39.97) ;  
%Shape: Arc [id:dp6833264157790081] 
\draw  [draw opacity=0][dash pattern={on 0.84pt off 2.51pt}] (58.63,39.33) .. controls (59.9,37.27) and (66.43,35.7) .. (74.28,35.7) .. controls (82.93,35.7) and (89.96,37.6) .. (90.15,39.96) -- (74.28,40.06) -- cycle ; \draw  [color={rgb, 255:red, 128; green, 128; blue, 128 }  ,draw opacity=1 ][dash pattern={on 0.84pt off 2.51pt}] (58.63,39.33) .. controls (59.9,37.27) and (66.43,35.7) .. (74.28,35.7) .. controls (82.93,35.7) and (89.96,37.6) .. (90.15,39.96) ;  
%Shape: Circle [id:dp7493091039430603] 
\draw   (58.12,40.41) .. controls (58.12,31.57) and (65.28,24.41) .. (74.11,24.41) .. controls (82.95,24.41) and (90.11,31.57) .. (90.11,40.41) .. controls (90.11,49.24) and (82.95,56.4) .. (74.11,56.4) .. controls (65.28,56.4) and (58.12,49.24) .. (58.12,40.41) -- cycle ;
%Shape: Ellipse [id:dp6860921357132487] 
\draw  [fill={rgb, 255:red, 0; green, 0; blue, 0 }  ,fill opacity=1 ] (67.63,31.68) .. controls (67.63,30.55) and (68.55,29.63) .. (69.68,29.63) .. controls (70.82,29.63) and (71.73,30.55) .. (71.73,31.68) .. controls (71.73,32.81) and (70.82,33.73) .. (69.68,33.73) .. controls (68.55,33.73) and (67.63,32.81) .. (67.63,31.68) -- cycle ;
%Shape: Ellipse [id:dp483675740291299] 
\draw  [fill={rgb, 255:red, 0; green, 0; blue, 0 }  ,fill opacity=1 ] (77.23,31.68) .. controls (77.23,30.55) and (78.15,29.63) .. (79.28,29.63) .. controls (80.42,29.63) and (81.33,30.55) .. (81.33,31.68) .. controls (81.33,32.81) and (80.42,33.73) .. (79.28,33.73) .. controls (78.15,33.73) and (77.23,32.81) .. (77.23,31.68) -- cycle ;
%Shape: Arc [id:dp5400793746532146] 
\draw  [draw opacity=0] (249.56,23.31) .. controls (249,25.25) and (243.11,26.77) .. (235.92,26.77) .. controls (228.36,26.77) and (222.23,25.09) .. (222.23,23.01) .. controls (222.23,22.88) and (222.26,22.76) .. (222.3,22.64) -- (235.92,23.01) -- cycle ; \draw  [color={rgb, 255:red, 128; green, 128; blue, 128 }  ,draw opacity=1 ] (249.56,23.31) .. controls (249,25.25) and (243.11,26.77) .. (235.92,26.77) .. controls (228.36,26.77) and (222.23,25.09) .. (222.23,23.01) .. controls (222.23,22.88) and (222.26,22.76) .. (222.3,22.64) ;  
%Shape: Arc [id:dp3207985778436975] 
\draw  [draw opacity=0][dash pattern={on 0.84pt off 2.51pt}] (222.57,22.08) .. controls (223.67,20.31) and (229.29,18.95) .. (236.06,18.95) .. controls (243.51,18.95) and (249.57,20.59) .. (249.75,22.62) -- (236.06,22.71) -- cycle ; \draw  [color={rgb, 255:red, 128; green, 128; blue, 128 }  ,draw opacity=1 ][dash pattern={on 0.84pt off 2.51pt}] (222.57,22.08) .. controls (223.67,20.31) and (229.29,18.95) .. (236.06,18.95) .. controls (243.51,18.95) and (249.57,20.59) .. (249.75,22.62) ;  
%Shape: Ellipse [id:dp5941715166163359] 
\draw   (222.13,23.01) .. controls (222.13,15.4) and (228.3,9.22) .. (235.92,9.22) .. controls (243.53,9.22) and (249.71,15.4) .. (249.71,23.01) .. controls (249.71,30.63) and (243.53,36.8) .. (235.92,36.8) .. controls (228.3,36.8) and (222.13,30.63) .. (222.13,23.01) -- cycle ;
%Shape: Ellipse [id:dp1657843871417639] 
\draw  [fill={rgb, 255:red, 0; green, 0; blue, 0 }  ,fill opacity=1 ] (230.33,15.49) .. controls (230.33,14.51) and (231.12,13.72) .. (232.1,13.72) .. controls (233.07,13.72) and (233.87,14.51) .. (233.87,15.49) .. controls (233.87,16.47) and (233.07,17.26) .. (232.1,17.26) .. controls (231.12,17.26) and (230.33,16.47) .. (230.33,15.49) -- cycle ;
%Shape: Ellipse [id:dp5151756206642133] 
\draw  [fill={rgb, 255:red, 0; green, 0; blue, 0 }  ,fill opacity=1 ] (238.61,15.49) .. controls (238.61,14.51) and (239.4,13.72) .. (240.37,13.72) .. controls (241.35,13.72) and (242.14,14.51) .. (242.14,15.49) .. controls (242.14,16.47) and (241.35,17.26) .. (240.37,17.26) .. controls (239.4,17.26) and (238.61,16.47) .. (238.61,15.49) -- cycle ;

%Shape: Arc [id:dp9257785879079623] 
\draw  [draw opacity=0] (249.56,60.11) .. controls (249,62.05) and (243.11,63.57) .. (235.92,63.57) .. controls (228.36,63.57) and (222.23,61.89) .. (222.23,59.81) .. controls (222.23,59.68) and (222.26,59.56) .. (222.3,59.44) -- (235.92,59.81) -- cycle ; \draw  [color={rgb, 255:red, 128; green, 128; blue, 128 }  ,draw opacity=1 ] (249.56,60.11) .. controls (249,62.05) and (243.11,63.57) .. (235.92,63.57) .. controls (228.36,63.57) and (222.23,61.89) .. (222.23,59.81) .. controls (222.23,59.68) and (222.26,59.56) .. (222.3,59.44) ;  
%Shape: Arc [id:dp5930238767137886] 
\draw  [draw opacity=0][dash pattern={on 0.84pt off 2.51pt}] (222.57,58.88) .. controls (223.67,57.11) and (229.29,55.75) .. (236.06,55.75) .. controls (243.51,55.75) and (249.57,57.39) .. (249.75,59.42) -- (236.06,59.51) -- cycle ; \draw  [color={rgb, 255:red, 128; green, 128; blue, 128 }  ,draw opacity=1 ][dash pattern={on 0.84pt off 2.51pt}] (222.57,58.88) .. controls (223.67,57.11) and (229.29,55.75) .. (236.06,55.75) .. controls (243.51,55.75) and (249.57,57.39) .. (249.75,59.42) ;  
%Shape: Ellipse [id:dp9593516230095376] 
\draw   (222.13,59.81) .. controls (222.13,52.2) and (228.3,46.02) .. (235.92,46.02) .. controls (243.53,46.02) and (249.71,52.2) .. (249.71,59.81) .. controls (249.71,67.43) and (243.53,73.6) .. (235.92,73.6) .. controls (228.3,73.6) and (222.13,67.43) .. (222.13,59.81) -- cycle ;
%Shape: Ellipse [id:dp3050748545457964] 
\draw  [fill={rgb, 255:red, 0; green, 0; blue, 0 }  ,fill opacity=1 ] (230.33,52.29) .. controls (230.33,51.31) and (231.12,50.52) .. (232.1,50.52) .. controls (233.07,50.52) and (233.87,51.31) .. (233.87,52.29) .. controls (233.87,53.27) and (233.07,54.06) .. (232.1,54.06) .. controls (231.12,54.06) and (230.33,53.27) .. (230.33,52.29) -- cycle ;
%Shape: Ellipse [id:dp5389470679739694] 
\draw  [fill={rgb, 255:red, 0; green, 0; blue, 0 }  ,fill opacity=1 ] (238.61,52.29) .. controls (238.61,51.31) and (239.4,50.52) .. (240.37,50.52) .. controls (241.35,50.52) and (242.14,51.31) .. (242.14,52.29) .. controls (242.14,53.27) and (241.35,54.06) .. (240.37,54.06) .. controls (239.4,54.06) and (238.61,53.27) .. (238.61,52.29) -- cycle ;

%Shape: Arc [id:dp31048787170952385] 
\draw  [draw opacity=0] (195.94,39.76) .. controls (195.29,42) and (188.45,43.77) .. (180.11,43.77) .. controls (171.35,43.77) and (164.24,41.82) .. (164.24,39.41) .. controls (164.24,39.26) and (164.27,39.12) .. (164.32,38.98) -- (180.11,39.41) -- cycle ; \draw  [color={rgb, 255:red, 128; green, 128; blue, 128 }  ,draw opacity=1 ] (195.94,39.76) .. controls (195.29,42) and (188.45,43.77) .. (180.11,43.77) .. controls (171.35,43.77) and (164.24,41.82) .. (164.24,39.41) .. controls (164.24,39.26) and (164.27,39.12) .. (164.32,38.98) ;  
%Shape: Arc [id:dp4135788123972073] 
\draw  [draw opacity=0][dash pattern={on 0.84pt off 2.51pt}] (164.63,38.33) .. controls (165.9,36.27) and (172.43,34.71) .. (180.28,34.71) .. controls (188.93,34.71) and (195.96,36.6) .. (196.15,38.96) -- (180.28,39.06) -- cycle ; \draw  [color={rgb, 255:red, 128; green, 128; blue, 128 }  ,draw opacity=1 ][dash pattern={on 0.84pt off 2.51pt}] (164.63,38.33) .. controls (165.9,36.27) and (172.43,34.71) .. (180.28,34.71) .. controls (188.93,34.71) and (195.96,36.6) .. (196.15,38.96) ;  
%Shape: Circle [id:dp087883636425651] 
\draw   (164.12,39.41) .. controls (164.12,30.58) and (171.28,23.42) .. (180.11,23.42) .. controls (188.95,23.42) and (196.11,30.58) .. (196.11,39.41) .. controls (196.11,48.25) and (188.95,55.41) .. (180.11,55.41) .. controls (171.28,55.41) and (164.12,48.25) .. (164.12,39.41) -- cycle ;
%Shape: Ellipse [id:dp890068764944708] 
\draw  [fill={rgb, 255:red, 0; green, 0; blue, 0 }  ,fill opacity=1 ] (170.63,30.49) .. controls (170.63,29.35) and (171.55,28.44) .. (172.68,28.44) .. controls (173.82,28.44) and (174.73,29.35) .. (174.73,30.49) .. controls (174.73,31.62) and (173.82,32.54) .. (172.68,32.54) .. controls (171.55,32.54) and (170.63,31.62) .. (170.63,30.49) -- cycle ;
%Shape: Ellipse [id:dp5723981912476287] 
\draw  [fill={rgb, 255:red, 0; green, 0; blue, 0 }  ,fill opacity=1 ] (178.23,30.49) .. controls (178.23,29.35) and (179.15,28.44) .. (180.28,28.44) .. controls (181.42,28.44) and (182.33,29.35) .. (182.33,30.49) .. controls (182.33,31.62) and (181.42,32.54) .. (180.28,32.54) .. controls (179.15,32.54) and (178.23,31.62) .. (178.23,30.49) -- cycle ;
%Shape: Ellipse [id:dp3158081729720502] 
\draw  [fill={rgb, 255:red, 0; green, 0; blue, 0 }  ,fill opacity=1 ] (185.83,30.49) .. controls (185.83,29.35) and (186.75,28.44) .. (187.88,28.44) .. controls (189.02,28.44) and (189.93,29.35) .. (189.93,30.49) .. controls (189.93,31.62) and (189.02,32.54) .. (187.88,32.54) .. controls (186.75,32.54) and (185.83,31.62) .. (185.83,30.49) -- cycle ;

% Text Node
\draw (204.5,30.9) node [anchor=north west][inner sep=0.75pt]    {$-$};
% Text Node
\draw (21,31.61) node [anchor=north west][inner sep=0.75pt]    {$h\ =\ $};
% Text Node
\draw (127,31.61) node [anchor=north west][inner sep=0.75pt]    {$t\ =\ $};
% Text Node
\draw (97,31.61) node [anchor=north west][inner sep=0.75pt]    {$,$};

\end{tikzpicture}
\end{center}
We have $\qdeg h = -2,\ \qdeg t = -4$. The relation (NC) implies the following relation:
\begin{center}
    \tikzset{every picture/.style={line width=0.75pt}} %set default line width to 0.75pt        

\begin{tikzpicture}[x=0.75pt,y=0.75pt,yscale=-.75,xscale=.75]
%uncomment if require: \path (0,67); %set diagram left start at 0, and has height of 67

%Shape: Rectangle [id:dp1880103673271405] 
\draw  [dash pattern={on 0.84pt off 2.51pt}] (10.93,11.56) -- (56.56,11.56) -- (56.56,55.49) -- (10.93,55.49) -- cycle ;
%Shape: Rectangle [id:dp3671255456183994] 
\draw  [dash pattern={on 0.84pt off 2.51pt}] (122.41,11.06) -- (169.08,11.06) -- (169.08,55.99) -- (122.41,55.99) -- cycle ;
%Shape: Rectangle [id:dp14439302115506092] 
\draw  [dash pattern={on 0.84pt off 2.51pt}] (224.95,12.06) -- (269.54,12.06) -- (269.54,54.99) -- (224.95,54.99) -- cycle ;
%Shape: Circle [id:dp4867055193593717] 
\draw  [fill={rgb, 255:red, 0; green, 0; blue, 0 }  ,fill opacity=1 ] (23.5,34.2) .. controls (23.5,32.43) and (24.93,31) .. (26.7,31) .. controls (28.47,31) and (29.91,32.43) .. (29.91,34.2) .. controls (29.91,35.97) and (28.47,37.41) .. (26.7,37.41) .. controls (24.93,37.41) and (23.5,35.97) .. (23.5,34.2) -- cycle ;
%Shape: Circle [id:dp9537965570069136] 
\draw  [fill={rgb, 255:red, 0; green, 0; blue, 0 }  ,fill opacity=1 ] (36,34.2) .. controls (36,32.43) and (37.43,31) .. (39.2,31) .. controls (40.97,31) and (42.41,32.43) .. (42.41,34.2) .. controls (42.41,35.97) and (40.97,37.41) .. (39.2,37.41) .. controls (37.43,37.41) and (36,35.97) .. (36,34.2) -- cycle ;
%Shape: Circle [id:dp6536374189751153] 
\draw  [fill={rgb, 255:red, 0; green, 0; blue, 0 }  ,fill opacity=1 ] (142.54,33.52) .. controls (142.54,31.75) and (143.97,30.32) .. (145.74,30.32) .. controls (147.51,30.32) and (148.95,31.75) .. (148.95,33.52) .. controls (148.95,35.29) and (147.51,36.73) .. (145.74,36.73) .. controls (143.97,36.73) and (142.54,35.29) .. (142.54,33.52) -- cycle ;

% Text Node
\draw (68,25.32) node [anchor=north west][inner sep=0.75pt]    {$=$};
% Text Node
\draw (183,25.32) node [anchor=north west][inner sep=0.75pt]    {$+$};
% Text Node
\draw (103,25.32) node [anchor=north west][inner sep=0.75pt]    {$h$};
% Text Node
\draw (207,25.32) node [anchor=north west][inner sep=0.75pt]    {$t$};

\end{tikzpicture}
\end{center}
Here, the dotted square represents some part of a surface, and the components represented by $h, t$ are assumed to be placed near the square in each term. However, the following lemma imply that, in fact, they can be placed anywhere in the complement of the surface. 

\begin{lemma}
\label{lem:pass-dot-sphere}
    In $\Cob^3_{\bullet/l}(B)$, any closed component (possibly with dots) can be unknotted and can pass through any other component. 
\end{lemma}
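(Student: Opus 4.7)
The plan is to use the neck-cutting relation (NC) iteratively to reduce any closed component $C$ of a dotted cobordism to a $\ZZ[h,t]$-linear combination of terms in which $C$ has been replaced by scalar-evaluated $2$-spheres. Once this is achieved, the original position of $C$ becomes irrelevant, and both ``unknotting'' and ``passing through other components'' follow automatically.

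First I would apply (NC) to cut handles on $C$. Since $C$ is a closed surface (possibly knotted or linked inside $D^2 \times I$), it admits a finite system of essential circles along which cutting reduces its complexity. For each such circle, I would thicken it to a small tubular neighborhood and apply (NC), which replaces that tube by a sum of capped pieces carrying appropriate dots. The result is a configuration in which $C$ has been replaced by a closed surface of smaller genus (if the chosen circle was non-separating) or by two closed surfaces of smaller total complexity (if it was separating). Iterating, after finitely many applications $C$ becomes, inside a linear combination of cobordisms, a disjoint union of $2$-spheres carrying various numbers of dots.

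Next I would evaluate each resulting dotted $2$-sphere as a scalar in $\ZZ[h, t]$: the zero-dot sphere is $0$ by (S), the one-dot sphere is $1$ by (S$_\bullet$), and spheres with more dots reduce via the two-dot relation $\bullet^2 = h\bullet + t$ displayed just above the lemma. Since any $2$-sphere in $D^2 \times I \subset \RR^3$ bounds a $3$-ball by Schoenflies, its scalar evaluation can be legitimately pulled out of the cobordism, leaving a scalar multiple of a cobordism in which that sphere has been entirely removed. The final scalar combination then depends only on the topological type of $C$ (its genus and total dot count), not on its embedding.

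The main subtlety I anticipate is verifying that the iterated NC-reduction yields a well-defined scalar, independent of the choice and order of cutting tubes. However, this consistency is automatic once one observes that (NC) is an equality of morphisms in the quotient category $\Cob^3_{\bullet/l}(B)$: any two valid reduction sequences produce the same final morphism. Consequently, two cobordisms that differ only by an isotopy of $C$ --- whether unknotting $C$ or passing $C$ through another component --- both reduce to the same $\ZZ[h,t]$-linear combination of simpler cobordisms and therefore represent the same morphism in $\Cob^3_{\bullet/l}(B)$, proving the lemma.
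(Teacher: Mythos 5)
Your first step (repeatedly applying (NC) to reduce a closed component to a $\ZZ$-linear combination of disjoint unions of dotted spheres) is the same as the paper's, and the evaluation of spheres with $0$ or $1$ dots via (S) and (S$_\bullet$) is fine. The gap is in how you dispose of spheres carrying two or more dots. At this point in the paper, $h$ and $t$ are not scalars in a ground ring: they are specific \emph{closed cobordism components} (a twice-dotted sphere, resp.\ a combination of dotted spheres), and the relation ``$\bullet\bullet = h\,\bullet + t$'' only rewrites two dots in exchange for placing those closed components \emph{near} the piece of surface in question. The hom-sets of $\Cob^3_{\bullet/l}(B)$ are a priori only $\ZZ$-modules spanned by isotopy classes modulo the local relations; the $\ZZ[h,t]$-module structure, i.e.\ the ability to ``pull $h$ and $t$ out'' and regard a multi-dotted sphere as a scalar independent of its position, is stated in the paper as a \emph{consequence} of this lemma. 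So when you evaluate a sphere with $n\geq 2$ dots as an element of $\ZZ[h,t]$ and declare that the result ``depends only on genus and dot count, not on the embedding,'' you are assuming exactly what the lemma asserts: that these closed components can be moved past the other components of the cobordism. There is no local relation that evaluates an $n\geq 2$-dotted sphere to a number (applying (NC) to such a sphere just reproduces it), so the circularity is not repairable by a remark about well-definedness of the reduction; the reduction of the sphere on one side of a wall produces spheres on that side, and one still has to prove these equal the corresponding configuration on the other side.

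This is precisely the case the paper's proof is designed for: after reducing to dotted spheres, it handles $n = 0,1$ by (S) and (S$_\bullet$), and for $n \geq 2$ it uses an isotopy of the sphere stretched out along the wall, applies (NC) to both resulting pictures, and concludes by induction on the number of dots that the $n$-dotted sphere on one side of the wall equals the one on the other side. Your Schoenflies remark does not substitute for this: the issue is not knotting of the sphere (spheres in $D^2\times I$ are indeed standard) but its \emph{linking} with the other components --- e.g.\ a twice-dotted sphere trapped inside another closed component --- and bounding a ball on one side of a wall says nothing about equality with the configuration on the other side. To fix your argument you would need to supply the inductive wall-passing step for spheres with $\geq 2$ dots (or some equivalent), at which point you have essentially reconstructed the paper's proof.
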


\begin{proof}
    By repeatedly applying (NC), any closed component can be transformed into a sum of disjoint unions of dotted spheres, and then recombined as an unknotted surface. Thus it suffices to prove that a dotted sphere can pass through any other component. Let $S$ be a sphere with $n$ dots. If $n = 0, 1$, the claim is trivial from (S) and (S$_\bullet$). If $n \geq 2$, consider the following isotopy:
    \begin{center}
        \tikzset{every picture/.style={line width=0.75pt}} %set default line width to 0.75pt        

\begin{tikzpicture}[x=0.75pt,y=0.75pt,yscale=-.75,xscale=.75]
%uncomment if require: \path (0,95); %set diagram left start at 0, and has height of 95

%Shape: Circle [id:dp4920417855202871] 
\draw  [fill={rgb, 255:red, 0; green, 0; blue, 0 }  ,fill opacity=1 ] (54,45.2) .. controls (54,43.43) and (55.43,42) .. (57.2,42) .. controls (58.97,42) and (60.41,43.43) .. (60.41,45.2) .. controls (60.41,46.97) and (58.97,48.41) .. (57.2,48.41) .. controls (55.43,48.41) and (54,46.97) .. (54,45.2) -- cycle ;
%Curve Lines [id:da9325994926166838] 
\draw    (35.49,41.12) .. controls (32.49,0.12) and (96.49,41.12) .. (117.7,41) ;
%Curve Lines [id:da10467713182482452] 
\draw    (35.49,41.12) .. controls (35.49,80.12) and (93.49,55.53) .. (114.7,55.41) ;
%Shape: Parallelogram [id:dp2680788725045189] 
\draw  [dash pattern={on 0.84pt off 2.51pt}] (102.85,62.99) -- (103.37,13.99) -- (143.15,35.42) -- (142.63,84.42) -- cycle ;
%Shape: Ellipse [id:dp9593488305455407] 
\draw  [fill={rgb, 255:red, 0; green, 0; blue, 0 }  ,fill opacity=1 ] (297.05,50.07) .. controls (297.05,48.3) and (295.4,46.86) .. (293.36,46.86) .. controls (291.32,46.86) and (289.67,48.3) .. (289.67,50.07) .. controls (289.67,51.84) and (291.32,53.27) .. (293.36,53.27) .. controls (295.4,53.27) and (297.05,51.84) .. (297.05,50.07) -- cycle ;
%Curve Lines [id:da05535218230010608] 
\draw    (318.37,45.98) .. controls (321.82,4.98) and (248.13,45.98) .. (223.7,45.86) ;
%Curve Lines [id:da8204055620252977] 
\draw    (318.37,45.98) .. controls (318.37,84.98) and (251.59,60.39) .. (227.16,60.27) ;
%Shape: Parallelogram [id:dp23076736541509546] 
\draw  [fill={rgb, 255:red, 255; green, 255; blue, 255 }  ,fill opacity=0.8 ][dash pattern={on 0.84pt off 2.51pt}] (201.85,65.85) -- (202.37,16.86) -- (242.15,38.28) -- (241.63,87.28) -- cycle ;

% Text Node
\draw (62,30.61) node [anchor=north west][inner sep=0.75pt]  [font=\small]  {$n$};
% Text Node
\draw (297,33.61) node [anchor=north west][inner sep=0.75pt]  [font=\small]  {$n$};
% Text Node
\draw (158,42.61) node [anchor=north west][inner sep=0.75pt]    {$=$};

\end{tikzpicture}
    \end{center}
    By applying (NC), the left hand side transforms into:
    \begin{center}
        \tikzset{every picture/.style={line width=0.75pt}} %set default line width to 0.75pt        

\begin{tikzpicture}[x=0.75pt,y=0.75pt,yscale=-.75,xscale=.75]
%uncomment if require: \path (0,95); %set diagram left start at 0, and has height of 95

%Shape: Circle [id:dp8626270800256467] 
\draw  [fill={rgb, 255:red, 0; green, 0; blue, 0 }  ,fill opacity=1 ] (47,45.2) .. controls (47,43.43) and (48.43,42) .. (50.2,42) .. controls (51.97,42) and (53.41,43.43) .. (53.41,45.2) .. controls (53.41,46.97) and (51.97,48.41) .. (50.2,48.41) .. controls (48.43,48.41) and (47,46.97) .. (47,45.2) -- cycle ;
%Shape: Parallelogram [id:dp5016293375822631] 
\draw  [dash pattern={on 0.84pt off 2.51pt}] (102.85,62.99) -- (103.37,13.99) -- (143.15,35.42) -- (142.63,84.42) -- cycle ;
%Shape: Circle [id:dp6414927470087627] 
\draw   (39.33,45.2) .. controls (39.33,33.56) and (48.77,24.13) .. (60.41,24.13) .. controls (72.05,24.13) and (81.49,33.56) .. (81.49,45.2) .. controls (81.49,56.84) and (72.05,66.28) .. (60.41,66.28) .. controls (48.77,66.28) and (39.33,56.84) .. (39.33,45.2) -- cycle ;
%Shape: Circle [id:dp016322107839386035] 
\draw  [fill={rgb, 255:red, 0; green, 0; blue, 0 }  ,fill opacity=1 ] (195,47.07) .. controls (195,45.3) and (196.43,43.86) .. (198.2,43.86) .. controls (199.97,43.86) and (201.41,45.3) .. (201.41,47.07) .. controls (201.41,48.84) and (199.97,50.27) .. (198.2,50.27) .. controls (196.43,50.27) and (195,48.84) .. (195,47.07) -- cycle ;
%Shape: Parallelogram [id:dp18414886576506528] 
\draw  [dash pattern={on 0.84pt off 2.51pt}] (243.85,64.85) -- (244.37,15.86) -- (284.15,37.28) -- (283.63,86.28) -- cycle ;
%Shape: Circle [id:dp5151272639890471] 
\draw   (180.33,47.07) .. controls (180.33,35.43) and (189.77,25.99) .. (201.41,25.99) .. controls (213.05,25.99) and (222.49,35.43) .. (222.49,47.07) .. controls (222.49,58.71) and (213.05,68.14) .. (201.41,68.14) .. controls (189.77,68.14) and (180.33,58.71) .. (180.33,47.07) -- cycle ;
%Shape: Circle [id:dp9784702121638151] 
\draw  [fill={rgb, 255:red, 0; green, 0; blue, 0 }  ,fill opacity=1 ] (259.59,50.07) .. controls (259.59,48.3) and (261.03,46.86) .. (262.8,46.86) .. controls (264.57,46.86) and (266,48.3) .. (266,50.07) .. controls (266,51.84) and (264.57,53.27) .. (262.8,53.27) .. controls (261.03,53.27) and (259.59,51.84) .. (259.59,50.07) -- cycle ;
%Shape: Circle [id:dp5672130094085165] 
\draw  [fill={rgb, 255:red, 0; green, 0; blue, 0 }  ,fill opacity=1 ] (345,29.07) .. controls (345,27.3) and (346.43,25.86) .. (348.2,25.86) .. controls (349.97,25.86) and (351.41,27.3) .. (351.41,29.07) .. controls (351.41,30.84) and (349.97,32.27) .. (348.2,32.27) .. controls (346.43,32.27) and (345,30.84) .. (345,29.07) -- cycle ;
%Shape: Parallelogram [id:dp09537640901309308] 
\draw  [dash pattern={on 0.84pt off 2.51pt}] (381.85,66.85) -- (382.37,17.86) -- (422.15,39.28) -- (421.63,88.28) -- cycle ;
%Shape: Circle [id:dp42325307810301016] 
\draw   (334.33,29.07) .. controls (334.33,19.63) and (341.98,11.99) .. (351.41,11.99) .. controls (360.84,11.99) and (368.49,19.63) .. (368.49,29.07) .. controls (368.49,38.5) and (360.84,46.14) .. (351.41,46.14) .. controls (341.98,46.14) and (334.33,38.5) .. (334.33,29.07) -- cycle ;
%Shape: Circle [id:dp6807605006983658] 
\draw   (335.59,69) .. controls (335.59,60.21) and (342.71,53.09) .. (351.5,53.09) .. controls (360.29,53.09) and (367.41,60.21) .. (367.41,69) .. controls (367.41,77.79) and (360.29,84.91) .. (351.5,84.91) .. controls (342.71,84.91) and (335.59,77.79) .. (335.59,69) -- cycle ;
%Shape: Circle [id:dp38516486549179874] 
\draw  [fill={rgb, 255:red, 0; green, 0; blue, 0 }  ,fill opacity=1 ] (342,68.7) .. controls (342,66.93) and (343.43,65.5) .. (345.2,65.5) .. controls (346.97,65.5) and (348.41,66.93) .. (348.41,68.7) .. controls (348.41,70.47) and (346.97,71.91) .. (345.2,71.91) .. controls (343.43,71.91) and (342,70.47) .. (342,68.7) -- cycle ;
%Shape: Circle [id:dp8237417114585771] 
\draw  [fill={rgb, 255:red, 0; green, 0; blue, 0 }  ,fill opacity=1 ] (354.5,68.7) .. controls (354.5,66.93) and (355.93,65.5) .. (357.7,65.5) .. controls (359.47,65.5) and (360.91,66.93) .. (360.91,68.7) .. controls (360.91,70.47) and (359.47,71.91) .. (357.7,71.91) .. controls (355.93,71.91) and (354.5,70.47) .. (354.5,68.7) -- cycle ;

% Text Node
\draw (51,30.61) node [anchor=north west][inner sep=0.75pt]  [font=\scriptsize]  {$n+1$};
% Text Node
\draw (199,32.47) node [anchor=north west][inner sep=0.75pt]  [font=\scriptsize]  {$n$};
% Text Node
\draw (155,41.21) node [anchor=north west][inner sep=0.75pt]   [align=left] {+};
% Text Node
\draw (349,14.47) node [anchor=north west][inner sep=0.75pt]  [font=\scriptsize]  {$n$};
% Text Node
\draw (301,42.21) node [anchor=north west][inner sep=0.75pt]   [align=left] {\mbox{-}};

\end{tikzpicture}
    \end{center}
    Similarly, for the right hand side, we obtain a similar picture with the spheres placed on the right side of the wall. Thus the claim follows by induction. 
\end{proof}

In particular, \Cref{lem:pass-dot-sphere} implies that each hom-module of $\Cob^3_{\bullet/l}(B)$ admits a $\ZZ[h, t]$-module structure, and $\Cob^3_{\bullet/l}(B)$ becomes a $\ZZ[h, t]$-linear category. 

Following \cite{BHP:2023} and \cite{Khovanov:2022}, we also allow \textit{hollow dots} $\circ$ and \textit{stars} $\star$ to be put on a surface in $\Cob^3_{\bullet/l}(B)$. First, a hollow dot represents%
\footnote{
    Our definition of the hollow dot $\circ$ differs by an overall sign from the one defined in \cite{BHP:2023} and in \cite{Khovanov:2004}. This convention suits better for the expressions given in \Cref{prop:ca-under-reidemeister,prop:ca-under-cob}. See also the discussion at the end of this subsection regarding duality. 
} 
\begin{center}
    \tikzset{every picture/.style={line width=0.75pt}} %set default line width to 0.75pt        

\begin{tikzpicture}[x=0.75pt,y=0.75pt,yscale=-.75,xscale=.75]
%uncomment if require: \path (0,67); %set diagram left start at 0, and has height of 67

%Shape: Rectangle [id:dp6657862526390232] 
\draw  [dash pattern={on 0.84pt off 2.51pt}] (10.93,11.56) -- (56.56,11.56) -- (56.56,55.49) -- (10.93,55.49) -- cycle ;
%Shape: Rectangle [id:dp05316274891441941] 
\draw  [dash pattern={on 0.84pt off 2.51pt}] (101.41,12.06) -- (148.08,12.06) -- (148.08,56.99) -- (101.41,56.99) -- cycle ;
%Shape: Circle [id:dp6568279118439123] 
\draw  [color={rgb, 255:red, 0; green, 0; blue, 0 }  ,draw opacity=1 ][fill={rgb, 255:red, 255; green, 255; blue, 255 }  ,fill opacity=1 ][line width=0.75]  (29.5,34.2) .. controls (29.5,32.43) and (30.93,31) .. (32.7,31) .. controls (34.47,31) and (35.91,32.43) .. (35.91,34.2) .. controls (35.91,35.97) and (34.47,37.41) .. (32.7,37.41) .. controls (30.93,37.41) and (29.5,35.97) .. (29.5,34.2) -- cycle ;
%Shape: Circle [id:dp4996287769828862] 
\draw  [fill={rgb, 255:red, 0; green, 0; blue, 0 }  ,fill opacity=1 ] (121.54,34.52) .. controls (121.54,32.75) and (122.97,31.32) .. (124.74,31.32) .. controls (126.51,31.32) and (127.95,32.75) .. (127.95,34.52) .. controls (127.95,36.29) and (126.51,37.73) .. (124.74,37.73) .. controls (122.97,37.73) and (121.54,36.29) .. (121.54,34.52) -- cycle ;
%Shape: Rectangle [id:dp696397315587286] 
\draw  [dash pattern={on 0.84pt off 2.51pt}] (209.95,11.91) -- (254.54,11.91) -- (254.54,54.84) -- (209.95,54.84) -- cycle ;

% Text Node
\draw (68,25.32) node [anchor=north west][inner sep=0.75pt]    {$=$};
% Text Node
\draw (167,24.24) node [anchor=north west][inner sep=0.75pt]    {$-$};
% Text Node
\draw (194,24.17) node [anchor=north west][inner sep=0.75pt]    {$h$};

\end{tikzpicture}
\end{center}
Then we have relations
\begin{center}
    \tikzset{every picture/.style={line width=0.75pt}} %set default line width to 0.75pt        

\begin{tikzpicture}[x=0.75pt,y=0.75pt,yscale=-.75,xscale=.75]
%uncomment if require: \path (0,179); %set diagram left start at 0, and has height of 179

%Shape: Rectangle [id:dp09151331113716654] 
\draw  [dash pattern={on 0.84pt off 2.51pt}] (10.93,11.56) -- (56.56,11.56) -- (56.56,55.49) -- (10.93,55.49) -- cycle ;
%Shape: Rectangle [id:dp7313549892497327] 
\draw  [dash pattern={on 0.84pt off 2.51pt}] (128.41,11.06) -- (175.08,11.06) -- (175.08,55.99) -- (128.41,55.99) -- cycle ;
%Shape: Rectangle [id:dp04296904551935177] 
\draw  [dash pattern={on 0.84pt off 2.51pt}] (230.95,12.06) -- (275.54,12.06) -- (275.54,54.99) -- (230.95,54.99) -- cycle ;
%Shape: Circle [id:dp6215879466612017] 
\draw  [color={rgb, 255:red, 0; green, 0; blue, 0 }  ,draw opacity=1 ][fill={rgb, 255:red, 255; green, 255; blue, 255 }  ,fill opacity=1 ] (23.5,34.2) .. controls (23.5,32.43) and (24.93,31) .. (26.7,31) .. controls (28.47,31) and (29.91,32.43) .. (29.91,34.2) .. controls (29.91,35.97) and (28.47,37.41) .. (26.7,37.41) .. controls (24.93,37.41) and (23.5,35.97) .. (23.5,34.2) -- cycle ;
%Shape: Circle [id:dp3391062439777357] 
\draw  [color={rgb, 255:red, 0; green, 0; blue, 0 }  ,draw opacity=1 ][fill={rgb, 255:red, 255; green, 255; blue, 255 }  ,fill opacity=1 ] (36,34.2) .. controls (36,32.43) and (37.43,31) .. (39.2,31) .. controls (40.97,31) and (42.41,32.43) .. (42.41,34.2) .. controls (42.41,35.97) and (40.97,37.41) .. (39.2,37.41) .. controls (37.43,37.41) and (36,35.97) .. (36,34.2) -- cycle ;
%Shape: Circle [id:dp18360081161742248] 
\draw  [color={rgb, 255:red, 0; green, 0; blue, 0 }  ,draw opacity=1 ][fill={rgb, 255:red, 255; green, 255; blue, 255 }  ,fill opacity=1 ] (148.54,33.52) .. controls (148.54,31.75) and (149.97,30.32) .. (151.74,30.32) .. controls (153.51,30.32) and (154.95,31.75) .. (154.95,33.52) .. controls (154.95,35.29) and (153.51,36.73) .. (151.74,36.73) .. controls (149.97,36.73) and (148.54,35.29) .. (148.54,33.52) -- cycle ;
%Shape: Rectangle [id:dp21450024325531214] 
\draw  [dash pattern={on 0.84pt off 2.51pt}] (9.93,68.56) -- (55.56,68.56) -- (55.56,112.49) -- (9.93,112.49) -- cycle ;
%Shape: Rectangle [id:dp9287078924646119] 
\draw  [dash pattern={on 0.84pt off 2.51pt}] (129.95,70.06) -- (174.54,70.06) -- (174.54,112.99) -- (129.95,112.99) -- cycle ;
%Shape: Circle [id:dp7148310372598236] 
\draw  [color={rgb, 255:red, 0; green, 0; blue, 0 }  ,draw opacity=1 ][fill={rgb, 255:red, 0; green, 0; blue, 0 }  ,fill opacity=1 ] (22.5,91.2) .. controls (22.5,89.43) and (23.93,88) .. (25.7,88) .. controls (27.47,88) and (28.91,89.43) .. (28.91,91.2) .. controls (28.91,92.97) and (27.47,94.41) .. (25.7,94.41) .. controls (23.93,94.41) and (22.5,92.97) .. (22.5,91.2) -- cycle ;
%Shape: Circle [id:dp5750392281765165] 
\draw  [color={rgb, 255:red, 0; green, 0; blue, 0 }  ,draw opacity=1 ][fill={rgb, 255:red, 255; green, 255; blue, 255 }  ,fill opacity=1 ] (35,91.2) .. controls (35,89.43) and (36.43,88) .. (38.2,88) .. controls (39.97,88) and (41.41,89.43) .. (41.41,91.2) .. controls (41.41,92.97) and (39.97,94.41) .. (38.2,94.41) .. controls (36.43,94.41) and (35,92.97) .. (35,91.2) -- cycle ;
%Shape: Arc [id:dp3828175310813955] 
\draw  [draw opacity=0] (53.81,147.4) .. controls (53.07,150) and (45.16,152.04) .. (35.51,152.04) .. controls (25.37,152.04) and (17.15,149.78) .. (17.15,147) .. controls (17.15,146.83) and (17.18,146.66) .. (17.24,146.5) -- (35.51,147) -- cycle ; \draw  [color={rgb, 255:red, 128; green, 128; blue, 128 }  ,draw opacity=1 ] (53.81,147.4) .. controls (53.07,150) and (45.16,152.04) .. (35.51,152.04) .. controls (25.37,152.04) and (17.15,149.78) .. (17.15,147) .. controls (17.15,146.83) and (17.18,146.66) .. (17.24,146.5) ;  
%Shape: Arc [id:dp731135441497379] 
\draw  [draw opacity=0][dash pattern={on 0.84pt off 2.51pt}] (17.6,145.75) .. controls (19.07,143.37) and (26.62,141.56) .. (35.71,141.56) .. controls (45.7,141.56) and (53.83,143.75) .. (54.06,146.48) -- (35.71,146.6) -- cycle ; \draw  [color={rgb, 255:red, 128; green, 128; blue, 128 }  ,draw opacity=1 ][dash pattern={on 0.84pt off 2.51pt}] (17.6,145.75) .. controls (19.07,143.37) and (26.62,141.56) .. (35.71,141.56) .. controls (45.7,141.56) and (53.83,143.75) .. (54.06,146.48) ;  
%Shape: Circle [id:dp10360601726537078] 
\draw   (17.01,147) .. controls (17.01,136.78) and (25.29,128.5) .. (35.51,128.5) .. controls (45.73,128.5) and (54.01,136.78) .. (54.01,147) .. controls (54.01,157.22) and (45.73,165.5) .. (35.51,165.5) .. controls (25.29,165.5) and (17.01,157.22) .. (17.01,147) -- cycle ;
%Shape: Circle [id:dp8442682750286817] 
\draw  [color={rgb, 255:red, 0; green, 0; blue, 0 }  ,draw opacity=1 ][fill={rgb, 255:red, 255; green, 255; blue, 255 }  ,fill opacity=1 ] (32.31,135.7) .. controls (32.31,133.93) and (33.74,132.5) .. (35.51,132.5) .. controls (37.28,132.5) and (38.71,133.93) .. (38.71,135.7) .. controls (38.71,137.47) and (37.28,138.91) .. (35.51,138.91) .. controls (33.74,138.91) and (32.31,137.47) .. (32.31,135.7) -- cycle ;

% Text Node
\draw (67,25.32) node [anchor=north west][inner sep=0.75pt]    {$=$};
% Text Node
\draw (189,25.32) node [anchor=north west][inner sep=0.75pt]    {$+$};
% Text Node
\draw (93,25.32) node [anchor=north west][inner sep=0.75pt]    {$\ -h$};
% Text Node
\draw (213,25.32) node [anchor=north west][inner sep=0.75pt]    {$t$};
% Text Node
\draw (67,82.32) node [anchor=north west][inner sep=0.75pt]    {$=$};
% Text Node
\draw (111,83.32) node [anchor=north west][inner sep=0.75pt]    {$t$};
% Text Node
\draw (67,137.4) node [anchor=north west][inner sep=0.75pt]    {$=\ 1$};

\end{tikzpicture}
\end{center}
and (NC) can be rewritten as 
\begin{center}
    \input{tikzpictures/hollow-dot-NC}
\end{center}
Next, a star $\star$ represents
\begin{center}
    \tikzset{every picture/.style={line width=0.75pt}} %set default line width to 0.75pt        

\begin{tikzpicture}[x=0.75pt,y=0.75pt,yscale=-.75,xscale=.75]
%uncomment if require: \path (0,67); %set diagram left start at 0, and has height of 67

%Shape: Rectangle [id:dp9087049366158031] 
\draw  [dash pattern={on 0.84pt off 2.51pt}] (10.93,11.56) -- (56.56,11.56) -- (56.56,55.49) -- (10.93,55.49) -- cycle ;
%Shape: Rectangle [id:dp9319588449248473] 
\draw  [dash pattern={on 0.84pt off 2.51pt}] (96.41,12.06) -- (143.08,12.06) -- (143.08,56.99) -- (96.41,56.99) -- cycle ;
%Shape: Rectangle [id:dp6235300642447601] 
\draw  [dash pattern={on 0.84pt off 2.51pt}] (181.95,13.06) -- (226.54,13.06) -- (226.54,55.99) -- (181.95,55.99) -- cycle ;
%Shape: Circle [id:dp6530489371527614] 
\draw  [fill={rgb, 255:red, 0; green, 0; blue, 0 }  ,fill opacity=1 ] (116.54,34.52) .. controls (116.54,32.75) and (117.97,31.32) .. (119.74,31.32) .. controls (121.51,31.32) and (122.95,32.75) .. (122.95,34.52) .. controls (122.95,36.29) and (121.51,37.73) .. (119.74,37.73) .. controls (117.97,37.73) and (116.54,36.29) .. (116.54,34.52) -- cycle ;
%Shape: Circle [id:dp6471212892520815] 
\draw  [color={rgb, 255:red, 0; green, 0; blue, 0 }  ,draw opacity=1 ][fill={rgb, 255:red, 255; green, 255; blue, 255 }  ,fill opacity=1 ] (201.04,34.52) .. controls (201.04,32.75) and (202.47,31.32) .. (204.24,31.32) .. controls (206.01,31.32) and (207.45,32.75) .. (207.45,34.52) .. controls (207.45,36.29) and (206.01,37.73) .. (204.24,37.73) .. controls (202.47,37.73) and (201.04,36.29) .. (201.04,34.52) -- cycle ;

% Text Node
\draw (68,25.32) node [anchor=north west][inner sep=0.75pt]    {$=$};
% Text Node
\draw (157,26.32) node [anchor=north west][inner sep=0.75pt]    {$+$};
% Text Node
\draw (26,26) node [anchor=north west][inner sep=0.75pt]    {$\star $};

\end{tikzpicture}
\end{center}
Then we have
\begin{center}
    \tikzset{every picture/.style={line width=0.75pt}} %set default line width to 0.75pt        

\begin{tikzpicture}[x=0.75pt,y=0.75pt,yscale=-.75,xscale=.75]
%uncomment if require: \path (0,67); %set diagram left start at 0, and has height of 67

%Shape: Rectangle [id:dp2767788229067313] 
\draw  [dash pattern={on 0.84pt off 2.51pt}] (10.93,11.56) -- (56.56,11.56) -- (56.56,55.49) -- (10.93,55.49) -- cycle ;
%Shape: Rectangle [id:dp7686094481876052] 
\draw  [dash pattern={on 0.84pt off 2.51pt}] (122.41,11.06) -- (169.08,11.06) -- (169.08,55.99) -- (122.41,55.99) -- cycle ;

% Text Node
\draw (68,25.32) node [anchor=north west][inner sep=0.75pt]    {$=$};
% Text Node
\draw (99,25.32) node [anchor=north west][inner sep=0.75pt]    {$\mathcal{D}$};
% Text Node
\draw (20,26) node [anchor=north west][inner sep=0.75pt]    {$\star \star$};

\end{tikzpicture}
\end{center}
where $\mathcal{D} = h^2 + 4t$ is the discriminant of $X^2 - hX - t$. Using (NC), a star on a surface is equivalent to adding a handle to that point. With these relations, we obtain the following evaluation formulae for closed cobordisms.

\begin{proposition}[{\cite{Khovanov:2022}}]
\label{lem:eval-dot-surface}
    Let $\Sigma_g$ denote the closed orientable surface of genus $g$ standardly embedded in $D^2 \times I$, and $\Sigma^\bullet_g$ the surface $\Sigma_g$ with a single dot. In $\Cob^3_{\bullet/l}(B)$, these surfaces are evaluated as 
    \[
        \Sigma_g = \begin{cases}
            0 & \text{$g$ is even,} \\
            2 \mathcal{D}^{\frac{g - 1}{2}} & \text{$g$ is odd}
        \end{cases}
    \]
    and 
    \[
        \Sigma^\bullet_g = \begin{cases}
            \mathcal{D}^{\frac{g}{2}} & \text{$g$ is even,} \\
            h \mathcal{D}^{\frac{g - 1}{2}} & \text{$g$ is odd.}
        \end{cases}
    \]
    Furthermore, the surface $\Sigma_g$ with arbitrary number of dots can be evaluated as a polynomial in $h$ and $t$, by using $(\bullet \bullet) = h (\bullet) + t(\ )$ inductively and applying the above formulae. 
\end{proposition}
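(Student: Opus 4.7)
The plan is to prove both formulae simultaneously by induction on the genus $g$, using the neck-cutting relation (NC) to reduce genus and the evaluation relations (S), (S$_\bullet$) as base cases.

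For the base cases $g=0$, relation (S) gives $\Sigma_0 = 0$ and (S$_\bullet$) gives $\Sigma^\bullet_0 = 1$, matching the claimed formulae. For the inductive step, I would pick a meridian circle of one of the handles of $\Sigma_g$, i.e.\ a non-separating simple closed curve bounding a disk in $D^3$, and apply (NC) to a tubular neighborhood of this curve (in the dotted case, placing the dot on a component disjoint from the cut, which is possible by \Cref{lem:pass-dot-sphere}). After cutting and re-capping, the resulting surface has genus $g-1$, and (NC) yields the recursions
\begin{align*}
    \Sigma_g &= 2\,\Sigma^\bullet_{g-1} - h\,\Sigma_{g-1}, \\
    \Sigma^\bullet_g &= 2\,\Sigma^{\bullet\bullet}_{g-1} - h\,\Sigma^\bullet_{g-1},
\end{align*}
where $\Sigma^{\bullet\bullet}_{g-1}$ denotes $\Sigma_{g-1}$ with two dots on the same component.

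Next, I would reduce the double-dot surface using the identity $(\bullet\bullet) = h(\bullet) + t(\ )$, which follows from the defining relation of $\star$ together with $\star\star = \mathcal{D} = h^2 + 4t$ expanded as $(2\bullet - h)^2 = 4\,\bullet\bullet - 4h\,\bullet + h^2$. This rewrites the second recursion as
\[
    \Sigma^\bullet_g = h\,\Sigma^\bullet_{g-1} + 2t\,\Sigma_{g-1}.
\]
A direct induction on $g$ then confirms both closed formulae: in the undotted case the parity pattern arises because the recursion pairs $\Sigma^\bullet_{g-1}$ (which carries an $h$ when $g-1$ is odd) with $h\Sigma_{g-1}$ (which vanishes when $g-1$ is even), causing cancellation in even genera; the dotted case is propagated similarly and is in fact easier because the leading term on the right is already $h\Sigma^\bullet_{g-1}$.

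There is essentially no obstacle; the only points requiring care are (i) making sure (NC) is applied along a curve on which the cobordism is genuinely a tube, which is guaranteed by standard embedding of $\Sigma_g$, and (ii) justifying that the dot in $\Sigma^\bullet_g$ may be freely positioned away from the cut — this is exactly the content of \Cref{lem:pass-dot-sphere} once one observes that after cutting, the dotted piece becomes a closed component that can be isotoped. The final assertion about surfaces with arbitrary many dots follows by repeatedly applying $(\bullet\bullet)=h(\bullet)+t(\ )$ to reduce to $\Sigma_g$ or $\Sigma_g^\bullet$ and invoking the two formulae above.
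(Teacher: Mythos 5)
Your proof is correct, and it is worth noting that the paper itself does not prove this proposition at all: it is quoted from Khovanov's work (\cite{Khovanov:2022}) and used as a black box. Your argument supplies a self-contained derivation entirely within the paper's toolkit: cutting a handle meridian with (NC) gives $\Sigma_g = 2\Sigma^\bullet_{g-1} - h\Sigma_{g-1}$ and $\Sigma^\bullet_g = 2\Sigma^{\bullet\bullet}_{g-1} - h\Sigma^\bullet_{g-1}$, and with $(\bullet\bullet) = h(\bullet) + t(\ )$ the second becomes $\Sigma^\bullet_g = h\Sigma^\bullet_{g-1} + 2t\Sigma_{g-1}$; together with the base cases $\Sigma_0 = 0$, $\Sigma^\bullet_0 = 1$ from (S), (S$_\bullet$), the induction closes exactly as you describe, since in even genus $2h\mathcal{D}^{(g-2)/2} - 2h\mathcal{D}^{(g-2)/2} = 0$ and $h^2\mathcal{D}^{(g-2)/2} + 4t\,\mathcal{D}^{(g-2)/2} = \mathcal{D}^{g/2}$. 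Two small expository points: the identity $(\bullet\bullet) = h(\bullet) + t(\ )$ is already recorded in the paper as a consequence of (NC), so rederiving it from $\star\star = \mathcal{D}$ is unnecessary and mildly circular (that identity is itself obtained from the dot relation); and you do not need \Cref{lem:pass-dot-sphere} to move the dot off the neck, since dots may move freely within their component by the definition of $\Cob^3_\bullet(B)$ — the lemma is only needed to regard the closed factors $h$, $t$ produced by the cut as central scalars, which you implicitly use when writing $h\Sigma_{g-1}$.
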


The following operation, called \textit{delooping}, is a generalization of \cite[Lemma 4.1]{BarNatan:2007}, and will be used throughout this paper. First, recall that any preadditive category $\mathcal{A}$ can be turned into an additive category by taking its additive closure. Concretely, the additive closure of $\mathcal{A}$ is given by the category $\Mat(\mathcal{A})$ consisting of objects: formal direct sums of objects of $\mathcal{A}$, and morphisms: matrices with entries morphisms of $\mathcal{A}$ with appropriate endpoints. 
 
\begin{proposition}[Delooping]
\label{prop:delooping}
    The following matrix of morphisms give mutually inverse isomorphisms in $\Mat(\Cob^3_{\bullet/l}(B))$:
    \begin{center}
        \input{tikzpictures/delooping}
    \end{center}
\end{proposition}

\begin{proof}
    Immediate from the local relations. 
\end{proof}

Hereafter, objects $\varnothing\{1\}, \varnothing\{-1\}$ will be denoted $\dotcircI, \dotcircX$ respectively. 

\subsection{Recovering the $(1+1)$-TQFT}

% Now, consider the case when $B = \underline{0}$. 

The \textit{universal Khovanov homology}, also called the \textit{$U(2)$-equivariant Khovanov homology}, $\Kh_{h, t}$, is given by the Frobenius extension $(R_{h, t}, A_{h, t})$, where $R_{h, t}$ is the graded ring $\ZZ[h, t]$ with $\deg h = -2,\ \deg t = -4$ and $A_{h, t}$ is the graded Frobenius algebra 
\[
     R_{h, t}[X]/(X^2 - hX - t)
\]
determined by the counit $\epsilon$ with $\epsilon(1) = 0,\ \epsilon(X) = 1$ and endowed with the grading $\deg(1) = 0, \deg(X) = -2$. Setting $(h, t) = (0, 0)$ recovers the original Khovanov homology. We shall show that both $R_{h, t}$ and $A_{h, t}$ can be recovered by the \textit{tautological functor} in $\Cob^3_{\bullet/l}(\underline{0})$. 

Hereafter, let $\Hom_{\bullet/l}$ denote the hom-functor in the category $\Cob^3_{\bullet/l}(\underline{0})$. The hom-module
\[
    \Hom_{\bullet/l}(\emptyset, \emptyset)
\]
admits a commutative graded ring structure, where the multiplication is the disjoint union and the unit is the empty cobordism $\emptyset$.

\begin{proposition}
\label{prop:Zht-ring-isom}
    There is a canonical graded ring isomorphism
    \[
        \Hom_{\bullet/l}(\emptyset, \emptyset) \isom R_{h, t}.
    \]
\end{proposition}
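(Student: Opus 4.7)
The plan is to define the map explicitly in one direction and construct an inverse via the universal $(1+1)$-TQFT. Define $\phi\colon R_{h,t} \to \Hom_{\bullet/l}(\varnothing, \varnothing)$ as the graded ring homomorphism sending the formal variables $h, t$ to the closed dotted cobordisms of the same names introduced just above the statement. This is well-defined: disjoint union endows $\Hom_{\bullet/l}(\varnothing, \varnothing)$ with a commutative ring structure, the empty cobordism is its unit, and \Cref{def:cob-grad} gives $\qdeg h = -2$ and $\qdeg t = -4$, matching the degrees of $h, t$ in $R_{h,t}$.

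For surjectivity, I combine \Cref{lem:pass-dot-sphere} with \Cref{lem:eval-dot-surface}. Any element of $\Hom_{\bullet/l}(\varnothing, \varnothing)$ is a $\ZZ$-linear combination of isotopy classes of closed dotted cobordisms in $D^2 \times I$. By \Cref{lem:pass-dot-sphere}, each closed component can, modulo the local relations, be unknotted and made disjoint from the others, so the cobordism is expressible as a disjoint union (i.e., a product under $\phi$) of standardly embedded dotted closed orientable surfaces. \Cref{lem:eval-dot-surface} evaluates each such surface as a polynomial in $h$ and $t$, so the entire element lies in the image of $\phi$.

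For injectivity, I construct a left inverse $\psi$ using the universal $(1+1)$-TQFT $F$ associated with the Frobenius extension $(R_{h,t}, A_{h,t})$. In $F$, a circle maps to $A_{h,t}$, the empty $1$-manifold to $R_{h,t}$, the elementary surfaces (cups, caps, pairs of pants) to the Frobenius structure maps, and a dot to multiplication by $X \in A_{h,t}$. One verifies that $F$ descends through the local relations: (S) corresponds to $\epsilon(1) = 0$, (S$_\bullet$) to $\epsilon(X) = 1$, and (NC) to the neck-cutting identity on $A_{h,t}$, which is the Frobenius algebra axiom encoded by the defining relation $X^2 = hX + t$. Restriction of $F$ to $\Hom_{\bullet/l}(\varnothing, \varnothing) \to \End_{R_{h,t}}(R_{h,t}) \cong R_{h,t}$ gives the desired $\psi$, and the identities $\psi(h) = h$, $\psi(t) = t$ yield $\psi \circ \phi = \id_{R_{h,t}}$; combined with the surjectivity of $\phi$, this forces $\phi$ to be an isomorphism of graded rings.

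The main obstacle is the construction and well-definedness of the TQFT $F$ on $\Cob^3_{\bullet/l}(\varnothing)$, specifically invariance under boundary-fixing isotopy in $D^2 \times I$ together with compatibility with the local relations. Each individual compatibility is a Frobenius algebra identity, but assembling them is the essential content of the argument. A self-contained alternative avoids the TQFT entirely and instead checks directly that the reduction procedure outlined after \Cref{lem:eval-dot-surface} — iterating $(\bullet\bullet) = h(\bullet) + t(\ )$ together with the closed-surface evaluations — produces a polynomial in $h, t$ that is independent of the order of reductions, yielding $\psi$ by hand.
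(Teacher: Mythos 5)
Your proposal is correct and follows essentially the same route as the paper: the evident map sending $h,t$ to the corresponding closed cobordisms, surjectivity via the evaluation of closed dotted surfaces (\Cref{lem:pass-dot-sphere}, \Cref{lem:eval-dot-surface}), and an inverse induced by the TQFT $\mathcal{F}_{h,t}$, which preserves the local relations and satisfies $\mathcal{F}_{h,t}(i(h))=\epsilon(X^2)=h$ and $\mathcal{F}_{h,t}(i(t))=\epsilon(X^3)-\epsilon(X^2)^2=t$. Your extra care about well-definedness of $\mathcal{F}_{h,t}$ and the order-independence of the reduction is a reasonable elaboration of what the paper leaves implicit, but it is not a different argument.
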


\begin{proof}
    An obvious graded ring homomorphism $i$ from the right to the left is given by mapping $h, t$ to the corresponding cobordisms. Furthermore, \Cref{lem:eval-dot-surface} shows that $i$ is surjective. The inverse map is given by the TQFT $\mathcal{F}_{h, t}$ obtained from the Frobenius algebra $A_{h, t}$. Indeed, $\mathcal{F}_{h, t}$ preserves the local relations, and hence induces a map from the left to the right, and we have 
    \[
        \mathcal{F}_{h, t}(i(h)) = \epsilon(X^2) = h
    \]
    and 
    \[
        \mathcal{F}_{h, t}(i(t)) = \epsilon(X^3) - \epsilon(X^2)^2 = t.
    \]
\end{proof}

\begin{proposition}
\label{prop:cob-tautological}
    The delooping isomorphism of \Cref{prop:delooping} gives an isomorphism of graded $R_{h, t}$-modules
    \[
        \Hom_{\bullet/l}(\varnothing, \bigcirc) \isom A_{h, t}\{1\}
    \]
    that maps
    \begin{center}
        \tikzset{every picture/.style={line width=0.75pt}} %set default line width to 0.75pt        

\begin{tikzpicture}[x=0.75pt,y=0.75pt,yscale=-1,xscale=1]
%uncomment if require: \path (0,41); %set diagram left start at 0, and has height of 41

%Shape: Arc [id:dp12726597247618732] 
\draw  [draw opacity=0][dash pattern={on 0.84pt off 2.51pt}] (25.7,33.61) .. controls (23.49,31.96) and (21.86,26.52) .. (21.86,20.07) .. controls (21.86,13.63) and (23.48,8.2) .. (25.7,6.54) -- (27.13,20.07) -- cycle ; \draw  [dash pattern={on 0.84pt off 2.51pt}] (25.7,33.61) .. controls (23.49,31.96) and (21.86,26.52) .. (21.86,20.07) .. controls (21.86,13.63) and (23.48,8.2) .. (25.7,6.54) ;  
%Shape: Arc [id:dp06536334966692847] 
\draw  [draw opacity=0] (26.55,6.1) .. controls (26.74,6.04) and (26.93,6.02) .. (27.13,6.02) .. controls (30.04,6.02) and (32.4,12.31) .. (32.4,20.07) .. controls (32.4,27.84) and (30.04,34.13) .. (27.13,34.13) .. controls (26.93,34.13) and (26.74,34.11) .. (26.55,34.05) -- (27.13,20.07) -- cycle ; \draw   (26.55,6.1) .. controls (26.74,6.04) and (26.93,6.02) .. (27.13,6.02) .. controls (30.04,6.02) and (32.4,12.31) .. (32.4,20.07) .. controls (32.4,27.84) and (30.04,34.13) .. (27.13,34.13) .. controls (26.93,34.13) and (26.74,34.11) .. (26.55,34.05) ;  
%Shape: Arc [id:dp9081360277319049] 
\draw  [draw opacity=0] (27.71,6.02) .. controls (27.51,6.02) and (27.32,6.02) .. (27.13,6.02) .. controls (18,6.02) and (10.61,12.31) .. (10.61,20.07) .. controls (10.61,27.84) and (18,34.13) .. (27.13,34.13) .. controls (27.32,34.13) and (27.51,34.13) .. (27.71,34.12) -- (27.13,20.07) -- cycle ; \draw   (27.71,6.02) .. controls (27.51,6.02) and (27.32,6.02) .. (27.13,6.02) .. controls (18,6.02) and (10.61,12.31) .. (10.61,20.07) .. controls (10.61,27.84) and (18,34.13) .. (27.13,34.13) .. controls (27.32,34.13) and (27.51,34.13) .. (27.71,34.12) ;  

%Shape: Arc [id:dp8270869968566329] 
\draw  [draw opacity=0][dash pattern={on 0.84pt off 2.51pt}] (117.7,34.61) .. controls (115.49,32.96) and (113.86,27.52) .. (113.86,21.07) .. controls (113.86,14.63) and (115.48,9.2) .. (117.7,7.54) -- (119.13,21.07) -- cycle ; \draw  [dash pattern={on 0.84pt off 2.51pt}] (117.7,34.61) .. controls (115.49,32.96) and (113.86,27.52) .. (113.86,21.07) .. controls (113.86,14.63) and (115.48,9.2) .. (117.7,7.54) ;  
%Shape: Arc [id:dp23191640421443382] 
\draw  [draw opacity=0] (118.55,7.1) .. controls (118.74,7.04) and (118.93,7.02) .. (119.13,7.02) .. controls (122.04,7.02) and (124.4,13.31) .. (124.4,21.07) .. controls (124.4,28.84) and (122.04,35.13) .. (119.13,35.13) .. controls (118.93,35.13) and (118.74,35.11) .. (118.55,35.05) -- (119.13,21.07) -- cycle ; \draw   (118.55,7.1) .. controls (118.74,7.04) and (118.93,7.02) .. (119.13,7.02) .. controls (122.04,7.02) and (124.4,13.31) .. (124.4,21.07) .. controls (124.4,28.84) and (122.04,35.13) .. (119.13,35.13) .. controls (118.93,35.13) and (118.74,35.11) .. (118.55,35.05) ;  
%Shape: Arc [id:dp07137268684926101] 
\draw  [draw opacity=0] (119.71,7.02) .. controls (119.51,7.02) and (119.32,7.02) .. (119.13,7.02) .. controls (110,7.02) and (102.61,13.31) .. (102.61,21.07) .. controls (102.61,28.84) and (110,35.13) .. (119.13,35.13) .. controls (119.32,35.13) and (119.51,35.13) .. (119.71,35.12) -- (119.13,21.07) -- cycle ; \draw   (119.71,7.02) .. controls (119.51,7.02) and (119.32,7.02) .. (119.13,7.02) .. controls (110,7.02) and (102.61,13.31) .. (102.61,21.07) .. controls (102.61,28.84) and (110,35.13) .. (119.13,35.13) .. controls (119.32,35.13) and (119.51,35.13) .. (119.71,35.12) ;  
%Shape: Ellipse [id:dp4659238662024515] 
\draw  [fill={rgb, 255:red, 0; green, 0; blue, 0 }  ,fill opacity=1 ] (105.49,20.82) .. controls (105.49,19.96) and (106.19,19.27) .. (107.04,19.27) .. controls (107.9,19.27) and (108.6,19.96) .. (108.6,20.82) .. controls (108.6,21.68) and (107.9,22.37) .. (107.04,22.37) .. controls (106.19,22.37) and (105.49,21.68) .. (105.49,20.82) -- cycle ;

% Text Node
\draw (39,10.4) node [anchor=north west][inner sep=0.75pt]    {$\mapsto \ 1,$};
% Text Node
\draw (130,11.4) node [anchor=north west][inner sep=0.75pt]    {$\mapsto \ X.$};

\end{tikzpicture}
    \end{center}
    Under this identification, the tautological functor
    \[
        \Hom_{\bullet/l}(\emptyset, -)\colon \Cob^3_{\bullet/l}(\underline{0}) \rightarrow {R_{h, t}\Mod}
    \]
    coincides with the (1+1)-TQFT $\mathcal{F}_{h, t}$ obtained from the Frobenius algebra $A_{h, t}$. Here, $R_{h, t}\Mod$ denotes the category of $R_{h, t}$-modules.
\end{proposition}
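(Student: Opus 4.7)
The plan is to separately address the graded module isomorphism and then the claim that $\Hom_{\bullet/l}(\varnothing, -)$ coincides with $\mathcal{F}_{h,t}$ as a functor.

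First I would apply \Cref{prop:delooping} to $\bigcirc$ to obtain $\bigcirc \cong \dotcircI \oplus \dotcircX = \varnothing\{1\} \oplus \varnothing\{-1\}$ in $\Mat(\Cob^3_{\bullet/l}(\underline{0}))$, and then apply $\Hom_{\bullet/l}(\varnothing, -)$ together with \Cref{prop:Zht-ring-isom} to get the graded $R_{h,t}$-module isomorphism
\[
    \Hom_{\bullet/l}(\varnothing, \bigcirc) \isom R_{h,t}\{1\} \oplus R_{h,t}\{-1\}.
\]
The right side matches $A_{h,t}\{1\}$ as a graded $R_{h,t}$-module via $1\{1\} \mapsto 1$ and $1\{-1\} \mapsto X$, since $\deg X = -2$. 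To verify that the undotted cap and the dotted cap correspond to $1$ and $X$ under this identification, I would compose each with the two projection cobordisms appearing in \Cref{prop:delooping} and evaluate the resulting closed components using (S), (S$_\bullet$), and (NC); this is a short local calculation that is essentially forced by degree reasons.

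For the functor statement, both $\Hom_{\bullet/l}(\varnothing, -)$ and $\mathcal{F}_{h,t}$ are symmetric monoidal on $\Cob^3_{\bullet/l}(\underline{0})$ with respect to disjoint union, and iterating delooping on $\bigcirc^{\sqcup n}$ identifies them on all objects. It then suffices to check agreement on the generators of the morphisms: birth, death, merge, and split. The birth sends $1 \in \Hom_{\bullet/l}(\varnothing, \varnothing) \isom R_{h,t}$ to the undotted cap, which is $1 \in A_{h,t}\{1\}$ by the first part, i.e.\ the unit of $A_{h,t}$. Post-composing the undotted and dotted caps with the death cobordism produces the undotted and dotted spheres, which evaluate to $0$ and $1$ by (S) and (S$_\bullet$), matching $\epsilon$. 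For the merge, testing on the basis $\{1, X\}$ yields $1$, $X$, and a twice-dotted cap which reduces via $\bullet\bullet = h\bullet + t$ (a direct consequence of (NC)) to $hX + t = X^2$, matching the multiplication in $A_{h,t}$. For the split, applying delooping to the codomain $\bigcirc \sqcup \bigcirc$ and expanding with (NC) reproduces the Frobenius coproduct $\Delta(1) = 1 \otimes X + X \otimes 1 - h(1 \otimes 1)$ and $\Delta(X) = X \otimes X + t(1 \otimes 1)$, uniquely determined by the Frobenius structure of $A_{h,t}$.

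The main obstacle is bookkeeping: tracking degree shifts and sign conventions through the delooping identification, particularly for the split, where $\bigcirc \sqcup \bigcirc$ must be deloooped twice and the resulting four terms matched term-by-term against $\Delta$. Once these identifications are aligned, each verification reduces to evaluating a handful of closed surfaces via the local relations, and the overall agreement with $\mathcal{F}_{h,t}$ follows from the uniqueness of the $(1+1)$-TQFT associated to $A_{h,t}$.
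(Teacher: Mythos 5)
Your proposal is correct and follows essentially the same route as the paper: identify $\Hom_{\bullet/l}(\varnothing,\bigcirc)$ with $A_{h,t}\{1\}$ via the delooping isomorphism (checking the two caps against the projections by evaluating dotted spheres), and then verify agreement with $\mathcal{F}_{h,t}$ on the elementary cobordisms by closed-surface evaluations using (S), (S$_\bullet$), (NC). The paper carries out only the merge in detail, computing its matrix $\begin{pmatrix} 1 & 0 & 0 & t \\ 0 & 1 & 1 & h \end{pmatrix}$ and declaring the remaining three cases analogous, which is exactly the calculation you sketch for birth, death, and split.
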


\begin{remark}
    To be precise, the above functor is considered in the additive closure $\Mat(\Cob^3_{\bullet/l}(\underline{0}))$. 
\end{remark}

\begin{proof}
    Let $\mathcal{T}$ denote the tautological functor. \Cref{prop:Zht-ring-isom} states that $\mathcal{T}(\varnothing) \isom R_{h, t}$. The first statement is immediate from \Cref{prop:delooping}, for we have 
    \begin{align*}
        \mathcal{T}(\bigcirc) 
        &\isom \mathcal{T}(\varnothing)\{1\} \oplus \mathcal{T}(\varnothing)\{-1\} \\
        &\isom R_{h, t}\{1\} \oplus R_{h, t}\{-1\} \\
        &\isom A_{h, t}\{1\}
    \end{align*}
    as graded $R_{h, t}$-modules. Similarly for $k \geq 1$, we have $\mathcal{T}(\bigcirc^{\sqcup k}) \isom (A_{h, t})^{\otimes k}$. This is exactly how the TQFT $\mathcal{F}_{h, t}$ acts on the objects. It remains to check the correspondences for the four elementary cobordisms. 
    
    First, for the merge cobordism, consider the following diagram in $\Mat(\Cob^3_{\bullet/l}(\underline{0}))$: 
    \begin{center}
        \input{tikzpictures/TQFT-m}
    \end{center}
    Here, direct sums are expressed by parentheses, and the vertical arrows are the delooping isomorphisms. The dashed horizontal arrow is a matrix of morphisms in $\Cob^3_{\bullet/l}(\underline{0})$, defined so that the square commutes. Consider the matrix entry, say, corresponding to the morphism $\dotcircX \dotcircX  \rightarrow \dotcircX$. It is the composition 
    \begin{center}
        \input{tikzpictures/TQFT-mXX}
    \end{center}
    which forms a sphere with two dots, namely $h$. Proceeding similarly, one can verify that the matrix $m$ is given by 
    \[
        \begin{pmatrix}
            1 & 0 & 0 & t \\
            0 & 1 & 1 & h
        \end{pmatrix}
    \]
    which is exactly the representation matrix of the multiplication $m$ with respect to the basis $\{1, X\}$. The other three cases can be proved similarly. 
\end{proof}

Hereafter, we use the following symbolic notations for elementary cobordisms corresponding to the operations of the Frobenius algebra $A_{h, t}$:
\begin{center}
    \input{tikzpictures/elem-cob-symbols}    
\end{center}
Note that $X$ has 
\[
    X^2 = hX + t,\quad
    \Delta(X) = X \otimes X + t(1 \otimes 1)
\]
whereas $Y$ has
\[
    Y^2 = -hY + t,\quad
    \Delta(Y) = Y \otimes Y + t (1 \otimes 1).
\]
We also use notations such as $\iota_X$ and $\epsilon_Y$ to represent elementary cobordisms with the corresponding dots added. The delooping isomorphism will be expressed as 
\[
\begin{tikzcd}[column sep=5em, ampersand replacement=\&]
    \thickcirc \arrow[shift left]{r}{ \begin{pmatrix} \epsilon_Y \\ \epsilon \end{pmatrix}} \& \begin{pmatrix} \dotcircI \\ \dotcircX \end{pmatrix} \arrow[shift left]{l}{ \begin{pmatrix} \iota & \iota_X \end{pmatrix}}.
    \end{tikzcd}
\]

\begin{remark}
    There is another version of delooping,
    \[
\begin{tikzcd}[column sep=5em, ampersand replacement=\&]
    \thickcirc \arrow[shift left]{r}{ \begin{pmatrix} \epsilon_X \\ \epsilon \end{pmatrix}} \& \begin{pmatrix} \dotcircI \\ \dotcircX \end{pmatrix} \arrow[shift left]{l}{ \begin{pmatrix} \iota & \iota_Y \end{pmatrix}}
    \end{tikzcd}
\]
    obtained by reversing the arrows of \Cref{prop:delooping}. With this identification, the TQFT obtained from the tautological functor becomes $\mathcal{F}_{-h, t}$, which is obtained from the Frobenius algebra 
    \[
        A_{-h, t} = R[Y]/(Y^2 + hY - t).
    \]
    There is a non-degenerate pairing
    \[
        \beta = \epsilon m\colon 
        A_{h, t} \otimes A_{-h, t} \to R
    \]
    and $\{Y, 1\}$ gives a basis of $A_{-h, t}$ dual to $\{1, X\}$ of $A_{h, t}$ with respect to $\beta$. This gives a Frobenius algebra isomorphism
    \[
        A_{-h, t} \isom A_{h, t}^*; \quad 
        1 \mapsto X^*,\quad 
        Y \mapsto 1^*
    \]
    where $A_{h, t}^* = \Hom_R(A_{h, t}, R)$ is the \textit{dual Frobenius algebra} of $A_{h, t}$, whose operations are given by the duals of the operations of $A_{h, t}$.
\end{remark}

\subsection{Khovanov homology for tangles}

\afterpage{
\begin{figure}[t]
    \centering
    \resizebox{.95\textwidth}{!}{
        \input{tikzpictures/ckh-tangle}
    }
    \caption{The formal Khovanov bracket [T]}
    \label{fig:formal-ckh}
\end{figure}
}

Given an oriented tangle diagram $T$ with boundary $\del T = B$, the \textit{formal Khovanov bracket} $[T]$ is defined as a ($\ZZ$-graded) chain complex in the additive category $\Mat(\Cob^3_{/l}(B))$, or equivalently an object in the category
\[
    \Kob(B) := \Kom(\Mat(\Cob^3_{/l}(B))).
\]
which reads, the category of chain complexes in the additive closure of the preadditive category $\Cob^3_{/l}(B)$. 
The construction of $[T]$ is formally identical to that of the ordinary Khovanov chain complex, except that each `chain group' is replaced by a formal direct sum of crossingless  tangle diagrams, and each differential is replaced by a matrix with entries linear combinations of embedded cobordisms. See \Cref{fig:formal-ckh} for an example. 

The complex $[T]$ is also endowed a bigrading. Let $n^+, n^-$ denote the number of positive and negative crossings of $T$ respectively. The \textit{homological grading} of $[T]$ is shifted by $-n^-$, so that the sequence starts from $[T]^{-n^-}$ and ends with $[T]^{n^+}$. Moreover, each object $[T]^k$ of homological grading $k$ will have its quantum grading (see \Cref{def:cob-grad}) shifted by $k + n^+ - n^-$. Thus $[T]$ can be written as
\[
    [T] = \left\{\ [T]^{-n^-}\{n^+ - 2n^-\} \to \cdots \to [T]^{n^+}\{2n^+ - n^-\} \ \right\}
\]
With this bigrading, one can check that the differential $d$ has bidegree $(1, 0)$. 

% The complex $[T]$ can be also regarded as an object of the dotted version
% \[
%     \Kob_\bullet(B) := \Kom(\Mat(\Cob^3_{\bullet/l}(B))).
% \]
Concepts such as chain homotopies and chain homotopy equivalences make sense, and the corresponding homotopy category will be denoted $\Kob_{/h}(B)$. Bar-Natan proves that the complex $[T]$ is a tangle invariant as an object in the homotopy category. 

\begin{theorem}[{\cite[Theorem 1]{BarNatan:2004}}]
\label{thm:formal-ckh-invariance}
    Suppose $T, T'$ are tangle diagrams with $\del T = \del T' = B$ that are related by a single Reidemeister move. Then there is a corresponding chain homotopy equivalence in $\Kob(B)$,
    \[
        f: [T] \xrightarrow{ \htpy } [T']
    \]
    of bidegree $(0, 0)$. 
\end{theorem}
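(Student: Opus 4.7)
The plan is to reproduce Bar-Natan's argument from \cite{BarNatan:2004}, adapted to the dotted category $\Cob^3_{\bullet/l}(B)$, by producing an explicit homotopy equivalence for each of the three Reidemeister moves. The two engines powering the proof are the delooping isomorphism of \Cref{prop:delooping} and a Gaussian-elimination (cancellation) lemma: if a chain complex in $\Mat(\Cob^3_{\bullet/l}(B))$ has a differential whose matrix contains an entry that is an isomorphism between two summands of consecutive chain groups, then one can remove those two summands and adjust the remaining differentials to obtain a chain-homotopy-equivalent complex, all in bidegree $(0,0)$. I would state this cancellation lemma first, with the explicit homotopy, since it will be invoked in every case.

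For Reidemeister~I, the complex $[T]$ of a diagram containing a positive curl has the form
\[
    \bigl\{\ \larc \longrightarrow \udarc\ \bigr\}
\]
after resolving the crossing, with appropriate grading shifts. The target object $\udarc$ contains a free-floating circle; applying \Cref{prop:delooping} replaces it with $\dotcircI \oplus \dotcircX$, so the differential becomes a $2\times 1$ matrix whose top entry (the one landing in $\dotcircI = \larc\{1\}$) is the identity cobordism after a local simplification using (S$_\bullet$) and (NC). Gaussian elimination cancels this identity pair, leaving a one-term complex isomorphic to $\larc$ in bidegree $(0,0)$. The negative curl is handled symmetrically.

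For Reidemeister~II, the complex $[T]$ is a square of four resolutions with four structure maps; one resolution again contains a circle, so delooping splits that corner into $\dotcircI \oplus \dotcircX$. Two of the resulting matrix entries can be identified with identities (up to local (NC)/(S$_\bullet$) simplification), and two applications of the cancellation lemma reduce the complex to the single object corresponding to the other (crossingless) R2 tangle, again in bidegree $(0,0)$. For Reidemeister~III, I would follow Bar-Natan's approach of applying R2 to one side of an R3 move to exhibit both sides as chain-homotopy equivalent to a common complex; alternatively, one can deloop the circle appearing in one of the eight vertices of the R3 cube and repeatedly apply Gaussian elimination, after which the two resulting complexes are manifestly isomorphic via a relabelling of strands.

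The step that I expect to require the most care is not any single Reidemeister move but the bookkeeping of quantum and homological degree shifts through the delooping and cancellation procedures, to confirm that the overall equivalence lands in bidegree $(0,0)$. One has to verify that the $\{\pm 1\}$ shifts introduced by \Cref{prop:delooping}, the $\{1\}$ shifts from the differential of a resolution cube, and the global $\{n^+ - 2n^-\}$-type normalization combine correctly, and that the identity entries targeted by Gaussian elimination really are homogeneous of degree $0$ after these shifts. Beyond that, since the local relations (S), (S$_\bullet$), (NC) used above are exactly the ones present in $\Cob^3_{\bullet/l}(B)$, the argument goes through verbatim in our dotted setting; the detailed invariance in the presence of dots and hollow-dots (needed later for the Lee-cycle cobordism) is postponed to \Cref{sec:appendix}.
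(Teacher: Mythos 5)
Your route is genuinely different from the paper's: the paper does not reprove this statement but invokes Bar-Natan's original argument, which constructs, for each Reidemeister move, explicit chain maps in both directions together with explicit homotopies (for R3 via the cone/categorified-Kauffman-trick argument), entirely inside the undotted category. That construction is exactly what makes the theorem hold in $\Kob(B) = \Kom(\Mat(\Cob^3_{/l}(B)))$, and it is also what feeds the later functoriality statement (\Cref{thm:formal-ckh-functoriality}).

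The gap in your proposal is the category in which it lives. The statement is about $\Kob(B)$, built from $\Cob^3_{/l}(B)$ with the relations (S), (T), (4Tu), whereas the delooping isomorphism \Cref{prop:delooping} that drives your whole argument is a statement in the dotted category $\Mat(\Cob^3_{\bullet/l}(B))$ and genuinely uses dots: over $\ZZ$ the circle is \emph{not} isomorphic to $\varnothing\{1\}\oplus\varnothing\{-1\}$ in $\Cob^3_{/l}(B)$, since without dots the neck-cutting consequence of (4Tu) only expresses $2\cdot(\text{tube})$ as a sum of handled caps and cups (the dot is morally half a handle). So deloop-and-eliminate proves the dotted analogue in $\Kob_\bullet(B)$, and because the functor $\Cob^3_{/l}(B)\to\Cob^3_{\bullet/l}(B)$ is a quotient-type comparison rather than an equivalence, this does not yield the homotopy equivalence in $\Kob(B)$ that the theorem asserts; you would either have to restrict the claim to the dotted setting or supply Bar-Natan's direct maps and homotopies. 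Two smaller points: your R3 step "after delooping and elimination the two complexes are manifestly isomorphic via a relabelling of strands" is precisely where the nontrivial check (matching all induced differentials, with signs) sits and cannot be waved through; and the R1 complex is misdescribed --- the two resolutions of a curl are $\larc\,\thickcirc$ and $\larc$ (related by $m$ or $\Delta$ depending on the handedness), not $\larc$ and $\udarc$.
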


The proof is given by explicitly constructing chain maps in both directions together with explicit chain homotopies and show that they satisfy the desired equations, for each of the three Reidemeister moves. All of the above concepts make sense in the dotted versions, and we will work in the category
\[
    \Kob_\bullet(B) := \Kom(\Mat(\Cob^3_{_\bullet/l}(B))).
\]

We shall extend the tautological functor of \Cref{prop:cob-tautological} to $\Kob_\bullet(\underline{0})$, and show that it naturally gives rise to the universal Khovanov homology functor $\Kh_{h, t}$. To make the statement precise, let us recall the concept of \textit{hom-complexes} in any $R$-linear category $\mathcal{A}$. Given two chain complexes $C, C'$ in $\mathcal{A}$, the hom-set 
\[
    \Hom_\mathcal{A}(C, C')
\]
admits an $R$-chain complex structure, whose $k$-th chain module is given by morphisms of degree $k$ from $C$ to $C'$, and the differential $\delta$ is given by 
\[
    \delta f = d_{C'} \circ f - (-1)^{\deg f} f \circ d_C.
\]
One can see that a $k$-cycle $f$ in the hom-complex is precisely a degree $k$ chain map $f: C \to C'$, and a $k$-boundary $f = \delta h$ is a degree $k$ null-homotopic chain map with degree $k - 1$ null homotopy $h$.

Returning to our case, consider the formal bracket $[\varnothing] \in \Kob_\bullet(\underline{0})$ for the empty link diagram $\varnothing$, which is given by
\[
    [\varnothing]^k = \begin{cases}
        \varnothing & k = 0, \\
        0 & k \neq 0
    \end{cases}
\]
with the trivial differential. The following proposition is immediate from \Cref{prop:cob-tautological}.

\begin{proposition}
\label{prop:kom-tautological}
    The tautological functor
    \[
        \Hom_{\bullet/l}([\emptyset], -)\colon \Kob_\bullet(\underline{0}) \longrightarrow \Kom({R_{h, t}\Mod})
    \]
    coincides with the functor induced from the TQFT $\mathcal{F}_{h, t}$. 
\end{proposition}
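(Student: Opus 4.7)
The plan is to promote the identification in \Cref{prop:cob-tautological} from the level of the underlying preadditive category to the level of chain complexes, by exploiting the functorial nature of the constructions $\Mat(-)$ and $\Kom(-)$.

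First, I would observe that any additive functor $F\colon \mathcal{A} \to \mathcal{B}$ between preadditive categories extends canonically to an additive functor $\Mat(F)\colon \Mat(\mathcal{A}) \to \Mat(\mathcal{B})$, sending a formal direct sum $\bigoplus_i X_i$ to $\bigoplus_i F(X_i)$ and a matrix $(f_{ij})$ to $(F(f_{ij}))$. Subsequently, any additive functor $G\colon \mathcal{A}' \to \mathcal{B}'$ between additive categories extends to $\Kom(G)\colon \Kom(\mathcal{A}') \to \Kom(\mathcal{B}')$ termwise, preserving differentials since they are just morphisms in the underlying additive category. Both extensions are uniquely determined by $F$ (resp.\ $G$) and commute with taking chain maps, because the hom-complex construction is built out of the same additive data. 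Thus to identify two additive functors $\Kob_\bullet(\underline{0}) \to \Kom(R_{h,t}\Mod)$ it suffices to identify their restrictions to $\Cob^3_{\bullet/l}(\underline{0})$.

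Next, I would verify that the tautological functor $\mathcal{T} := \Hom_{\bullet/l}([\varnothing], -)$ is additive on $\Cob^3_{\bullet/l}(\underline{0})$: this is just the general fact that $\Hom_{\mathcal{C}}(X, -)$ is additive in the second variable whenever $\mathcal{C}$ is preadditive. Consequently, $\mathcal{T}$ admits a unique additive extension to $\Mat(\Cob^3_{\bullet/l}(\underline{0}))$, and then to $\Kob_\bullet(\underline{0}) = \Kom(\Mat(\Cob^3_{\bullet/l}(\underline{0})))$. Similarly, the TQFT $\mathcal{F}_{h,t}$, being a functor of $R_{h,t}$-modules, is additive, so it extends in the same way. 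By \Cref{prop:cob-tautological}, these two extensions agree on objects and elementary morphisms of $\Cob^3_{\bullet/l}(\underline{0})$; by the uniqueness of the additive extension, they therefore agree on $\Mat(\Cob^3_{\bullet/l}(\underline{0}))$, and hence on $\Kob_\bullet(\underline{0})$.

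I do not expect any genuine obstacle; the only point requiring care is checking that the termwise identifications are compatible with the matrix differentials occurring in $[T]$ and with the delooping isomorphism (since it was used to establish \Cref{prop:cob-tautological}). This compatibility is a matter of naturality: the delooping isomorphism is an isomorphism in $\Mat(\Cob^3_{\bullet/l}(B))$, and its image under either functor is the same isomorphism of $R_{h,t}$-modules, so the comparison commutes with the differentials entrywise.
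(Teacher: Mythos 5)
Your argument is correct and follows the same route as the paper, which simply declares the proposition immediate from \Cref{prop:cob-tautological}; you have just spelled out the routine facts that the tautological functor and $\mathcal{F}_{h,t}$ are additive, extend uniquely and termwise through $\Mat(-)$ and $\Kom(-)$, and that for $[\varnothing]$ concentrated in degree $0$ the hom-complex is exactly the termwise application. No gap.
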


Thus for a link diagram $L$, its \textit{universal Khovanov complex} $\CKh_{h, t}(L)$ can be regarded as
\[
    \CKh_{h, t}(L) = \Hom_{\bullet/l}([\emptyset], [L]).
\]
Under this identification, a $k$-cycle $z$ in $\CKh_{h, t}^k(L)$ can be regarded as a degree $k$ chain map 
\[
    z: [\varnothing] \to [L]
\]
and its homology class $[z]$ in $\Kh^k_{h, t}(L)$ as the chain homotopy class of $z$. Furthermore, it automatically follows from \Cref{thm:formal-ckh-invariance} that the chain homotopy type of $\CKh_{h, t}(L)$ is an invariant of the corresponding link. 
This change of view will be essential in extending the  definition of the Lee classes to tangle diagrams in \Cref{sec:reform-s}. 

% \subsection{Functoriality of Khovanov homology}

Now, let $\Diag(B)$ denote the category\footnote{
    In \cite{BarNatan:2004}, the domain category of $[\cdot]$ is denoted $\Cob^4(B)$. Here we used the notation and definition used in \cite{BHL:2019} for clarity and to avoid the genericity argument for the morphisms. 
} 
consisting of objects -- oriented tangle diagrams $T$ with $\del T = B$, and morphisms -- oriented movies between tangle diagrams that fix the boundary $B$. Let $\Diag_{/i}(B)$ denote the quotient category, $\Diag(B)$ modulo the 15 \textit{movie moves} of Cartar and Saito \cite{CS:1993}. In particular when $B = \underline{0}$, the objects of $\Diag_{/i}(\underline{0})$ are oriented link diagrams, and the category is equivalent to the category of oriented links in $\RR^3$ (\cite[Proposition 2.14]{BHL:2019}). 
The formal Khovanov bracket $[\cdot]$ extends to a functor
\[
    [\cdot]: \Diag(B) \to \Kob(B)
\]
by mapping each elementary move (a Reidemeister move or a Morse move) to the corresponding chain map in $\Kob(B)$. Here, the maps for the Reidemeister moves are those constructed in \Cref{thm:formal-ckh-invariance}, and the maps for the Morse moves are given by the obvious cobordisms. The following theorem states the \textit{isotopy invariance} of the functor $[\cdot]$, up to sign.

\begin{theorem}[{\cite[Theorem 4, 5]{BarNatan:2004}}]
\label{thm:formal-ckh-functoriality}
    The formal Khovanov bracket descends to a functor
    \[
        [\cdot]: \Diag_{/i}(B) \to \Kob_{/\pm h}(B).
    \]
    Here, $\Kob_{/\pm h}(B)$ denotes the projectivization of the homotopy category $\Kob_{/h}(B)$. Similar statement holds for the dotted category.
\end{theorem}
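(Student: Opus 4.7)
The plan is to verify the statement by extending the bracket in two stages: first at the level of elementary movies, then descending to ambient-isotopy classes via the Carter--Saito movie moves. For the first stage, each elementary movie is either a Reidemeister move or a Morse move (birth, death, or saddle). Reidemeister moves are assigned the chain homotopy equivalences produced by Theorem~\ref{thm:formal-ckh-invariance}, while Morse moves are assigned the obvious chain maps built from the embedded cobordism: a birth introduces a direct summand supported on a new crossingless circle via $\iota$, a death collapses one via $\epsilon$, and an oriented saddle is applied termwise. One then checks that these assignments are compatible with horizontal composition of movies, which is immediate since chain maps compose in $\Mat(\Cob^3_{\bullet/l}(B))$.

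For the second stage, one must verify that each of the 15 Carter--Saito movie moves induces chain-homotopic chain maps, up to an overall sign, between the two formal brackets in $\Kob_\bullet(B)$. The cleanest strategy is to organize the 15 moves into three families: (i) moves that are purely Reidemeister (e.g.\ two consecutive R2 moves undoing each other, or the triangle move relating R2-R2-R3 with R3-R2-R2); (ii) moves involving a Reidemeister move sliding past a Morse critical point; and (iii) moves involving reversals and framing adjustments. For each family, one writes down the two candidate chain maps using the explicit formulas from the proof of Theorem~\ref{thm:formal-ckh-invariance} together with the delooping isomorphism of Proposition~\ref{prop:delooping}, and constructs an explicit chain homotopy represented by a linear combination of dotted cobordisms. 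Many of these homotopies are forced by degree reasons, since the only cobordism of the correct bidegree between the relevant chain objects is either a cylinder, a saddle, or a dotted elementary surface.

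The bulk of the routine verifications can be streamlined by a locality argument: since the chain maps assigned to each elementary movie are supported in a disk where the move occurs, it suffices to check each movie move in a small tangle neighbourhood, which reduces the verification to a finite computation in $\Mat(\Cob^3_{\bullet/l}(B'))$ for some small $B'$. The main obstacle, and the reason the target category is only $\Kob_{/\pm h}(B)$ rather than $\Kob_{/h}(B)$, is a single unavoidable sign discrepancy that appears in one of the movie moves (the one corresponding to rotating an R1 curl by a half-twist of the framing); this sign cannot be absorbed into the conventions for the Reidemeister chain maps without breaking some other movie move, as can be seen by a parity count of the isotopy invariants. Consequently, the induced functor is only well-defined up to overall sign, and the statement follows by passing to the projectivization.
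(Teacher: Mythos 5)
You should first note that the paper does not prove this statement at all: it is quoted verbatim from Bar-Natan \cite[Theorems 4, 5]{BarNatan:2004}, and Bar-Natan's actual argument is structurally different from your plan. He does \emph{not} exhibit explicit chain homotopies for the fifteen Carter--Saito moves. Instead, he observes that for each movie move the two sides induce morphisms between the brackets of the identical first and last frames, which are complexes of very small tangles, and he proves an abstract statement about these complexes: up to homotopy, their degree-zero morphism spaces (in the relevant cases, automorphism spaces) are spanned by the identity. Hence the two candidate maps agree up to a scalar, and composing with the reversed movie forces that scalar to be $\pm 1$. The sign ambiguity is intrinsic to this method --- it cannot determine the scalar --- and Jacobsson-type examples show the discrepancy is genuinely present, which is exactly why the target is the projectivization $\Kob_{/\pm h}(B)$.

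The gaps in your proposal are concrete. First, your ``forced by degree reasons'' step is unsound: in $\Mat(\Cob^3_{\bullet/l}(B'))$ the morphisms of a fixed bidegree between the relevant objects are far from unique (one can add tubes, dots, closed components, $h$-multiples), so degree alone does not pin down a homotopy; what is actually needed is a statement about morphisms \emph{modulo homotopy}, and that is precisely the nontrivial lemma in Bar-Natan's proof that your outline omits. Second, the mechanism you give for the sign --- ``a single unavoidable sign discrepancy'' in a move ``rotating an R1 curl by a half-twist of the framing'' --- does not correspond to any of the fifteen Carter--Saito moves (the setting is unframed, and no such move exists); the known sign anomalies occur in several of the circular movie moves, depending on orientations, so the parity-count justification is unsupported. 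Third, the heart of your argument (explicit homotopies for all moves, in all orientations and mirror variants, compatible with the Morse-move maps) is only promised, not carried out. As written, the proposal is a plan with an incorrect key justification rather than a proof; either execute the case-by-case verification in full, or adopt Bar-Natan's ``no nontrivial degree-zero automorphisms up to homotopy'' argument, which is what makes the theorem provable without those computations.
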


In particular when $B = \underline{0}$, 

\begin{proposition}
    The $U(2)$-equivariant Khovanov homology functor $\Kh_{h, t}$ decomposes into a sequence of functors
    \[
        \begin{tikzcd}
        \Diag(\underline{0}) \arrow[r, "{[\cdot]}"] & \Kob(\underline{0}) \arrow[r, "\mathcal{F}_{h, t}"] & \Kom({R_{h, t}\Mod}) \arrow[r, "H"] & {R_{h, t}\Mod}.
        \end{tikzcd}
    \]
    and descends to
    \[
    \begin{tikzcd}
        \Diag_{/i}(\underline{0}) \arrow[r, "{[\cdot]}"] & \Kob_{/\pm h}(\underline{0}) \arrow[r, "\mathcal{F}_{h, t}"] & \Kom_{/\pm h}({R_{h, t}\Mod}) \arrow[r, "H"] & {R_{h, t}\Mod} / \pm.
    \end{tikzcd}
    \]    
    Similar statement holds for the dotted category.
\end{proposition}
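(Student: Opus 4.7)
The plan is to establish the factorization first at the level of the ordinary (non-quotient) categories and then descend each factor through the appropriate equivalence relation. The key input is already in hand: Proposition \ref{prop:kom-tautological} identifies the tautological functor on $\Kob_\bullet(\underline{0})$ with the functor induced from the TQFT $\mathcal{F}_{h,t}$. I would first unwind this on the level of a single link diagram $L$: the classical $\CKh_{h,t}(L)$ is built by applying the Frobenius algebra $A_{h,t}$ vertex-wise to the cube of resolutions, and the formal bracket $[L] \in \Kob(\underline{0})$ is precisely that cube viewed inside $\Kom(\Mat(\Cob^3_{\bullet/l}(\underline{0})))$, so $\mathcal{F}_{h,t}([L]) = \CKh_{h,t}(L)$ as bigraded chain complexes, and $H$ applied to this is $\Kh_{h,t}(L)$. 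On morphisms, the chain maps that $[\cdot]$ assigns to elementary Reidemeister and Morse movies coincide, after applying $\mathcal{F}_{h,t}$, with the classical chain maps used to build $\Kh_{h,t}$; this is essentially true by construction, since both assign the same local cobordisms at a crossing move and the same Frobenius operations at a Morse move.

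Next I would verify functoriality of each arrow in the first diagram. That $[\cdot]\colon \Diag(\underline{0}) \to \Kob(\underline{0})$ is a functor is Bar-Natan's construction. The TQFT $\mathcal{F}_{h,t}\colon \Cob^3_{\bullet/l}(\underline{0}) \to R_{h,t}\Mod$ extends entrywise over $\Mat$ and objectwise over $\Kom$ to give a functor $\Kob_\bullet(\underline{0}) \to \Kom(R_{h,t}\Mod)$. Finally $H$ is the standard homology functor. The composition agrees with $\Kh_{h,t}$ by the previous paragraph.

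For the descent, Theorem \ref{thm:formal-ckh-functoriality} provides the descent of $[\cdot]$ to $\Diag_{/i}(\underline{0}) \to \Kob_{/\pm h}(\underline{0})$. Any additive functor of chain complex categories carries chain homotopies to chain homotopies, since a null-homotopy $f = dh + hd$ is sent to $\mathcal{F}_{h,t}(f) = d\mathcal{F}_{h,t}(h) + \mathcal{F}_{h,t}(h)d$; additivity also ensures compatibility with the $\pm 1$ action. Hence $\mathcal{F}_{h,t}$ descends to $\Kob_{/\pm h}(\underline{0}) \to \Kom_{/\pm h}(R_{h,t}\Mod)$. The homology functor $H$ is classically invariant under chain homotopy and sends $\pm f$ to $\pm H(f)$, so it descends to $\Kom_{/\pm h}(R_{h,t}\Mod) \to R_{h,t}\Mod / \pm$. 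Composing gives the second diagram.

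There is no serious obstacle here: the proposition is essentially a bookkeeping statement that reorganizes previously established facts (Propositions \ref{prop:cob-tautological} and \ref{prop:kom-tautological} for the object-wise identification, and Theorem \ref{thm:formal-ckh-functoriality} for the invariance up to sign). The only point worth double-checking is that the sign ambiguity tracked by the projectivization $/\pm h$ is preserved by both $\mathcal{F}_{h,t}$ and $H$, which follows immediately from their additivity. The dotted-version claim is proved identically, replacing $\Cob^3_{/l}$ by $\Cob^3_{\bullet/l}$ throughout, since all of the invoked results were stated for the dotted category as well.
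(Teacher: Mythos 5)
Your proposal is correct and follows essentially the same route the paper intends: the paper gives no separate proof for this proposition, treating it as immediate bookkeeping from \Cref{prop:cob-tautological,prop:kom-tautological} (object/morphism-wise identification of the tautological functor with $\mathcal{F}_{h,t}$) and \Cref{thm:formal-ckh-functoriality} (descent of $[\cdot]$ up to sign), with the descent of $\mathcal{F}_{h,t}$ and $H$ through homotopy and $\pm$ following from additivity exactly as you argue. The only cosmetic slip is writing $\Cob^3_{\bullet/l}(\underline{0})$ where the first (undotted) diagram calls for $\Cob^3_{/l}(\underline{0})$, which is harmless since the undotted category embeds in the dotted one compatibly with $\mathcal{F}_{h,t}$.
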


% In \cite{Sano:2020-b}, the author proved that the sign indeterminacy can be fixed by adjusting the signs of the cobordism maps so that the signs of the \textit{Lee classes} are fixed. 
% Later in \Cref{thm:adjust-cob-map} we prove that this argument can be generalized to tangles.

\subsection{Reduced Khovanov homology}
\label{subsec:reduced-khovanov}

Here, we briefly explain that the \textit{reduced Khovanov homology}, originally introduced in \cite{Khovanov:2003}, can also be recovered from a certain tautological functor. Consider the dotted category $\Cob^3_\bullet(\underline{2})$, where $\underline{2}$ denotes $\{(0, \pm 1)\} \subset \del D^2$. To distinguish from the unreduced setting, we write  
\begin{center}
    \tikzset{every picture/.style={line width=0.75pt}} %set default line width to 0.75pt        

\begin{tikzpicture}[x=0.75pt,y=0.75pt,yscale=-1,xscale=1]
%uncomment if require: \path (0,47); %set diagram left start at 0, and has height of 47

%Shape: Arc [id:dp6319807885803715] 
\draw  [draw opacity=0] (96.94,23.75) .. controls (96.29,26) and (89.45,27.76) .. (81.11,27.76) .. controls (72.35,27.76) and (65.24,25.81) .. (65.24,23.41) .. controls (65.24,23.26) and (65.27,23.11) .. (65.32,22.97) -- (81.11,23.41) -- cycle ; \draw  [color={rgb, 255:red, 128; green, 128; blue, 128 }  ,draw opacity=1 ] (96.94,23.75) .. controls (96.29,26) and (89.45,27.76) .. (81.11,27.76) .. controls (72.35,27.76) and (65.24,25.81) .. (65.24,23.41) .. controls (65.24,23.26) and (65.27,23.11) .. (65.32,22.97) ;  
%Shape: Arc [id:dp056615061649111675] 
\draw  [draw opacity=0][dash pattern={on 0.84pt off 2.51pt}] (65.63,22.33) .. controls (66.9,20.27) and (73.43,18.7) .. (81.28,18.7) .. controls (89.93,18.7) and (96.96,20.6) .. (97.15,22.96) -- (81.28,23.06) -- cycle ; \draw  [color={rgb, 255:red, 128; green, 128; blue, 128 }  ,draw opacity=1 ][dash pattern={on 0.84pt off 2.51pt}] (65.63,22.33) .. controls (66.9,20.27) and (73.43,18.7) .. (81.28,18.7) .. controls (89.93,18.7) and (96.96,20.6) .. (97.15,22.96) ;  
%Shape: Circle [id:dp39111359424094494] 
\draw   (65.12,23.41) .. controls (65.12,14.57) and (72.28,7.41) .. (81.11,7.41) .. controls (89.95,7.41) and (97.11,14.57) .. (97.11,23.41) .. controls (97.11,32.24) and (89.95,39.4) .. (81.11,39.4) .. controls (72.28,39.4) and (65.12,32.24) .. (65.12,23.41) -- cycle ;
%Shape: Ellipse [id:dp9734470792692242] 
\draw  [fill={rgb, 255:red, 0; green, 0; blue, 0 }  ,fill opacity=1 ] (74.63,14.68) .. controls (74.63,13.55) and (75.55,12.63) .. (76.68,12.63) .. controls (77.82,12.63) and (78.73,13.55) .. (78.73,14.68) .. controls (78.73,15.81) and (77.82,16.73) .. (76.68,16.73) .. controls (75.55,16.73) and (74.63,15.81) .. (74.63,14.68) -- cycle ;
%Shape: Ellipse [id:dp1260497035726349] 
\draw  [fill={rgb, 255:red, 0; green, 0; blue, 0 }  ,fill opacity=1 ] (84.23,14.68) .. controls (84.23,13.55) and (85.15,12.63) .. (86.28,12.63) .. controls (87.42,12.63) and (88.33,13.55) .. (88.33,14.68) .. controls (88.33,15.81) and (87.42,16.73) .. (86.28,16.73) .. controls (85.15,16.73) and (84.23,15.81) .. (84.23,14.68) -- cycle ;

% Text Node
\draw (14,14.61) node [anchor=north west][inner sep=0.75pt]    {$H\ =\ $};

\end{tikzpicture}
\end{center}
instead of $h$. Every object $X$ of $\Cob^3_\bullet(\underline{2})$ contains a unique arc $e$, called the \textit{special arc} of $X$, such that $\del e = \underline{2}$. Furthermore, every cobordism $S$ between objects $X, Y$ contains a unique component $S$, called the \textit{special component} of $S$, that contains the both of the special arcs of $X$ and $Y$ in its boundary. In addition to the three local relations, we impose two more relations (R1) and (R2), given by 
\begin{center}
    \tikzset{every picture/.style={line width=0.75pt}} %set default line width to 0.75pt        

\begin{tikzpicture}[x=0.75pt,y=0.75pt,yscale=-.75,xscale=.75]
%uncomment if require: \path (0,82); %set diagram left start at 0, and has height of 82

%Shape: Rectangle [id:dp8333514108188844] 
\draw  [dash pattern={on 4.5pt off 4.5pt}] (55.93,12.56) -- (101.56,12.56) -- (101.56,56.49) -- (55.93,56.49) -- cycle ;
%Shape: Rectangle [id:dp19497325531006915] 
\draw  [dash pattern={on 4.5pt off 4.5pt}] (165.41,13.06) -- (212.08,13.06) -- (212.08,57.99) -- (165.41,57.99) -- cycle ;
%Shape: Circle [id:dp6574774305818769] 
\draw  [fill={rgb, 255:red, 0; green, 0; blue, 0 }  ,fill opacity=1 ] (75.54,34.52) .. controls (75.54,32.75) and (76.97,31.32) .. (78.74,31.32) .. controls (80.51,31.32) and (81.95,32.75) .. (81.95,34.52) .. controls (81.95,36.29) and (80.51,37.73) .. (78.74,37.73) .. controls (76.97,37.73) and (75.54,36.29) .. (75.54,34.52) -- cycle ;
%Shape: Arc [id:dp06666842893981129] 
\draw  [draw opacity=0] (398.56,20.31) .. controls (398,22.25) and (392.11,23.77) .. (384.92,23.77) .. controls (377.36,23.77) and (371.23,22.09) .. (371.23,20.01) .. controls (371.23,19.88) and (371.26,19.76) .. (371.3,19.64) -- (384.92,20.01) -- cycle ; \draw  [color={rgb, 255:red, 128; green, 128; blue, 128 }  ,draw opacity=1 ] (398.56,20.31) .. controls (398,22.25) and (392.11,23.77) .. (384.92,23.77) .. controls (377.36,23.77) and (371.23,22.09) .. (371.23,20.01) .. controls (371.23,19.88) and (371.26,19.76) .. (371.3,19.64) ;  
%Shape: Arc [id:dp6303245775179046] 
\draw  [draw opacity=0][dash pattern={on 0.84pt off 2.51pt}] (371.57,19.08) .. controls (372.67,17.31) and (378.29,15.95) .. (385.06,15.95) .. controls (392.51,15.95) and (398.57,17.59) .. (398.75,19.62) -- (385.06,19.71) -- cycle ; \draw  [color={rgb, 255:red, 128; green, 128; blue, 128 }  ,draw opacity=1 ][dash pattern={on 0.84pt off 2.51pt}] (371.57,19.08) .. controls (372.67,17.31) and (378.29,15.95) .. (385.06,15.95) .. controls (392.51,15.95) and (398.57,17.59) .. (398.75,19.62) ;  
%Shape: Ellipse [id:dp647174213061882] 
\draw   (371.13,20.01) .. controls (371.13,12.4) and (377.3,6.22) .. (384.92,6.22) .. controls (392.53,6.22) and (398.71,12.4) .. (398.71,20.01) .. controls (398.71,27.63) and (392.53,33.8) .. (384.92,33.8) .. controls (377.3,33.8) and (371.13,27.63) .. (371.13,20.01) -- cycle ;
%Shape: Ellipse [id:dp05342748249245888] 
\draw  [fill={rgb, 255:red, 0; green, 0; blue, 0 }  ,fill opacity=1 ] (379.33,12.49) .. controls (379.33,11.51) and (380.12,10.72) .. (381.1,10.72) .. controls (382.07,10.72) and (382.87,11.51) .. (382.87,12.49) .. controls (382.87,13.47) and (382.07,14.26) .. (381.1,14.26) .. controls (380.12,14.26) and (379.33,13.47) .. (379.33,12.49) -- cycle ;
%Shape: Ellipse [id:dp4584754734512062] 
\draw  [fill={rgb, 255:red, 0; green, 0; blue, 0 }  ,fill opacity=1 ] (387.61,12.49) .. controls (387.61,11.51) and (388.4,10.72) .. (389.37,10.72) .. controls (390.35,10.72) and (391.14,11.51) .. (391.14,12.49) .. controls (391.14,13.47) and (390.35,14.26) .. (389.37,14.26) .. controls (388.4,14.26) and (387.61,13.47) .. (387.61,12.49) -- cycle ;

%Shape: Arc [id:dp022650617392291683] 
\draw  [draw opacity=0] (398.56,57.11) .. controls (398,59.05) and (392.11,60.57) .. (384.92,60.57) .. controls (377.36,60.57) and (371.23,58.89) .. (371.23,56.81) .. controls (371.23,56.68) and (371.26,56.56) .. (371.3,56.44) -- (384.92,56.81) -- cycle ; \draw  [color={rgb, 255:red, 128; green, 128; blue, 128 }  ,draw opacity=1 ] (398.56,57.11) .. controls (398,59.05) and (392.11,60.57) .. (384.92,60.57) .. controls (377.36,60.57) and (371.23,58.89) .. (371.23,56.81) .. controls (371.23,56.68) and (371.26,56.56) .. (371.3,56.44) ;  
%Shape: Arc [id:dp0533700800002167] 
\draw  [draw opacity=0][dash pattern={on 0.84pt off 2.51pt}] (371.57,55.88) .. controls (372.67,54.11) and (378.29,52.75) .. (385.06,52.75) .. controls (392.51,52.75) and (398.57,54.39) .. (398.75,56.42) -- (385.06,56.51) -- cycle ; \draw  [color={rgb, 255:red, 128; green, 128; blue, 128 }  ,draw opacity=1 ][dash pattern={on 0.84pt off 2.51pt}] (371.57,55.88) .. controls (372.67,54.11) and (378.29,52.75) .. (385.06,52.75) .. controls (392.51,52.75) and (398.57,54.39) .. (398.75,56.42) ;  
%Shape: Ellipse [id:dp12728445276652467] 
\draw   (371.13,56.81) .. controls (371.13,49.2) and (377.3,43.02) .. (384.92,43.02) .. controls (392.53,43.02) and (398.71,49.2) .. (398.71,56.81) .. controls (398.71,64.43) and (392.53,70.6) .. (384.92,70.6) .. controls (377.3,70.6) and (371.13,64.43) .. (371.13,56.81) -- cycle ;
%Shape: Ellipse [id:dp7627968194286492] 
\draw  [fill={rgb, 255:red, 0; green, 0; blue, 0 }  ,fill opacity=1 ] (379.33,49.29) .. controls (379.33,48.31) and (380.12,47.52) .. (381.1,47.52) .. controls (382.07,47.52) and (382.87,48.31) .. (382.87,49.29) .. controls (382.87,50.27) and (382.07,51.06) .. (381.1,51.06) .. controls (380.12,51.06) and (379.33,50.27) .. (379.33,49.29) -- cycle ;
%Shape: Ellipse [id:dp2700126717227459] 
\draw  [fill={rgb, 255:red, 0; green, 0; blue, 0 }  ,fill opacity=1 ] (387.61,49.29) .. controls (387.61,48.31) and (388.4,47.52) .. (389.37,47.52) .. controls (390.35,47.52) and (391.14,48.31) .. (391.14,49.29) .. controls (391.14,50.27) and (390.35,51.06) .. (389.37,51.06) .. controls (388.4,51.06) and (387.61,50.27) .. (387.61,49.29) -- cycle ;

%Shape: Arc [id:dp00682089804826469] 
\draw  [draw opacity=0] (334.94,37.76) .. controls (334.29,40) and (327.45,41.77) .. (319.11,41.77) .. controls (310.35,41.77) and (303.24,39.82) .. (303.24,37.41) .. controls (303.24,37.26) and (303.27,37.12) .. (303.32,36.98) -- (319.11,37.41) -- cycle ; \draw  [color={rgb, 255:red, 128; green, 128; blue, 128 }  ,draw opacity=1 ] (334.94,37.76) .. controls (334.29,40) and (327.45,41.77) .. (319.11,41.77) .. controls (310.35,41.77) and (303.24,39.82) .. (303.24,37.41) .. controls (303.24,37.26) and (303.27,37.12) .. (303.32,36.98) ;  
%Shape: Arc [id:dp054052321872980036] 
\draw  [draw opacity=0][dash pattern={on 0.84pt off 2.51pt}] (303.63,36.33) .. controls (304.9,34.27) and (311.43,32.71) .. (319.28,32.71) .. controls (327.93,32.71) and (334.96,34.6) .. (335.15,36.96) -- (319.28,37.06) -- cycle ; \draw  [color={rgb, 255:red, 128; green, 128; blue, 128 }  ,draw opacity=1 ][dash pattern={on 0.84pt off 2.51pt}] (303.63,36.33) .. controls (304.9,34.27) and (311.43,32.71) .. (319.28,32.71) .. controls (327.93,32.71) and (334.96,34.6) .. (335.15,36.96) ;  
%Shape: Circle [id:dp730969299255903] 
\draw   (303.12,37.41) .. controls (303.12,28.58) and (310.28,21.42) .. (319.11,21.42) .. controls (327.95,21.42) and (335.11,28.58) .. (335.11,37.41) .. controls (335.11,46.25) and (327.95,53.41) .. (319.11,53.41) .. controls (310.28,53.41) and (303.12,46.25) .. (303.12,37.41) -- cycle ;
%Shape: Ellipse [id:dp9507107202889884] 
\draw  [fill={rgb, 255:red, 0; green, 0; blue, 0 }  ,fill opacity=1 ] (309.63,28.49) .. controls (309.63,27.35) and (310.55,26.44) .. (311.68,26.44) .. controls (312.82,26.44) and (313.73,27.35) .. (313.73,28.49) .. controls (313.73,29.62) and (312.82,30.54) .. (311.68,30.54) .. controls (310.55,30.54) and (309.63,29.62) .. (309.63,28.49) -- cycle ;
%Shape: Ellipse [id:dp17038327512066886] 
\draw  [fill={rgb, 255:red, 0; green, 0; blue, 0 }  ,fill opacity=1 ] (317.23,28.49) .. controls (317.23,27.35) and (318.15,26.44) .. (319.28,26.44) .. controls (320.42,26.44) and (321.33,27.35) .. (321.33,28.49) .. controls (321.33,29.62) and (320.42,30.54) .. (319.28,30.54) .. controls (318.15,30.54) and (317.23,29.62) .. (317.23,28.49) -- cycle ;
%Shape: Ellipse [id:dp2810743072438544] 
\draw  [fill={rgb, 255:red, 0; green, 0; blue, 0 }  ,fill opacity=1 ] (324.83,28.49) .. controls (324.83,27.35) and (325.75,26.44) .. (326.88,26.44) .. controls (328.02,26.44) and (328.93,27.35) .. (328.93,28.49) .. controls (328.93,29.62) and (328.02,30.54) .. (326.88,30.54) .. controls (325.75,30.54) and (324.83,29.62) .. (324.83,28.49) -- cycle ;

% Text Node
\draw (113,26.32) node [anchor=north west][inner sep=0.75pt]    {$=$};
% Text Node
\draw (61,16) node [anchor=north west][inner sep=0.75pt]    {$*$};
% Text Node
\draw (169.41,16) node [anchor=north west][inner sep=0.75pt]    {$*$};
% Text Node
\draw (342.5,27.9) node [anchor=north west][inner sep=0.75pt]    {$=$};
% Text Node
\draw (11,26) node [anchor=north west][inner sep=0.75pt]   [align=left] {(R1)};
% Text Node
\draw (139,28.24) node [anchor=north west][inner sep=0.75pt]    {$H$};
% Text Node
\draw (252,24) node [anchor=north west][inner sep=0.75pt]   [align=left] {(R2)};

\end{tikzpicture}
\end{center}
In the picture of (R1), the sheet with the asterisk $*$ denotes the special component. (R2) is equivalent to setting $t = 0$. For now, let $\widetilde{\Cob}{}^3_{\bullet/l}(\underline{2})$ denote the corresponding quotient category.

\begin{remark}
    As explained in \cite[Section 5]{Naot:2006}, in the reduced setting, the dot $\bullet$ can be given a geometric interpretation as follows. Given a dotted cobordism $S$, replace each dot on $S$ with one end of a tube whose other end is connected to the special component of $S$. With this interpretation, $H$ turns into a handle attached to the special component:
    \begin{center}
        \tikzset{every picture/.style={line width=0.75pt}} %set default line width to 0.75pt        

\begin{tikzpicture}[x=0.75pt,y=0.75pt,yscale=-.8,xscale=.8]
%uncomment if require: \path (0,72); %set diagram left start at 0, and has height of 72

%Shape: Rectangle [id:dp6600111918616832] 
\draw  [dash pattern={on 4.5pt off 4.5pt}] (65.41,14.06) -- (112.08,14.06) -- (112.08,58.99) -- (65.41,58.99) -- cycle ;
%Shape: Arc [id:dp5168701500807074] 
\draw  [draw opacity=0][fill={rgb, 255:red, 255; green, 255; blue, 255 }  ,fill opacity=1 ] (88.03,49.02) .. controls (89.73,49.13) and (91.47,49.19) .. (93.24,49.19) .. controls (111.89,49.19) and (127,42.69) .. (127,34.68) .. controls (127,26.67) and (111.89,20.18) .. (93.24,20.18) .. controls (91.47,20.18) and (89.73,20.24) .. (88.03,20.35) -- (93.24,34.68) -- cycle ; \draw   (88.03,49.02) .. controls (89.73,49.13) and (91.47,49.19) .. (93.24,49.19) .. controls (111.89,49.19) and (127,42.69) .. (127,34.68) .. controls (127,26.67) and (111.89,20.18) .. (93.24,20.18) .. controls (91.47,20.18) and (89.73,20.24) .. (88.03,20.35) ;  
%Shape: Arc [id:dp07972852627583737] 
\draw  [draw opacity=0] (86.26,43.47) .. controls (87.31,43.5) and (88.37,43.52) .. (89.45,43.52) .. controls (105.78,43.52) and (119.01,39.56) .. (119.01,34.68) .. controls (119.01,29.8) and (105.78,25.85) .. (89.45,25.85) .. controls (88.37,25.85) and (87.31,25.86) .. (86.26,25.9) -- (89.45,34.68) -- cycle ; \draw   (86.26,43.47) .. controls (87.31,43.5) and (88.37,43.52) .. (89.45,43.52) .. controls (105.78,43.52) and (119.01,39.56) .. (119.01,34.68) .. controls (119.01,29.8) and (105.78,25.85) .. (89.45,25.85) .. controls (88.37,25.85) and (87.31,25.86) .. (86.26,25.9) ;  

% Text Node
\draw (67.41,17.46) node [anchor=north west][inner sep=0.75pt]    {$*$};
% Text Node
\draw (14,27.24) node [anchor=north west][inner sep=0.75pt]    {$H = $};

\end{tikzpicture}
    \end{center}
    Furthermore, the relation (S$_\bullet$) becomes trivial, (NC) becomes a special case of (4Tu), and the two additional relations (R1), (R2) also become trivial.     
\end{remark}

Let $e$ denote the object consisting only of the special arc. The hom-set
\[
    \Hom_{\bullet/l}(e, e)
\]
admits a commutative ring structure, where the multiplication is given by horizontally concatenating the special components while taking the disjoint union of the remaining closed components, and the unit is given by the product $e \times I$. Analogous to \Cref{prop:Zht-ring-isom}, there is a graded ring isomorphism
\[
    \Hom_{\bullet/l}(e, e) \isom R_H
\]
where $R_H$ is the graded polynomial ring $R_H = \ZZ[H]$ with $\deg H = -2$. Furthermore, analogous to \Cref{prop:cob-tautological}, the tautological functor
\[
    \Hom_{\bullet/l}(e, -)\colon \widetilde{\Cob}{}^3_{\bullet/l}(\underline{2}) \rightarrow {R_H}\Mod
\]
coincides with the non-closed $(1+1)$-TQFT $\mathcal{F}'_H$ obtained from the Frobenius algebra
\[
    A_H = R[X]/(X(X - H)),
\]
with additional correspondences mapping the special arc $e$ to the the rank $1$ $A_H$-submodule
\[
    R'_H = R_H \langle X \rangle \{2\}
\]
and the merge and the split cobordisms involving the special arc to 
\begin{align*}
    m': R'_H \otimes A_H \to R'_H; \quad
    X \otimes 1 \mapsto X,\quad 
    X \otimes X \mapsto X \otimes H
\end{align*}
and
\begin{align*}
    \Delta': R'_H \to R'_H \otimes A_H; \quad
    X \mapsto X \otimes X.
\end{align*}
Next, let $\widetilde{\Kob}(\underline{2})$ denote the category
\[
     \Kom(\Mat(\widetilde{\Cob}{}^3_{\bullet/l}(\underline{2}))).
\]
Analogous to \Cref{prop:kom-tautological}, the above tautological functor induces a functor 
\[
    \Hom_{\bullet/l}([e], -)\colon \widetilde{\Kob}(\underline{2}) \longrightarrow \Kom({R_H}\Mod)
\]
that coincides with the functor induced from $\mathcal{F}'_H$. 

Given a pointed link diagram $L$, let $L^\times$ denote the  $2$-end tangle diagram obtained by cutting the strand at the base point of $L$. Then the above tautological functor applied to the complex $[L^\times]$ gives the \textit{reduced $U(1)$-equivariant Khovanov complex}, also called the \textit{reduced Bar-Natan complex} of $L$. Setting $H = 0$ recovers the original reduced Khovanov homology. Combining these observations with \Cref{thm:formal-ckh-functoriality}, we obtain,

\begin{proposition}
    The reduced $U(1)$-Khovanov homology functor $\widetilde{\Kh}_H$ decomposes into a sequence of functors
    \[
        \begin{tikzcd}
        \Diag(\underline{2}) \arrow[r, "{[\cdot]}"] & \widetilde{\Kob}(\underline{2}) \arrow[r, "\mathcal{F}'_H"] & \Kom({R_H}\Mod) \arrow[r, "H^*"] & {R_H}\Mod
        \end{tikzcd}
    \]
    and descends to
    \[
    \begin{tikzcd}
        \Diag_{/i}(\underline{2}) \arrow[r, "{[\cdot]}"] & \widetilde{\Kob}_{/\pm h}(\underline{2}) \arrow[r, "\mathcal{F}'_H"] & \Kom_{/\pm h}({R_H}\Mod) \arrow[r, "H^*"] & {R_H}\Mod/\pm.
    \end{tikzcd}    
    \]
\end{proposition}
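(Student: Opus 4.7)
My plan is to mirror the derivation of the unreduced statement, Proposition 2.10, now carried out for the category with boundary $\underline{2}$ and the reduced TQFT $\mathcal{F}'_H$. First I would unpack the definition of $\widetilde{\Kh}_H$: for a pointed diagram $L$ with base point on a strand, the reduced $U(1)$-equivariant Khovanov complex $\widetilde{\CKh}_H(L)$ is built from the usual cube of resolutions of $L$, with the TQFT $\mathcal{F}'_H$ applied so that the marked strand carries the distinguished submodule $R'_H = R_H\langle X\rangle\{2\}$ and the remaining circles carry $A_H$. The whole point of passing to the $2$-tangle $L^\times$ is that the special arc $e$ now plays the role of the empty set in the unreduced story.

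Next I would identify $\mathcal{F}'_H\circ[\cdot]$ with the tautological functor $\Hom_{\bullet/l}([e],-)$ at the level of chain complexes. On objects this is the assertion that each vertex of the cube, consisting of the special arc together with some collection of disjoint circles, is sent to $R'_H\otimes A_H^{\otimes k}$; and this follows by applying the delooping isomorphism of Proposition 2.6 to each non-special circle, after observing that the reduced analogue $\Hom_{\bullet/l}(e,e)\isom R_H$ identifies the endomorphism ring of the special arc with $R_H$. On morphisms, the differentials of the cube are sums of saddle cobordisms; for a saddle not touching the special component one recovers the $m,\Delta$ of Proposition 2.8 verbatim, while for a saddle incident to the special component one computes the entries of the resulting matrix under delooping exactly as in the proof of Proposition 2.8, producing the maps $m'(X\otimes 1)=X$, $m'(X\otimes X)=X\otimes H$ and $\Delta'(X)=X\otimes X$ that define $\mathcal{F}'_H$ on the special arc. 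The relation (R2) sets $t=0$ and makes the output match the $U(1)$-algebra $A_H$ rather than $A_{h,t}$.

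For the second diagram, once the first diagram is established, invariance and functoriality are free consequences. Theorem 2.11 applied to $B=\underline{2}$ gives $[\cdot]\colon \Diag_{/i}(\underline{2})\to \widetilde{\Kob}_{/\pm h}(\underline{2})$ in the reduced dotted category, and both $\mathcal{F}'_H$ and the homology functor $H^*$ are additive, so they respect chain homotopy and the $\pm$-indeterminacy, inducing the functors on the quotients. Composing yields the claimed descent of $\widetilde{\Kh}_H$.

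The main obstacle, and really the only content beyond formal bookkeeping, is the matrix computation for the elementary saddle cobordisms incident to the special arc; in particular verifying that the $X\otimes X$ entry of $m'$ picks up a factor of $H$ rather than $h$. This is a direct delooping calculation of the same shape as the one carried out for the unreduced merge in the proof of Proposition 2.8, but it uses the relation (R1) to convert the doubly dotted special sphere into $H$; once this and the two other elementary cases are checked, everything else fits together mechanically.
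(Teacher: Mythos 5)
Your proposal is correct and follows essentially the same route as the paper, which obtains this proposition by combining the reduced analogues of Propositions 2.7–2.10 (identifying $\Hom_{\bullet/l}(e,-)$ with $\mathcal{F}'_H$ via delooping, with the relations (R1), (R2) producing $m'$, $\Delta'$ and $t=0$) with the functoriality Theorem 2.11 for the descent to the quotient categories. The only quibble is cosmetic: the factor $H$ in $m'(X\otimes X)$ comes from (R1) applied to a dot on the (non-closed) special component, not from a doubly dotted sphere.
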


The unreduced and the reduced theories can be related as follows. Consider the functor
\[
    I\colon \Cob^3(\underline{0}) \to \Cob^3(\underline{2})
\]
that inserts the special arc $e$ to the objects and the special component $e \times I$ to the morphisms (such functor is obtained from a specific \textit{planar arc diagram}, as explained in \Cref{subsec:planar-algebra}). 

\begin{proposition}
    The functor $I$ induces a full and faithful functor 
    \[
        I: \Cob^3_{\bullet/l, {t = 0}}(\underline{0}) \longrightarrow \widetilde{\Cob}{}^3_{\bullet/l}(\underline{2}),
    \]
    and the following diagram commutes
    \[
    \begin{tikzcd}[row sep=3em, column sep=4em]
\Cob^3_{\bullet/l, t = 0}(\underline{0}) \arrow[r, "I"] \arrow[d, "\mathcal{F}_{h, 0}"'] & \widetilde{\Cob}{}^3_{\bullet/l}(\underline{2}) \arrow[d, "\mathcal{F}'_H"] \\
{{R_h}\Mod} \arrow[r, "h = H"'] & {{R_H}\Mod}.
    \end{tikzcd}
    \]
\end{proposition}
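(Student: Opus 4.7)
The plan is to verify the three assertions in sequence. First I would check that $I$ is well-defined on the quotient categories. By construction $I(X) = X \sqcup e$ and $I(S) = S \sqcup (e \times I)$, so the closed portion of a cobordism is unaltered by $I$, and the local relations (S), (S$_\bullet$), (NC) on the source pass directly to the corresponding relations on the target. The source imposition $t = 0$ matches the target relation (R2), which the text asserts is equivalent to $t = 0$. The remaining target relation (R1) never acts on an image $I(S)$, since the special component is the dotless disk $e \times I$ and no local relation can transport dots onto a previously dotless component.

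Next I would verify commutativity of the square by checking agreement on a generating set of elementary cobordisms (cups, caps, merges, splits, and dots). Under $h = H$, the Frobenius algebras $A_{h,0} = R_h[X]/(X^2 - hX)$ and $A_H = R_H[X]/(X^2 - HX)$ coincide, and the special-arc module $R'_H = R_H\langle X\rangle\{2\}$ is canonically isomorphic to $R_H$ as a graded $R_H$-module (the generator $X$ has total degree zero after the shift). Because $e \times I$ acts as the identity on $R'_H$ under $\mathcal{F}'_H$, this tensor factor is inert for every image morphism, and the square commutes on each generator by direct computation, extending to all cobordisms by functoriality.

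For the full-and-faithful property, the cleanest route is via delooping. By \Cref{prop:delooping}, every object in $\Mat(\Cob^3_{\bullet/l, t=0}(\underline{0}))$ is isomorphic to a direct sum of shifted copies of $\varnothing$; the same local cobordisms (dots, cups, and caps on the circle being removed) establish the analogous decomposition in $\Mat(\widetilde{\Cob}{}^3_{\bullet/l}(\underline{2}))$ for any non-special circle, so every object there is isomorphic to a direct sum of shifted copies of $e$. Since $I(\varnothing) = e$, additivity reduces the claim to showing that $I$ induces a graded ring isomorphism
\[
    \Hom(\varnothing, \varnothing) \xrightarrow{\ I\ } \Hom(e, e),
\]
where the source is computed in $\Cob^3_{\bullet/l, t=0}(\underline{0})$ and the target in $\widetilde{\Cob}{}^3_{\bullet/l}(\underline{2})$. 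By \Cref{prop:Zht-ring-isom} with $t = 0$ the left side is $R_h = \ZZ[h]$, and the analogous argument in the reduced category---using (R1) to absorb dots on the special component into powers of $H$ and combining (NC) with \Cref{lem:eval-dot-surface} to evaluate closed components---identifies the right side with $R_H = \ZZ[H]$. Since $I$ sends the generating doubly-dotted sphere $h$ to the same closed surface (now disjoint from $e \times I$ and evaluating to $H$ on the target), it realizes precisely the defining ring isomorphism $h \leftrightarrow H$.

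The main obstacle is the third step, specifically confirming that delooping remains valid in $\Mat(\widetilde{\Cob}{}^3_{\bullet/l}(\underline{2}))$ and that the computation of $\Hom(e, e)$ reduces cleanly to $R_H$. The subtlety is that (R1) becomes applicable after a cobordism has been manipulated so that dots are passed onto the special component, and one must verify that this process only exchanges such dots for powers of $H$ without introducing unexpected relations among the non-special configurations; this amounts to a careful adaptation of the proof of \Cref{lem:eval-dot-surface} to the reduced setting.
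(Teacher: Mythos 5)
The paper states this proposition without proof, so there is nothing internal to compare against; judged on its own, your argument is essentially correct and is the natural one. The reduction of full-faithfulness to the single hom-set computation is valid: since $I$ is additive and preserves the quantum grading, and every object of $\Mat(\Cob^3_{\bullet/l,t=0}(\underline{0}))$ deloops into shifted copies of $\varnothing$ (\Cref{prop:delooping} survives the quotient $t=0$), transporting the delooping isomorphisms through $I$ reduces everything to bijectivity of $\Hom(\varnothing,\varnothing)\to\Hom(e,e)$ — you do not actually need the independent statement that every object of the reduced category deloops, though it is true and harmless. The "main obstacle" you flag is precisely the graded ring isomorphism $\Hom_{\bullet/l}(e,e)\isom R_H$, which the paper asserts just before the proposition; if you want it self-contained, the injectivity half is obtained exactly as in \Cref{prop:Zht-ring-isom}, by using $\mathcal{F}'_H$ as a retraction (check it kills (R1) and (R2)), while surjectivity follows from (NC), (R1) and \Cref{lem:eval-dot-surface} as you describe, so the gap you worry about closes by the same mechanism already used in the unreduced case. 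Two small points: your remark that "(R1) never acts on an image $I(S)$" is not needed for well-definedness — descending to a quotient target only requires that the source relations hold there, and additional target relations can never obstruct this; the genuine role of (R1)/(R2) is the threat to faithfulness, which is exactly what the computation $\Hom(e,e)\isom\ZZ[H]$, together with $I(h)=H$, rules out. And in the commutativity check, it is worth saying explicitly that the square commutes up to the canonical identification $R'_H\isom R_H$ (generator $X$ in degree $0$), since the maps $m'$, $\Delta'$ involving the special arc are irrelevant there — only cobordisms with special component $e\times I$ lie in the image of $I$, as you note.
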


\begin{remark}
    As explained in \cite{Naot:2006} and in more detail in \cite{ILM:2021}, we may also recover the unreduced complex $\CKh_{h, t}$ from the reduced complex $\widetilde{\CKh}_H$ as follows. Let $H$ act on the Frobenius algebra $A_{h, t}$ as 
    \[
        H = 2X - h
    \]
    and regard $A_{h, t}$ as an $R_H$-module. Then for a $2$-end tangle diagram $T$, there is an isomorphism
    \[
        \widetilde{\CKh}_H(T) \otimes_{\ZZ[H]} A_{h, t} \isom \CKh_{h, t}(\bar{T})
    \]
    where $\bar{T}$ is the closure of $T$. 
\end{remark}

\subsection{Planar algebra structure}
\label{subsec:planar-algebra}

The advantage of generalizing Khovanov homology to tangles is that it enables \textit{local treatments} of link diagrams, which is justified by the fact that the above defined categories form \textit{planar algebras}, and the formal Khovanov bracket $[\cdot]$ gives a \textit{planar algebra morphism}. Here we review the basic concepts. See \cite[Section 5]{BarNatan:2004} or \cite[Appendix A.4, A.5]{CMW:2009} for details. 
\begin{definition}    
    A \textit{$d$-input planar arc diagram} $D$ is a disk  with $d$ smaller disks $D_1, ..., D_d$ removed, together with a collection of disjoint properly embedded oriented arcs and circles that has boundary in $\del D$. The outer disk is called the \textit{output disk}, and the inner disks are called the \textit{input disks}.
\end{definition}

\begin{figure}[t]
    \centering
    \tikzset{every picture/.style={line width=0.75pt}} %set default line width to 0.75pt        

\begin{tikzpicture}[x=0.75pt,y=0.75pt,yscale=-.9,xscale=.9]
%uncomment if require: \path (0,151); %set diagram left start at 0, and has height of 151

%Shape: Circle [id:dp2777223627013291] 
\draw  [fill={rgb, 255:red, 241; green, 241; blue, 241 }  ,fill opacity=1 ][dash pattern={on 4.5pt off 4.5pt}] (13.38,70.46) .. controls (13.38,37.4) and (40.18,10.6) .. (73.24,10.6) .. controls (106.31,10.6) and (133.11,37.4) .. (133.11,70.46) .. controls (133.11,103.52) and (106.31,130.33) .. (73.24,130.33) .. controls (40.18,130.33) and (13.38,103.52) .. (13.38,70.46) -- cycle ;
%Shape: Circle [id:dp6642696064348876] 
\draw  [fill={rgb, 255:red, 255; green, 255; blue, 255 }  ,fill opacity=1 ][dash pattern={on 4.5pt off 4.5pt}] (28.91,70.46) .. controls (28.91,60.82) and (36.72,53) .. (46.37,53) .. controls (56.01,53) and (63.83,60.82) .. (63.83,70.46) .. controls (63.83,80.11) and (56.01,87.92) .. (46.37,87.92) .. controls (36.72,87.92) and (28.91,80.11) .. (28.91,70.46) -- cycle ;
%Shape: Ellipse [id:dp4823478759253361] 
\draw  [fill={rgb, 255:red, 255; green, 255; blue, 255 }  ,fill opacity=1 ][dash pattern={on 4.5pt off 4.5pt}] (84.1,70.46) .. controls (84.1,62.7) and (90.39,56.41) .. (98.15,56.41) .. controls (105.91,56.41) and (112.2,62.7) .. (112.2,70.46) .. controls (112.2,78.22) and (105.91,84.51) .. (98.15,84.51) .. controls (90.39,84.51) and (84.1,78.22) .. (84.1,70.46) -- cycle ;
%Shape: Circle [id:dp28357815974161094] 
\draw  [fill={rgb, 255:red, 0; green, 0; blue, 0 }  ,fill opacity=1 ] (50.76,14.14) .. controls (50.76,12.87) and (51.78,11.85) .. (53.05,11.85) .. controls (54.32,11.85) and (55.34,12.87) .. (55.34,14.14) .. controls (55.34,15.41) and (54.32,16.43) .. (53.05,16.43) .. controls (51.78,16.43) and (50.76,15.41) .. (50.76,14.14) -- cycle ;
%Shape: Circle [id:dp06513125218762328] 
\draw  [fill={rgb, 255:red, 0; green, 0; blue, 0 }  ,fill opacity=1 ] (92.6,14.43) .. controls (92.6,13.16) and (93.63,12.13) .. (94.89,12.13) .. controls (96.16,12.13) and (97.18,13.16) .. (97.18,14.43) .. controls (97.18,15.69) and (96.16,16.72) .. (94.89,16.72) .. controls (93.63,16.72) and (92.6,15.69) .. (92.6,14.43) -- cycle ;
%Shape: Ellipse [id:dp7962160929963763] 
\draw  [fill={rgb, 255:red, 0; green, 0; blue, 0 }  ,fill opacity=1 ] (130.81,70.46) .. controls (130.81,69.2) and (131.84,68.17) .. (133.11,68.17) .. controls (134.37,68.17) and (135.4,69.2) .. (135.4,70.46) .. controls (135.4,71.73) and (134.37,72.76) .. (133.11,72.76) .. controls (131.84,72.76) and (130.81,71.73) .. (130.81,70.46) -- cycle ;
%Shape: Ellipse [id:dp26734275128823015] 
\draw  [fill={rgb, 255:red, 0; green, 0; blue, 0 }  ,fill opacity=1 ] (107.5,117.41) .. controls (107.5,116.14) and (108.53,115.12) .. (109.79,115.12) .. controls (111.06,115.12) and (112.09,116.14) .. (112.09,117.41) .. controls (112.09,118.68) and (111.06,119.7) .. (109.79,119.7) .. controls (108.53,119.7) and (107.5,118.68) .. (107.5,117.41) -- cycle ;
%Shape: Ellipse [id:dp041553542179075365] 
\draw  [fill={rgb, 255:red, 0; green, 0; blue, 0 }  ,fill opacity=1 ] (11.21,70.46) .. controls (11.21,69.2) and (12.24,68.17) .. (13.5,68.17) .. controls (14.77,68.17) and (15.8,69.2) .. (15.8,70.46) .. controls (15.8,71.73) and (14.77,72.76) .. (13.5,72.76) .. controls (12.24,72.76) and (11.21,71.73) .. (11.21,70.46) -- cycle ;
%Shape: Ellipse [id:dp31003496150667154] 
\draw  [fill={rgb, 255:red, 0; green, 0; blue, 0 }  ,fill opacity=1 ] (34.71,117.31) .. controls (34.71,116.04) and (35.74,115.02) .. (37,115.02) .. controls (38.27,115.02) and (39.3,116.04) .. (39.3,117.31) .. controls (39.3,118.57) and (38.27,119.6) .. (37,119.6) .. controls (35.74,119.6) and (34.71,118.57) .. (34.71,117.31) -- cycle ;
%Shape: Circle [id:dp6419972624056253] 
\draw  [fill={rgb, 255:red, 0; green, 0; blue, 0 }  ,fill opacity=1 ] (32.3,56.54) .. controls (32.3,55.28) and (33.33,54.25) .. (34.59,54.25) .. controls (35.86,54.25) and (36.89,55.28) .. (36.89,56.54) .. controls (36.89,57.81) and (35.86,58.84) .. (34.59,58.84) .. controls (33.33,58.84) and (32.3,57.81) .. (32.3,56.54) -- cycle ;
%Shape: Ellipse [id:dp48852865023244907] 
\draw  [fill={rgb, 255:red, 0; green, 0; blue, 0 }  ,fill opacity=1 ] (53.75,56.04) .. controls (53.75,54.78) and (54.78,53.75) .. (56.04,53.75) .. controls (57.31,53.75) and (58.34,54.78) .. (58.34,56.04) .. controls (58.34,57.31) and (57.31,58.34) .. (56.04,58.34) .. controls (54.78,58.34) and (53.75,57.31) .. (53.75,56.04) -- cycle ;
%Shape: Ellipse [id:dp4706102417257356] 
\draw  [fill={rgb, 255:red, 0; green, 0; blue, 0 }  ,fill opacity=1 ] (33.3,83.48) .. controls (33.3,82.21) and (34.32,81.19) .. (35.59,81.19) .. controls (36.86,81.19) and (37.88,82.21) .. (37.88,83.48) .. controls (37.88,84.75) and (36.86,85.77) .. (35.59,85.77) .. controls (34.32,85.77) and (33.3,84.75) .. (33.3,83.48) -- cycle ;
%Shape: Ellipse [id:dp4330890106797717] 
\draw  [fill={rgb, 255:red, 0; green, 0; blue, 0 }  ,fill opacity=1 ] (54.75,82.98) .. controls (54.75,81.72) and (55.78,80.69) .. (57.04,80.69) .. controls (58.31,80.69) and (59.33,81.72) .. (59.33,82.98) .. controls (59.33,84.25) and (58.31,85.27) .. (57.04,85.27) .. controls (55.78,85.27) and (54.75,84.25) .. (54.75,82.98) -- cycle ;
%Shape: Ellipse [id:dp8174842978112602] 
\draw  [fill={rgb, 255:red, 0; green, 0; blue, 0 }  ,fill opacity=1 ] (95.46,56.41) .. controls (95.46,55.15) and (96.48,54.12) .. (97.75,54.12) .. controls (99.02,54.12) and (100.04,55.15) .. (100.04,56.41) .. controls (100.04,57.68) and (99.02,58.71) .. (97.75,58.71) .. controls (96.48,58.71) and (95.46,57.68) .. (95.46,56.41) -- cycle ;
%Shape: Circle [id:dp333533488935827] 
\draw  [fill={rgb, 255:red, 0; green, 0; blue, 0 }  ,fill opacity=1 ] (95.46,84.51) .. controls (95.46,83.25) and (96.48,82.22) .. (97.75,82.22) .. controls (99.02,82.22) and (100.04,83.25) .. (100.04,84.51) .. controls (100.04,85.78) and (99.02,86.81) .. (97.75,86.81) .. controls (96.48,86.81) and (95.46,85.78) .. (95.46,84.51) -- cycle ;
%Curve Lines [id:da6986998455454837] 
\draw    (56.04,56.04) .. controls (69.31,43.28) and (90.57,48.43) .. (97.75,56.41) ;
%Curve Lines [id:da31798456433788824] 
\draw    (59.33,82.98) .. controls (70.81,90.17) and (86.28,92.25) .. (97.75,84.51) ;
%Curve Lines [id:da5014371436800616] 
\draw    (34.59,56.54) .. controls (36.39,34.79) and (50.36,34.5) .. (53.05,14.14) ;
%Curve Lines [id:da32226508052763947] 
\draw    (130.81,70.46) .. controls (126.46,71.09) and (92.26,25.32) .. (94.89,14.43) ;
%Curve Lines [id:da4193420366926577] 
\draw    (107.5,117.41) .. controls (102.24,103.64) and (48.86,101.64) .. (37,117.31) ;
%Curve Lines [id:da5241731791295293] 
\draw    (13.5,70.46) .. controls (21.92,68.72) and (25.91,90.67) .. (35.59,85.77) ;

% Text Node
\draw (98.15,70.46) node  [font=\scriptsize]  {$D_{2}$};
% Text Node
\draw (46.37,70.46) node  [font=\scriptsize]  {$D_{1}$};

\end{tikzpicture}
    \caption{A $2$-input planar arc diagram.}
    \label{fig:planar-diagram}
\end{figure}

\begin{definition}
\label{def:planar-alg}
    A \textit{planar algebra} $\mathcal{P}$ is a collection of sets $\{ \mathcal{P}(B) \}_B$ indexed by sets of boundary points $B \subset \del D^2$, together with operations $\{ \mathcal{O}(D) \}_D$ indexed by planar arc diagrams $D$, such that for each $d$-input arc diagram $D$ we have
    \[
        \mathcal{O}(D): \mathcal{P}(B_1) \times \cdots \times \mathcal{P}(B_d) \longrightarrow \mathcal{P}(B),
    \]
    and satisfies the \textit{identity rule} for each $B$,
    \[
        \mathcal{O}(I_B) = \id_{\mathcal{P}(B)}
    \]
    and the \textit{associativity rule} in a natural way. Hereafter, $\mathcal{O}(D)$ is simply denoted $D$.
\end{definition}

\begin{definition}
\label{def:planar-mor}
    A \textit{planar algebra morphism} $\Phi = \{ \Phi_B \}_B$ between planar algebras $\mathcal{P}$ and $\mathcal{Q}$ is a collection of maps
    \[
        \Phi_B: \mathcal{P}(B) \to \mathcal{Q}(B)
    \]
    such that for each $d$-input arc diagram $D$,
    \[
        \Phi_B \circ D = D \circ (\Phi_{B_1} \times \cdots \times \Phi_{B_d}).
    \]
\end{definition}

In \Cref{def:planar-alg}, if we allow $\{\mathcal{P}(B)\}_B$ to be a collection of categories and $\{ \mathcal{O}(D) \}_D$ to be a collection of functors that are compatible with the compositions, we obtain the concept of \textit{planar algebra of categories} (see \cite[Definition 3]{Webster:2007}). Similarly, a planar algebra morphism can be defined for planar algebra of categories by allowing $\{ \Phi_B \}_B$ to be a collection of functors. The following collections of categories admit planar algebra structures in the obvious way: 
\[
    \Cob^3_{(\bullet)}, \ 
    \Kob^3_{(\bullet)}, \ 
    \Diag.
\]
Furthermore, the formal Khovanov bracket
\[
    [\cdot]: \Diag(B) \to \Kob_{(\bullet)}(B)
\]
induces a planar algebra morphism between the two planar algebras of categories. The planar algebra structures are inherited to the respective quotient categories
\[
    \Cob^3_{(\bullet)/l}, \ 
    \Kob^3_{(\bullet)/h}, \ 
    \Diag_{/i},
\]
and the formal Khovanov bracket induces a planar algebra morphism
\[
    [\cdot]: \Diag_{/i} \to \Kob_{(\bullet)/\pm h}.
\]

\subsection{Gaussian elimination}
\label{subsec:gaussian}

As shown in \cite{BarNatan:2007}, \textit{Gaussian elimination}, one of the fundamental methods in linear algebra, can be applied to reduce a chain complex in any additive category. First, we recall the definition of a \textit{strong deformation retraction}.  
\begin{definition}
    In any additive category, a chain map $r\colon \Omega \rightarrow \Omega'$ between chain complexes is a \textit{strong deformation retraction}\footnote{
        Conditions (iii) - (v) are called \textit{side conditions}. Some authors only require conditions (i), (ii) for the definition of strong deformations retractions. If (i), (ii) are satisfied, then the homotopy $h$ can be modified so that it also satisfies the side conditions. 
    }, if there is a chain map $i\colon \Omega' \rightarrow \Omega$ (called the \textit{inclusion}) and a homotopy $h$ on $\Omega$, satisfying (i) $ri = 1$, (ii) $ir - 1 = dh + hd$, (iii) $hi = 0$, (iv) $rh = 0$ and (v) $h^2 = 0$. Such $\Omega'$ is called a \textit{strong deformation retract} of $\Omega$. 
    \[
        \begin{tikzcd}
        \Omega 
            \arrow[r, "r", shift left] 
            \arrow["h"', loop, distance=2em, in=305, out=235] & 
        \Omega' 
            \arrow[l, "i", shift left]
        \end{tikzcd}        
    \]
\end{definition}

\begin{proposition}[Gaussian elimination]
\label{prop:gauss-elim}
    Suppose the top row of the following diagram is a part of a chain complex $\Omega$ such that the morphism $X \xrightarrow{a} Y$ is invertible. Then there is a strong deformation retract $\Omega'$ of $\Omega$ described by the bottom row, with a strong deformation retraction indicated by the downward vertical arrows: 
    \[
\begin{tikzcd}
[row sep=6em, column sep=4.5em, ampersand replacement=\&]
U 
    \arrow[r, "{\begin{pmatrix} \alpha \\ \beta \end{pmatrix}}"] 
    \arrow[d, equal] 
\& \begin{pmatrix} X \\ Y \end{pmatrix} 
    \arrow[rr, "{\begin{pmatrix} a & b \\ c & d \end{pmatrix}}", shift left] 
    \arrow[d, "{\begin{pmatrix} 0 & 1 \end{pmatrix}}"', shift right] 
\& \& \begin{pmatrix} Z \\ W \end{pmatrix} 
    \arrow[r, "{\begin{pmatrix} \gamma & \delta \end{pmatrix}}"] 
    \arrow[d, "{\begin{pmatrix} -ca^{-1} & 1 \end{pmatrix}}"', shift right] 
    \arrow[ll, "{\begin{pmatrix} -a & 0 \\ 0 & 0 \end{pmatrix}}", dashed, shift left]
\& V 
    \arrow[d, equal] 
\\ U 
    \arrow[r, "\beta"] 
\& X' 
    \arrow[rr, "d - c a^{-1} b"] 
    \arrow[u, "{\begin{pmatrix} -a^{-1}b \\ 1 \end{pmatrix}}"', dashed, shift right]
\& \& Y' 
    \arrow[r, "\delta"] 
    \arrow[u, "{\begin{pmatrix} 0 \\ 1 \end{pmatrix}}"', dashed, shift right]
\& V.
\end{tikzcd}
    \]
    The associated inclusion and homotopy are indicated by the dashed arrows.
\end{proposition}

The morphism $d - c a^{-1} b$ is the \textit{Schur complement} of the matrix $\begin{pmatrix} a & b \\ c & d \end{pmatrix}$. One may regard the $c a^{-1} b$ term as produced by the reversal of the arrow $a$
\[
\begin{tikzcd}[row sep=3em, column sep=3.5em]
X \arrow[rd, "c"', pos=.8]  & Z \arrow[l, "a^{-1}"', dashed] \\
Y \arrow[ru, "b", pos=.8] & W.
\end{tikzcd}
\]
Also note the $-c a^{-1}$ entry in the right vertical arrow of the retraction, which is also caused by the reversal of $a$. One may recall from Morse theory, where a pair of critical points $p, q$ of a Morse function $f$ on a manifold $M$ that are connected by a single flow line from $p$ to $q$ can be canceled by modifying the gradient-like vector field so that the flow line is reversed from $q$ to $p$ (see \cite[Figure 5.2]{Milnor:1965}). 

In actual computations, when applying Gaussian elimination on an invertible arrow $a\colon X \to Z$, we collect \textit{all objects} $Y$ that has a non-trivial arrow $b$ into $Z$ and \textit{all objects} $W$ that has a non-trivial arrow $c$ out from $X$, and for every pair $(Y, W)$ subtract the morphism $c a^{-1} b$ from $d\colon Y \to W$. This process can be repeated as long as there is an invertible arrow left in the reduced complex. 

Given a tangle diagram $T \in \Diag(B)$, by repeatedly applying delooping and Gaussian elimination to the chain complex $[T]$, it can be reduced into a complex $E$ with differential $d_E$ such that:
\begin{enumerate}
    \item No object of $E$ contains a loop.
    \item No entry of $d_E$ is invertible. 
\end{enumerate}
We call such procedure, and also the corresponding homotopy equivalence, a \textit{reduction} of the complex $[T]$. In particular when $B = \underline{0}$, $[T]$ reduces to a complex $E$ such that:
\begin{enumerate}
    \item Every object of $E$ is a direct sum of empty diagrams.
    \item Every entry of $d_E$ is a non-invertible homogeneous polynomial in $\ZZ[h, t]$. 
\end{enumerate}

\cite[Section 6]{BarNatan:2007} demonstrates this procedure, where the Khovanov homology ($h = t = 0$) of the figure eight knot is efficiently computed by hand. Analogous procedure also works in the reduced setting, i.e.\ when $B = \underline{2}$. 
    \section{Reformulation of the Rasmussen invariant}
\label{sec:reform-s}

Hereafter, we impose an additional local relation to the category $\Cob_{\bullet/l}(B)$,
\begin{center}
    \tikzset{every picture/.style={line width=0.75pt}} %set default line width to 0.75pt        

\begin{tikzpicture}[x=0.75pt,y=0.75pt,yscale=-1,xscale=1]
%uncomment if require: \path (0,87); %set diagram left start at 0, and has height of 87

%Shape: Arc [id:dp8991209021386461] 
\draw  [draw opacity=0] (111.56,24.31) .. controls (111,26.25) and (105.11,27.77) .. (97.92,27.77) .. controls (90.36,27.77) and (84.23,26.09) .. (84.23,24.01) .. controls (84.23,23.88) and (84.26,23.76) .. (84.3,23.64) -- (97.92,24.01) -- cycle ; \draw  [color={rgb, 255:red, 128; green, 128; blue, 128 }  ,draw opacity=1 ] (111.56,24.31) .. controls (111,26.25) and (105.11,27.77) .. (97.92,27.77) .. controls (90.36,27.77) and (84.23,26.09) .. (84.23,24.01) .. controls (84.23,23.88) and (84.26,23.76) .. (84.3,23.64) ;  
%Shape: Arc [id:dp5130200525224077] 
\draw  [draw opacity=0][dash pattern={on 0.84pt off 2.51pt}] (84.57,23.08) .. controls (85.67,21.31) and (91.29,19.95) .. (98.06,19.95) .. controls (105.51,19.95) and (111.57,21.59) .. (111.75,23.62) -- (98.06,23.71) -- cycle ; \draw  [color={rgb, 255:red, 128; green, 128; blue, 128 }  ,draw opacity=1 ][dash pattern={on 0.84pt off 2.51pt}] (84.57,23.08) .. controls (85.67,21.31) and (91.29,19.95) .. (98.06,19.95) .. controls (105.51,19.95) and (111.57,21.59) .. (111.75,23.62) ;  
%Shape: Ellipse [id:dp09019278146354637] 
\draw   (84.13,24.01) .. controls (84.13,16.4) and (90.3,10.22) .. (97.92,10.22) .. controls (105.53,10.22) and (111.71,16.4) .. (111.71,24.01) .. controls (111.71,31.63) and (105.53,37.8) .. (97.92,37.8) .. controls (90.3,37.8) and (84.13,31.63) .. (84.13,24.01) -- cycle ;
%Shape: Ellipse [id:dp7060167442475777] 
\draw  [fill={rgb, 255:red, 0; green, 0; blue, 0 }  ,fill opacity=1 ] (92.33,16.49) .. controls (92.33,15.51) and (93.12,14.72) .. (94.1,14.72) .. controls (95.07,14.72) and (95.87,15.51) .. (95.87,16.49) .. controls (95.87,17.47) and (95.07,18.26) .. (94.1,18.26) .. controls (93.12,18.26) and (92.33,17.47) .. (92.33,16.49) -- cycle ;
%Shape: Ellipse [id:dp25425461936506777] 
\draw  [fill={rgb, 255:red, 0; green, 0; blue, 0 }  ,fill opacity=1 ] (100.61,16.49) .. controls (100.61,15.51) and (101.4,14.72) .. (102.37,14.72) .. controls (103.35,14.72) and (104.14,15.51) .. (104.14,16.49) .. controls (104.14,17.47) and (103.35,18.26) .. (102.37,18.26) .. controls (101.4,18.26) and (100.61,17.47) .. (100.61,16.49) -- cycle ;

%Shape: Arc [id:dp6880148267466573] 
\draw  [draw opacity=0] (111.56,61.11) .. controls (111,63.05) and (105.11,64.57) .. (97.92,64.57) .. controls (90.36,64.57) and (84.23,62.89) .. (84.23,60.81) .. controls (84.23,60.68) and (84.26,60.56) .. (84.3,60.44) -- (97.92,60.81) -- cycle ; \draw  [color={rgb, 255:red, 128; green, 128; blue, 128 }  ,draw opacity=1 ] (111.56,61.11) .. controls (111,63.05) and (105.11,64.57) .. (97.92,64.57) .. controls (90.36,64.57) and (84.23,62.89) .. (84.23,60.81) .. controls (84.23,60.68) and (84.26,60.56) .. (84.3,60.44) ;  
%Shape: Arc [id:dp056731527356467804] 
\draw  [draw opacity=0][dash pattern={on 0.84pt off 2.51pt}] (84.57,59.88) .. controls (85.67,58.11) and (91.29,56.75) .. (98.06,56.75) .. controls (105.51,56.75) and (111.57,58.39) .. (111.75,60.42) -- (98.06,60.51) -- cycle ; \draw  [color={rgb, 255:red, 128; green, 128; blue, 128 }  ,draw opacity=1 ][dash pattern={on 0.84pt off 2.51pt}] (84.57,59.88) .. controls (85.67,58.11) and (91.29,56.75) .. (98.06,56.75) .. controls (105.51,56.75) and (111.57,58.39) .. (111.75,60.42) ;  
%Shape: Ellipse [id:dp22803045933391064] 
\draw   (84.13,60.81) .. controls (84.13,53.2) and (90.3,47.02) .. (97.92,47.02) .. controls (105.53,47.02) and (111.71,53.2) .. (111.71,60.81) .. controls (111.71,68.43) and (105.53,74.6) .. (97.92,74.6) .. controls (90.3,74.6) and (84.13,68.43) .. (84.13,60.81) -- cycle ;
%Shape: Ellipse [id:dp7195049859109318] 
\draw  [fill={rgb, 255:red, 0; green, 0; blue, 0 }  ,fill opacity=1 ] (92.33,53.29) .. controls (92.33,52.31) and (93.12,51.52) .. (94.1,51.52) .. controls (95.07,51.52) and (95.87,52.31) .. (95.87,53.29) .. controls (95.87,54.27) and (95.07,55.06) .. (94.1,55.06) .. controls (93.12,55.06) and (92.33,54.27) .. (92.33,53.29) -- cycle ;
%Shape: Ellipse [id:dp18308218918006358] 
\draw  [fill={rgb, 255:red, 0; green, 0; blue, 0 }  ,fill opacity=1 ] (100.61,53.29) .. controls (100.61,52.31) and (101.4,51.52) .. (102.37,51.52) .. controls (103.35,51.52) and (104.14,52.31) .. (104.14,53.29) .. controls (104.14,54.27) and (103.35,55.06) .. (102.37,55.06) .. controls (101.4,55.06) and (100.61,54.27) .. (100.61,53.29) -- cycle ;

%Shape: Arc [id:dp9272105409186626] 
\draw  [draw opacity=0] (47.94,41.76) .. controls (47.29,44) and (40.45,45.77) .. (32.11,45.77) .. controls (23.35,45.77) and (16.24,43.82) .. (16.24,41.41) .. controls (16.24,41.26) and (16.27,41.12) .. (16.32,40.98) -- (32.11,41.41) -- cycle ; \draw  [color={rgb, 255:red, 128; green, 128; blue, 128 }  ,draw opacity=1 ] (47.94,41.76) .. controls (47.29,44) and (40.45,45.77) .. (32.11,45.77) .. controls (23.35,45.77) and (16.24,43.82) .. (16.24,41.41) .. controls (16.24,41.26) and (16.27,41.12) .. (16.32,40.98) ;  
%Shape: Arc [id:dp8373795595166753] 
\draw  [draw opacity=0][dash pattern={on 0.84pt off 2.51pt}] (16.63,40.33) .. controls (17.9,38.27) and (24.43,36.71) .. (32.28,36.71) .. controls (40.93,36.71) and (47.96,38.6) .. (48.15,40.96) -- (32.28,41.06) -- cycle ; \draw  [color={rgb, 255:red, 128; green, 128; blue, 128 }  ,draw opacity=1 ][dash pattern={on 0.84pt off 2.51pt}] (16.63,40.33) .. controls (17.9,38.27) and (24.43,36.71) .. (32.28,36.71) .. controls (40.93,36.71) and (47.96,38.6) .. (48.15,40.96) ;  
%Shape: Circle [id:dp7641306816787177] 
\draw   (16.12,41.41) .. controls (16.12,32.58) and (23.28,25.42) .. (32.11,25.42) .. controls (40.95,25.42) and (48.11,32.58) .. (48.11,41.41) .. controls (48.11,50.25) and (40.95,57.41) .. (32.11,57.41) .. controls (23.28,57.41) and (16.12,50.25) .. (16.12,41.41) -- cycle ;
%Shape: Ellipse [id:dp2949956012879268] 
\draw  [fill={rgb, 255:red, 0; green, 0; blue, 0 }  ,fill opacity=1 ] (22.63,32.49) .. controls (22.63,31.35) and (23.55,30.44) .. (24.68,30.44) .. controls (25.82,30.44) and (26.73,31.35) .. (26.73,32.49) .. controls (26.73,33.62) and (25.82,34.54) .. (24.68,34.54) .. controls (23.55,34.54) and (22.63,33.62) .. (22.63,32.49) -- cycle ;
%Shape: Ellipse [id:dp21044418370086182] 
\draw  [fill={rgb, 255:red, 0; green, 0; blue, 0 }  ,fill opacity=1 ] (30.23,32.49) .. controls (30.23,31.35) and (31.15,30.44) .. (32.28,30.44) .. controls (33.42,30.44) and (34.33,31.35) .. (34.33,32.49) .. controls (34.33,33.62) and (33.42,34.54) .. (32.28,34.54) .. controls (31.15,34.54) and (30.23,33.62) .. (30.23,32.49) -- cycle ;
%Shape: Ellipse [id:dp015687192068096767] 
\draw  [fill={rgb, 255:red, 0; green, 0; blue, 0 }  ,fill opacity=1 ] (37.83,32.49) .. controls (37.83,31.35) and (38.75,30.44) .. (39.88,30.44) .. controls (41.02,30.44) and (41.93,31.35) .. (41.93,32.49) .. controls (41.93,33.62) and (41.02,34.54) .. (39.88,34.54) .. controls (38.75,34.54) and (37.83,33.62) .. (37.83,32.49) -- cycle ;

% Text Node
\draw (55.5,31.9) node [anchor=north west][inner sep=0.75pt]    {$=$};

\end{tikzpicture}
\end{center}
or equivalently $t = 0$, and write 
\begin{center}
    \tikzset{every picture/.style={line width=0.75pt}} %set default line width to 0.75pt        

\begin{tikzpicture}[x=0.75pt,y=0.75pt,yscale=-1,xscale=1]
%uncomment if require: \path (0,47); %set diagram left start at 0, and has height of 47

%Shape: Arc [id:dp6319807885803715] 
\draw  [draw opacity=0] (96.94,23.75) .. controls (96.29,26) and (89.45,27.76) .. (81.11,27.76) .. controls (72.35,27.76) and (65.24,25.81) .. (65.24,23.41) .. controls (65.24,23.26) and (65.27,23.11) .. (65.32,22.97) -- (81.11,23.41) -- cycle ; \draw  [color={rgb, 255:red, 128; green, 128; blue, 128 }  ,draw opacity=1 ] (96.94,23.75) .. controls (96.29,26) and (89.45,27.76) .. (81.11,27.76) .. controls (72.35,27.76) and (65.24,25.81) .. (65.24,23.41) .. controls (65.24,23.26) and (65.27,23.11) .. (65.32,22.97) ;  
%Shape: Arc [id:dp056615061649111675] 
\draw  [draw opacity=0][dash pattern={on 0.84pt off 2.51pt}] (65.63,22.33) .. controls (66.9,20.27) and (73.43,18.7) .. (81.28,18.7) .. controls (89.93,18.7) and (96.96,20.6) .. (97.15,22.96) -- (81.28,23.06) -- cycle ; \draw  [color={rgb, 255:red, 128; green, 128; blue, 128 }  ,draw opacity=1 ][dash pattern={on 0.84pt off 2.51pt}] (65.63,22.33) .. controls (66.9,20.27) and (73.43,18.7) .. (81.28,18.7) .. controls (89.93,18.7) and (96.96,20.6) .. (97.15,22.96) ;  
%Shape: Circle [id:dp39111359424094494] 
\draw   (65.12,23.41) .. controls (65.12,14.57) and (72.28,7.41) .. (81.11,7.41) .. controls (89.95,7.41) and (97.11,14.57) .. (97.11,23.41) .. controls (97.11,32.24) and (89.95,39.4) .. (81.11,39.4) .. controls (72.28,39.4) and (65.12,32.24) .. (65.12,23.41) -- cycle ;
%Shape: Ellipse [id:dp9734470792692242] 
\draw  [fill={rgb, 255:red, 0; green, 0; blue, 0 }  ,fill opacity=1 ] (74.63,14.68) .. controls (74.63,13.55) and (75.55,12.63) .. (76.68,12.63) .. controls (77.82,12.63) and (78.73,13.55) .. (78.73,14.68) .. controls (78.73,15.81) and (77.82,16.73) .. (76.68,16.73) .. controls (75.55,16.73) and (74.63,15.81) .. (74.63,14.68) -- cycle ;
%Shape: Ellipse [id:dp1260497035726349] 
\draw  [fill={rgb, 255:red, 0; green, 0; blue, 0 }  ,fill opacity=1 ] (84.23,14.68) .. controls (84.23,13.55) and (85.15,12.63) .. (86.28,12.63) .. controls (87.42,12.63) and (88.33,13.55) .. (88.33,14.68) .. controls (88.33,15.81) and (87.42,16.73) .. (86.28,16.73) .. controls (85.15,16.73) and (84.23,15.81) .. (84.23,14.68) -- cycle ;

% Text Node
\draw (14,14.61) node [anchor=north west][inner sep=0.75pt]    {$H\ =\ $};

\end{tikzpicture}
\end{center}
instead of $h$, as in \Cref{subsec:reduced-khovanov}. We override the notation and denote the new category as $\Cob_{\bullet/l}(B)$. The local relations become:
\begin{center}
    \tikzset{every picture/.style={line width=0.75pt}} %set default line width to 0.75pt        

\begin{tikzpicture}[x=0.75pt,y=0.75pt,yscale=-.75,xscale=.75]
%uncomment if require: \path (0,67); %set diagram left start at 0, and has height of 67

%Shape: Rectangle [id:dp21707422911588292] 
\draw  [dash pattern={on 0.84pt off 2.51pt}] (10.93,11.06) -- (56.56,11.06) -- (56.56,54.99) -- (10.93,54.99) -- cycle ;
%Shape: Rectangle [id:dp15493943713249336] 
\draw  [dash pattern={on 0.84pt off 2.51pt}] (115.41,11.06) -- (162.08,11.06) -- (162.08,55.99) -- (115.41,55.99) -- cycle ;
%Shape: Circle [id:dp7365920452212693] 
\draw  [fill={rgb, 255:red, 0; green, 0; blue, 0 }  ,fill opacity=1 ] (23.5,34.2) .. controls (23.5,32.43) and (24.93,31) .. (26.7,31) .. controls (28.47,31) and (29.91,32.43) .. (29.91,34.2) .. controls (29.91,35.97) and (28.47,37.41) .. (26.7,37.41) .. controls (24.93,37.41) and (23.5,35.97) .. (23.5,34.2) -- cycle ;
%Shape: Circle [id:dp9469298444570525] 
\draw  [fill={rgb, 255:red, 0; green, 0; blue, 0 }  ,fill opacity=1 ] (36,34.2) .. controls (36,32.43) and (37.43,31) .. (39.2,31) .. controls (40.97,31) and (42.41,32.43) .. (42.41,34.2) .. controls (42.41,35.97) and (40.97,37.41) .. (39.2,37.41) .. controls (37.43,37.41) and (36,35.97) .. (36,34.2) -- cycle ;
%Shape: Circle [id:dp3791562173926655] 
\draw  [fill={rgb, 255:red, 0; green, 0; blue, 0 }  ,fill opacity=1 ] (135.54,33.52) .. controls (135.54,31.75) and (136.97,30.32) .. (138.74,30.32) .. controls (140.51,30.32) and (141.95,31.75) .. (141.95,33.52) .. controls (141.95,35.29) and (140.51,36.73) .. (138.74,36.73) .. controls (136.97,36.73) and (135.54,35.29) .. (135.54,33.52) -- cycle ;
%Shape: Rectangle [id:dp7808305222084078] 
\draw  [dash pattern={on 0.84pt off 2.51pt}] (204.93,11.06) -- (250.56,11.06) -- (250.56,54.99) -- (204.93,54.99) -- cycle ;
%Shape: Rectangle [id:dp659071778039395] 
\draw  [dash pattern={on 0.84pt off 2.51pt}] (322.41,11.06) -- (369.08,11.06) -- (369.08,55.99) -- (322.41,55.99) -- cycle ;
%Shape: Circle [id:dp3577246373145724] 
\draw  [color={rgb, 255:red, 0; green, 0; blue, 0 }  ,draw opacity=1 ][fill={rgb, 255:red, 255; green, 255; blue, 255 }  ,fill opacity=1 ] (217.5,34.06) .. controls (217.5,32.29) and (218.93,30.85) .. (220.7,30.85) .. controls (222.47,30.85) and (223.91,32.29) .. (223.91,34.06) .. controls (223.91,35.83) and (222.47,37.26) .. (220.7,37.26) .. controls (218.93,37.26) and (217.5,35.83) .. (217.5,34.06) -- cycle ;
%Shape: Circle [id:dp9847973386629165] 
\draw  [color={rgb, 255:red, 0; green, 0; blue, 0 }  ,draw opacity=1 ][fill={rgb, 255:red, 255; green, 255; blue, 255 }  ,fill opacity=1 ] (230,34.06) .. controls (230,32.29) and (231.43,30.85) .. (233.2,30.85) .. controls (234.97,30.85) and (236.41,32.29) .. (236.41,34.06) .. controls (236.41,35.83) and (234.97,37.26) .. (233.2,37.26) .. controls (231.43,37.26) and (230,35.83) .. (230,34.06) -- cycle ;
%Shape: Circle [id:dp7713082279762818] 
\draw  [color={rgb, 255:red, 0; green, 0; blue, 0 }  ,draw opacity=1 ][fill={rgb, 255:red, 255; green, 255; blue, 255 }  ,fill opacity=1 ] (342.54,34.38) .. controls (342.54,32.61) and (343.97,31.17) .. (345.74,31.17) .. controls (347.51,31.17) and (348.95,32.61) .. (348.95,34.38) .. controls (348.95,36.15) and (347.51,37.58) .. (345.74,37.58) .. controls (343.97,37.58) and (342.54,36.15) .. (342.54,34.38) -- cycle ;
%Shape: Rectangle [id:dp927748882474282] 
\draw  [dash pattern={on 0.84pt off 2.51pt}] (415.93,11.06) -- (461.56,11.06) -- (461.56,54.99) -- (415.93,54.99) -- cycle ;
%Shape: Circle [id:dp6250183398486859] 
\draw  [color={rgb, 255:red, 0; green, 0; blue, 0 }  ,draw opacity=1 ][fill={rgb, 255:red, 0; green, 0; blue, 0 }  ,fill opacity=1 ] (428.5,34.56) .. controls (428.5,32.79) and (429.93,31.35) .. (431.7,31.35) .. controls (433.47,31.35) and (434.91,32.79) .. (434.91,34.56) .. controls (434.91,36.33) and (433.47,37.76) .. (431.7,37.76) .. controls (429.93,37.76) and (428.5,36.33) .. (428.5,34.56) -- cycle ;
%Shape: Circle [id:dp7505548716310184] 
\draw  [color={rgb, 255:red, 0; green, 0; blue, 0 }  ,draw opacity=1 ][fill={rgb, 255:red, 255; green, 255; blue, 255 }  ,fill opacity=1 ] (441,34.56) .. controls (441,32.79) and (442.43,31.35) .. (444.2,31.35) .. controls (445.97,31.35) and (447.41,32.79) .. (447.41,34.56) .. controls (447.41,36.33) and (445.97,37.76) .. (444.2,37.76) .. controls (442.43,37.76) and (441,36.33) .. (441,34.56) -- cycle ;

% Text Node
\draw (68,26.21) node [anchor=north west][inner sep=0.75pt]    {$=$};
% Text Node
\draw (177,26.21) node [anchor=north west][inner sep=0.75pt]    {$,$};
% Text Node
\draw (95,26.21) node [anchor=north west][inner sep=0.75pt]    {$H$};
% Text Node
\draw (262,26.21) node [anchor=north west][inner sep=0.75pt]    {$=$};
% Text Node
\draw (288,26.21) node [anchor=north west][inner sep=0.75pt]    {$-H$};
% Text Node
\draw (387,26.21) node [anchor=north west][inner sep=0.75pt]    {$,$};
% Text Node
\draw (470,26.21) node [anchor=north west][inner sep=0.75pt]    {$=\ 0.$};

\end{tikzpicture}
\end{center}
The ring $R$ and the Frobenius algebra $A$ we obtain from the new $\Cob_{\bullet/l}(B)$ are
\[
    R = \ZZ[H],\ 
    A = R[X]/(X^2 - HX)
\]
whose corresponding link homology theory is the $U(1)$-equivariant Khovanov homology. With $Y = X - H$ in $A$, we have 
\begin{gather*}
    X^2 = HX,\quad Y^2 = -HY,\quad XY = 0,\\
    \Delta (X) = X \otimes X,\quad \Delta (Y) = Y \otimes Y,\\
    \epsilon(X) = \epsilon(Y) = 1.
\end{gather*}

\subsection{The link invariant $s_H$}
\label{subsec:lee-class}

Herein, we assume that $L$ is a link diagram, and is regarded as an object in $\Diag(\underline{0})$. In view of \Cref{prop:kom-tautological}, we shall redefine the \textit{Lee cycle} $\ca(L)$ of $L$ as a chain map from $[\varnothing]$ to $[L]$, and reformulate the link invariant $s_H$ introduced in \cite{Sano:2020}.

Let $L^\lout$ denote the \textit{Seifert resolution} of $L$, i.e.\ the diagram obtained by resolving each crossing of $L$ in the orientation preserving way. Let
\[
    S_L: \varnothing \to L^\lout
\]
be the cobordism consisting of a cup $D$ for each circle $\gamma \subset L^\lout$ such that $\del D = \gamma$. The Lee cycle $\ca(L)$ is defined by adding a dot or a hollow dot to each component of $S_L$, as follows. The \textit{standard checkerboard coloring} of $L^\lout$ is the alternative coloring on the complement $D^2 \setminus L^\lout$ by white or black such that a neighborhood of $\del D^2$ is colored white. The \textit{$XY$-labeling} on $L^\lout$ is a labeling of each component $\gamma$ of $L^\lout$ by $X$ or $Y$ defined as follows: while moving a point along $\gamma$ in the given orientation, if it sees a black region to the left, label it $X$, otherwise label it $Y$. For each component $S_i$ of $S_L$, add a dot $\bullet$ or a hollow dot $\circ$ to $S_i$ depending on whether its boundary is labeled $X$ or $Y$. This defines a dotted cobordism 
\[
    \ca(L)\colon \varnothing \to L^\lout
\]
in $\Cob^3_{\bullet}(\underline{0})$. See \Cref{fig:intro-lee-cycle-cob} in \Cref{sec:intro} for an example, where the labels $\mathbf{a}, \mathbf{b}$ are to be replaced with $X, Y$. 

% \begin{figure}[t]
%     \centering
%     \input{tikzpictures/lee-cycle}
%     \caption{The $XY$-labeling for a link diagram $L$.}
%     \label{fig:lee-cycle}
%     %
%     \vspace{1.5em}
%     %
%     \input{tikzpictures/lee-cycle-cob}
%     \caption{The Lee cycle $\ca(L)$.}
%     \label{fig:lee-cycle-cob}    
% \end{figure}

\begin{proposition}
\label{prop:lee-cycle-is-cycle}
    The cobordism $\ca(L)$ defines a chain map
    \[
        \ca(L): [\varnothing] \to [L]
    \]
    of homological degree $0$. 
\end{proposition}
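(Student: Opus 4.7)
Since $[\varnothing]$ is concentrated in homological grading $0$, the claim reduces to two assertions: that $L^\lout$ sits in homological grading $0$ of $[L]$, and that $d_{[L]} \circ \ca(L) = 0$. The first is a grading bookkeeping: $L^\lout$ corresponds to the cube vertex whose binary label is $0$ at each positive crossing (the Seifert smoothing being the $0$-smoothing there) and $1$ at each negative crossing. Its pre-shift homological grading is the number of $1$'s, namely $n^-$, so after the overall shift by $-n^-$ it lies in grading $0$.

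The differential out of the Seifert vertex is $\sum_{c} s_c$, where the sum runs over positive crossings $c$ and $s_c$ is the saddle cobordism flipping the smoothing at $c$ from $0$ to $1$; negative crossings contribute nothing, since their Seifert smoothing is already maximal. Because the various $s_c \circ \ca(L)$ land in different direct summands of $[L]^1$, it suffices to show $s_c \circ \ca(L) = 0$ for each positive crossing $c$.

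Fix such a crossing $c$, with local Seifert arcs $\alpha_L, \alpha_R$. The substantive step is to rule out the possibility that $\alpha_L$ and $\alpha_R$ belong to the same Seifert circle. Around $c$ the complement $D^2 \setminus L^\lout$ has three local regions---a west, a middle, and an east region---with the west and east receiving the same checkerboard color and the middle the opposite. Traversing $\alpha_L$ in its induced orientation puts the middle region on one side, while traversing $\alpha_R$ in its induced orientation puts the middle on the opposite side. The $XY$-labeling rule then forces the two arcs to carry opposite labels, so they cannot lie on a common Seifert circle. Let $\gamma_L, \gamma_R$ be the two distinct Seifert circles, labeled $X$ and $Y$ in some order.

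Near $c$ the cobordism $\ca(L)$ contains the two cups capping off $\gamma_L$ and $\gamma_R$, decorated with a $\bullet$ and a $\circ$ (in the order determined by which circle is $X$). Attaching the $1$-handle $s_c$ fuses these two cups into a single disk bearing both a $\bullet$ and a $\circ$; the rest of the cobordism is unchanged. Using $\circ = \bullet - H$ and the local relation $\bullet\bullet = H\bullet$ (the relation (NC) specialized to our $t = 0$ convention), one computes on this disk
\[
\bullet \cdot \circ = \bullet(\bullet - H) = \bullet\bullet - H\bullet = H\bullet - H\bullet = 0,
\]
so $s_c \circ \ca(L) = 0$. The essential obstacle is the color/label consistency that rules out the ``both arcs on a common Seifert circle'' case at positive crossings; once that is settled, the vanishing is a one-line avatar of the identity $XY = 0$ in the Frobenius algebra $A = R[X]/(X^2 - HX)$.
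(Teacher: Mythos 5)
Your proof is correct and follows essentially the same route as the paper: the outgoing differentials from the Seifert vertex are the saddles at positive crossings, each merging two circles carrying opposite labels, so the composite vanishes by $\bullet\,\circ = 0$ (i.e.\ $XY = 0$), and the degree claim is the shift by $-n^-$. The only difference is that you spell out, via the checkerboard coloring, the observation the paper simply asserts—that the two local Seifert arcs at any crossing lie on distinct, oppositely labeled circles—so no further changes are needed.
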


\begin{proof}
    That $\ca(L)$ is a chain map is equivalent to $d \circ \ca(L) = 0$, which is equivalent to $f \circ \ca(L) = 0$ for each arrow $f$ going out of $L^\lout$ in the complex $[L]$. Observe that each $f$ is a saddle cobordism with one end of the boundary a pair of circles of $L^\lout$ with different labels. Thus $f$ merges two components of $\ca(L)$ with different types of dots, giving $(\bullet \circ) = 0$. That $\ca(L)$ has homological degree $0$ is obvious from the fact that $L^\lout$ is a homological grading $0$ summand in the complex $[L]$. 
\end{proof}

\begin{definition}
\label{def:lee-cycle}
    The chain map $\ca(L)$ is called the \textit{Lee cycle} of $L$, and its chain homotopy class $[\ca(L)]$ the \textit{Lee class} of $L$.
\end{definition}

Note that for the orientation reversed diagram $L^r$ of $L$, its Lee cycle $\ca(L^r)$ is obtained from $\ca(L)$ by swapping the dots and the hollow dots. The cycle $\ca(L^r)$ will be denoted $\cb(L)$.

\begin{proposition}
\label{prop:qdeg-lee-cycle}
    \[
        \qdeg \ca(L) = \qdeg \cb(L) = w(L) - r(L).
    \]
    Here, $w(L)$ denotes the writhe, and $r(L)$ the number of Seifert circles of $L$.
\end{proposition}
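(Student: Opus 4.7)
The plan is to compute the quantum degree of $\ca(L)$ directly from the formula $\qdeg C = \chi(C) - \tfrac{1}{2}|B| - 2k$ in \Cref{def:cob-grad}, and then to incorporate the quantum grading shift of $[L]$ at the homological level where the Seifert resolution sits. The argument for $\cb(L)$ is immediate from that for $\ca(L)$, since $\cb(L) = \ca(L^r)$ is built from the same cobordism template with $\bullet$ and $\circ$ swapped, while both $w(L)$ and $r(L)$ are invariant under reversing every orientation.

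The underlying surface of $\ca(L)$ is a disjoint union of $r(L)$ cups (one per Seifert circle of $L^\lout$) with no boundary on $\del D^2$, so $\chi = r(L)$ and $|B|=0$. Each cup carries exactly one decoration, and the key observation is that in both cases the contribution is $-1$. A cup with a single dot $\bullet$ has $\chi - 2k = 1 - 2 = -1$. A cup with a hollow dot $\circ = \bullet - H$ decomposes into a cup with a dot (degree $-1$) and a cup disjoint from the doubly-dotted sphere $H$; the latter summand has $\chi - 2k = (1+2) - 2(0+2) = -1$ as well, so the cup with $\circ$ is homogeneous of degree $-1$. Summing across the $r(L)$ components, the cobordism $\ca(L) : \varnothing \to L^\lout$ has quantum degree $-r(L)$ in $\Cob^3_{\bullet/l}(\underline{0})$.

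Finally, I would place $L^\lout$ inside the bigraded complex $[L]$. At each positive crossing the orientation-preserving smoothing is the $0$-resolution, and at each negative crossing it is the $1$-resolution, so the Seifert resolution is the vertex $v$ with $|v| = n^-$, of homological degree $|v| - n^- = 0$. The grading convention for $[L]$ recalled in \Cref{sec:prelim} then gives $[L]^0 = L^\lout\{n^+ - n^-\} = L^\lout\{w(L)\}$. Since a morphism of intrinsic quantum degree $d$ into $Y\{w\}$ has degree $d + w$, we conclude
\[
    \qdeg \ca(L) \;=\; -r(L) + w(L) \;=\; w(L) - r(L),
\]
and the same formula for $\cb(L)$. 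The computation is mostly bookkeeping; the only place requiring a little care is verifying that $\circ$ is genuinely homogeneous, which rests on $\deg H = -2$.
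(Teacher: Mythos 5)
Your proposal is correct. The paper states this proposition without proof (it is treated as a routine check), and your computation is exactly the intended bookkeeping: each dotted or hollow-dotted cup is homogeneous of quantum degree $-1$ (the hollow dot because $\deg H = -2$), so the cobordism $\varnothing \to L^\lout$ has intrinsic degree $-r(L)$, and since the Seifert resolution is the homological-degree-$0$ summand of $[L]$ its quantum shift is $n^+ - n^- = w(L)$, giving $\qdeg\ca(L) = w(L) - r(L)$, with $\cb(L)$ identical by swapping the decorations.
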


% \begin{proof}
%     With $\qdeg(\bullet) = \qdeg(\circ) = -2$, we have 
%     \[
%         \qdeg \ca(L) 
%         = w(L) + r(L)(\chi(D^2) - 2) = w(L) - r(L).
%     \]
% \end{proof}

Up to chain homotopy, the Lee cycle $\ca(L)$ may be expressed as $H^k z$ with $k > 0$ and some cycle $z$. For such $z$ to be unique in a certain sense, we introduce the concept of \textit{stable (homotopy) equivalence}. 

\begin{definition}
\label{def:stab-eqiuv}
    Two chain maps $z, z'$ in $\Kob_\bullet(B)$ are \textit{stably (homotopy) equivalent}, denoted $z \htpy_s z'$, if there exists an integer $k \geq 0$ such that $H^k z \htpy H^k z'$.
\end{definition}

Obviously, stable equivalence is an equivalence relation on each hom-set of $\Kob_\bullet(B)$, and the $q$-degree is invariant under stable equivalence. 
% The quotient category will be denoted $\Kob_{\bullet / sh}(B)$. 

\begin{definition}
\label{def:H-divisiblity}
    The \textit{$H$-divisibility} of the Lee cycle $\ca(L)$ the maximal $H$-divisibility of a cycle that is stably equivalent to $\ca(L)$, i.e.\ 
    \[
        d_H(L) = \max_k\{\ \exists z\colon [\varnothing] \to [L],\ (H^k) z \htpy_s \ca(L)\ \}.
    \]
\end{definition}

Note that $d_H(L)$ is always finite, since the $q$-degree is bounded from above in the corresponding hom-set. The following proposition shows that the $H$-divisibilities can be compared using chain maps. 

\begin{proposition}
    Suppose $L, L'$ are two link diagrams, and
    \[
        f: [L] \to [L']
    \]
    is a chain map such that 
    \[
        f \circ \ca(L) \htpy_s H^k \ca(L')
    \]
    for some $k \in \ZZ$. Then 
    \[
        d_H(L) \leq k + d_H(L').
    \]
    Moreover, when $f$ is a chain homotopy equivalence, then the above inequality becomes an equality. 
\end{proposition}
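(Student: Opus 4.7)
The plan is to prove the inequality by transporting an optimal $H$-divisor of $\ca(L)$ across $f$, and, when $f$ is invertible up to homotopy, to transport back along the homotopy inverse to obtain the matching lower bound.

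First I would set $m = d_H(L)$ and pick a cycle $z\colon [\varnothing] \to [L]$ realizing this divisibility, i.e.\ with $H^m z \htpy_s \ca(L)$. Post-composing with $f$ (which respects both $\htpy$ and $\htpy_s$) gives
\[
    H^m(f \circ z) = f \circ (H^m z) \htpy_s f \circ \ca(L) \htpy_s H^k \ca(L').
\]
If $m < k$ the conclusion $d_H(L) \leq k + d_H(L')$ is automatic, since $d_H(L') \geq 0$ (witnessed by $z' = \ca(L')$ with exponent $0$). If $m \geq k$, I will invoke the key formal observation that $H$-multiplication is cancellable at the level of stable equivalence: unfolding \Cref{def:stab-eqiuv}, $H^a \xi \htpy_s H^a \eta$ means $H^{N+a}\xi \htpy H^{N+a}\eta$ for some $N\geq 0$, and the very same equation witnesses $\xi \htpy_s \eta$. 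Rewriting the relation above as $H^k \cdot H^{m-k}(f \circ z) \htpy_s H^k \ca(L')$ and cancelling $H^k$ in this sense yields $H^{m-k}(f \circ z) \htpy_s \ca(L')$. Hence $d_H(L') \geq m - k$, which is the desired inequality.

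For the moreover, let $g\colon [L'] \to [L]$ be a chain homotopy inverse of $f$. Applying $g$ to the hypothesis gives
\[
    \ca(L) \htpy g \circ f \circ \ca(L) \htpy_s H^k (g \circ \ca(L')).
\]
Now choose $z'\colon [\varnothing] \to [L']$ with $H^{m'} z' \htpy_s \ca(L')$ for $m' = d_H(L')$; a symmetric computation
\[
    H^{k+m'}(g \circ z') = H^k \cdot g \circ (H^{m'} z') \htpy_s H^k(g \circ \ca(L')) \htpy_s \ca(L)
\]
produces the lower bound $d_H(L) \geq k + d_H(L')$, and combining with the first half gives equality. The main obstacle is purely bookkeeping: one needs that $\htpy_s$ is preserved by pre- and post-composition and that multiplication by $H$ is cancellable up to stable equivalence, together with the edge case $m<k$. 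The substantive point is the final one — that $H$-torsion is invisible to $\htpy_s$ — which is precisely the feature that makes the invariant $d_H$ well-defined and functorial in this sense.
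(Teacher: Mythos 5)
Your argument is correct. Note, however, that the paper states this proposition without proof — it is treated as a direct consequence of \Cref{def:stab-eqiuv,def:H-divisiblity} (the analogous homological statements being proved in the author's earlier works) — so there is no in-paper proof to compare against line by line. Your route is the natural one: transport an optimal $H$-divisor of $\ca(L)$ along $f$ to bound $d_H(L')$ from below, and, when $f$ is a homotopy equivalence, transport a divisor of $\ca(L')$ back along a homotopy inverse $g$ to get the reverse bound. The one ingredient you isolate — cancellability of $H$-multiplication up to stable equivalence — is indeed immediate from \Cref{def:stab-eqiuv} exactly as you unfold it, and the facts that composition with a chain map is $\ZZ[H]$-linear and preserves $\htpy$ (hence $\htpy_s$) are routine. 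This is essentially the same mechanism the paper deploys in its proof of \Cref{prop:lee-class-invariance}, which is phrased instead through maximality/non-divisibility contradictions; your cancellation lemma replaces that bookkeeping and is arguably cleaner. The only cosmetic caveat concerns the statement's allowance of $k \in \ZZ$: for $k < 0$ the expression $H^k \ca(L')$ only makes literal sense after inverting $H$ (i.e.\ reading the hypothesis as $H^{-k} f \circ \ca(L) \htpy_s \ca(L')$), under which your argument goes through verbatim and even more directly; for $k \geq 0$, which is the case actually used in the paper, your case split $m < k$ versus $m \geq k$ is complete.
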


The following proposition states the variance of the $H$-divisibility of the Lee class under the Reidemeister moves. Although the corresponding propositions are proved in \cite[Proposition 2.13]{Sano:2020} and in \cite[Proposition 2.16]{Sano-Sato:2023} in the ordinary context of Khovanov homology, a proof is rewritten in \Cref{sec:appendix} for the current context, effectively using delooping (\Cref{prop:delooping}) and Gaussian elimination (\Cref{prop:gauss-elim}). We encourage the reader to go through the proof, as it demonstrates how the $H$-multipliers arise while applying these techniques. 

\begin{proposition}
\label{prop:ca-under-reidemeister}
    Suppose $L, L'$ are link diagrams related by a Reidemeister move. Under the corresponding chain homotopy equivalence $f$, the Lee cycles correspond as 
    \begin{align*}
        f \circ \ca(L) &\htpy \epsilon H^j \ca(L') \\
        f \circ \cb(L) &\htpy \epsilon' H^j \cb(L')
    \end{align*}
    where $j \in \{0, \pm 1\}$ is given by%
    \footnote{
        Here, $\delta$ denotes the \textit{difference operator}, i.e.\ $\delta f(x, y) = f(y) - f(x)$.
    } 
    \[
        j = \frac{\delta w(L, L') - \delta r(L, L')}{2}
    \]
    and $\epsilon, \epsilon'$ are signs satisfying 
    \[
        \epsilon\epsilon' = (-1)^j.
    \]
\end{proposition}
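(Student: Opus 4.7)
The plan is a case-by-case verification, one variant of each Reidemeister move at a time. For each move, I would first write down the local formal brackets $[L]$ and $[L']$ as objects in $\Kob_\bullet$, together with the chain homotopy equivalence $f$ of Theorem 2.6. Bar-Natan's construction of $f$ proceeds by delooping (Prop.\ 2.7) every closed component that appears after resolution and then cancelling invertible matrix entries via Gaussian elimination (Prop.\ 2.20). My proof would carry out exactly this reduction, but with the additional data of the Lee cycle attached: the cobordism $\ca(L)\colon [\varnothing] \to [L]$ and its image at each stage of the reduction are tracked explicitly.

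The key mechanism is that every application of Prop.\ 2.20 replaces the tail differential $d$ by the Schur complement $s = d - ca^{-1}b$, and the associated retraction has an entry $-ca^{-1}$. When $\ca(L)$ is pushed through this retraction, the $-ca^{-1}$ entry either kills the contribution on the cancelled summand or, through the delooping formulas in terms of $\iota, \iota_X, \epsilon, \epsilon_Y$, produces a factor of $H$ arising from the relations $\bullet \bullet = H\bullet$ and $(\circ) = (\bullet) - H$. The exponent of $H$ that survives is fixed a priori by $q$-grading: since $f$ preserves $\qdeg$, Prop.\ 3.2 forces
\[
    w(L) - r(L) \;=\; -2j + w(L') - r(L'),
\]
so $j = (\delta w - \delta r)/2$, and the entire content of the computation is the verification that the surviving coefficient is indeed $\pm H^{j}$ rather than a more complicated polynomial. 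This also forces $j \in \{0, \pm 1\}$ in each case, since the writhe and Seifert-circle counts change by at most one under a single move.

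For the sign assertion $\epsilon\epsilon' = (-1)^j$, I would exploit the symmetry $\cb(L) = \ca(L^r)$ and the identity $\circ = \bullet - H$ in $\Cob^3_{\bullet/l}$, which says that swapping dots for hollow dots shifts each component's label by $-H$. Composing with $f$ then produces, in each cancelled summand, two computations that differ by a sign depending on how many $H$-factors were absorbed: precisely the parity of $j$. Concretely, I would first compute $\epsilon$ for each move variant, then recompute by replacing $X$-dots with $Y$-dots (i.e.\ $\bullet$ with $\circ = \bullet - H$) and read off $\epsilon'$, and check the relation in every case.

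The main obstacle is the sheer bookkeeping: RI splits into four variants (sign of the crossing and orientation of the kink), RII into two (parallel vs.\ antiparallel strands), and RIII into a handful of essentially distinct configurations after using braid-like normal forms. In several of these cases the Lee cycle has components lying in both the cancelled and the surviving summands, so care must be taken not to misidentify which cobordism gets multiplied by $H$ versus which simply passes through. Once one case (say positive RI with a particular orientation) is done in full, the remainder reduce to symmetric arguments obtained by reversing orientations, mirror imaging, or invoking the previously established cases — this is the payoff of working uniformly in $\Cob^3_{\bullet/l}$ rather than handling each crossing sign separately.
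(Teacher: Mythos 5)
Your plan is essentially the paper's own proof (given in the appendix): a case-by-case reduction of each Reidemeister variant using delooping and Gaussian elimination, tracking the image of $\ca(L)$ through the retraction (where the $-ca^{-1}$ entries produce the $H$-factors) and obtaining the sign relation by redoing the computation with $\bullet$ and $\circ$ swapped. One small inaccuracy worth noting: under the antiparallel R2 move the Seifert circle count can change by $2$ (not at most $1$), but since $\delta w = 0$ there the conclusion $j \in \{0, \pm 1\}$ still holds, exactly as in the paper's Case 2-1.
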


\begin{definition}
    The stable equivalence class of the cycle $z$ satisfying 
    \[
        (H^{d_H(L)}) z \htpy_s \ca(L)
    \]
    is called the \textit{refined Lee class} of $L$, and will be denoted $\ca_s(L)$.
\end{definition}

The refined Lee class $\ca_s(L)$ is an invariant of the link in the following sense.

\begin{proposition}
\label{prop:lee-class-invariance}
    Suppose $L, L'$ are link diagrams related by a sequence of Reidemeister moves. With the corresponding chain homotopy equivalence $f$ given in \Cref{thm:formal-ckh-invariance}, the following diagram commutes up to sign and stable equivalence:
    \[
        \begin{tikzcd}[row sep=3em, column sep=5em]
        {[\varnothing]} \arrow[d, equal] \arrow[r, "\ca_s(L)"] & {[L]} \arrow[d, "f"] \\
        {[\varnothing]} \arrow[r, "\ca_s(L')"]          & {[L']}.              
        \end{tikzcd}
    \]
\end{proposition}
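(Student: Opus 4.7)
The plan is to reduce the proposition to the case of a single Reidemeister move and then invoke \Cref{prop:ca-under-reidemeister} together with the maximality characterization of $d_H$. Given a sequence $L = L_0 \to L_1 \to \cdots \to L_n = L'$ of Reidemeister moves with corresponding chain homotopy equivalences $f_i: [L_{i-1}] \to [L_i]$, the full map $f$ is (up to homotopy) the composition $f_n \circ \cdots \circ f_1$, so once we establish the result for each $f_i$, the commutativity up to sign and stable equivalence for $f$ follows by composing the one-step commutative squares and multiplying the signs.

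For a single move, \Cref{prop:ca-under-reidemeister} gives
\[
    f \circ \ca(L) \htpy \epsilon H^j \ca(L'), \qquad j = \tfrac{1}{2}\bigl(\delta w(L,L') - \delta r(L,L')\bigr).
\]
By the definition of $\ca_s$, pick representatives $z, z'$ of $\ca_s(L)$ and $\ca_s(L')$ satisfying $\ca(L) \htpy_s H^{d_H(L)} z$ and $\ca(L') \htpy_s H^{d_H(L')} z'$. Using the fact that every morphism in $\Mat(\Cob^3_{\bullet/l})$ is $R$-linear (since $H$ acts centrally on hom-modules by disjoint union with the doubly dotted sphere), $f$ commutes with multiplication by $H$, so substituting into the relation above yields
\[
    H^{d_H(L)}\bigl(f \circ z\bigr) \htpy_s \epsilon\, H^{\,j + d_H(L')}\, z'.
\]

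The key input is now the equality $d_H(L) = j + d_H(L')$, which is exactly the ``chain homotopy equivalence'' case of the comparison proposition preceding \Cref{prop:ca-under-reidemeister} (applied to $f$ and to its homotopy inverse to get both inequalities). With this equality both $H$-powers on the two sides coincide, so the displayed relation reads
\[
    H^{d_H(L)}\bigl(f \circ z\bigr) \htpy_s \epsilon\, H^{d_H(L)} z'.
\]
By the very definition of stable equivalence, this means $f \circ z \htpy_s \epsilon\, z'$, which is the claimed commutativity of the square up to sign and stable equivalence.

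The main subtlety, rather than an obstacle, is making sure that the $H$-power can be legitimately cancelled: this is handled precisely by introducing the notion of stable equivalence in \Cref{def:stab-eqiuv} and by the $R$-linearity of chain maps, so that post-composition with $f$ commutes with multiplication by $H$. The only genuine content beyond bookkeeping is the already-established equality $d_H(L) = j + d_H(L')$ coming from the fact that $f$ is invertible up to homotopy, which ensures the exponents balance and no further shift in $H$-divisibility is introduced by the move.
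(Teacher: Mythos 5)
Your proof is correct and takes essentially the same route as the paper: both arguments combine \Cref{prop:ca-under-reidemeister} with the maximality of $d_H$, exploited through the homotopy inverse of $f$, and the cancellation of the common $H$-power is exactly what the definition of stable equivalence permits. The only difference is cosmetic: you delegate the exponent balance $d_H(L) = j + d_H(L')$ to the equality case of the comparison proposition stated just before \Cref{prop:ca-under-reidemeister}, whereas the paper re-derives that balance inline via the contradiction argument with the homotopy inverse $g$.
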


\begin{proof}
    Let $g$ be the homotopy inverse of $f$ given in \Cref{thm:formal-ckh-invariance}. First, we claim that $f \circ \ca_s(L)$ is not (stably) $H$-divisible, otherwise
    \[
        \ca_s(L) \htpy g \circ (f \circ \ca_s(L))
    \]
    would be $H$-divisible, contradicting the maximality of $d_H(L)$. Similarly, we have that $g \circ \ca_s(L')$ is not $H$-divisible. From \Cref{prop:ca-under-reidemeister}, we have 
    \[
        f \circ \ca(L) \htpy (\pm H^j) \ca(L')
    \]
    for some $j \in \ZZ$, and hence
    \[
        (H^k) f \circ \ca_s(L) \htpy_s (\pm H^{k' + j}) \ca_s(L')
    \]
    where $k = d_H(L)$ and $k' = d_H(L')$. We claim that $k = k' + j$. Otherwise, if $k < k' + j$, we would have 
    \[
        \ca_s(L) \htpy g  \circ (f \circ \ca_s(L)) \htpy_s H^{k' + j - k} g \circ \ca_s(L')
    \]
    contradicting the non-divisibility of $\ca_s(L)$. Similarly, $k > k' + j$ cannot hold. Thus $\ca_s(L)$ and $\pm \ca_s(L')$ are stably equivalent.
\end{proof}

\Cref{prop:lee-class-invariance} justifies the following definition.

\begin{definition}
\label{def:s_H}
    The link invariant $s_H$ is defined by 
    \[
        s_H(L) = 2d_H(L) + w(L) - r(L) + 1,
    \]
    which equals $\qdeg(\ca_s(L)) + 1$. 
\end{definition}

The constant is added so that $s_H(\bigcirc) = 0$. The following propositions can be obtained by translating the propositions of \cite{Sano:2020}. 

\begin{proposition} 
\label{prop:s_H-properties}
    Let $L, L'$ be link diagrams.
    \begin{enumerate}
        \item $d_H(L) = d_H(L^r)$.
        \item $d_H(L \sqcup L') \geq d_H(L) + d_H(L')$.
        \item If $L$ is positive, then $d_H(L) = 0$.
    \end{enumerate}
    Equivalently, 
    \begin{enumerate}
        \item $s_H(L) = s_H(L^r)$.
        \item $s_H(L \sqcup L') \geq s_H(L) + s_H(L') - 1$.
        \item If $L$ is positive, then $s_H(L) = n(L) - r(L) + 1$.
    \end{enumerate}
\end{proposition}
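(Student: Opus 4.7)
For part (1), my plan is to exhibit an involution on the category $\Cob^3_{\bullet/l}$ that swaps the role of $\bullet$ and $\circ$ on each component of a cobordism. Since $\circ = \bullet - H$, this involution is well-defined only after identifying $H$ with $-H$; equivalently, it can be realized by the substitution $X \leftrightarrow Y$ on the Frobenius algebra side. Under this involution, the Lee cycle $\ca(L)$ gets sent to the cycle obtained by swapping all the labels on Seifert circles, which is exactly $\cb(L) = \ca(L^r)$. Since divisibility by $H$ and by $-H$ are the same, we conclude $d_H(L) = d_H(L^r)$.

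For part (2), my plan is to invoke the planar algebra structure from Section 2.6. The disjoint union is realized by a particular $2$-input planar arc diagram $D$, which induces a functor giving us $[L \sqcup L'] = D([L], [L'])$, and the Seifert resolution, checkerboard coloring, and $XY$-labeling all behave compatibly with the disjoint union, so $\ca(L \sqcup L') = D(\ca(L), \ca(L'))$. If $H^{k} z \htpy \ca(L)$ and $H^{k'} z' \htpy \ca(L')$ stably, then applying $D$ gives $H^{k+k'} D(z,z') \htpy_s \ca(L \sqcup L')$, which yields the inequality.

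For part (3), the plan is as follows. When $L$ is positive, $n^-(L) = 0$, so the complex $[L]$ is supported in homological degrees $0, 1, \ldots, n^+$, and the Seifert resolution $L^\lout$ is precisely the object at homological degree $0$. Suppose for contradiction that $d_H(L) \geq 1$, i.e.\ $H^k \ca(L) \htpy H^{k+1} z$ for some chain map $z$ and some $k \geq 0$. Any chain homotopy $h$ of degree $-1$ from $[\varnothing]$ to $[L]$ must land in $[L]^{-1} = 0$, so the equation reduces to a strict equality in $\Hom_{\bullet/l}(\varnothing, L^\lout)$. Applying delooping (Proposition~\ref{prop:delooping}) to the $r$ circles of $L^\lout$, this hom-module is identified with $A^{\otimes r}$ up to a grading shift, and by Proposition~\ref{prop:cob-tautological} the cobordism $\ca(L)$ corresponds to a pure tensor $Z_1 \otimes \cdots \otimes Z_r$ with each $Z_i \in \{X, Y\}$. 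The hard part is then the algebraic claim that such a pure tensor is not $H$-divisible in $A^{\otimes r}$; writing $Y = X - H$ and expanding in the monomial basis $\{1, X\}^{\otimes r}$, the coefficient of $X \otimes \cdots \otimes X$ is $1$, so since $A^{\otimes r}$ is a free $R$-module and $R = \ZZ[H]$ is an integral domain with no $H$-torsion, the equality $H^k \cdot (Z_1 \otimes \cdots \otimes Z_r) = H^{k+1} \cdot z_0$ forces $1 = 0 \pmod H$, a contradiction. Hence $d_H(L) = 0$.

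The only genuine subtlety is the one in part (3): making sure that stable equivalence, rather than mere homotopy equivalence, is handled correctly. This is resolved by the observation that $[L]^{-1} = 0$ for positive diagrams, which upgrades the homotopy to a strict equality on the homological degree $0$ part and reduces the question to a freeness statement for the $R$-module $A^{\otimes r}$. The translation between the statements in terms of $d_H$ and $s_H$ is then immediate from Definition~\ref{def:s_H} together with the facts $r(L^r) = r(L)$, $w(L^r) = w(L)$, and for positive $L$, $r(L \sqcup L') = r(L) + r(L')$, $w(L \sqcup L') = w(L) + w(L')$, and $w(L) = n(L)$.
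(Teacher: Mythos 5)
Your proposal is correct, but it is a genuinely different route from the paper's: the paper does not prove \Cref{prop:s_H-properties} internally at all, it simply notes that the statements "can be obtained by translating the propositions of \cite{Sano:2020}", i.e.\ it imports results proved there in the algebraic setting (Lee classes in $\Kh_h(L;S)$, divisibility measured modulo $h$-torsion, cf.\ \Cref{prop:d_h-by-homology}). You instead give a self-contained chain-level argument inside $\Cob^3_{\bullet/l}$: (1) the dot/hollow-dot swap, which is an involution of the category semilinear over $H\mapsto -H$ (it preserves (S), (S$_\bullet$), (NC) and the $t=0$ relation, fixes the undotted differentials of $[L]=[L^r]$, and sends $\ca(L)$ to $\cb(L)=\ca(L^r)$), so $H$- and $(-H)$-divisibility coincide; (2) the disjoint-union planar arc diagram, which is the special case of \Cref{thm:lee-cycle-decomp} with $T_0=\varnothing$, plus functoriality of $D$ on homotopies, so divisibilities add; (3) for positive $L$ the vanishing $[L]^{-1}=0$ upgrades stable homotopy to strict equality in $\Hom_{\bullet/l}(\varnothing,L^\lout)$, and after delooping this is a free $\ZZ[H]$-module in which $\ca(L)$ is a pure tensor in $X,Y$ whose $X^{\otimes r}$-coefficient is $1$, so it is not $H$-divisible — the same mechanism the paper uses implicitly in its torus-knot example ("no arrows coming into the initial object"). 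What your approach buys is a proof entirely within Bar-Natan's framework, with no appeal to the homology-mod-torsion formalism of the earlier paper; what the paper's citation buys is brevity and consistency with the already-established invariance theory there. One cosmetic slip: the facts $r(L\sqcup L')=r(L)+r(L')$ and $w(L\sqcup L')=w(L)+w(L')$ hold for arbitrary diagrams, so they should not be grouped under "for positive $L$" (only $w(L)=n(L)$ needs positivity); this does not affect the argument.
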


\begin{proposition}
\label{prop:ca-under-cob}
    Let $S$ be an oriented cobordism between links, represented as a movie between two link diagrams $L, L'$, such that each component has boundary in $L$. Then the corresponding chain map 
    \[
        \phi_S\colon [L] \to [L']
    \]
    gives
    \begin{align*}
        \phi_S \circ \ca(L) &\htpy \epsilon H^j \ca(L') \\
        \phi_S \circ \cb(L) &\htpy \epsilon' H^j \cb(L')
    \end{align*}
    where $j \in \ZZ$ is given by 
    \[
        j = \frac{\delta w(L, L') - \delta r(L, L') - \chi(S)}{2}
    \]
    and the signs $\epsilon, \epsilon' \in \{\pm 1\}$ satisfy 
    \[
        \epsilon\epsilon' = (-1)^j.
    \]
\end{proposition}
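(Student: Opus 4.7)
The plan is to proceed by induction on a movie decomposition of $S$ into elementary moves --- Reidemeister moves and Morse moves (births, deaths, saddles). Under composition of cobordisms along a closed $1$-manifold, the three quantities $\delta w(L,L')$, $\delta r(L,L')$, and $\chi(S)$ are all additive, so it suffices to verify the formula on each elementary step. The hypothesis that every component of $S$ has boundary in $L$ lets us arrange, by standard movie-move manipulations, that no pure birth appears: each birth can be paired with an immediately following saddle that merges the newborn disk into a component meeting $L$, and the composite is an annulus, hence an isotopy. Thus I would reduce to movies built solely from Reidemeister moves, deaths, and Seifert-compatible saddles.

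For Reidemeister moves the cobordism is an identity with $\chi = 0$, and the formula reduces exactly to \Cref{prop:ca-under-reidemeister}. For Morse moves I would verify the formula directly, using the tautological reading of elementary cobordisms in terms of $\iota, \epsilon, m, \Delta$ on $A = R[X]/(X^2 - HX)$, where $\ca$ is recorded by dots labelled $X$ and $\cb$ by dots labelled $Y = X - H$. For a death ($\chi = 1$, $\delta r = -1$), the relations $\epsilon(X) = \epsilon(Y) = 1$ give $\phi_S \circ \ca(L) \htpy \ca(L')$ and similarly for $\cb$, matching $j = 0$ with $\epsilon = \epsilon' = 1$. For a saddle merging two same-labelled Seifert circles ($\chi = -1$, $\delta r = -1$), the relations $X^2 = HX$ and $Y^2 = -HY$ give $\phi_S \circ \ca(L) \htpy H\,\ca(L')$ and $\phi_S \circ \cb(L) \htpy -H\,\cb(L')$, matching $j = 1$ with $\epsilon\epsilon' = -1$. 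For a saddle splitting one Seifert circle ($\chi = -1$, $\delta r = 1$), the relations $\Delta(X) = X \otimes X$ and $\Delta(Y) = Y \otimes Y$ give $\phi_S \circ \ca(L) \htpy \ca(L')$ and similarly for $\cb$, matching $j = 0$ with $\epsilon = \epsilon' = 1$. Orientability of $S$ prevents saddles from mixing $X$- and $Y$-labelled circles, so the degenerate case $XY = 0$ never arises.

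Summing the per-move contributions and invoking the additivity of $\delta w$, $\delta r$, and $\chi$ recovers $j = (\delta w(L,L') - \delta r(L,L') - \chi(S))/2$ together with the sign relation $\epsilon\epsilon' = (-1)^j$. The main obstacle I anticipate is the sign bookkeeping: the $\pm 1$ factors coming from Reidemeister substitutions via \Cref{prop:ca-under-reidemeister} must interact correctly with the Morse-move signs and with any reordering signs that appear when commuting moves past each other during movie rearrangement. A secondary technical point is to justify, from the boundary hypothesis alone, that the movie can always be arranged without pure births; this should follow from a standard general-position argument on the Morse function realising $S$, but warrants careful verification.
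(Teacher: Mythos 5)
Your overall strategy is the right one, and it is essentially the argument the paper itself delegates to \cite{Sano:2020}: the paper gives no in-text proof of this proposition (only the Reidemeister case is proved, in \Cref{sec:appendix}), and the intended proof is exactly a movie decomposition into Reidemeister and Morse moves, with additivity of $\delta w$, $\delta r$, $\chi$ and multiplicativity of the $H$-powers and signs. Your per-move arithmetic is correct: deaths give $j=0$ with $\epsilon=\epsilon'=1$ via $\epsilon(X)=\epsilon(Y)=1$; orientation-compatible merges give $j=1$ with $\epsilon\epsilon'=-1$ via $X^2=HX$, $Y^2=-HY$; splits give $j=0$ via $\Delta(X)=X\otimes X$, $\Delta(Y)=Y\otimes Y$ (using $t=0$); and R-moves are exactly \Cref{prop:ca-under-reidemeister}.

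Two steps, however, are asserted where they carry the real content. First, the claim that an oriented saddle never mixes $X$- and $Y$-labelled circles, and moreover that the circles of $L'$ carry the labels dictated by the \emph{standard checkerboard coloring of $L'$} (not just the labels you would naively transport from $L$), is the crux of the Morse-move case and needs an argument. It is true, and provable as follows: the band lies in a single complementary region $R$ of $L^\lout$, and orientation-compatibility forces the two attaching strands to be anti-parallel, i.e.\ to see $R$ on the same side; since regions adjacent across a strand have opposite colors, the two strands see the same color on their left, hence carry equal labels. For the labels in $L'$, note that the color of any point away from the move disk is computed by the parity of intersections of a ray to infinity avoiding the disk, and this is unchanged from $L^\lout$ to $L'^\lout$; hence spectator circles, the merged circle, and both offspring of a split keep their labels. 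Without some such argument the degenerate case $XY=0$ is not excluded and the induction is not closed. Second, your birth-elimination step is not quite right as stated: the first saddle meeting a newborn circle need not merge it into a component that already meets $L$ (it may first join two newborn circles), so the "pair each birth with the immediately following saddle" rearrangement does not obviously go through. The clean fix is the one you hint at: since every component of $S$ meets $L$, choose a Morse function on $S$ rel the incoming end with no index-$0$ critical points, hence a movie with no births at all; replacing the given movie by this one changes $\phi_S$ only by an overall sign and chain homotopy by \Cref{thm:formal-ckh-functoriality}, which is absorbed into $\epsilon,\epsilon'$ and does not affect $\epsilon\epsilon'=(-1)^j$. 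With these two points supplied, your proof is complete and coincides with the intended one.
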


\begin{corollary}
\label{cor:s_H-under-cob}
    Under the assumption of \Cref{prop:ca-under-cob}, we have
    \[
        s_H(L) \leq s_H(L') - \chi(S).
    \]
    Moreover if each component of $S$ has boundary in both $L$ and $L'$, then 
    \[
        |s_H(L) - s_H(L')| \leq -\chi(S).
    \]
\end{corollary}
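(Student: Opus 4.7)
The plan is to run \Cref{prop:ca-under-cob} through the definition of the refined Lee class and extract the inequality from the trivial fact that every cycle has non-negative $H$-divisibility. Starting from
\[
    \phi_S \circ \ca(L) \htpy \epsilon H^j \ca(L'), \qquad j = \tfrac{1}{2}\bigl(\delta w - \delta r - \chi(S)\bigr),
\]
I would substitute $\ca(L) \htpy_s H^{d_H(L)} \ca_s(L)$ and $\ca(L') \htpy_s H^{d_H(L')} \ca_s(L')$ to rewrite the relation as
\[
    H^{d_H(L)} \bigl( \phi_S \circ \ca_s(L) \bigr) \htpy_s \epsilon\, H^{j + d_H(L')} \ca_s(L').
\]
If one had $d_H(L) > j + d_H(L')$, then writing $K = d_H(L) - j - d_H(L') \geq 1$ and unfolding stable equivalence into a genuine $H^M$-amplified chain homotopy, the relation would force the cycle $\epsilon \ca_s(L') - H^K (\phi_S \circ \ca_s(L))$ to be stably null-homotopic, so $\ca_s(L') \htpy_s \epsilon H^K (\phi_S \circ \ca_s(L))$. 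This would make $\ca(L')$ stably $H^{d_H(L')+K}$-divisible, contradicting the maximality of $d_H(L')$. Hence $d_H(L) \leq j + d_H(L')$, and substituting the value of $j$ together with the identity $s_H = 2 d_H + w - r + 1$ rearranges to $s_H(L) \leq s_H(L') - \chi(S)$.

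For the ``moreover'' statement, I would apply the same argument to the reversed cobordism $S^{\ast} \colon L' \to L$ obtained by playing the movie of $S$ backwards. The hypothesis that every component of $S$ meets both $L$ and $L'$ guarantees that every component of $S^{\ast}$ meets its source $L'$, so \Cref{prop:ca-under-cob} applies symmetrically and yields $s_H(L') \leq s_H(L) - \chi(S^{\ast}) = s_H(L) - \chi(S)$. Combining the two inequalities gives $|s_H(L) - s_H(L')| \leq -\chi(S)$.

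The main step to execute carefully is the cancellation embedded in the above contradiction argument. By phrasing the key move as a contradiction with the maximality of $d_H(L')$ rather than as a literal division of $H$-powers, the argument avoids any need to assume injectivity of multiplication by $H$ on the hom-complex, sidestepping any potential $H$-torsion issues. Signs do not intervene in the final inequality since $d_H$ is insensitive to them.
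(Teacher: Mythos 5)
Your argument is correct and follows the route the paper intends: combine \Cref{prop:ca-under-cob} with the comparison of $H$-divisibilities along a chain map (the unlabeled proposition after \Cref{def:H-divisiblity}, which your contradiction-with-maximality step re-derives using only the definition of stable equivalence, so no division by $H$ is needed), then do the degree bookkeeping with $s_H = 2d_H + w - r + 1$, and obtain the two-sided bound by running the movie of $S$ backwards, noting $\chi(S^{\ast}) = \chi(S)$ and that the reversed cobordism satisfies the hypothesis with respect to its source $L'$.
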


\begin{corollary}
    $s_H$ is a link concordance invariant. 
\end{corollary}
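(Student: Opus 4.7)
The plan is to deduce this directly from \Cref{cor:s_H-under-cob}, unpacking the definition of link concordance. Recall that an (oriented) concordance between links $L$ and $L'$ is a smoothly properly embedded oriented cobordism $S \subset S^3 \times I$ with $\partial S = L \sqcup L'$, each of whose components is an annulus with one boundary circle in $L$ and the other in $L'$.

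First, I would represent the concordance $S$ by a generic movie between the diagrams $L$ and $L'$, so that $S$ is realized as a composition of Reidemeister and Morse moves and \Cref{prop:ca-under-cob} becomes applicable. The key numerical observation is that every component of $S$ is an annulus, hence has Euler characteristic $0$, and consequently $\chi(S) = 0$. Moreover, the component-boundary condition in the second half of \Cref{cor:s_H-under-cob}, namely that each component of $S$ has boundary in both $L$ and $L'$, is satisfied by the very definition of concordance.

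Applying the second inequality of \Cref{cor:s_H-under-cob} then gives
\[
    |s_H(L) - s_H(L')| \leq -\chi(S) = 0,
\]
which forces $s_H(L) = s_H(L')$. Since $s_H$ is already known to be a diagram-invariant by \Cref{prop:lee-class-invariance}, this shows that $s_H$ depends only on the concordance class of the underlying link.

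The only mildly delicate point, rather than a genuine obstacle, is the implicit passage from an abstract concordance $S$ to a movie presentation compatible with the cobordism-map formalism used in \Cref{prop:ca-under-cob}; once that is in place the argument is a two-line corollary. No new computations are required.
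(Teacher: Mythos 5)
Your argument is correct and is exactly the (implicit) deduction the paper intends: a concordance consists of annuli with boundary in both $L$ and $L'$, so $\chi(S)=0$ and the second inequality of \Cref{cor:s_H-under-cob} forces $s_H(L)=s_H(L')$. No meaningful difference from the paper's approach.
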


\begin{remark}
    The signs $\epsilon, \epsilon'$ appearing in \Cref{prop:ca-under-reidemeister,prop:ca-under-cob} can be used fix the sign indeterminacy in the functoriality of Khovanov homology for links, as in \cite{Sano:2020-b}. Is it possible to apply the same technique for tangles? See also \Cref{rem:lee-class-inv-for-tangle}. 
\end{remark}

\subsection{The reduced invariant $\tilde{s}_H$}

The reduced invariant $\tilde{s}_H$ can be defined similarly, by instead considering $2$-end tangles. Let $T$ be a $2$-end tangle diagram, whose strand connecting the two boundary points is oriented from $(0, -1)$ to $(0, 1)$. Let $T^\lout$ denote the Seifert resolution of $T$. The \textit{standard checkerboard coloring} of $T^\lout$ is the one that colors the left half of $\partial D^2$ black, so that the special arc in $T^\lout$ is labeled $X$ by the labeling rule. This gives a dotted cobordism, 
\[
    \tilde{\ca}(T)\colon e \to T^\lout
\]
except we require no dots to be placed on the special component. Analogous to \Cref{prop:lee-cycle-is-cycle}, we see that this defines a chain map
\[
    \tilde{\ca}(T)\colon [e] \to [T].
\]

\begin{definition}
    The chain map $\tilde{\ca}(T)$ is called the \textit{Lee cycle} of $T$, and its chain homotopy class the \textit{Lee class} of $T$.
\end{definition}

\begin{definition}
    The (stable) $H$-divisibility of the Lee class of $T$ is defined as 
    \[
        \tilde{d}_H(T) = \max_k\{\
            \exists z\colon [e] \to [T],\ (H^k) z \htpy_s \tilde{\ca}(T)\ 
        \}.
    \]
\end{definition}

A proposition analogous to \Cref{prop:ca-under-cob} holds for the reduced Lee class, and hence the following quantity gives an invariant of $2$-end tangles. 

\begin{definition}
    The $2$-end tangle invariant $\tilde{s}_H$ is defined by 
    \[
        \tilde{s}_H(T) = 2\tilde{d}_H(T) + w(T) - r(T).
    \]
    Here, $r(T)$ denotes the number of Seifert circles of $T$, i.e.\ the number of components of $T^\lout$ excluding the special arc. 
\end{definition}

We remark that propositions analogous to \Cref{prop:s_H-properties,prop:ca-under-cob} also hold for the reduced invariant. See \cite{Sano-Sato:2023} for further details. 

\subsection{Recovering the $s$-invariant}

\Cref{def:s_H} is formally identical to the link invariant $s_h$ defined in \cite{Sano:2020}. Indeed, we show that $s_h$, and in particular $s$, can be reformulated in the current context. Suppose we are given a commutative ring $S$ with a fixed element $h \in S$, together with a ring homomorphism $\phi\colon R \to S$ with $\phi(H) = h$. We may turn $\Cob_{\bullet/l}(B)$ into an $S$-linear category, denoted $\Cob_{\bullet/l}(B; S)$, by tensoring $S$ to the hom-modules of $\Cob_{\bullet/l}(B)$ over $R$. Note that the graded structure of $\Cob_{\bullet/l}(B)$ may collapse unless $S$ is graded and $\deg h = -2$. The corresponding category of complexes is denoted $\Kob_{\bullet/l}(B; S)$, and the Lee cycle $\ca(L; S)$ over $S$ is similarly defined as a chain map in $\Kob_{\bullet}(\underline{0}; S)$. Its $h$-divisibility is defined as 
\[
    d_h(L; S) = \max_k\{\ \exists z\colon [\varnothing] \to [L],\ (H^k) z \htpy_s \ca(L; S)\ \},
\]
and the invariant $s_h$ is given by
\[
    s_h(L; S) = 2d_h(L; S) + w(L) - r(L) + 1.
\]
Here, both $d_h$ and $s_h$ are allowed to be infinite. 

\begin{proposition}
\label{prop:d_h-by-homology}
    If $S$ is an integral domain and $h$ is non-zero, the above defined quantities $d_h$ and $s_h$ coincide with those defined in \cite{Sano:2020}. 
\end{proposition}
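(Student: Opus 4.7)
The plan is to push everything through the tautological functor of \Cref{prop:kom-tautological} and translate stable equivalence of chain maps into equivalence modulo $h$-torsion in the classical complex $\CKh_h(L;S)$.

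First, I would extend the tautological functor to coefficients in $S$ by tensoring over $R$, obtaining
\[
    \mathcal{F}_S := \Hom_{\bullet/l}([\varnothing],-) \otimes_R S \colon \Kob_{\bullet/l}(\underline{0};S) \longrightarrow \Kom(S\text{-}\Mod).
\]
By (the $S$-linear version of) \Cref{prop:kom-tautological}, we have $\mathcal{F}_S([L]) \cong \CKh_h(L;S)$ as $S$-chain complexes. Since $\mathcal{F}_S([\varnothing]) = S$ concentrated in homological degree zero, the hom-complex $\Hom_{\Kob_{\bullet/l}(\underline{0};S)}([\varnothing],[L])$ is canonically identified as an $S$-chain complex with $\CKh_h(L;S)$. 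Under this identification, the $H$-action on the left corresponds to multiplication by $h$ on the right; degree-$0$ chain maps correspond to $0$-cycles; null-homotopic chain maps correspond to boundaries; and by construction $\ca(L;S)$ corresponds to the classical Lee cycle representing $[\ca(L)] \in \Kh^0_h(L;S)$.

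Next, I would unpack stable equivalence: two degree-$0$ chain maps $z, z' \colon [\varnothing] \to [L]$ satisfy $z \htpy_s z'$ iff $H^k(z-z')$ is a boundary in the hom-complex for some $k \geq 0$, which under $\mathcal{F}_S$ translates to $h^k([\mathcal{F}_S(z)] - [\mathcal{F}_S(z')]) = 0$, i.e.\ to $[\mathcal{F}_S(z)] - [\mathcal{F}_S(z')]$ being $h$-torsion in $\Kh^0_h(L;S)$. Hence stable equivalence classes of $0$-cycles in the hom-complex are in $H$-equivariant bijection with the quotient $\Kh^0_h(L;S)/(h\text{-torsion})$. The condition ``$(H^k)\,z \htpy_s \ca(L;S)$ for some $z$'' therefore becomes ``$h^k[\xi] = [\ca(L;S)]$ in $\Kh^0_h(L;S)/(h\text{-torsion})$ for some $[\xi]$''; the maximal such $k$ is precisely the definition of $d_h$ used in \cite{Sano:2020}, and the common formula $s_h = 2d_h + w(L) - r(L) + 1$ transfers the agreement from $d_h$ to $s_h$.

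The main point requiring care is that the two maxima are finite and genuinely coincide. Finiteness uses that $S$ is an integral domain with $h \neq 0$: otherwise arbitrarily large $k$ with $H^k z \htpy_s \ca(L;S)$ would force $\qdeg z$ to diverge in a hom-module that is $q$-bounded above in each homological degree. Granted finiteness, equality of the two maxima follows because $h$-divisibility in $\Kh^0_h(L;S)/(h\text{-torsion})$ is manifestly invariant under the choice of chain-level representative, which is exactly what the bijection of the previous paragraph encodes; the hypothesis that $h$ is a non-zero-divisor in $S$ is used only to ensure that multiplication by $h$ on the quotient is well-behaved, matching the setup of \cite{Sano:2020}.
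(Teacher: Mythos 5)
Your first two steps are fine and coincide with the paper's argument: the identification $\CKh_h(L;S) = \Hom_{\bullet/l}([\varnothing],[L])$ is already part of the setup, and unpacking stable equivalence gives exactly the paper's chain $\ca(L;S) \htpy_s h^k z \Leftrightarrow [\ca(L;S)] \equiv h^k[z] \pmod{h\text{-}\Tor}$. The gap is in your final step, where you assert that divisibility in $\Kh^0_h(L;S)/(h\text{-torsion})$ is ``precisely the definition of $d_h$ used in \cite{Sano:2020}.'' It is not: the invariant there is defined through divisibility of the Lee class modulo the \emph{full} torsion submodule of $\Kh_h(L;S)$. The missing bridge is the statement that every torsion element of $\Kh_h(L;S)$ is an $h$-torsion element, so that the two quotients agree; this is a genuine input, quoted in the paper as \cite[Corollary 2.10]{Sano:2020}, and it is exactly where the hypothesis that $S$ is an integral domain and $h \neq 0$ does its work.

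By contrast, your proposal spends that hypothesis on finiteness of the two maxima and on ``multiplication by $h$ being well-behaved on the quotient,'' neither of which is the actual issue: the paper allows $d_h$ to be infinite in general and its proof makes no use of finiteness, and your $q$-degree argument for finiteness is in any case shaky over a general $S$, since the paper notes that the quantum grading may collapse unless $S$ is graded with $\deg h = -2$. So as written, the proof conflates ``modulo $h$-torsion'' with ``modulo torsion''; to close the argument you must either invoke the cited torsion result or reprove it under the stated hypotheses.
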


\begin{proof}
    For a cycle $z \in \CKh_h(L; S)$ and an integer $k \geq 0$, the following are equivalent:
    \begin{align*}
        \ca(L; S) \htpy_s h^k z 
        &\Leftrightarrow \exists l \geq 0,\ h^l \ca(L; S) \htpy h^l h^k z \\
        &\Leftrightarrow \exists l \geq 0,\ h^l [\ca(L; S)] = h^l h^k [z] \\
        &\Leftrightarrow [\ca(L; S)] \equiv h^k [z] \pmod{h \text{-}\Tor} \\
        &\Leftrightarrow [\ca(L; S)] \equiv h^k [z] \pmod{\Tor}
    \end{align*}
    Here, $[z]$ denotes the chain homotopy class (or equivalently the homology class) of $z$. The final equivalence follows from \cite[Corollary 2.10]{Sano:2020} which states that every torsion of $\Kh_h(T; S)$ is an $h$-torsion. 
\end{proof}

In particular, when $(S, h) = (F[H], H)$ for any field $F$, it is proved in \cite[Theorem 3]{Sano:2020} (for $\fchar F \neq 2$) and in \cite[Theorem 2]{Sano-Sato:2023} (including $\fchar F = 2$) that the invariant $s_h$ coincides with the $s$-invariant over $F$. In this case, the category $\Cob_{\bullet/l}(B; S)$ is obtained by tensoring $F$ to the hom-modules of $\Cob_{\bullet/l}(B)$ over $\ZZ$, so instead we write $\Cob_{\bullet/l}(B; F)$ and similarly for the other notations. We conclude, 

\begin{restate-theorem}[thm:s-reformulate]
    For a knot diagram $K$, the invariant
    \[
        s_H(K; F) = 2d_H(K; F) + w(K) - r(K) + 1
    \]
    coincides with the Rasmussen invariant $s$ over $F$. 
\end{restate-theorem}

The invariant $s_H = s_H(-; \ZZ)$ is universal in the following sense.

\begin{proposition}
\label{prop:s_H-universal}
    For any pair $(S, h)$, the following inequalities hold
    \[
        d_H \leq d_h,\quad s_H \leq s_h.
    \]
\end{proposition}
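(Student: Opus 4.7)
The plan is to use the naturality of all the constructions under the base-change functor induced by $\phi\colon R \to S$. Concretely, tensoring hom-modules with $S$ over $R$ along $\phi$ defines an $R$-linear, graded-preserving functor
\[
    \Phi\colon \Cob^3_{\bullet/l}(B) \longrightarrow \Cob^3_{\bullet/l}(B; S),
\]
which extends to the additive closures and then to the complex categories, yielding a functor $\Phi\colon \Kob_\bullet(B) \to \Kob_\bullet(B; S)$. By construction $\Phi(H) = h$, and since the local relations used to define the target already include the image of those in the source, $\Phi$ is well-defined.

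The second step is to observe that $\Phi$ sends the Lee cycle $\ca(L)$, regarded as a chain map $[\varnothing] \to [L]$ in $\Kob_\bullet(\underline{0})$, to the Lee cycle $\ca(L; S)$ in $\Kob_\bullet(\underline{0}; S)$. This is immediate from the definition of $\ca(L)$ as a dotted cobordism built from cups, dots, and hollow dots determined purely by the diagram $L$, which is unchanged by base change.

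Now suppose $k \leq d_H(L)$, so that there exists a chain map $z\colon [\varnothing] \to [L]$ and an integer $l \geq 0$ with
\[
    H^{l+k}\, z \htpy H^{l}\, \ca(L)
\]
in $\Kob_\bullet(\underline{0})$. Applying the functor $\Phi$, which carries chain homotopies to chain homotopies, yields
\[
    h^{l+k}\, \Phi(z) \htpy h^{l}\, \ca(L; S)
\]
in $\Kob_\bullet(\underline{0}; S)$, i.e., $\ca(L; S) \htpy_s h^k \Phi(z)$. Hence $k \leq d_h(L; S)$, and taking the maximum over $k$ gives $d_H(L) \leq d_h(L; S)$.

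Finally, since $w(L)$ and $r(L)$ depend only on the diagram $L$ and not on the coefficient ring, the definitions
\[
    s_H(L) = 2 d_H(L) + w(L) - r(L) + 1, \qquad s_h(L; S) = 2 d_h(L; S) + w(L) - r(L) + 1
\]
give $s_H(L) \leq s_h(L; S)$ as well. The only mildly subtle point, and the one I would verify carefully, is that $\Phi$ really does descend through all the quotients (local relations, additive closure, complex category, stable equivalence) in a way that preserves chain homotopies; but this is routine because $\Phi$ is defined by an $R$-linear tensor product and all structures involved are $R$-linear and compatible.
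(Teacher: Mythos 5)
Your argument is correct and is exactly the reasoning the paper leaves implicit: Proposition \ref{prop:s_H-universal} is stated without proof, being an immediate consequence of the fact that the base-change functor $\Kob_\bullet(\underline{0}) \to \Kob_\bullet(\underline{0}; S)$ induced by $\phi$ sends $H$ to $h$, sends $\ca(L)$ to $\ca(L; S)$, and preserves chain homotopies, hence carries every stable divisibility relation $H^k z \htpy_s \ca(L)$ over $R$ to $h^k \Phi(z) \htpy_s \ca(L; S)$ over $S$. The only cosmetic caveats --- that the quantum grading may collapse over $S$ and that $d_h$ may be infinite --- do not affect the inequality, and your bookkeeping with the stabilizing power $h^l$ matches Definition \ref{def:stab-eqiuv} correctly.
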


\begin{remark}
    The inequality of \Cref{prop:s_H-universal} was used in \cite{ISST:2024} to prove that the Rasmussen invariants $s^F$ of $K = 9_{46}$ among all fields $F$ are equal to $0$, by showing that $s_H(K) = s_H(K^*) = 0$. In general, if a knot $K$ satisfies $s_H(K) = -s_H(K^*)$, then it follows that all $s^F(K)$ are equal to $s_H(K)$. This is not always the case, for there are knots such that $s^\QQ \neq s^{\FF_2}$.
\end{remark}

Again, analogous statements hold for the reduced versions. In particular, the reduced invariant for $2$-end tangles
\[
    \tilde{s}_H(T; F) = \tilde{d}_H(T; F) + w(T) - r(T)
\]
coincide with the Rasmussen invariant of the closure $\bar{T}$ of $T$. In general, the reduced setting simplifies the situation for knots: when $S$ is a PID and $h$ is prime in $S$, the reduced homology $\widetilde{\Kh}_h(K; S)$ of a knot $K$ has rank $1$ and the invariant $\tilde{s}_h$ is \textit{slice-torus}. In contrast, the unreduced homology $\Kh_h(K; S)$ has rank $2$ and the invariant $s_h$ is not necessarily slice-torus, as exemplified by the case $(S, h) = (\ZZ, 3)$. See \cite{Sano-Sato:2023} for further details. 

\subsection{Lee cycle for tangle diagrams}
\label{subsec:lee-cycle-tangles}

Next, we extend the definition of Lee cycles to tangle diagrams. 
% , so that the Lee cycle of a link diagram can be reconstructed from those of its tangle pieces in a manageable way. 
First, consider the Frobenius extension $(\bar{R}, \bar{A})$ given by $\bar{R} = \ZZ[H^\pm]$ and $\bar{A} = A \otimes_R \bar{R}$. Borrowing the idea from \cite{Khovanov:2022}, define idempotents 
\[
    e_X = X/H,\quad 
    e_Y = Y/(-H)
\]
in $\bar{A}$. Then we have
\begin{gather*}
    e_X^2 = 1,\quad e_Y^2 = 1,\quad e_X e_Y = 0, \\ 
    X e_X = X,\quad Y e_Y = Y,\quad Y e_X = X e_Y = 0.
\end{gather*}
Corresponding to the idempotents $e_X, e_Y$, the \textit{reduced dot} $\mathrlap{/}\bullet$ and the \textit{reduced hollow dot} $\mathrlap{/}\circ$ are defined as
\begin{center}
    \tikzset{every picture/.style={line width=0.75pt}} %set default line width to 0.75pt        

\begin{tikzpicture}[x=0.75pt,y=0.75pt,yscale=-.75,xscale=.75]
%uncomment if require: \path (0,67); %set diagram left start at 0, and has height of 67

%Shape: Rectangle [id:dp771364038701766] 
\draw  [dash pattern={on 0.84pt off 2.51pt}] (10.93,11.56) -- (56.56,11.56) -- (56.56,55.49) -- (10.93,55.49) -- cycle ;
%Shape: Rectangle [id:dp9274977007582983] 
\draw  [dash pattern={on 0.84pt off 2.51pt}] (128.41,12.06) -- (175.08,12.06) -- (175.08,56.99) -- (128.41,56.99) -- cycle ;
%Shape: Rectangle [id:dp7095875967515791] 
\draw  [dash pattern={on 0.84pt off 2.51pt}] (213.95,13.06) -- (258.54,13.06) -- (258.54,55.99) -- (213.95,55.99) -- cycle ;
%Shape: Circle [id:dp8298032122243446] 
\draw  [fill={rgb, 255:red, 0; green, 0; blue, 0 }  ,fill opacity=1 ] (148.54,34.52) .. controls (148.54,32.75) and (149.97,31.32) .. (151.74,31.32) .. controls (153.51,31.32) and (154.95,32.75) .. (154.95,34.52) .. controls (154.95,36.29) and (153.51,37.73) .. (151.74,37.73) .. controls (149.97,37.73) and (148.54,36.29) .. (148.54,34.52) -- cycle ;
%Shape: Circle [id:dp9367598816280328] 
\draw  [color={rgb, 255:red, 0; green, 0; blue, 0 }  ,draw opacity=1 ][fill={rgb, 255:red, 255; green, 255; blue, 255 }  ,fill opacity=1 ] (233.04,34.52) .. controls (233.04,32.75) and (234.47,31.32) .. (236.24,31.32) .. controls (238.01,31.32) and (239.45,32.75) .. (239.45,34.52) .. controls (239.45,36.29) and (238.01,37.73) .. (236.24,37.73) .. controls (234.47,37.73) and (233.04,36.29) .. (233.04,34.52) -- cycle ;
%Shape: Circle [id:dp4395547045715563] 
\draw  [fill={rgb, 255:red, 0; green, 0; blue, 0 }  ,fill opacity=1 ] (30.54,33.52) .. controls (30.54,31.75) and (31.97,30.32) .. (33.74,30.32) .. controls (35.51,30.32) and (36.95,31.75) .. (36.95,33.52) .. controls (36.95,35.29) and (35.51,36.73) .. (33.74,36.73) .. controls (31.97,36.73) and (30.54,35.29) .. (30.54,33.52) -- cycle ;
%Shape: Rectangle [id:dp732979996168103] 
\draw  [dash pattern={on 0.84pt off 2.51pt}] (346.41,10.91) -- (393.08,10.91) -- (393.08,55.84) -- (346.41,55.84) -- cycle ;
%Shape: Circle [id:dp8068057692973063] 
\draw  [color={rgb, 255:red, 0; green, 0; blue, 0 }  ,draw opacity=1 ][fill={rgb, 255:red, 255; green, 255; blue, 255 }  ,fill opacity=1 ] (366.54,33.38) .. controls (366.54,31.61) and (367.97,30.17) .. (369.74,30.17) .. controls (371.51,30.17) and (372.95,31.61) .. (372.95,33.38) .. controls (372.95,35.15) and (371.51,36.58) .. (369.74,36.58) .. controls (367.97,36.58) and (366.54,35.15) .. (366.54,33.38) -- cycle ;
%Straight Lines [id:da47558233870033517] 
\draw    (40,28.06) -- (28.54,39.52) ;
%Straight Lines [id:da4693965240871042] 
\draw    (242.5,28.06) -- (231.04,39.52) ;

% Text Node
\draw (66,25.32) node [anchor=north west][inner sep=0.75pt]    {$=$};
% Text Node
\draw (189,26.32) node [anchor=north west][inner sep=0.75pt]    {$,$};
% Text Node
\draw (90,24.24) node [anchor=north west][inner sep=0.75pt]    {$H^{-1}$};
% Text Node
\draw (282.77,31.77) node    {$=$};
% Text Node
\draw (295,23.1) node [anchor=north west][inner sep=0.75pt]    {$-H^{-1}$};

\end{tikzpicture}
\end{center}

\begin{figure}[t]
    \centering
    \begin{subfigure}[t]{0.25\linewidth}
        \centering
        \tikzset{every picture/.style={line width=0.75pt}} %set default line width to 0.75pt        

\begin{tikzpicture}[x=0.75pt,y=0.75pt,yscale=-.9,xscale=.9]
%uncomment if require: \path (0,107); %set diagram left start at 0, and has height of 107

\clip (0,0) rectangle + (120, 110);

%Curve Lines [id:da15326191008464274] 
\draw [line width=1.5]    (84.58,38.39) .. controls (77.5,-21) and (18,22.5) .. (56.36,74.01) ;
%Curve Lines [id:da9013974696883018] 
\draw [line width=1.5]    (67.15,83.58) .. controls (108,112) and (137,29) .. (49.5,42.5) ;
%Curve Lines [id:da7121874323939582] 
\draw [line width=1.5]    (39.13,43.95) .. controls (1.7,51.35) and (29.68,138.29) .. (83.06,51.44) ;
%Straight Lines [id:da7418979216200584] 
\draw    (54.5,13) -- (48.5,13) ;
\draw [shift={(45.5,13)}, rotate = 360] [fill={rgb, 255:red, 0; green, 0; blue, 0 }  ][line width=0.08]  [draw opacity=0] (8.93,-4.29) -- (0,0) -- (8.93,4.29) -- cycle    ;
%Shape: Donut [id:dp17448941596299938] 
\draw  [color={rgb, 255:red, 255; green, 255; blue, 255 }  ,draw opacity=1 ][fill={rgb, 255:red, 255; green, 255; blue, 255 }  ,fill opacity=1 ,even odd rule] (21,56) .. controls (21,32.8) and (39.8,14) .. (63,14) .. controls (86.2,14) and (105,32.8) .. (105,56) .. controls (105,79.2) and (86.2,98) .. (63,98) .. controls (39.8,98) and (21,79.2) .. (21,56)(11,56) .. controls (11,27.28) and (34.28,4) .. (63,4) .. controls (91.72,4) and (115,27.28) .. (115,56) .. controls (115,84.72) and (91.72,108) .. (63,108) .. controls (34.28,108) and (11,84.72) .. (11,56) ;
%Shape: Circle [id:dp6869574614495939] 
\draw  [dash pattern={on 4.5pt off 4.5pt}] (22.24,56) .. controls (22.24,33.49) and (40.49,15.24) .. (63,15.24) .. controls (85.51,15.24) and (103.76,33.49) .. (103.76,56) .. controls (103.76,78.51) and (85.51,96.76) .. (63,96.76) .. controls (40.49,96.76) and (22.24,78.51) .. (22.24,56) -- cycle ;

%Shape: Circle [id:dp7009796048638841] 
\draw  [fill={rgb, 255:red, 0; green, 0; blue, 0 }  ,fill opacity=1 ] (47.4,17.4) .. controls (47.4,16.52) and (48.12,15.8) .. (49,15.8) .. controls (49.88,15.8) and (50.6,16.52) .. (50.6,17.4) .. controls (50.6,18.28) and (49.88,19) .. (49,19) .. controls (48.12,19) and (47.4,18.28) .. (47.4,17.4) -- cycle ;
%Shape: Circle [id:dp4029866274809353] 
\draw  [fill={rgb, 255:red, 0; green, 0; blue, 0 }  ,fill opacity=1 ] (76.6,17.6) .. controls (76.6,16.72) and (77.32,16) .. (78.2,16) .. controls (79.08,16) and (79.8,16.72) .. (79.8,17.6) .. controls (79.8,18.48) and (79.08,19.2) .. (78.2,19.2) .. controls (77.32,19.2) and (76.6,18.48) .. (76.6,17.6) -- cycle ;
%Shape: Circle [id:dp4629788763173265] 
\draw  [fill={rgb, 255:red, 0; green, 0; blue, 0 }  ,fill opacity=1 ] (102.4,56.6) .. controls (102.4,55.72) and (103.12,55) .. (104,55) .. controls (104.88,55) and (105.6,55.72) .. (105.6,56.6) .. controls (105.6,57.48) and (104.88,58.2) .. (104,58.2) .. controls (103.12,58.2) and (102.4,57.48) .. (102.4,56.6) -- cycle ;
%Shape: Circle [id:dp16114814790833754] 
\draw  [fill={rgb, 255:red, 0; green, 0; blue, 0 }  ,fill opacity=1 ] (87,88.6) .. controls (87,87.72) and (87.72,87) .. (88.6,87) .. controls (89.48,87) and (90.2,87.72) .. (90.2,88.6) .. controls (90.2,89.48) and (89.48,90.2) .. (88.6,90.2) .. controls (87.72,90.2) and (87,89.48) .. (87,88.6) -- cycle ;
%Shape: Circle [id:dp020126703215711195] 
\draw  [fill={rgb, 255:red, 0; green, 0; blue, 0 }  ,fill opacity=1 ] (19.8,61.4) .. controls (19.8,60.52) and (20.52,59.8) .. (21.4,59.8) .. controls (22.28,59.8) and (23,60.52) .. (23,61.4) .. controls (23,62.28) and (22.28,63) .. (21.4,63) .. controls (20.52,63) and (19.8,62.28) .. (19.8,61.4) -- cycle ;
%Shape: Circle [id:dp16895331144783732] 
\draw  [fill={rgb, 255:red, 0; green, 0; blue, 0 }  ,fill opacity=1 ] (36.2,89.4) .. controls (36.2,88.52) and (36.92,87.8) .. (37.8,87.8) .. controls (38.68,87.8) and (39.4,88.52) .. (39.4,89.4) .. controls (39.4,90.28) and (38.68,91) .. (37.8,91) .. controls (36.92,91) and (36.2,90.28) .. (36.2,89.4) -- cycle ;

\end{tikzpicture}
        \caption{}
        \label{fig:tangle-lee-cycle-a}
    \end{subfigure}
    \begin{subfigure}[t]{0.25\linewidth}
        \centering
        \tikzset{every picture/.style={line width=0.75pt}} %set default line width to 0.75pt        

\begin{tikzpicture}[x=0.75pt,y=0.75pt,yscale=-.9,xscale=.9]
%uncomment if require: \path (0,107); %set diagram left start at 0, and has height of 107

\clip (0,0) rectangle + (120, 110);

%Shape: Polygon Curved [id:ds17713549162592357] 
\draw  [color={rgb, 255:red, 208; green, 2; blue, 27 }  ,draw opacity=1 ][line width=0.75]  (59.5,7) .. controls (72.5,6) and (77.5,14) .. (79.5,31) .. controls (81.5,48) and (89.27,38.04) .. (99.5,49) .. controls (109.73,59.96) and (108.5,68) .. (106.5,77) .. controls (104.5,86) and (92.5,94) .. (75.5,85) .. controls (58.5,76) and (65.5,77) .. (50.5,86) .. controls (35.5,95) and (26.5,85) .. (20.5,78) .. controls (14.5,71) and (15.5,56) .. (22.5,51) .. controls (29.5,46) and (40.5,46) .. (42.5,31) .. controls (44.5,16) and (46.5,8) .. (59.5,7) -- cycle ;
%Shape: Polygon Curved [id:ds220122662274978] 
\draw  [color={rgb, 255:red, 0; green, 116; blue, 255 }  ,draw opacity=1 ][line width=1.5]  (59.5,38) .. controls (71.5,38) and (85,35) .. (78.5,57) .. controls (72,79) and (55,81) .. (45,61) .. controls (35,41) and (47.5,38) .. (59.5,38) -- cycle ;
%Shape: Donut [id:dp3569216048282726] 
\draw  [color={rgb, 255:red, 255; green, 255; blue, 255 }  ,draw opacity=1 ][fill={rgb, 255:red, 255; green, 255; blue, 255 }  ,fill opacity=1 ,even odd rule] (17.38,52) .. controls (17.38,27.91) and (36.91,8.38) .. (61,8.38) .. controls (85.09,8.38) and (104.62,27.91) .. (104.62,52) .. controls (104.62,76.09) and (85.09,95.62) .. (61,95.62) .. controls (36.91,95.62) and (17.38,76.09) .. (17.38,52)(7,52) .. controls (7,22.18) and (31.18,-2) .. (61,-2) .. controls (90.82,-2) and (115,22.18) .. (115,52) .. controls (115,81.82) and (90.82,106) .. (61,106) .. controls (31.18,106) and (7,81.82) .. (7,52) ;
%Shape: Ellipse [id:dp16911967993067456] 
\draw  [dash pattern={on 4.5pt off 4.5pt}] (18.68,52) .. controls (18.68,28.63) and (37.63,9.68) .. (61,9.68) .. controls (84.37,9.68) and (103.32,28.63) .. (103.32,52) .. controls (103.32,75.37) and (84.37,94.32) .. (61,94.32) .. controls (37.63,94.32) and (18.68,75.37) .. (18.68,52) -- cycle ;

%Shape: Ellipse [id:dp016289843171898633] 
\draw  [fill={rgb, 255:red, 0; green, 0; blue, 0 }  ,fill opacity=1 ] (44.8,11.92) .. controls (44.8,11) and (45.54,10.25) .. (46.46,10.25) .. controls (47.38,10.25) and (48.12,11) .. (48.12,11.92) .. controls (48.12,12.83) and (47.38,13.58) .. (46.46,13.58) .. controls (45.54,13.58) and (44.8,12.83) .. (44.8,11.92) -- cycle ;
%Shape: Ellipse [id:dp13230856704649308] 
\draw  [fill={rgb, 255:red, 0; green, 0; blue, 0 }  ,fill opacity=1 ] (75.12,12.12) .. controls (75.12,11.21) and (75.87,10.46) .. (76.78,10.46) .. controls (77.7,10.46) and (78.45,11.21) .. (78.45,12.12) .. controls (78.45,13.04) and (77.7,13.78) .. (76.78,13.78) .. controls (75.87,13.78) and (75.12,13.04) .. (75.12,12.12) -- cycle ;
%Shape: Ellipse [id:dp0018608289945439838] 
\draw  [fill={rgb, 255:red, 0; green, 0; blue, 0 }  ,fill opacity=1 ] (101.92,52.62) .. controls (101.92,51.71) and (102.66,50.96) .. (103.58,50.96) .. controls (104.49,50.96) and (105.24,51.71) .. (105.24,52.62) .. controls (105.24,53.54) and (104.49,54.28) .. (103.58,54.28) .. controls (102.66,54.28) and (101.92,53.54) .. (101.92,52.62) -- cycle ;
%Shape: Ellipse [id:dp07967166802575776] 
\draw  [fill={rgb, 255:red, 0; green, 0; blue, 0 }  ,fill opacity=1 ] (85.92,85.85) .. controls (85.92,84.94) and (86.67,84.19) .. (87.58,84.19) .. controls (88.5,84.19) and (89.25,84.94) .. (89.25,85.85) .. controls (89.25,86.77) and (88.5,87.52) .. (87.58,87.52) .. controls (86.67,87.52) and (85.92,86.77) .. (85.92,85.85) -- cycle ;
%Shape: Ellipse [id:dp6053029952522743] 
\draw  [fill={rgb, 255:red, 0; green, 0; blue, 0 }  ,fill opacity=1 ] (16.14,57.61) .. controls (16.14,56.69) and (16.88,55.95) .. (17.8,55.95) .. controls (18.72,55.95) and (19.46,56.69) .. (19.46,57.61) .. controls (19.46,58.53) and (18.72,59.27) .. (17.8,59.27) .. controls (16.88,59.27) and (16.14,58.53) .. (16.14,57.61) -- cycle ;
%Shape: Ellipse [id:dp13145830481472043] 
\draw  [fill={rgb, 255:red, 0; green, 0; blue, 0 }  ,fill opacity=1 ] (33.17,86.68) .. controls (33.17,85.77) and (33.91,85.02) .. (34.83,85.02) .. controls (35.75,85.02) and (36.49,85.77) .. (36.49,86.68) .. controls (36.49,87.6) and (35.75,88.35) .. (34.83,88.35) .. controls (33.91,88.35) and (33.17,87.6) .. (33.17,86.68) -- cycle ;

% Text Node
\draw (79.5,55.4) node [anchor=north west][inner sep=0.75pt]  [color={rgb, 255:red, 0; green, 117; blue, 255 }  ,opacity=1 ]  {$Y$};
% Text Node
\draw (24,26.4) node [anchor=north west][inner sep=0.75pt]  [font=\footnotesize]  {$\textcolor[rgb]{0.82,0.01,0.11}{e}\textcolor[rgb]{0.82,0.01,0.11}{_{X}}$};
% Text Node
\draw (83,25.4) node [anchor=north west][inner sep=0.75pt]  [font=\footnotesize]  {$\textcolor[rgb]{0.82,0.01,0.11}{e}\textcolor[rgb]{0.82,0.01,0.11}{_{X}}$};
% Text Node
\draw (57,79.4) node [anchor=north west][inner sep=0.75pt]  [font=\footnotesize]  {$\textcolor[rgb]{0.82,0.01,0.11}{e}\textcolor[rgb]{0.82,0.01,0.11}{_{X}}$};

\end{tikzpicture}
        \caption{}
        \label{fig:tangle-lee-cycle-b}
    \end{subfigure}
    \begin{subfigure}[t]{0.45\linewidth}
        \centering
        \input{tikzpictures/tangle-lee-cycle-c}
        \vspace{-1.1em} % wtf?
        \caption{}
        \label{fig:tangle-lee-cycle-c}
    \end{subfigure}
    \caption{The Lee cycle for a tangle diagram.}
    \label{fig:tangle-lee-cycle}    
\end{figure}

Let $T$ be a tangle diagram with $\partial T = B$. Let $T^\lout$ denote the Seifert resolution of $T$ as before, and $T^\lin$ the diagram obtained by removing all of the closed components of $T^\lout$. Note that both $T^\lin$ and $T^\lout$ are objects of $\Cob(B)$, and when $B = \underline{0}$ we have $T^\lin = \varnothing$. Let $S_T$ be the cobordism from $T^\lin$ to $T^\lout$ consisting of a cup $D$ for each circle $\gamma \subset T^\lout$ such that $\del D = \gamma$, and a sheet $\gamma \times I$ for each arc $\gamma \subset T^\lin$. Given a checkerboard coloring $c$ for $T^\lout$, the $XY$-labeling on $T^\lout$ is obtained as before, except that the arc components are labeled by $e_X$ or $e_Y$ instead of $X$ or $Y$ (see \Cref{fig:tangle-lee-cycle-a,fig:tangle-lee-cycle-b}). This gives a dotted cobordism
\[
    \ca(T, c)\colon T^\lin \to T^\lout
\]
in $\Cob^3_{\bullet}(B; \bar{R})$, by decorating each component $S_i$ of $S_T$ according to the labeling on $T^\lout$ (see \Cref{fig:tangle-lee-cycle-c}). Similar to \Cref{prop:lee-cycle-is-cycle}, it can be proved that this defines a chain map 
\[
    \ca(T, c)\colon [T^\lin] \to [T]
\] 
in $\Kob^3_{\bullet}(B; \bar{R})$.

\begin{definition}
\label{def:lee-cycle-tangle}
    The chain map $\ca(T, c) \in \Hom_{\Kob^3_{\bullet}(B; \bar{R})}([T^\lin], [T])$ is called the \textit{Lee cycle} of $T$, with respect to the checkerboard coloring $c$.
\end{definition}

Now, suppose $T$ can be expressed as $T = D(T_1, \ldots, T_d)$ with a $d$-input planar arc diagram $D$ and smaller tangle diagrams $T_i$. Note that we have $T^\lout = D({T_1}^\lout, \ldots, {T_d}^\lout)$, but not necessarily $T^\lin = D({T_1}^\lin, \ldots, {T_d}^\lin)$, since some of the arcs of ${T_i}^\lin$ may be connected by $D$ to form new circles. Let $T_0$ denote the horizontal composition $D({T_1}^\lin, \ldots, {T_d}^\lin)$. The underlying cobordism $S_T$ of $\ca(T, c)$ can be decomposed as 
\[
\begin{tikzcd}[column sep=5em]
    T^\lin 
        \arrow[r, "S_{T_0}"] & 
    T_0 
        \arrow[r, "{D(S_{T_1}, \ldots, S_{T_d})}"] & 
    T^\lout
\end{tikzcd}    
\]
with (undotted) cobordisms $S_{T_i}$ corresponding to the diagrams $T_i$. A checkerboard coloring $c$ for $T$ induces a checkerboard coloring $c_i$ for each $T_i$. Then we obtain the following decomposition of $\ca(T, c)$ (see \Cref{fig:intro-lee-decomp} in \Cref{sec:intro}). 

\begin{restate-theorem}[thm:lee-cycle-decomp]
\[
    \ca(T, c) = D(\ca(T_1, c_1), \ldots, \ca(T_d, c_d)) \circ \ca(T_0, c_0).
\]    
\end{restate-theorem}

In particular, when $T$ is a link diagram and $c$ is the standard checkerboard coloring, the induced checkerboard colorings $c_i$ will be ommitted from the notations. 

\begin{remark}
\label{rem:lee-class-inv-for-tangle}
    One may expect that, using the Lee class for tangle diagrams, an analogue of \Cref{prop:lee-class-invariance} can be proved and that the invariant $s_H$ can be extended to tangles. However, the initial diagram $T^\lin$ is generally not invariant under the Reidemeister moves, so we cannot directly compare the two chain maps $\ca(T)$ and $\ca(T')$. See Case 2 for the R2 move in \Cref{sec:appendix}.
    \[
        \begin{tikzcd}[row sep=3em, column sep=5em]
        {[T^\lin]} 
            \arrow[d, dotted, no head, "?"] 
            \arrow[r, "\ca(T)"] 
        & {[T]} 
            \arrow[d, "\htpy"]
        \\ {[T'{}^\lin]} 
            \arrow[r, "\ca(T')"]
        & {[T']}              
        \end{tikzcd}
    \]
\end{remark}

\begin{remark}
    The relation (R1) in the reduced setting could be understood that, in $\Cob^3_\bullet(\underline{2})$, it is required to put a reduced dot $\mathrlap{/}\bullet$ on the special component. This makes the definition of the reduced Lee cycle for $2$-end tangles consistent with \Cref{def:lee-cycle-tangle}.
\end{remark}

Analogous to \Cref{prop:qdeg-lee-cycle}, the following proposition holds. 

\begin{proposition}
\label{prop:qdeg-lee-cycle-tangle}
    For a tangle diagram $T$, 
    \[
        \qdeg \ca(T) = w(T) - r(T).
    \]
    Here, $w(T)$ denotes the writhe of $T$ and $r(T)$ the number of circles in $T^\lout$.
\end{proposition}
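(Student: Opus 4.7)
The plan is to unpack $\qdeg \ca(T)$ into two contributions: the intrinsic quantum degree of the underlying decorated cobordism $S_T$ viewed as a morphism in $\Cob^3_\bullet(B;\bar R)$, and the correction coming from the bigrading shifts of the formal complexes $[T^\lin]$ and $[T]$ in which source and target sit.

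For the shift contribution, I would first observe that $T^\lin$ is crossingless, so $[T^\lin]$ places $T^\lin$ in homological degree $0$ with no quantum shift. On the target side, the Seifert resolution $T^\lout$ sits in homological degree $0$ of $[T]$, and by the general shift formula $k + n^+(T) - n^-(T)$ recalled in the setup of the formal Khovanov bracket, the quantum shift of $T^\lout$ inside $[T]$ is exactly $w(T)$. As a morphism between shifted complexes, $\ca(T)$ therefore acquires a $+w(T)$ contribution from the shifts.

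For the intrinsic part, I would compute the degree of the decorated cobordism $S_T$ directly from \Cref{def:cob-grad}. By construction $S_T$ is the disjoint union of $r(T)$ disk caps (one for each circle of $T^\lout$) and $|B|/2$ strips $\gamma \times I$ (one for each arc of $T^\lin$), so $\chi(S_T) = r(T) + |B|/2$. Each disk carries a single (ordinary) dot $\bullet$ or $\circ$, and each strip carries a single reduced dot $\mathrlap{/}\bullet$ or $\mathrlap{/}\circ$, which by construction equals $\pm H^{-1}$ times an ordinary dot on the same sheet. Applying $\qdeg C = \chi(C) - \tfrac{1}{2}|B| - 2k$ with $k = r(T) + |B|/2$ ordinary dots, and then adding the $+2$ contribution from each of the $|B|/2$ factors of $H^{-1}$, one gets
\[
  \qdeg S_T = \bigl(r(T) + \tfrac{|B|}{2}\bigr) - \tfrac{|B|}{2} - 2\bigl(r(T) + \tfrac{|B|}{2}\bigr) + 2 \cdot \tfrac{|B|}{2} = -r(T).
\]

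Combining the two contributions gives $\qdeg \ca(T) = -r(T) + w(T) = w(T) - r(T)$, as required; specialising to $B = \underline 0$ recovers \Cref{prop:qdeg-lee-cycle}. The only subtle point is the bookkeeping for the reduced dots, whose quantum degree is $0$ rather than $-2$ because they live over $\bar R = \ZZ[H^{\pm 1}]$; once this is recognised the argument is a direct computation of Euler characteristic against dot count against boundary size, so I do not expect any real obstacle.
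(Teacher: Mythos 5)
Your computation is correct, and it is exactly the routine verification the paper leaves implicit (the proposition is stated without proof): the quantum shift of $T^\lout$ inside $[T]$ contributes $w(T)$, and the decorated cobordism $S_T$ has degree $-r(T)$, since each strip is a square with a degree-$0$ reduced decoration (contributing $1-1=0$ after the $\tfrac12|B|$ correction) while each decorated cap contributes $1-2=-1$. Two minor points of precision, neither affecting the result: $\mathrlap{/}\circ$ is $-H^{-1}$ times a \emph{hollow} dot rather than an ordinary dot, but since $\bullet$ and $\circ$ are both homogeneous of degree $-2$ your count is unchanged; and the convention you use for the degree of a morphism between shifted objects (target shift minus source shift) is the one forced by the differential of $[T]$ having quantum degree $0$, and is consistent with \Cref{prop:qdeg-lee-cycle} in the closed case $B=\underline{0}$.
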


% \begin{proof}
%     Let $a$ be the number of arcs of $T^\lout$, which is equal to $\frac{|B|}{2}$. With $\qdeg({\not\bullet}) = \qdeg({\not\circ}) = 0$, we have  
%     \[
%         \qdeg \ca(T) 
%         = w + r(\chi(D^2) - 2) + a(\chi(\gamma \times I)) - \frac{|B|}{2} = w - r.
%     \]
% \end{proof}

Since $\qdeg$ is additive under both horizontal and vertical compositions of tangle diagrams, under the assumption of \Cref{thm:lee-cycle-decomp}, we have
\[
    \qdeg(\ca(T)) = \sum_{i = 0}^d \qdeg(\ca(T_i))
\]
which can also be confirmed using \Cref{prop:qdeg-lee-cycle-tangle} with
\[
    w(T) = \sum_{i = 1}^d w(T_i),\ 
    r(T) = \sum_{i = 0}^d r(T_i). 
\]

\subsection{Computing $s$ via tangle decompositions}

\begin{figure}[t]
    \centering
\[
\begin{tikzcd} [column sep = 4.5em, row sep = 2.5em]
& {[T_i^\lin]} \arrow[r, "\ca(T_i)"] \arrow[rr, "H^{k_i} z_i", bend left] \arrow[d, "D", dashed] & {[T_i]} \arrow[r, "f_i", "\htpy"'] \arrow[d, "D", dashed] & E_i \arrow[d, "D", dashed] & \\
\varnothing \arrow[r, "\ca(T_0)"] \arrow[rr, "\ca(L)", bend right] \arrow[rrrr, "H^{k+l} z", bend right = 35] & {D([T_i^\lin])} \arrow[r, "{D(\ca(T_i))}"] \arrow[rr, "H^k D(z_i)", bend right] & {D([T_i])} \arrow[r, "f", "\htpy"'] & {D(E_i)} \arrow[r, "g", "\htpy"'] & E
\end{tikzcd}
\]
    \caption{Computing $s_H(L)$ from smaller tangle pieces}
    \label{fig:compute-s-strategy}
\end{figure}

\Cref{thm:lee-cycle-decomp} can be used to estimate, or sometimes determine, the invariant $s_H$ of a link diagram $L$ from smaller tangle pieces. \Cref{fig:compute-s-strategy} shows the general process of the computation, which we explain below. 

Suppose we are given a link diagram $L$ with a tangle decomposition
\[
    L = D(T_1, \ldots, T_d),
\]
such that each complex $[T_i]$ admits a reduction (as explained in \Cref{subsec:gaussian})
\[
    f_i\colon [T_i] \xrightarrow{\htpy} E_i
\]
under which the Lee cycle $\ca(T_i)$ transforms into 
\[
    f_i \circ \ca(T_i) = \pm H^{k_i} z_i
\]
for some cycle $z_i$ and $k_i \geq 0$.  
From \Cref{thm:lee-cycle-decomp}, we have 
\[
    \ca(L) = D(\ca(T_1), \ldots, \ca(T_d)) \circ \ca(T_0),
\]
and post-composing the horizontal composition
\[
    f = D(f_1, ..., f_d)
\]
to $\ca(L)$ gives
\[
    f \circ \ca(T) = \pm H^k z_0
\]
where
\[
    z_0 = D(z_1, \ldots, z_d) \circ \ca(T_0)
\]
and $k = \sum_i k_i$. Furthermore, if there is a reduction
\[
    g\colon D(E_1, \ldots, E_d) \to E 
\]
such that the cycle $z_0$ transforms into 
\[
    g \circ z_0 = \pm H^l z
\]
for some cycle $z$ and $l \geq 0$, then we have 
\[
    gf \circ \ca(T) = \pm H^{k + l} z.
\]
Since $f, g$ are homotopy equivalences, we have 
\[
    d_H(L) \geq k + l.
\]

If $E$ is simple enough so that we can additionally prove that $l$ is maximal, then we may conclude that $d_H = k + l$, and hence $s_H$ is determined. Furthermore, by applying homotopy inverses of $f, g$, we obtain an explicit cycle $z \in \CKh_H(L)$ that realizes
\[
    s_H(L) = \qdeg(z) + 1.
\]

Strictly speaking, the cycle $\ca(L)$ is a morphism in the category $\Kob^3_{\bullet}(\underline{0})$, whereas the cycles $\ca(T_i)$ are morphisms in $\Kob^3_{\bullet}(\partial T_i; \bar{R})$, where $H$ is inverted and $H$-divisibility is trivially infinite. However, this is not a problem, if the maps $f_i$ are defined over $R$ and the composite cycle $z_0$ belongs to $\Kob^3_{\bullet}(\underline{0})$, since 
\[
    f \circ \ca(T) = \pm H^k z_0
\]
is an equation in $\Kob^3_{\bullet}(\underline{0})$. More generally, as long as we are interested in $H$-divisibility up to stable equivalence, the commutativity of \Cref{fig:compute-s-strategy} need be only up to chain homotopy, as the following lemma shows.

\begin{lemma}
    Let $f, g$ be chain maps in $\Kob^3_{\bullet}(B)$ that are chain homotopic in $\Kob^3_{\bullet}(\underline{0}; \bar{R})$. Then $f, g$ are stably equivalent in $\Kob^3_{\bullet}(B)$.
\end{lemma}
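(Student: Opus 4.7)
The plan is to prove the lemma by ``clearing denominators.'' The key observation is that $\bar R = \mathbb{Z}[H^{\pm 1}]$ is simply the localization of $R = \mathbb{Z}[H]$ at the multiplicative set $\{H^k\}_{k\geq 0}$, so for any $R$-module $M$ the extension $M \otimes_R \bar R$ agrees with the localization $M[H^{-1}]$. In particular, every element of $M[H^{-1}]$ can be represented in the form $m/H^k$ with $m \in M$ and $k \geq 0$, and two such representatives $m/H^k$ and $m'/H^{k'}$ coincide iff there exists $l \geq 0$ with $H^l(H^{k'}m - H^k m') = 0$ in $M$. I would apply this to each component of the relevant hom-complex.

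Concretely, unfold the hypothesis: there is some chain homotopy $h$ of degree $-1$ in $\Hom_{\Kob^3_\bullet(B;\bar R)}(C,C')$ satisfying $\delta h = f - g$, where $C,C'$ are the source and target complexes (finite in each homological degree, so each summand $\Hom_{\Cob^3_{\bullet/l}(B)}(C^i,C'^{i-1})$ is a single $R$-module). Writing $h = h'/H^k$ for some $h'$ defined over $R$ and some $k\geq 0$, the chain homotopy equation becomes $(\delta h')/H^k = f-g$ inside the localization. By the equivalence relation of localization there exists $l\geq 0$ with
\[
    H^l \delta h' \;=\; H^{k+l}(f - g)
\]
as an honest equation in $\Hom_{\Kob^3_\bullet(B)}(C,C')$. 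Rearranging, $\delta(H^l h') = H^{k+l} f - H^{k+l} g$ over $R$, so $H^l h'$ is a genuine chain homotopy over $R$ witnessing $H^{k+l}f \simeq H^{k+l}g$ in $\Kob^3_\bullet(B)$. By \Cref{def:stab-eqiuv} this is precisely $f \simeq_s g$.

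The only subtlety, and the ``main obstacle'' such as it is, lies in the second clearing step: the canonical map $\Hom_R \to \Hom_{\bar R}$ need not be injective, since hom-modules may contain $H$-torsion, so one cannot immediately conclude $\delta h' = H^k(f-g)$ over $R$ from the same equation over $\bar R$. Introducing the additional factor $H^l$ to absorb the torsion is exactly what makes the argument work, and this is harmless because stable equivalence by definition permits multiplication by arbitrary powers of $H$. No finite generation of hom-modules is needed, only the fact that $h$ is a single element of a localized module, which always admits a bounded-denominator representative.
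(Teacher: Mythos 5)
Your argument is correct and is essentially the paper's own proof: both clear denominators by writing the homotopy over $\bar R$ as $H^{-k}$ times a homotopy defined over $R$ and then invoking the definition of stable equivalence. The only difference is your extra factor $H^l$ to account for possible $H$-torsion in the hom-modules (i.e.\ non-injectivity of $\Hom_R \to \Hom_{\bar R}$), a point the paper's one-line proof passes over silently; since stable equivalence absorbs arbitrary powers of $H$, your refinement is harmless and, if anything, slightly more careful.
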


\begin{proof}
    Suppose $f \htpy g$ over $\bar{R}$, i.e.\ there is a chain homotopy $h$ in $\Kob^3_{\bullet}(\underline{0}; \bar{R})$ such that
    \[
        f - g = dh + hd.
    \]
    Since $h$ is an $\bar{R}$-linear combination of cobordisms, there is an $k \geq 0$ such that $H^k h$ is a homotopy in $\Kob^3_{\bullet}(\underline{0})$. Thus $H^k f \htpy H^k g$ over $R$. 
\end{proof}

    \section{Twist tangles}
\label{sec:twist-tangles}

For $q \in \ZZ$, let $T_q$ denote the unoriented tangle diagram obtained by adding $|q|$ half-twists, positive or negative depending on the sign of $q$, to a pair of crossingless vertical strands. 
\begin{center}
    \tikzset{every picture/.style={line width=0.75pt}} %set default line width to 0.75pt        

\begin{tikzpicture}[x=0.75pt,y=0.75pt,yscale=-1,xscale=1]
%uncomment if require: \path (0,110); %set diagram left start at 0, and has height of 110

\begin{knot}[
  % draft mode=crossings,
  clip width=3pt,
  end tolerance=1pt,
  % consider self intersections
]

%Curve Lines [id:da015620136144045005] 
\strand [line width=1.5]    (91.05,10.83) .. controls (90.61,25.83) and (71.06,24.83) .. (71.33,40.16) .. controls (71.61,55.49) and (91.33,56.16) .. (91.33,70.83) .. controls (91.33,85.49) and (71.33,84.83) .. (70.67,100.16) ;
%Curve Lines [id:da4446089835951046] 
\strand [line width=1.5]    (72.24,10.16) .. controls (72.64,25.16) and (91.59,24.83) .. (91.33,40.16) .. controls (91.08,55.49) and (71.98,55.49) .. (71.98,70.16) .. controls (71.98,84.83) and (90.05,85.49) .. (90.67,100.83) ;

% \flipcrossings{1,3,5,7,9}

\end{knot}

%Shape: Rectangle [id:dp3643844221281882] 
\draw  [color={rgb, 255:red, 255; green, 255; blue, 255 }  ,draw opacity=1 ][fill={rgb, 255:red, 255; green, 255; blue, 255 }  ,fill opacity=1 ] (67,37.17) -- (95,37.17) -- (95,70.83) -- (67,70.83) -- cycle ;
%Shape: Brace [id:dp5566677176969166] 
\draw  [color={rgb, 255:red, 128; green, 128; blue, 128 }  ,draw opacity=1 ] (104,93) .. controls (108.67,93) and (111,90.67) .. (111,86) -- (111,65.73) .. controls (111,59.06) and (113.33,55.73) .. (118,55.73) .. controls (113.33,55.73) and (111,52.4) .. (111,45.73)(111,48.73) -- (111,25.45) .. controls (111,20.78) and (108.67,18.45) .. (104,18.45) ;

% Text Node
\draw (9,44.4) node [anchor=north west][inner sep=0.75pt]    {$T_{q} \ =$};
% Text Node
\draw (72.67,44.57) node [anchor=north west][inner sep=0.75pt]    {$\vdots $};
% Text Node
\draw (125,44.4) node [anchor=north west][inner sep=0.75pt]    {$q$};

\end{tikzpicture}
\end{center}
Let $T^\pm_q$ denote the tangle diagram $T_q$ equipped with the orientation given so that each crossing has the sign indicated by the superscript. The chain homotopy type of $[T_q^\pm]$ is well-known, as originally computed in \cite[Section 6.2]{Khovanov:2000} for its closure, and subsequently in \cite[Proposition 4.1]{Thompson:2018} and in \cite[Proposition 5.1]{Schuetz:2021} for the open case. Here, we review the explicit reduction, and also state how the Lee cycles correspond under the reduction. 

Let $\sfE_0$ and $\sfE_1$ denote the following unoriented $4$-end crossingless tangle diagrams, 
\begin{center}
    \tikzset{every picture/.style={line width=0.75pt}} %set default line width to 0.75pt        

\begin{tikzpicture}[x=0.75pt,y=0.75pt,yscale=-.75,xscale=.75]
%uncomment if require: \path (0,73); %set diagram left start at 0, and has height of 73

%Shape: Circle [id:dp6062736172006213] 
\draw  [dash pattern={on 4.5pt off 4.5pt}] (206,34.77) .. controls (206,20.71) and (217.4,9.31) .. (231.46,9.31) .. controls (245.52,9.31) and (256.91,20.71) .. (256.91,34.77) .. controls (256.91,48.83) and (245.52,60.22) .. (231.46,60.22) .. controls (217.4,60.22) and (206,48.83) .. (206,34.77) -- cycle ;
%Shape: Arc [id:dp993940871537017] 
\draw  [draw opacity=0][line width=1.5]  (251.56,20.25) .. controls (246.19,23.83) and (239.17,26) .. (231.48,26) .. controls (223.83,26) and (216.84,23.85) .. (211.48,20.3) -- (231.48,2.73) -- cycle ; \draw  [line width=1.5]  (251.56,20.25) .. controls (246.19,23.83) and (239.17,26) .. (231.48,26) .. controls (223.83,26) and (216.84,23.85) .. (211.48,20.3) ;  
%Shape: Arc [id:dp4792432370176144] 
\draw  [draw opacity=0][line width=1.5]  (211.4,49.29) .. controls (216.77,45.71) and (223.79,43.54) .. (231.48,43.54) .. controls (239.13,43.54) and (246.12,45.69) .. (251.48,49.24) -- (231.48,66.81) -- cycle ; \draw  [line width=1.5]  (211.4,49.29) .. controls (216.77,45.71) and (223.79,43.54) .. (231.48,43.54) .. controls (239.13,43.54) and (246.12,45.69) .. (251.48,49.24) ;  

%Shape: Circle [id:dp4022115198378231] 
\draw  [dash pattern={on 4.5pt off 4.5pt}] (86.48,9.29) .. controls (100.54,9.29) and (111.94,20.69) .. (111.94,34.75) .. controls (111.94,48.81) and (100.54,60.21) .. (86.48,60.21) .. controls (72.42,60.21) and (61.02,48.81) .. (61.02,34.75) .. controls (61.02,20.69) and (72.42,9.29) .. (86.48,9.29) -- cycle ;
%Shape: Arc [id:dp6664921835052046] 
\draw  [draw opacity=0][line width=1.5]  (101,54.85) .. controls (97.42,49.48) and (95.25,42.46) .. (95.25,34.77) .. controls (95.25,27.12) and (97.4,20.13) .. (100.95,14.77) -- (118.52,34.77) -- cycle ; \draw  [line width=1.5]  (101,54.85) .. controls (97.42,49.48) and (95.25,42.46) .. (95.25,34.77) .. controls (95.25,27.12) and (97.4,20.13) .. (100.95,14.77) ;  
%Shape: Arc [id:dp993600306644113] 
\draw  [draw opacity=0][line width=1.5]  (71.96,14.69) .. controls (75.54,20.06) and (77.71,27.08) .. (77.71,34.77) .. controls (77.71,42.42) and (75.56,49.41) .. (72.01,54.77) -- (54.43,34.77) -- cycle ; \draw  [line width=1.5]  (71.96,14.69) .. controls (75.54,20.06) and (77.71,27.08) .. (77.71,34.77) .. controls (77.71,42.42) and (75.56,49.41) .. (72.01,54.77) ;

% Text Node
\draw (7,22.4) node [anchor=north west][inner sep=0.75pt]    {$\mathsf{E}_{0} \ =\ $};
% Text Node
\draw (149,23.4) node [anchor=north west][inner sep=0.75pt]    {$\mathsf{E}_{1} \ =\ $};
% Text Node
\draw (121,24) node [anchor=north west][inner sep=0.75pt]   [align=left] {,};

\end{tikzpicture}
\end{center}
Consider the degree $-2$ endomorphisms $a, b$ on $\sfE_1$ given by 
\begin{center}
    \tikzset{every picture/.style={line width=0.75pt}} %set default line width to 0.75pt        

\begin{tikzpicture}[x=0.75pt,y=0.75pt,yscale=-.8,xscale=.8]
%uncomment if require: \path (0,185); %set diagram left start at 0, and has height of 185

%Curve Lines [id:da8954709680110604] 
\draw [color={rgb, 255:red, 0; green, 0; blue, 0 }  ,draw opacity=1 ][line width=1.5]    (161.96,30.56) .. controls (162,29.25) and (166,32.75) .. (171.14,24.74) ;
%Curve Lines [id:da8254061701221355] 
\draw [color={rgb, 255:red, 0; green, 0; blue, 0 }  ,draw opacity=1 ][line width=1.5]    (153.5,60.56) .. controls (157.5,60.75) and (160.23,59.24) .. (166.32,74.52) ;
%Shape: Ellipse [id:dp55863236365411] 
\draw  [dash pattern={on 4.5pt off 4.5pt}] (162.5,9.33) .. controls (170.05,13.59) and (174.83,32.78) .. (173.18,52.2) .. controls (171.52,71.62) and (164.06,83.92) .. (156.5,79.66) .. controls (148.95,75.41) and (144.17,56.21) .. (145.82,36.79) .. controls (147.48,17.37) and (154.94,5.07) .. (162.5,9.33) -- cycle ;
%Curve Lines [id:da9835614185112381] 
\draw [color={rgb, 255:red, 0; green, 0; blue, 0 }  ,draw opacity=1 ][line width=1.5]    (76.96,13.56) .. controls (82.84,31.28) and (87.32,34.82) .. (96.14,24.74) ;
%Curve Lines [id:da5320450186635435] 
\draw [color={rgb, 255:red, 0; green, 0; blue, 0 }  ,draw opacity=1 ][line width=1.5]    (72.14,63.34) .. controls (80.59,57.65) and (85.23,59.24) .. (91.32,74.52) ;
%Shape: Ellipse [id:dp3792935456637039] 
\draw  [dash pattern={on 4.5pt off 4.5pt}] (87.5,9.33) .. controls (95.05,13.59) and (99.83,32.78) .. (98.18,52.2) .. controls (96.52,71.62) and (89.06,83.92) .. (81.5,79.66) .. controls (73.95,75.41) and (69.17,56.21) .. (70.82,36.79) .. controls (72.48,17.37) and (79.94,5.07) .. (87.5,9.33) -- cycle ;
%Straight Lines [id:da18875620193778764] 
\draw [line width=0.75]    (171.14,24.74) -- (96.14,24.74) ;
%Straight Lines [id:da67944913575543] 
\draw [line width=0.75]    (82.5,63.34) -- (72.14,63.34) ;
%Straight Lines [id:da33419778913212794] 
\draw [line width=0.75]    (153.5,60.56) -- (81.96,60.56) ;
%Straight Lines [id:da17152257913582103] 
\draw [line width=0.75]    (166.32,74.52) -- (91.32,74.52) ;
%Straight Lines [id:da08889557072160403] 
\draw [line width=0.75]    (161.96,30.56) -- (86.96,30.56) ;
%Straight Lines [id:da7416509830719938] 
\draw [line width=0.75]    (151.96,13.56) -- (76.96,13.56) ;
%Straight Lines [id:da42058599959712795] 
\draw [line width=0.75]    (118.46,30.75) -- (118.46,60.31) ;
%Straight Lines [id:da564261367124538] 
\draw [line width=0.75]    (126.96,30.75) -- (126.96,60.31) ;
%Curve Lines [id:da9535545082036532] 
\draw [color={rgb, 255:red, 0; green, 0; blue, 0 }  ,draw opacity=1 ][line width=1.5]    (151.96,13.56) .. controls (153.15,17.14) and (154.28,20.14) .. (155.4,22.57) ;
%Curve Lines [id:da5773154743456222] 
\draw [color={rgb, 255:red, 0; green, 0; blue, 0 }  ,draw opacity=1 ][line width=1.5]    (160.96,121.56) .. controls (161,120.25) and (165,123.75) .. (170.14,115.74) ;
%Curve Lines [id:da8834530554723561] 
\draw [color={rgb, 255:red, 0; green, 0; blue, 0 }  ,draw opacity=1 ][line width=1.5]    (152.5,151.56) .. controls (156.5,151.75) and (159.23,150.24) .. (165.32,165.52) ;
%Shape: Ellipse [id:dp03989479677387342] 
\draw  [dash pattern={on 4.5pt off 4.5pt}] (161.5,100.33) .. controls (169.05,104.59) and (173.83,123.78) .. (172.18,143.2) .. controls (170.52,162.62) and (163.06,174.92) .. (155.5,170.66) .. controls (147.95,166.41) and (143.17,147.21) .. (144.82,127.79) .. controls (146.48,108.37) and (153.94,96.07) .. (161.5,100.33) -- cycle ;
%Curve Lines [id:da5406993966095357] 
\draw [color={rgb, 255:red, 0; green, 0; blue, 0 }  ,draw opacity=1 ][line width=1.5]    (75.96,104.56) .. controls (81.84,122.28) and (86.32,125.82) .. (95.14,115.74) ;
%Curve Lines [id:da051426255711055524] 
\draw [color={rgb, 255:red, 0; green, 0; blue, 0 }  ,draw opacity=1 ][line width=1.5]    (71.14,154.34) .. controls (79.59,148.65) and (84.23,150.24) .. (90.32,165.52) ;
%Shape: Ellipse [id:dp304470596175764] 
\draw  [dash pattern={on 4.5pt off 4.5pt}] (86.5,100.33) .. controls (94.05,104.59) and (98.83,123.78) .. (97.18,143.2) .. controls (95.52,162.62) and (88.06,174.92) .. (80.5,170.66) .. controls (72.95,166.41) and (68.17,147.21) .. (69.82,127.79) .. controls (71.48,108.37) and (78.94,96.07) .. (86.5,100.33) -- cycle ;
%Straight Lines [id:da07946531551941594] 
\draw [line width=0.75]    (170.14,115.74) -- (95.14,115.74) ;
%Straight Lines [id:da9630157238751652] 
\draw [line width=0.75]    (81.5,154.34) -- (71.14,154.34) ;
%Straight Lines [id:da26400514926504803] 
\draw [line width=0.75]    (152.5,151.56) -- (80.96,151.56) ;
%Straight Lines [id:da14092409345924262] 
\draw [line width=0.75]    (165.32,165.52) -- (90.32,165.52) ;
%Straight Lines [id:da650828771480304] 
\draw [line width=0.75]    (160.96,121.56) -- (85.96,121.56) ;
%Straight Lines [id:da488963066615982] 
\draw [line width=0.75]    (150.96,104.56) -- (75.96,104.56) ;
%Straight Lines [id:da7995615845471159] 
\draw [line width=0.75]    (117.46,121.75) -- (117.46,151.31) ;
%Straight Lines [id:da30918386803971587] 
\draw [line width=0.75]    (125.96,121.75) -- (125.96,151.31) ;
%Curve Lines [id:da28268699011199605] 
\draw [color={rgb, 255:red, 0; green, 0; blue, 0 }  ,draw opacity=1 ][line width=1.5]    (150.96,104.56) .. controls (152.15,108.14) and (153.28,111.14) .. (154.4,113.57) ;
%Curve Lines [id:da9906589138130326] 
\draw [line width=0.75]    (247.17,150.79) .. controls (249.5,137.81) and (266.22,136.83) .. (268.56,151.11) ;
%Curve Lines [id:da8053921945621523] 
\draw [line width=0.75]    (241.33,150.79) .. controls (242.89,133.26) and (269.33,127.75) .. (274,151.11) ;

%Curve Lines [id:da45776286350379447] 
\draw [color={rgb, 255:red, 0; green, 0; blue, 0 }  ,draw opacity=1 ][line width=1.5]    (297.96,121.56) .. controls (298,120.25) and (302,123.75) .. (307.14,115.74) ;
%Curve Lines [id:da5961977433002985] 
\draw [color={rgb, 255:red, 0; green, 0; blue, 0 }  ,draw opacity=1 ][line width=1.5]    (289.5,151.56) .. controls (293.5,151.75) and (296.23,150.24) .. (302.32,165.52) ;
%Shape: Ellipse [id:dp008513848989451511] 
\draw  [dash pattern={on 4.5pt off 4.5pt}] (298.5,100.33) .. controls (306.05,104.59) and (310.83,123.78) .. (309.18,143.2) .. controls (307.52,162.62) and (300.06,174.92) .. (292.5,170.66) .. controls (284.95,166.41) and (280.17,147.21) .. (281.82,127.79) .. controls (283.48,108.37) and (290.94,96.07) .. (298.5,100.33) -- cycle ;
%Curve Lines [id:da4570470609350318] 
\draw [color={rgb, 255:red, 0; green, 0; blue, 0 }  ,draw opacity=1 ][line width=1.5]    (212.96,104.56) .. controls (218.84,122.28) and (223.32,125.82) .. (232.14,115.74) ;
%Curve Lines [id:da6235143743618828] 
\draw [color={rgb, 255:red, 0; green, 0; blue, 0 }  ,draw opacity=1 ][line width=1.5]    (208.14,154.34) .. controls (216.59,148.65) and (221.23,150.24) .. (227.32,165.52) ;
%Shape: Ellipse [id:dp5025223221848856] 
\draw  [dash pattern={on 4.5pt off 4.5pt}] (223.5,100.33) .. controls (231.05,104.59) and (235.83,123.78) .. (234.18,143.2) .. controls (232.52,162.62) and (225.06,174.92) .. (217.5,170.66) .. controls (209.95,166.41) and (205.17,147.21) .. (206.82,127.79) .. controls (208.48,108.37) and (215.94,96.07) .. (223.5,100.33) -- cycle ;
%Straight Lines [id:da755027405007149] 
\draw [line width=0.75]    (307.14,115.74) -- (232.14,115.74) ;
%Straight Lines [id:da035956383409978754] 
\draw [line width=0.75]    (218.5,154.34) -- (208.14,154.34) ;
%Straight Lines [id:da17221805464666873] 
\draw [line width=0.75]    (289.5,151.56) -- (217.96,151.56) ;
%Straight Lines [id:da1368580230815566] 
\draw [line width=0.75]    (302.32,165.52) -- (227.32,165.52) ;
%Straight Lines [id:da21337567679263436] 
\draw [line width=0.75]    (297.96,121.56) -- (222.96,121.56) ;
%Straight Lines [id:da8944552557818618] 
\draw [line width=0.75]    (287.96,104.56) -- (212.96,104.56) ;
%Curve Lines [id:da4104700277529103] 
\draw [color={rgb, 255:red, 0; green, 0; blue, 0 }  ,draw opacity=1 ][line width=1.5]    (287.96,104.56) .. controls (289.15,108.14) and (290.28,111.14) .. (291.4,113.57) ;

% Text Node
\draw (22,30.67) node [anchor=north west][inner sep=0.75pt]    {$a\ =\ $};
% Text Node
\draw (22,125.67) node [anchor=north west][inner sep=0.75pt]    {$b\ =\ $};
% Text Node
\draw (182.5,127.4) node [anchor=north west][inner sep=0.75pt]    {$-$};
% Text Node
\draw (185,30.77) node [anchor=north west][inner sep=0.75pt]   [align=left] {,};

\end{tikzpicture}
\end{center}
Using (NC), the two endomorphisms $a, b$ can be also written as
\begin{align*}
    a &= u_X + l_Y = u_Y + l_X, \\
    b &= u_X - l_X = u_Y - l_Y
\end{align*}
where $u_X, l_X$ are endomorphisms defined as
\begin{center}
    \tikzset{every picture/.style={line width=0.75pt}} %set default line width to 0.75pt        

\begin{tikzpicture}[x=0.75pt,y=0.75pt,yscale=-.8,xscale=.8]
%uncomment if require: \path (0,71); %set diagram left start at 0, and has height of 71

%Shape: Circle [id:dp6211665977525103] 
\draw  [dash pattern={on 4.5pt off 4.5pt}] (69,32.27) .. controls (69,18.21) and (80.4,6.81) .. (94.46,6.81) .. controls (108.52,6.81) and (119.91,18.21) .. (119.91,32.27) .. controls (119.91,46.33) and (108.52,57.72) .. (94.46,57.72) .. controls (80.4,57.72) and (69,46.33) .. (69,32.27) -- cycle ;
%Shape: Arc [id:dp6011280949412268] 
\draw  [draw opacity=0][line width=1.5]  (114.56,17.75) .. controls (109.19,21.33) and (102.17,23.5) .. (94.48,23.5) .. controls (86.83,23.5) and (79.84,21.35) .. (74.48,17.8) -- (94.48,0.23) -- cycle ; \draw  [line width=1.5]  (114.56,17.75) .. controls (109.19,21.33) and (102.17,23.5) .. (94.48,23.5) .. controls (86.83,23.5) and (79.84,21.35) .. (74.48,17.8) ;  
%Shape: Arc [id:dp43530179172757] 
\draw  [draw opacity=0][line width=1.5]  (74.4,46.79) .. controls (79.77,43.21) and (86.79,41.04) .. (94.48,41.04) .. controls (102.13,41.04) and (109.12,43.19) .. (114.48,46.74) -- (94.48,64.31) -- cycle ; \draw  [line width=1.5]  (74.4,46.79) .. controls (79.77,43.21) and (86.79,41.04) .. (94.48,41.04) .. controls (102.13,41.04) and (109.12,43.19) .. (114.48,46.74) ;  

%Shape: Circle [id:dp7534974089922698] 
\draw  [fill={rgb, 255:red, 0; green, 0; blue, 0 }  ,fill opacity=1 ] (91,23.27) .. controls (91,21.5) and (92.43,20.07) .. (94.2,20.07) .. controls (95.97,20.07) and (97.41,21.5) .. (97.41,23.27) .. controls (97.41,25.04) and (95.97,26.47) .. (94.2,26.47) .. controls (92.43,26.47) and (91,25.04) .. (91,23.27) -- cycle ;
%Shape: Circle [id:dp6670512678023024] 
\draw  [dash pattern={on 4.5pt off 4.5pt}] (209,31.27) .. controls (209,17.21) and (220.4,5.81) .. (234.46,5.81) .. controls (248.52,5.81) and (259.91,17.21) .. (259.91,31.27) .. controls (259.91,45.33) and (248.52,56.72) .. (234.46,56.72) .. controls (220.4,56.72) and (209,45.33) .. (209,31.27) -- cycle ;
%Shape: Arc [id:dp1552130861497656] 
\draw  [draw opacity=0][line width=1.5]  (254.56,16.75) .. controls (249.19,20.33) and (242.17,22.5) .. (234.48,22.5) .. controls (226.83,22.5) and (219.84,20.35) .. (214.48,16.8) -- (234.48,-0.77) -- cycle ; \draw  [line width=1.5]  (254.56,16.75) .. controls (249.19,20.33) and (242.17,22.5) .. (234.48,22.5) .. controls (226.83,22.5) and (219.84,20.35) .. (214.48,16.8) ;  
%Shape: Arc [id:dp5830620161887486] 
\draw  [draw opacity=0][line width=1.5]  (214.4,45.79) .. controls (219.77,42.21) and (226.79,40.04) .. (234.48,40.04) .. controls (242.13,40.04) and (249.12,42.19) .. (254.48,45.74) -- (234.48,63.31) -- cycle ; \draw  [line width=1.5]  (214.4,45.79) .. controls (219.77,42.21) and (226.79,40.04) .. (234.48,40.04) .. controls (242.13,40.04) and (249.12,42.19) .. (254.48,45.74) ;  

%Shape: Circle [id:dp6582951267959462] 
\draw  [fill={rgb, 255:red, 0; green, 0; blue, 0 }  ,fill opacity=1 ] (231.25,40.47) .. controls (231.25,38.7) and (232.69,37.27) .. (234.46,37.27) .. controls (236.23,37.27) and (237.66,38.7) .. (237.66,40.47) .. controls (237.66,42.24) and (236.23,43.68) .. (234.46,43.68) .. controls (232.69,43.68) and (231.25,42.24) .. (231.25,40.47) -- cycle ;

% Text Node
\draw (17,24.67) node [anchor=north west][inner sep=0.75pt]    {$u_{X} \ =\ $};
% Text Node
\draw (127,25.77) node [anchor=north west][inner sep=0.75pt]   [align=left] {,};
% Text Node
\draw (157,23.67) node [anchor=north west][inner sep=0.75pt]    {$l_{X} \ =\ $};

\end{tikzpicture}
\end{center}
and $u_Y, l_Y$ similarly using $\circ$. Let $s$ denote the genus $0$ saddle cobordism from $\sfE_0$ to $\sfE_1$ and also from $\sfE_1$ to $\sfE_0$. Note that we have $ab = ba = 0$, $sb = 0$ and $bs = 0$. 

\begin{remark}
    Borrowing the notation of \cite{Khovanov:2022}, the morphism $b$ can be expressed as a tube with a \textit{defect circle} along its meridian,
    \begin{center}
        \tikzset{every picture/.style={line width=0.75pt}} %set default line width to 0.75pt        

\begin{tikzpicture}[x=0.75pt,y=0.75pt,yscale=-.8,xscale=.8]
%uncomment if require: \path (0,95); %set diagram left start at 0, and has height of 95

%Curve Lines [id:da6882441715246497] 
\draw [color={rgb, 255:red, 0; green, 0; blue, 0 }  ,draw opacity=1 ][line width=1.5]    (161.96,34.56) .. controls (162,33.25) and (166,36.75) .. (171.14,28.74) ;
%Curve Lines [id:da09313586940764129] 
\draw [color={rgb, 255:red, 0; green, 0; blue, 0 }  ,draw opacity=1 ][line width=1.5]    (153.5,64.56) .. controls (157.5,64.75) and (160.23,63.24) .. (166.32,78.52) ;
%Shape: Ellipse [id:dp4549638243257157] 
\draw  [dash pattern={on 4.5pt off 4.5pt}] (162.5,13.33) .. controls (170.05,17.59) and (174.83,36.78) .. (173.18,56.2) .. controls (171.52,75.62) and (164.06,87.92) .. (156.5,83.66) .. controls (148.95,79.41) and (144.17,60.21) .. (145.82,40.79) .. controls (147.48,21.37) and (154.94,9.07) .. (162.5,13.33) -- cycle ;
%Curve Lines [id:da7458429450024703] 
\draw [color={rgb, 255:red, 0; green, 0; blue, 0 }  ,draw opacity=1 ][line width=1.5]    (76.96,17.56) .. controls (82.84,35.28) and (87.32,38.82) .. (96.14,28.74) ;
%Curve Lines [id:da9668179428586579] 
\draw [color={rgb, 255:red, 0; green, 0; blue, 0 }  ,draw opacity=1 ][line width=1.5]    (72.14,67.34) .. controls (80.59,61.65) and (85.23,63.24) .. (91.32,78.52) ;
%Shape: Ellipse [id:dp4303898937722468] 
\draw  [dash pattern={on 4.5pt off 4.5pt}] (87.5,13.33) .. controls (95.05,17.59) and (99.83,36.78) .. (98.18,56.2) .. controls (96.52,75.62) and (89.06,87.92) .. (81.5,83.66) .. controls (73.95,79.41) and (69.17,60.21) .. (70.82,40.79) .. controls (72.48,21.37) and (79.94,9.07) .. (87.5,13.33) -- cycle ;
%Straight Lines [id:da3680439848186754] 
\draw [line width=0.75]    (171.14,28.74) -- (96.14,28.74) ;
%Straight Lines [id:da33279807924041915] 
\draw [line width=0.75]    (82.5,67.34) -- (72.14,67.34) ;
%Straight Lines [id:da19617985260206794] 
\draw [line width=0.75]    (153.5,64.56) -- (81.96,64.56) ;
%Straight Lines [id:da24157731546754302] 
\draw [line width=0.75]    (166.32,78.52) -- (91.32,78.52) ;
%Straight Lines [id:da4143722286983248] 
\draw [line width=0.75]    (161.96,34.56) -- (86.96,34.56) ;
%Straight Lines [id:da714980385869984] 
\draw [line width=0.75]    (151.96,17.56) -- (76.96,17.56) ;
%Straight Lines [id:da059522532160527275] 
\draw [line width=0.75]    (116.46,34.75) -- (116.46,64.31) ;
%Straight Lines [id:da4045660525243089] 
\draw [line width=0.75]    (132.96,34.75) -- (132.96,64.31) ;
%Curve Lines [id:da28218089029650517] 
\draw [color={rgb, 255:red, 0; green, 0; blue, 0 }  ,draw opacity=1 ][line width=1.5]    (151.96,17.56) .. controls (153.15,21.14) and (154.28,24.14) .. (155.4,26.57) ;
%Shape: Arc [id:dp10301883211002649] 
\draw  [draw opacity=0][line width=1.5]  (132.96,45.3) .. controls (132.96,45.34) and (132.96,45.38) .. (132.96,45.42) .. controls (132.96,47.9) and (129.31,49.9) .. (124.8,49.9) .. controls (120.34,49.9) and (116.71,47.93) .. (116.65,45.48) -- (124.8,45.42) -- cycle ; \draw  [color={rgb, 255:red, 208; green, 2; blue, 27 }  ,draw opacity=1 ][line width=1.5]  (132.96,45.3) .. controls (132.96,45.34) and (132.96,45.38) .. (132.96,45.42) .. controls (132.96,47.9) and (129.31,49.9) .. (124.8,49.9) .. controls (120.34,49.9) and (116.71,47.93) .. (116.65,45.48) ;  
%Straight Lines [id:da10293738747808046] 
\draw [color={rgb, 255:red, 208; green, 2; blue, 27 }  ,draw opacity=1 ][line width=1.5]    (125.29,49.17) -- (125.29,54.57) ;

% Text Node
\draw (23,38.67) node [anchor=north west][inner sep=0.75pt]    {$b\ =\ $};

\end{tikzpicture}
    \end{center}
    which swaps $\bullet$ and $-\circ$ as they pass the defect line. We also remark that this cobordism, or the corresponding algebraic map, is used to represent \textit{crossing change maps}, as in \cite{Alishahi:2018,Ito-Yoshida:2021}. 
\end{remark}

First, we state the following lemma, which is immediate from \Cref{prop:delooping}. 

\begin{lemma}
\label{lem:Tq-lem1}
    Consider the following tangle diagrams $T, T'$ and morphisms $\Delta, m, a, b$
    \begin{center}
        \tikzset{every picture/.style={line width=0.75pt}} %set default line width to 0.75pt        

\begin{tikzpicture}[x=0.75pt,y=0.75pt,yscale=-.9,xscale=.9]
%uncomment if require: \path (0,96); %set diagram left start at 0, and has height of 96

%Shape: Ellipse [id:dp017086428089281736] 
\draw  [color={rgb, 255:red, 0; green, 0; blue, 0 }  ,draw opacity=1 ][line width=1.5]  (177.91,28.77) .. controls (177.91,23.46) and (182.41,19.16) .. (187.96,19.16) .. controls (193.5,19.16) and (198,23.46) .. (198,28.77) .. controls (198,34.07) and (193.5,38.37) .. (187.96,38.37) .. controls (182.41,38.37) and (177.91,34.07) .. (177.91,28.77) -- cycle ;
%Shape: Circle [id:dp5378187555014846] 
\draw  [dash pattern={on 4.5pt off 4.5pt}] (162.5,33.77) .. controls (162.5,19.71) and (173.9,8.31) .. (187.96,8.31) .. controls (202.02,8.31) and (213.41,19.71) .. (213.41,33.77) .. controls (213.41,47.83) and (202.02,59.22) .. (187.96,59.22) .. controls (173.9,59.22) and (162.5,47.83) .. (162.5,33.77) -- cycle ;
%Shape: Arc [id:dp576977160661786] 
\draw  [draw opacity=0][line width=1.5]  (168.89,50.71) .. controls (174.12,48.05) and (180.76,46.45) .. (187.98,46.45) .. controls (194.91,46.45) and (201.3,47.92) .. (206.43,50.39) -- (187.98,65.81) -- cycle ; \draw  [line width=1.5]  (168.89,50.71) .. controls (174.12,48.05) and (180.76,46.45) .. (187.98,46.45) .. controls (194.91,46.45) and (201.3,47.92) .. (206.43,50.39) ;  
%Shape: Circle [id:dp1605069597570039] 
\draw  [dash pattern={on 4.5pt off 4.5pt}] (37,33.77) .. controls (37,19.71) and (48.4,8.31) .. (62.46,8.31) .. controls (76.52,8.31) and (87.91,19.71) .. (87.91,33.77) .. controls (87.91,47.83) and (76.52,59.22) .. (62.46,59.22) .. controls (48.4,59.22) and (37,47.83) .. (37,33.77) -- cycle ;
%Shape: Arc [id:dp7187260614479883] 
\draw  [draw opacity=0][line width=1.5]  (42.4,48.29) .. controls (47.77,44.71) and (54.79,42.54) .. (62.48,42.54) .. controls (70.13,42.54) and (77.12,44.69) .. (82.48,48.24) -- (62.48,65.81) -- cycle ; \draw  [line width=1.5]  (42.4,48.29) .. controls (47.77,44.71) and (54.79,42.54) .. (62.48,42.54) .. controls (70.13,42.54) and (77.12,44.69) .. (82.48,48.24) ;  
%Straight Lines [id:da5888503912311841] 
\draw    (100,33) -- (147,33) ;
\draw [shift={(149,33)}, rotate = 180] [color={rgb, 255:red, 0; green, 0; blue, 0 }  ][line width=0.75]    (10.93,-4.9) .. controls (6.95,-2.3) and (3.31,-0.67) .. (0,0) .. controls (3.31,0.67) and (6.95,2.3) .. (10.93,4.9)   ;
%Straight Lines [id:da40840046117326223] 
\draw    (151,44) -- (100,44) ;
\draw [shift={(98,44)}, rotate = 360] [color={rgb, 255:red, 0; green, 0; blue, 0 }  ][line width=0.75]    (10.93,-4.9) .. controls (6.95,-2.3) and (3.31,-0.67) .. (0,0) .. controls (3.31,0.67) and (6.95,2.3) .. (10.93,4.9)   ;
%Curve Lines [id:da07083178554949143] 
\draw    (223.77,25.32) .. controls (232.08,17.65) and (256.35,7.43) .. (256.99,33.62) .. controls (257.6,58.63) and (237.61,49.89) .. (225.44,44.01) ;
\draw [shift={(223.77,43.2)}, rotate = 25.94] [color={rgb, 255:red, 0; green, 0; blue, 0 }  ][line width=0.75]    (6.56,-1.97) .. controls (4.17,-0.84) and (1.99,-0.18) .. (0,0) .. controls (1.99,0.18) and (4.17,0.84) .. (6.56,1.97)   ;

% Text Node
\draw (179,69.65) node [anchor=north west][inner sep=0.75pt]    {$T'$};
% Text Node
\draw (53,69.65) node [anchor=north west][inner sep=0.75pt]    {$T$};
% Text Node
\draw (117,11.4) node [anchor=north west][inner sep=0.75pt]    {$\Delta$};
% Text Node
\draw (117,47.4) node [anchor=north west][inner sep=0.75pt]    {$m$};
% Text Node
\draw (263,23.4) node [anchor=north west][inner sep=0.75pt]    {$a,\ b$};

\end{tikzpicture}
    \end{center}
    Here, these morphisms are assumed to have $q$-degree $0$ by shifting the gradings of the objects appropriately. Then by delooping the circle appearing in tangle $T'$, morphisms $\Delta, m$ transform into
    \[
\begin{tikzcd}[row sep=.3em] 
& & T\{2\} & & T \arrow[dd, "\oplus", phantom] \arrow[rrd, "I"] & & \\
T \arrow[rru, "Y"] \arrow[rrd, "I"'] & & \oplus & & & & T \\ & & T & & T\{-2\} \arrow[rru, "X"'] & & 
\end{tikzcd} 
    \]
    and endomorphisms $a, b$ transform into
    \[
\begin{tikzcd}[row sep=2em]
T \arrow[d, "\oplus" description, no head, phantom] \arrow[rrr, "Y"] \arrow[rrrd, "I" description, pos=.8] & & & T\{2\} \arrow[d, "\oplus" description, no head, phantom] & T \arrow[d, "\oplus" description, no head, phantom] \arrow[rrrd, "I" description, pos=.8] \arrow[rrr, "-X"] & & & T\{2\} \arrow[d, "\oplus" description, no head, phantom] \\
T\{-2\} \arrow[rrr, "X"] \arrow[rrru, dotted, "0" description, pos=.75] & & & T & T\{-2\} \arrow[rrr, "-Y"] \arrow[rrru, dotted, "0" description, pos=.75] & & & T.
\end{tikzcd}
    \]
\end{lemma}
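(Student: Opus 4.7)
The plan is to apply the delooping isomorphism of Proposition~\ref{prop:delooping} to the single circle component of $T'$, yielding $T' \cong T\{1\}\oplus T\{-1\}$ in $\Mat(\Cob^3_{\bullet/l}(B))$. This decomposition can be relabeled as $T\{2\}\oplus T$ or $T\oplus T\{-2\}$ by an overall grading shift, chosen to match the intrinsic quantum degree of the morphism under consideration. Under the explicit maps $\begin{pmatrix}\epsilon_Y\\ \epsilon\end{pmatrix}$ and $\begin{pmatrix}\iota & \iota_X\end{pmatrix}$ of Proposition~\ref{prop:delooping}, the upper summand corresponds to the circle labeled $1$ and the lower summand to the circle labeled $X$, so that via the tautological functor of Proposition~\ref{prop:cob-tautological} the circle is identified with the Frobenius algebra $A = R[X]/(X^2-HX)$ in its basis $(1,X)$.

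For the saddle $m\colon T\to T'$ (which splits an arc of $T$ into an arc plus a new circle), I would compute the two matrix entries by post-composing with $\epsilon_Y$ and $\epsilon$ on the newly-created circle. The composition $\epsilon_Y\circ m$ caps the new circle with a hollow-dotted disk; gluing this cap to the saddle produces a disk that is the identity cobordism on $T$ carrying a hollow dot, i.e., the morphism $Y$ on the arc of $T$. The composition $\epsilon\circ m$ caps the circle with an undecorated disk, producing the undecorated identity $I$. A symmetric computation, pre-composing $\Delta\colon T'\to T$ with $\iota$ and $\iota_X$, gives matrix entries $I$ and $X$, respectively, which is equivalent to the unit and $X$-unit axioms $m_A\circ(I\otimes \iota) = I$ and $m_A\circ(I\otimes \iota_X) = X\cdot$ on $A$.

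For the endomorphisms $a = u_X + l_Y$ and $b = u_X - l_X$, the key observation is that a decoration on the arc component of $T'$ commutes with delooping of the circle and therefore acts diagonally on the decomposition (as $X\cdot I$ for $l_X$, as $Y\cdot I$ for $l_Y$), while a decoration on the circle corresponds under the TQFT to multiplication by $X$ on $A$. In the basis $(1,X)$, multiplication by $X$ has matrix $\begin{pmatrix} 0 & 0 \\ 1 & H \end{pmatrix}$, since $X\cdot 1 = X$ and $X\cdot X = HX$. Assembling the arc and circle contributions and simplifying via $H+Y = X$ and $H - X = -Y$ then yields the claimed matrices.

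Since the argument is a direct application of Propositions~\ref{prop:delooping} and \ref{prop:cob-tautological}, the only delicate point I anticipate is the bookkeeping of grading shifts: because $m,\Delta,a,b$ have distinct intrinsic quantum degrees, they are most naturally written as morphisms between differently-shifted forms of the canonical decomposition $T\{1\}\oplus T\{-1\}$, which is the reason the source and target labels in the lemma switch between $T\{2\}\oplus T$ and $T\oplus T\{-2\}$.
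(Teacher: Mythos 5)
Your proposal is correct and follows exactly the route the paper intends: the paper offers no written proof beyond declaring the lemma ``immediate from'' the delooping isomorphism, and your argument is precisely that verification — deloop the circle via $\begin{pmatrix}\epsilon_Y\\ \epsilon\end{pmatrix}$, $\begin{pmatrix}\iota & \iota_X\end{pmatrix}$, evaluate each matrix entry as a glued cobordism (or, equivalently, through the tautological functor on the circle factor), and use $X^2=HX$, $XY=0$, $H+Y=X$, $H-X=-Y$ to obtain the stated matrices, with the $\{2\}$/$\{-2\}$ labels accounted for by the overall $\{\pm1\}$ shift of $T'$ as it appears in the complexes. Your entry-by-entry computations ($\epsilon_Y$, $\epsilon$ against the split; $\iota$, $\iota_X$ against the merge; the matrix $\begin{pmatrix}0&0\\1&H\end{pmatrix}$ for a dot on the circle) all check out against the lemma's diagrams.
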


The following proposition was originally proved in \cite[Section 6.2]{Khovanov:2000} in the context of Khovanov homology, and later in \cite[Proposition 4.1]{Thompson:2018} and \cite[Proposition 5.1]{Schuetz:2021} within Bar-Natan’s framework. We include a proof here for completeness and later use.

\begin{proposition}
\label{prop:twist-tangle-retract}
    For each $q \geq 1$, there is a strong deformation retract from the complex $[T_q]$ to the complex $E_q$ of length $q + 1$ defined as 
    \[
\begin{tikzcd}
\underline{\sfE_0} \arrow[r, "s"] & \sfE_1\{1\} \arrow[r, "b"] & \sfE_1\{3\} \arrow[r, "a"] & \sfE_1\{5\} \arrow[r, "b"] & \cdots \arrow[r, ""] & \sfE_1\{2q - 1\}.
\end{tikzcd}
    \]
    Here, the bigradings are relative with respect to the underlined object $\underline{\sfE_0}$, and must be shifted by $(0, q)$ or $(-q, -2q)$ according to the orientation. Similarly, there is a strong deformation retract from the complex $[T_{-q}]$ to the complex $E_{-q}$ of length $q + 1$ defined as 
    \[
\begin{tikzcd}[column sep = 2em]
\sfE_1\{-2q + 1\} \arrow[r, ""] & \cdots \arrow[r, "b"] & \sfE_1\{-5\} \arrow[r, "a"] & \sfE_1\{-3\} \arrow[r, "b"] & \sfE_1\{-1\} \arrow[r, "s"] & \underline{\sfE_0},
\end{tikzcd}
    \]
    where the bigradings must be shifted by $(q, 2q)$ or $(0, -q)$ according to the orientation. In each of the cases, the homological grading $0$ part is the leftmost object for the positive orientation, and the rightmost for the negative. 
\end{proposition}

\begin{proof}
    The result is trivial for $q = 1$. Let us assume that the result is true for $q \geq 1$. Let $D$ be the $2$-input planar arc diagram such that $T_{q + 1} = D(T_1, T_q)$. Then from the induction hypothesis, the complex $[T_{q+1}] = D([T_1], [T_q])$ deformation retracts to $D(\sfE_1, E_q)$, which is of the form
    \[
\begin{tikzcd}
\underline{\sfE_0} \arrow[d, "s_1"] \arrow[r, "s"] & \sfE_1 \arrow[d, "s_1"] \arrow[r, "b"] & \sfE_1 \arrow[r, "a"] \arrow[d, "s_1"] & \cdots \arrow[r] & \sfE_1 \arrow[d, "s_1"] \\
\sfE_1 \arrow[r, "-s_2"] & \sfE_{11} \arrow[r, "-b_2"] & \sfE_{11} \arrow[r, "-a_2"] & \cdots \arrow[r] & \sfE_{11}.
\end{tikzcd}
    \]
    Here, we identified $D(\sfE_0, E_i)$ and $D(E_i, \sfE_0)$ with $E_i$ for each $i = 0, 1$. The diagram $D(\sfE_1, \sfE_1)$ is denoted $\sfE_{11}$, which has the form of a circle inserted between the two arcs of $\sfE_1$. The vertical arrows are all $s_1 = D(s, I)$ and the lower horizontal arrows are given by $a_2 = D(I, a)$, $b_2 = D(I, b)$ and $s_2 = D(I, s)$. Each vertical arrow $s_1$ with target in $\sfE_{11}$ splits a circle from the upper arc, and the lower leftmost horizontal arrow $-s_2$ splits a circle from the lower arc. 

    The delooping isomorphism gives $\sfE_{11} \isom \sfE^+_1 \oplus \sfE^-_1$, where $\sfE_1^\pm$ are abbreviations of $\sfE_1\{\pm 1\}$. From \Cref{lem:Tq-lem1}, the bottom row of the above diagram can be written as 
    \[
\begin{tikzcd} [row sep=.5em] & \sfE^+_1 \arrow[r] \arrow[rdd, "-I"   description] & \sfE^+_1 \arrow[rdd, "-I"   description] \arrow[r] & \sfE^+_1 \arrow[rdd, "-I"   description] \arrow[r] & \cdots \arrow[r] \arrow[rdd, "-I"   description] & \sfE^+_1 \\
\sfE_1 \arrow[ru] \arrow[rd, "-I"   description] & & & & & \\ & \sfE^-_1 \arrow[r] & \sfE^-_1 \arrow[r] & \sfE^-_1 \arrow[r] & \cdots \arrow[r] & \sfE^-_1.
\end{tikzcd} 
    \]
    By eliminating the diagonal arrows labeled $-I$ from the left end, the complex $D(\sfE_1, E_q)$ can be further reduced to a sequence of the form
    \[
\begin{tikzcd}
\underline{\sfE_0} \arrow[r, "s"] & \sfE_1 \arrow[r, "b"] & \sfE_1 \arrow[r, "a"] & \cdots \arrow[r, ""] & \sfE_1 \arrow[d, ""] \\
& & & & \sfE_1.
\end{tikzcd}
    \]
    At the right end, if the rightmost horizontal arrow is $a$ (resp. $b$), then the added vertical arrow becomes $b$ (resp. $a$), as can be seen from the following diagrams. This proves the result for $q + 1$. 
    \[
    \begin{tikzcd}
\sfE_1 \arrow[r, "a"] & \sfE_1 \arrow[d, "u_Y" description, bend left] \arrow[dd, "I" description, bend left=50] & & \sfE_1 \arrow[r, "b"] & \sfE_1 \arrow[d, "u_Y" description, bend left] \arrow[dd, "I" description, bend left=50] \\
\sfE_1 \arrow[rd, "-I"  description] \arrow[r, "-l_Y"] & \sfE_1 & & \sfE_1 \arrow[rd, "-I"  description] \arrow[r, "l_X"] & \sfE_1 \\
\sfE_1 \arrow[r] & \sfE_1 & & \sfE_1 \arrow[r] & \sfE_1 
\end{tikzcd}
    \]

    Similarly, the complex $[T_{-q - 1}] = D([T_{-1}], [T_{-q}])$ deformation retracts to $D(E_{-1}, E_{-q})$, which is of the form
    \[
\begin{tikzcd}
\sfE_{11} \arrow[r, ""] \arrow[d, "s_1"] & \cdots \arrow[r, "-a_2"] & \sfE_{11} \arrow[r, "-b_2"] \arrow[d, "s_1"] & \sfE_{11} \arrow[r, "-s_2"] \arrow[d, "s_1"] & \sfE_1 \arrow[d, "s_1"] \\
\sfE_1 \arrow[r, ""] & \cdots \arrow[r, "a"] & \sfE_1 \arrow[r, "b"] & \sfE_1 \arrow[r, "s"] & \underline{\sfE_0}.
\end{tikzcd}
    \]
    By delooping the objects in the top row and applying reductions from the rightmost square, we see that the complex reduces to  
    \[
\begin{tikzcd}
\sfE_1 \arrow[d, ""] & & & & \\
\sfE_1 \arrow[r, ""] & \cdots \arrow[r, "a"] & \sfE_1 \arrow[r, "b"] & \sfE_1 \arrow[r, "s"] & \underline{\sfE_0}
\end{tikzcd}
    \]
    thus proving the latter result. 
\end{proof}

Next, we state the correspondence of the Lee cycles of $T^\pm_q$ under the reduction of \Cref{prop:twist-tangle-retract}. For each $q \in \ZZ \setminus \{0\}$, let $f_q: [T^\pm_q] \rightarrow E^\pm_q$ denote the reduction. Let $\ca(T^\pm_q)$ denote the Lee cycle of $T^\pm_q$, and $z^\pm_q$ the dotted cobordism obtained from $\ca(T^\pm_q)$ by removing the disk components. See \Cref{fig:tq-lee-cycles} for the case $q = \pm 3$.

\begin{figure}[t]
    \centering
    \input{tikzpictures/T_q-lee-cycles}
    \vspace{.5em}
    \tikzset{every picture/.style={line width=0.75pt}} %set default line width to 0.75pt        

\begin{tikzpicture}[x=0.75pt,y=0.75pt,yscale=-.8,xscale=.8]
%uncomment if require: \path (0,116); %set diagram left start at 0, and has height of 116

%Shape: Circle [id:dp3783461828970084] 
\draw  [dash pattern={on 4.5pt off 4.5pt}] (42.48,14.29) .. controls (56.54,14.29) and (67.94,25.69) .. (67.94,39.75) .. controls (67.94,53.81) and (56.54,65.21) .. (42.48,65.21) .. controls (28.42,65.21) and (17.02,53.81) .. (17.02,39.75) .. controls (17.02,25.69) and (28.42,14.29) .. (42.48,14.29) -- cycle ;
%Shape: Arc [id:dp2833758931658643] 
\draw  [draw opacity=0][line width=1.5]  (57,59.85) .. controls (53.42,54.48) and (51.25,47.46) .. (51.25,39.77) .. controls (51.25,32.12) and (53.4,25.13) .. (56.95,19.77) -- (74.52,39.77) -- cycle ; \draw  [color={rgb, 255:red, 74; green, 144; blue, 226 }  ,draw opacity=1 ][line width=1.5]  (57,59.85) .. controls (53.42,54.48) and (51.25,47.46) .. (51.25,39.77) .. controls (51.25,32.12) and (53.4,25.13) .. (56.95,19.77) ;  
%Shape: Arc [id:dp03409974619105005] 
\draw  [draw opacity=0][line width=1.5]  (27.96,19.69) .. controls (31.54,25.06) and (33.71,32.08) .. (33.71,39.77) .. controls (33.71,47.42) and (31.56,54.41) .. (28.01,59.77) -- (10.43,39.77) -- cycle ; \draw  [color={rgb, 255:red, 208; green, 2; blue, 27 }  ,draw opacity=1 ][line width=1.5]  (27.96,19.69) .. controls (31.54,25.06) and (33.71,32.08) .. (33.71,39.77) .. controls (33.71,47.42) and (31.56,54.41) .. (28.01,59.77) ;  

%Shape: Circle [id:dp9924117193687215] 
\draw  [dash pattern={on 4.5pt off 4.5pt}] (104.61,39.77) .. controls (104.61,25.71) and (116,14.31) .. (130.06,14.31) .. controls (144.12,14.31) and (155.52,25.71) .. (155.52,39.77) .. controls (155.52,53.83) and (144.12,65.22) .. (130.06,65.22) .. controls (116,65.22) and (104.61,53.83) .. (104.61,39.77) -- cycle ;
%Shape: Arc [id:dp26378139355045804] 
\draw  [draw opacity=0][line width=1.5]  (150.16,25.25) .. controls (144.8,28.83) and (137.77,31) .. (130.09,31) .. controls (122.44,31) and (115.44,28.85) .. (110.09,25.3) -- (130.09,7.73) -- cycle ; \draw  [color={rgb, 255:red, 208; green, 2; blue, 27 }  ,draw opacity=1 ][line width=1.5]  (150.16,25.25) .. controls (144.8,28.83) and (137.77,31) .. (130.09,31) .. controls (122.44,31) and (115.44,28.85) .. (110.09,25.3) ;  
%Shape: Arc [id:dp7786035191300683] 
\draw  [draw opacity=0][line width=1.5]  (110.01,54.29) .. controls (115.37,50.71) and (122.4,48.54) .. (130.09,48.54) .. controls (137.74,48.54) and (144.73,50.69) .. (150.08,54.24) -- (130.09,71.81) -- cycle ; \draw  [color={rgb, 255:red, 74; green, 144; blue, 226 }  ,draw opacity=1 ][line width=1.5]  (110.01,54.29) .. controls (115.37,50.71) and (122.4,48.54) .. (130.09,48.54) .. controls (137.74,48.54) and (144.73,50.69) .. (150.08,54.24) ;  

%Shape: Circle [id:dp0377432710453085] 
\draw  [dash pattern={on 4.5pt off 4.5pt}] (313.48,14.29) .. controls (327.54,14.29) and (338.94,25.69) .. (338.94,39.75) .. controls (338.94,53.81) and (327.54,65.21) .. (313.48,65.21) .. controls (299.42,65.21) and (288.02,53.81) .. (288.02,39.75) .. controls (288.02,25.69) and (299.42,14.29) .. (313.48,14.29) -- cycle ;
%Shape: Arc [id:dp8890376490818921] 
\draw  [draw opacity=0][line width=1.5]  (328,59.85) .. controls (324.42,54.48) and (322.25,47.46) .. (322.25,39.77) .. controls (322.25,32.12) and (324.4,25.13) .. (327.95,19.77) -- (345.52,39.77) -- cycle ; \draw  [color={rgb, 255:red, 74; green, 144; blue, 226 }  ,draw opacity=1 ][line width=1.5]  (328,59.85) .. controls (324.42,54.48) and (322.25,47.46) .. (322.25,39.77) .. controls (322.25,32.12) and (324.4,25.13) .. (327.95,19.77) ;  
%Shape: Arc [id:dp0540284730618702] 
\draw  [draw opacity=0][line width=1.5]  (298.96,19.69) .. controls (302.54,25.06) and (304.71,32.08) .. (304.71,39.77) .. controls (304.71,47.42) and (302.56,54.41) .. (299.01,59.77) -- (281.43,39.77) -- cycle ; \draw  [color={rgb, 255:red, 208; green, 2; blue, 27 }  ,draw opacity=1 ][line width=1.5]  (298.96,19.69) .. controls (302.54,25.06) and (304.71,32.08) .. (304.71,39.77) .. controls (304.71,47.42) and (302.56,54.41) .. (299.01,59.77) ;  

%Shape: Circle [id:dp25817316691169934] 
\draw  [dash pattern={on 4.5pt off 4.5pt}] (199.4,39.77) .. controls (199.4,25.71) and (210.79,14.31) .. (224.85,14.31) .. controls (238.91,14.31) and (250.31,25.71) .. (250.31,39.77) .. controls (250.31,53.83) and (238.91,65.22) .. (224.85,65.22) .. controls (210.79,65.22) and (199.4,53.83) .. (199.4,39.77) -- cycle ;
%Shape: Arc [id:dp22432165608472987] 
\draw  [draw opacity=0][line width=1.5]  (244.95,25.25) .. controls (239.59,28.83) and (232.56,31) .. (224.88,31) .. controls (217.23,31) and (210.23,28.85) .. (204.88,25.3) -- (224.88,7.73) -- cycle ; \draw  [color={rgb, 255:red, 208; green, 2; blue, 27 }  ,draw opacity=1 ][line width=1.5]  (244.95,25.25) .. controls (239.59,28.83) and (232.56,31) .. (224.88,31) .. controls (217.23,31) and (210.23,28.85) .. (204.88,25.3) ;  
%Shape: Arc [id:dp4761071808609787] 
\draw  [draw opacity=0][line width=1.5]  (204.8,54.29) .. controls (210.16,50.71) and (217.19,48.54) .. (224.88,48.54) .. controls (232.53,48.54) and (239.52,50.69) .. (244.87,54.24) -- (224.88,71.81) -- cycle ; \draw  [color={rgb, 255:red, 74; green, 144; blue, 226 }  ,draw opacity=1 ][line width=1.5]  (204.8,54.29) .. controls (210.16,50.71) and (217.19,48.54) .. (224.88,48.54) .. controls (232.53,48.54) and (239.52,50.69) .. (244.87,54.24) ;

% Text Node
\draw (31,75.9) node [anchor=north west][inner sep=0.75pt]    {$z_{3}^{+}$};
% Text Node
\draw (120.33,75.9) node [anchor=north west][inner sep=0.75pt]    {$z_{3}^{-}$};
% Text Node
\draw (209.66,75.9) node [anchor=north west][inner sep=0.75pt]    {$z_{-3}^{+}$};
% Text Node
\draw (299,75.9) node [anchor=north west][inner sep=0.75pt]    {$z_{-3}^{-}$};

\end{tikzpicture}
    \caption{Tangle diagrams $T^\pm_q$ and their Lee cycles for $q = \pm 3$.}
    \label{fig:tq-lee-cycles}
\end{figure}

\begin{proposition}
\label{prop:twist-tangle-lee-cycle}
    For each $q \geq 1$, we have  
    \[
        \ca(T^+_q) \xmapsto{\ f_q\ } z^+_q,\quad 
        \ca(T^-_q) \xmapsto{\ f_q\ } \pm H^{q - 1} z^-_q
    \]
    and
    \[
        \ca(T^+_{-q}) \xmapsto{\ f_{-q}\ } z^+_{-q},\quad 
        \ca(T^-_{-q}) \xmapsto{\ f_{-q}\ } z^-_{-q}.
    \]
\end{proposition}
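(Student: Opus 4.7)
The plan is to prove the proposition by induction on $|q|$, paralleling the inductive construction of the reduction $f_q$ in the proof of \Cref{prop:twist-tangle-retract}. For the base cases $q = \pm 1$, the complex $[T_{\pm 1}^{\pm}]$ already coincides with $E_{\pm 1}$ and $f_{\pm 1}$ is the identity; a direct application of \Cref{def:lee-cycle-tangle} identifies $\ca(T_{\pm 1}^{\pm})$ with $z_{\pm 1}^{\pm}$ on the unique Seifert-resolved vertex, with trivial $H$-multiplier $H^{q-1} = 1$.

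For the inductive step with $q \geq 1$, I would write $T_{q+1}^{\pm} = D(T_1^{\pm}, T_q^{\pm})$ and apply \Cref{thm:lee-cycle-decomp} to obtain
\[
    \ca(T_{q+1}^{\pm}) \;=\; D(\ca(T_1^{\pm}),\ca(T_q^{\pm})) \circ \ca(T_0^{\pm}),
\]
where $T_0^{\pm} = D((T_1^{\pm})^{\lin},(T_q^{\pm})^{\lin})$ is a diagram in which the horizontal arcs of the two $E_1$ pieces combine into an arc plus a circle. The reduction $f_{q+1}$ factors as $D(f_1, f_q)$ followed by the delooping of that circle in $E_{11} = D(E_1, E_1)$ and the iterated Gaussian eliminations described in \Cref{lem:Tq-lem2}. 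First I would apply the inductive hypothesis to replace $D(\ca(T_1^{\pm}), \ca(T_q^{\pm}))$ by $D(z_1^{\pm}, z_q^{\pm})$ (times the inherited $H^{q-1}$ in the $T_q^{-}$ case), then track the composite
\[
    D(z_1^{\pm}, z_q^{\pm}) \circ \ca(T_0^{\pm})
\]
through the remaining delooping–elimination passes. The argument for $q \leq -1$ is symmetric, using $T_{q-1}^{\pm} = D(T_{-1}^{\pm}, T_q^{\pm})$ and reducing from the right.

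The hard part will be the $H$-power bookkeeping. For $T_{q+1}^{+}$ the Seifert resolution is $E_0$ and the Lee cycle occupies the underlined $\underline{E_0}$ vertex, i.e.\ the source of the leftmost arrow $s$ in $E_{q+1}$; the Gaussian eliminations of \Cref{lem:Tq-lem2} leave this term untouched, so no $H$-multiplier accumulates and one recovers $z_{q+1}^{+}$. For $T_{q+1}^{-}$, however, the Lee cycle sits at the far-right $E_1\{2q+1\}$ vertex and is obtained by composing with the decorated circle in $\ca(T_0^{-})$; using the identities $Y = X - H$, $X^2 = HX$, and $XY = YX = 0$ in $A$, the delooping of the $e_X$- or $e_Y$-dotted circle followed by the subsequent Gaussian elimination produces exactly one additional factor of $H$ relative to the previous step, upgrading $H^{q-1} z_q^{-}$ to $H^{q} z_{q+1}^{-}$. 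The main obstacle is verifying that this single factor of $H$ appears at each pass (and not more or fewer): this amounts to checking that the Schur complement $-c a^{-1} b$ in \Cref{prop:gauss-elim} picks up the correct divisor in each of the three cases corresponding to the morphisms $s$, $a$, and $b$ appearing in $E_q$, together with a sign bookkeeping inherited from the minus signs in the $u_X, l_X$ expansions of $a$ and $b$.
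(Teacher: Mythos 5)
Your proposal follows essentially the same route as the paper's proof: induct via $T^\pm_{q+1} = D(T^\pm_1, T^\pm_q)$, decompose the Lee cycle using \Cref{thm:lee-cycle-decomp}, factor $f_{q+1}$ as $D(1,f_q)$ followed by the deloop-and-eliminate retraction, note that in the positive case the cycle lands on $\underline{E_0}$, which the retraction fixes, and that in the negative case delooping the circle of $E_{11}$ and the final Gaussian elimination contribute exactly one extra factor of $H$. One minor correction to your bookkeeping: the circle in $T_0^- = D(E_1,E_1) = E_{11}$ is a \emph{closed} component of the Seifert resolution, so by \Cref{def:lee-cycle-tangle} it is capped with a full dot $X$ or $Y$ (the reduced dots $e_X, e_Y$ are reserved for the arc components); this is precisely why the deloop-eliminate step yields $H^{+1}$ rather than $H^0$, so your stated conclusion is correct but should not describe that circle as $e_X$- or $e_Y$-dotted.
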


\begin{proof}
    We only prove the first two equations. First, the statement is trivial for $q = 1$. Next, assume $q \geq 1$ and that the result it true up to $q$. Recall from the proof of \Cref{prop:twist-tangle-retract} that the map $f_{q + 1}$ is obtained by the composition 
    \[
        f_{q + 1} = g_{q + 1} \circ D(1, f_q)
    \]
    where $g_{q + 1}$ is the retraction that collapses all the squares from the left. For the positive diagram $T^+_{q + 1}$, from \Cref{thm:lee-cycle-decomp}, we have 
    \begin{align*}
        \ca(T^+_{q + 1}) 
        &= \ca(D(T^+_1, T^+_q)) \\
        &= D(\ca(T^+_1), \ca(T^+_q)) \circ \ca(\sfE_0)
    \end{align*}
    Here, we have identified $D((T^+_1)^{(i)}, (T^+_q)^{(i)}) = \sfE_0$. With $\ca(T^+_1) = z^+_1, \ca(\sfE_0) = z^+_{q + 1}$ and the induction hypothesis, post-composing $D(1, f_q)$ to $\ca(T^+_{q + 1})$ gives 
    \[
        D(z^+_1, z^+_q) \circ z^+_{q + 1} = z^+_{q + 1}.
    \]
    Since $g_{q + 1}$ acts as identity on $\underline{\sfE_0}$, we obtain the desired result. 
    For the negative diagram $T^-_{q + 1}$, we have
    \begin{align*}
        \ca(T^-_{q + 1}) 
        &= \ca(D(T^-_1, T^-_q)) \\
        &= D(\ca(T^-_1), \ca(T^-_q)) \circ \ca(\sfE_{11}).
    \end{align*}
    Here, we have identified $D((T^-_1)^{(i)}, (T^-_q)^{(i)}) = D(\sfE_1, \sfE_1) = \sfE_{11}$. From the induction hypothesis, post-composing $D(1, f_q)$ to $\ca(T^-_{q + 1})$ gives 
    \[
        \pm H^{q-2} D(z^-_1, z^-_q) \circ \ca(\sfE_{11}) = \pm H^{q-2} \ca(\sfE_{11}).
    \]
    Note that the cycle $\ca(\sfE_{11})$ has target in the rightmost bottom object $\sfE_{11}$ of the complex $E^-_{q + 1}$. One can check using \Cref{prop:gauss-elim} that post-composing $g_{q+1}$ to it gives
    \[
        \pm H z^-_{q + 1}
    \]
    for both cases of the checkerboard coloring. Thus we have 
    \[
        f_{q + 1} \circ \ca(T^-_{q + 1}) = \pm H^q z^-_{q + 1}
    \]
    as desired. The proof for the remaining equations proceeds similarly.
\end{proof}

\begin{example}
    We first demonstrate that, for any odd $q > 0$, the $s_H$-invariants of the positive $(2, q)$-torus knot and its mirror can be determined using \Cref{prop:twist-tangle-lee-cycle} (although the positive case is immediate from \Cref{prop:s_H-properties}). Write $T_{2, q} = C({T}^+_{q})$ using a closure $C$ (a $1$-input planar arc diagram) that connects the two strands of ${T}^+_{q}$ outside the disk. From \Cref{prop:twist-tangle-retract}, the complex $[T_{2, q}] = [C({T}^+_{q})] = C([{T}^+_{q}])$ can be reduced to 
    \[
\begin{tikzcd}[column sep=2.5em]
C(\underline{\sfE_0}) \arrow[r, "C(s)"] & C(\sfE_1) \arrow[r, "C(b)"] & C(\sfE_1) \arrow[r, "C(a)"] & C(\sfE_1) \arrow[r, "C(b)"] & \cdots
\end{tikzcd}
    \]
    which is
    \[
\begin{tikzcd}[column sep=3em]
\underline{\thickcirc \thickcirc} \arrow[r, "m"] & \thickcirc \arrow[r, "0", dotted] & \thickcirc \arrow[r, "U"] & \thickcirc \arrow[r, "0", dotted] & \cdots.
\end{tikzcd}
    \]
    Here, $U = X + Y$, which corresponds to the star $\star$ introduced in \Cref{sec:prelim}. From \Cref{prop:twist-tangle-lee-cycle}, we have $C(f_q) \circ \ca(T_{2, q}) = C(f_q \circ \ca({T}^+_{q})) = C(z^+_q)$. Since there are no arrows coming into the initial object, we have $d_H(T_{2, q}) = 0$. Together with $w = q$ and $r = 2$, we determine
    \[
        s_H(T_{2, q}) = q - 1.
    \]
    For the mirror, similarly write $T^*_{2, q} = C({T}^-_{-q})$ and then its complex is given by 
    \[
\begin{tikzcd}
\cdots \arrow[r, "0", dotted] & \thickcirc \arrow[r, "U"] & \thickcirc \arrow[r, "0", dotted] & \thickcirc \arrow[r, "\Delta"] & \underline{\thickcirc \thickcirc}.
\end{tikzcd}
    \]
    Again from \Cref{prop:twist-tangle-lee-cycle}, we have $C(f_{-q}) \circ \ca(T^*_{2, q}) = C(z^-_{-q})$. Furthermore, we may apply deloop-and-eliminate to the rightmost arrow
    \[
\begin{tikzcd}
\thickcirc \arrow[r, "\Delta"] & \underline{\thickcirc\thickcirc}
\end{tikzcd}
    \]
    to get 
    \[
\begin{tikzcd}
0 \arrow[r, dotted] & \underline{\thickcirc}.
\end{tikzcd}
    \]
    One can check that the post-composing this retraction to $C(z^-_{-q})$ gives $\pm H \ca(\bigcirc)$. Thus $d_H(T^*_{2, q}) = 1$, and with $w = -q$ and $r = 2$, we determine 
    \[
        s_H(T^*_{2, q}) = -q + 1.
    \]
\end{example}

    \section{Pretzel knots}
\label{sec:pretzel}

Finally we determine the $s$-invariant for all $3$-strand pretzel knots $P(p, q, r)$ by combining general arguments on \textit{slice-torus invariants} and the diagrammatic methods we have developed so far. 

\subsection{General results for slice-torus invariants}

A \emph{slice-torus invariant} $\nu$ is an $\ZZ$-valued group homomorphism from smooth knot concordance group $\mathcal{C}$, which satisfies following two conditions:
\begin{itemize}[leftmargin=5em]   
    \item[(slice)] $\nu(K) \le g_4(K)$; and 
    \item[(torus)] $\nu(T_{p,q}) = g_4(T_{p,q})$ for any positive torus knot $T_{p,q}$.
\end{itemize}
Here, $g_4$ denotes the $4$-genus of knots. Slice-torus invariants include: (i) Ozsv\'ath and Szab\'o's \textit{$\tau$-invariant} from knot Floer homology \cite{OS:2003}, (ii) Rasmussen invariants over arbitrary field $F$, normalized as $s^F/2$, (iii) the $\tilde{s}$-invariant from equivariant singular instanton Floer homology \cite{DISST:2022}, and (iv) Iida-Taniguchi's $q_M$-invariant from $\ZZ_2$-equivariant Seiberg--Witten Floer cohomology \cite{Iida-Taniguchi:2024}. (More generally, one may consider \textit{$\RR$-valued slice-torus invariants}, which contain the concordance invariants from $\mathfrak{sl}_N$ Khovanov--Rozansky homologies \cite{Wu:2009,Lobb:2009,Lobb:2012}, and the $\tau^\#$-invariant from framed instanton Floer homology \cite{Baldwin-Sivek:2021}). Studies of slice-torus invariants originates in \cite{Livingston:2004,Lewark:2014}, and have been extensively studied in \cite{Cavallo:2020,Feller:2022}.

The following properties follows from the formal properties of slice-torus invariants. 

\begin{proposition}[{\cite{Livingston:2004,Lewark:2014}}]
    Let $\nu$ be any slice-torus invariant. 
    \begin{enumerate}
        \item If $S$ is a smooth cobordism of genus $g(S)$ between knots $K$ and $K'$, 
        \[
            |\nu(K) - \nu(K')| \leq g(S).
        \]
        \item If two knots $K^+$ and $K^-$ differ by a single crossing change, from positive to negative, then
        \[
            0 \leq \nu(K^+) - \nu(K^-) \leq 1.
        \]
    \end{enumerate}
\end{proposition}

The following lemma states the effect on a slice-torus invariant of a full twist on a 2-strand tangle. Let $K$ be a knot, and let $J = \partial D$ be an unknot bounding a disk $D$ that intersects $K$ transversely at two points. We can modify $K$ by performing full twists along the two strands passing through $D$. Denote by $K_n$ the knot obtained from $K$ by applying $n \in \mathbb{Z}$ full twists along $D$.

\begin{lemma}[{\cite[Proposition 2.1]{LZ:2024}}]\label{lem:st-twist}
    Let $\nu$ be a slice-torus invariant, and let $m \geq n$ be two integers.
    \begin{enumerate}
        \item[(a)] If the two strands are oriented in opposite directions, then \[
            \nu(K_m) - \nu(K_n) \in \{0, -1\}.
        \]
        \item[(b)] If the two strands are oriented in the same direction, then 
        \[
            \nu(K_m) - \nu(K_n) \in \{m - n - 1, m - n\}.
        \]
    \end{enumerate}
\end{lemma}

\begin{figure}
    \centering
    \begin{subfigure}{.45\textwidth}
        \centering
        \tikzset{every picture/.style={line width=0.75pt}} %set default line width to 0.75pt        

\begin{tikzpicture}[x=0.75pt,y=0.75pt,yscale=-.6,xscale=.6]
%uncomment if require: \path (0,180); %set diagram left start at 0, and has height of 180

%Straight Lines [id:da6315012118684977] 
\draw    (0,150) -- (248,150) ;
\draw [shift={(250,150)}, rotate = 180] [color={rgb, 255:red, 0; green, 0; blue, 0 }  ][line width=0.75]    (10.93,-3.29) .. controls (6.95,-1.4) and (3.31,-0.3) .. (0,0) .. controls (3.31,0.3) and (6.95,1.4) .. (10.93,3.29)   ;
%Shape: Circle [id:dp08133097512373277] 
\draw  [fill={rgb, 255:red, 0; green, 0; blue, 0 }  ,fill opacity=1 ] (0,63.2) .. controls (0,61.43) and (1.43,60) .. (3.2,60) .. controls (4.97,60) and (6.41,61.43) .. (6.41,63.2) .. controls (6.41,64.97) and (4.97,66.41) .. (3.2,66.41) .. controls (1.43,66.41) and (0,64.97) .. (0,63.2) -- cycle ;
%Shape: Circle [id:dp19488342644404133] 
\draw  [fill={rgb, 255:red, 0; green, 0; blue, 0 }  ,fill opacity=1 ] (30,63.2) .. controls (30,61.43) and (31.43,60) .. (33.2,60) .. controls (34.97,60) and (36.41,61.43) .. (36.41,63.2) .. controls (36.41,64.97) and (34.97,66.41) .. (33.2,66.41) .. controls (31.43,66.41) and (30,64.97) .. (30,63.2) -- cycle ;
%Shape: Circle [id:dp4339152447269542] 
\draw  [fill={rgb, 255:red, 0; green, 0; blue, 0 }  ,fill opacity=1 ] (60,63.2) .. controls (60,61.43) and (61.43,60) .. (63.2,60) .. controls (64.97,60) and (66.41,61.43) .. (66.41,63.2) .. controls (66.41,64.97) and (64.97,66.41) .. (63.2,66.41) .. controls (61.43,66.41) and (60,64.97) .. (60,63.2) -- cycle ;
%Shape: Circle [id:dp730581601244595] 
\draw  [fill={rgb, 255:red, 0; green, 0; blue, 0 }  ,fill opacity=1 ] (90,63.2) .. controls (90,61.43) and (91.43,60) .. (93.2,60) .. controls (94.97,60) and (96.41,61.43) .. (96.41,63.2) .. controls (96.41,64.97) and (94.97,66.41) .. (93.2,66.41) .. controls (91.43,66.41) and (90,64.97) .. (90,63.2) -- cycle ;
%Shape: Circle [id:dp6480973051047133] 
\draw  [fill={rgb, 255:red, 0; green, 0; blue, 0 }  ,fill opacity=1 ] (120,93.2) .. controls (120,91.43) and (121.43,90) .. (123.2,90) .. controls (124.97,90) and (126.41,91.43) .. (126.41,93.2) .. controls (126.41,94.97) and (124.97,96.41) .. (123.2,96.41) .. controls (121.43,96.41) and (120,94.97) .. (120,93.2) -- cycle ;
%Shape: Circle [id:dp2485291341050786] 
\draw  [fill={rgb, 255:red, 0; green, 0; blue, 0 }  ,fill opacity=1 ] (150,93.2) .. controls (150,91.43) and (151.43,90) .. (153.2,90) .. controls (154.97,90) and (156.41,91.43) .. (156.41,93.2) .. controls (156.41,94.97) and (154.97,96.41) .. (153.2,96.41) .. controls (151.43,96.41) and (150,94.97) .. (150,93.2) -- cycle ;
%Shape: Circle [id:dp6994094092972528] 
\draw  [fill={rgb, 255:red, 0; green, 0; blue, 0 }  ,fill opacity=1 ] (180,93.2) .. controls (180,91.43) and (181.43,90) .. (183.2,90) .. controls (184.97,90) and (186.41,91.43) .. (186.41,93.2) .. controls (186.41,94.97) and (184.97,96.41) .. (183.2,96.41) .. controls (181.43,96.41) and (180,94.97) .. (180,93.2) -- cycle ;
%Shape: Circle [id:dp8900658770248643] 
\draw  [fill={rgb, 255:red, 0; green, 0; blue, 0 }  ,fill opacity=1 ] (210,93.2) .. controls (210,91.43) and (211.43,90) .. (213.2,90) .. controls (214.97,90) and (216.41,91.43) .. (216.41,93.2) .. controls (216.41,94.97) and (214.97,96.41) .. (213.2,96.41) .. controls (211.43,96.41) and (210,94.97) .. (210,93.2) -- cycle ;
%Shape: Rectangle [id:dp9466118018891896] 
\draw  [color={rgb, 255:red, 255; green, 255; blue, 255 }  ,draw opacity=1 ] (0,0) -- (30,0) -- (30,30) -- (0,30) -- cycle ;

% Text Node
\draw (237,122.4) node [anchor=north west][inner sep=0.75pt]    {$n$};

\end{tikzpicture}
        \caption{}
        \label{fig:nu-graph1}
    \end{subfigure}
    \begin{subfigure}{.45\textwidth}
        \centering
        \tikzset{every picture/.style={line width=0.75pt}} %set default line width to 0.75pt        

\begin{tikzpicture}[x=0.75pt,y=0.75pt,yscale=-.6,xscale=.6]
%uncomment if require: \path (0,180); %set diagram left start at 0, and has height of 180

%Straight Lines [id:da859643119698466] 
\draw    (0,150) -- (248,150) ;
\draw [shift={(250,150)}, rotate = 180] [color={rgb, 255:red, 0; green, 0; blue, 0 }  ][line width=0.75]    (10.93,-3.29) .. controls (6.95,-1.4) and (3.31,-0.3) .. (0,0) .. controls (3.31,0.3) and (6.95,1.4) .. (10.93,3.29)   ;
%Shape: Circle [id:dp3358220431898171] 
\draw  [fill={rgb, 255:red, 0; green, 0; blue, 0 }  ,fill opacity=1 ] (30,123.2) .. controls (30,121.43) and (31.43,120) .. (33.2,120) .. controls (34.97,120) and (36.41,121.43) .. (36.41,123.2) .. controls (36.41,124.97) and (34.97,126.41) .. (33.2,126.41) .. controls (31.43,126.41) and (30,124.97) .. (30,123.2) -- cycle ;
%Shape: Circle [id:dp7933734133837682] 
\draw  [fill={rgb, 255:red, 0; green, 0; blue, 0 }  ,fill opacity=1 ] (60,93.2) .. controls (60,91.43) and (61.43,90) .. (63.2,90) .. controls (64.97,90) and (66.41,91.43) .. (66.41,93.2) .. controls (66.41,94.97) and (64.97,96.41) .. (63.2,96.41) .. controls (61.43,96.41) and (60,94.97) .. (60,93.2) -- cycle ;
%Shape: Circle [id:dp920565011649488] 
\draw  [fill={rgb, 255:red, 0; green, 0; blue, 0 }  ,fill opacity=1 ] (90,63.2) .. controls (90,61.43) and (91.43,60) .. (93.2,60) .. controls (94.97,60) and (96.41,61.43) .. (96.41,63.2) .. controls (96.41,64.97) and (94.97,66.41) .. (93.2,66.41) .. controls (91.43,66.41) and (90,64.97) .. (90,63.2) -- cycle ;
%Shape: Circle [id:dp6881239873084085] 
\draw  [fill={rgb, 255:red, 0; green, 0; blue, 0 }  ,fill opacity=1 ] (120,63.2) .. controls (120,61.43) and (121.43,60) .. (123.2,60) .. controls (124.97,60) and (126.41,61.43) .. (126.41,63.2) .. controls (126.41,64.97) and (124.97,66.41) .. (123.2,66.41) .. controls (121.43,66.41) and (120,64.97) .. (120,63.2) -- cycle ;
%Shape: Circle [id:dp7431179841558968] 
\draw  [fill={rgb, 255:red, 0; green, 0; blue, 0 }  ,fill opacity=1 ] (150,33.2) .. controls (150,31.43) and (151.43,30) .. (153.2,30) .. controls (154.97,30) and (156.41,31.43) .. (156.41,33.2) .. controls (156.41,34.97) and (154.97,36.41) .. (153.2,36.41) .. controls (151.43,36.41) and (150,34.97) .. (150,33.2) -- cycle ;
%Shape: Circle [id:dp688176861965861] 
\draw  [fill={rgb, 255:red, 0; green, 0; blue, 0 }  ,fill opacity=1 ] (180,3.2) .. controls (180,1.43) and (181.43,0) .. (183.2,0) .. controls (184.97,0) and (186.41,1.43) .. (186.41,3.2) .. controls (186.41,4.97) and (184.97,6.41) .. (183.2,6.41) .. controls (181.43,6.41) and (180,4.97) .. (180,3.2) -- cycle ;
%Shape: Rectangle [id:dp4464423142969526] 
\draw  [color={rgb, 255:red, 255; green, 255; blue, 255 }  ,draw opacity=1 ] (0,0) -- (30,0) -- (30,30) -- (0,30) -- cycle ;

% Text Node
\draw (237,122.4) node [anchor=north west][inner sep=0.75pt]    {$n$};

\end{tikzpicture}
        \caption{}
        \label{fig:nu-graph2}
    \end{subfigure}
    \caption{Plotting $(n, \nu(K_n))$.}
\end{figure}

One can visualize this lemma by considering the discrete graph
\[
    \Gamma = \{ \ (n, \nu(K_n)) \mid n \in \ZZ \ \} \subset \RR^2.
\]
In the first case, the graph appears constant, except for one possible `dropping point' (\Cref{fig:nu-graph1}). In the second case, the graph appears affine with slope $1$, except for one possible `stationary point' (\Cref{fig:nu-graph2}).

\begin{proposition}
\label{prop:st-Wh}
    Let $J_n$ be an $n$-twist knot, where $n$ denotes the number of full twists. Then, for a slice-torus invariant $\nu$,
    \[
    \nu(J_n) = \begin{cases}
        1 & \text{if } n < 0, \\
        0 & \text{if } n \geq 0.
    \end{cases}
    \]
\end{proposition}

\begin{proof}
    $J_{-1} = T_{2,3}$, and $J_0$ is the unknot. Hence $\nu(J_{-1})=1$, $\nu(J_0)=0$. Other values are determined by \Cref{lem:st-twist}.
\end{proof}

Now, consider a $3$-strand pretzel knot $P(p,q,r)$ with $p, q, r \in \mathbb{Z}$. Without loss of generality, we may assume that $p > 0$, and it suffices to consider the following cases: 
\begin{itemize}[leftmargin=3em]
    \item[($O$)] $p, q, r$ are all odd, and 
    \begin{itemize}
        \item[($O_+$)] $q, r > 0$,
        \item[($O_-$)] $q, r < 0$.
    \end{itemize}
    
    \item[($E$)] $p$ is even; $q, r$ are odd, and 
    \begin{itemize}
        \item[($E_{++}$)] $q, r > 0$,
        \item[($E_{-+}$)] $q < 0 < r$,
        \item[($E_{--}$)] $q, r < 0$.
    \end{itemize}
\end{itemize}
This is because 3-strand pretzel knots possess symmetries under permutations of $(p, q, r)$, and also satisfy
\[
    -P(p, q, r) = P(-p, -q, -r),
\]
where $-P$ denotes the mirror of $P$. The case $p = 0$ is also excluded, since $P(0,q,r)$ is just the connected sum of two torus knots, $T_{2,q} \# T_{2,r}$. Also note that if $q = -r$, then $P$ is ribbon, in particular slice, and hence $\nu(P) = 0$ (see \cite[Proposition 2.1]{Lecuona:2015}). 

The following proposition states that, for any slice-torus invariant $\nu$, the value of $\nu(P(p,q,r))$ can be determined under certain conditions solely from its formal properties.

\begin{proposition}
\label{prop:st_pretzel}
    Consider the 3-strand pretzel knot $P = P(p, q, r)$ with integers $p, q, r$. Let $\nu$ be any slice-torus invariant. Depending on the five types described above, $\nu(P)$ is partially determined as follows:
    \vspace{1em}
    \begin{itemize}[leftmargin=4em]   
        \item[($O_+$)] $\nu(P) = -1$.
        \item[($O_-$)] $\nu(P) = \begin{cases}
            0 & \text{if\ } p \geq \min\{-q,-r\} \\ 
            0 \text{\ or\ } 1 & \text{otherwise}. 
        \end{cases}$
        \item[($E_{++}$)] $\nu(P) = \displaystyle \frac{q+r}{2} - 1$. 
        \item[($E_{-+}$)] $\nu(P) = \begin{cases}
            \displaystyle \frac{q+r}{2}     & \text{if\ } q + r \leq 0, \\[.75em]
            \displaystyle \frac{q+r}{2}-1 \text{\ or\ } \frac{q+r}{2} & \text{otherwise}. 
        \end{cases}$.
        \item[($E_{--}$)] $\nu(P) = \displaystyle \frac{q+r}{2}$.
    \end{itemize}
    \vspace{1em}
\end{proposition}

\begin{proof} $\ $
\begin{figure}[t]
    \centering
    \input{tikzpictures/pretzel-cut-odd}
    \caption{Genus 1 cobordism from odd $P$ to the unknot}
    \label{fig:pretzel-cut-odd}
    \vspace{2em}
    \input{tikzpictures/pretzel-cut-even}
    \caption{Genus 1 cobordism from even $P$ to $T_{2, q + r \pm 1}$.}
    \label{fig:pretzel-cut-even}
\end{figure}

    \begin{itemize}[leftmargin=4em]   
    \item[($O$)] 
        Suppose $p, q, r$ are odd. Take a genus 1 cobordism from $P$ to the unknot, described as a sequence of band surgeries and twists, as in \Cref{fig:pretzel-cut-odd}. This implies that $|\nu(P)| \leq 1$.
        
    \item[($O_+$)]
        Further suppose $q, r > 0$. Observe that $P(1,1,1) = -T_{2,3}$, so $\nu(P(1,1,1)) = -1$. $P(p,q,r)$ can be obtained by adding positive twists on $P(1,1,1)$, each of which has the effect of possibly decreasing the value of $\nu$. However, we have $\nu(P) \geq -1$ so $\nu(P) = -1$. 
        
    \item[($O_-$)]
        Next, suppose $q, r < 0$ and $p \geq \min\{-q, -r\}$. Without loss of generality, we may assume that $p \geq -q$. From $-p \leq q < 0 < -r$, we have
        \[ 
            0 = \nu(P(p,-p,r)) 
            \geq \nu(P(p,q,r))
            \geq \nu(P(p,-r,r))
            = 0. 
        \]
        
        Next, if $p < \min\{-q, -r\}$, then 
        \[
            \nu(P(p, q, r)) 
            \geq \nu(P(p, -1, r)) 
            = 0
        \]
        so $\nu(P(p, q, r))$ is either $0$ or $1$.
        
    \item[(E)]
        Suppose $p$ is even. If $q + r = 0$, then $\nu(P) = 0$. Suppose $q + r \neq 0$. Take two genus 1 cobordisms from $P$ to the $(2, q + r - 1)$-torus knot $T$ and to the $(2, q + r + 1)$-torus knot $T'$, as described in \Cref{fig:pretzel-cut-odd}. This implies 
        \[
            | \nu(P) - \nu(T) | \leq 1,\quad 
            | \nu(P) - \nu(T') | \leq 1.
        \]
        If $q + r > 0$, then both $T, T'$ are positive and $\nu(T) = \frac{q + r - 2}{2}$, $\nu(T') = \frac{q + r}{2}$, so $\nu(P)$ is either $\frac{q + r - 2}{2}$ or $\frac{q + r}{2}$. If $q + r < 0$, then both $T, T'$ are negative and $\nu(T) = \frac{q + r}{2}$, $\nu(T') = \frac{q + r + 2}{2}$, so $\nu(P)$ is either $\frac{q + r}{2}$ or $\frac{q + r + 2}{2}$. 

    \item[($E_{++}$)]
        Further suppose $q, r > 0$. Observe that $P$ can be obtained from $P(p,1,1)$ by adding positive twists, and $P(p,1,1)$ is the $p$-twist knot $J_p$. From $\nu(P(p,1,1)) = 0$, \Cref{lem:st-twist} implies $\nu(P) = \frac{q+r-2}{2}$.
    
    \item[($E_{--}$)]
        Next, suppose $q, r < 0$. One can observe that $P$ is a negative knot, which is \textit{squeezed}, hence all slice-torus invariants takes the same value (\cite[Proposition 1.2, Lemma 3.5]{LZ:2024}). In particular, the $s$-invariant can be computed from its diagram as $s(P) = q+r$. Thus $\nu(P) = \frac{q+r}{2}$.
        
    \item[($E_{-+}$)]
        Finally, suppose $q < 0 < r$. The case $q + r > 0$ is discussed above, so suppose $q + r \leq 0$. In this case, $P$ can be obtained from $P(p,q,-1)$ by adding positive twists. From the above result, we have $\nu(P(p,q,-1)) = \frac{q-1}{2}$, and \Cref{lem:st-twist} implies $\nu(P) = \frac{q+r}{2}$.
        \qedhere
    \end{itemize}    
\end{proof}

Thus, the undetermined cases are:
\begin{itemize}[leftmargin=4em]   
    \item[($O^\star_-$)] $p, q, r$ are odd, $p > 0$ and $q, r < -p$,
    \item[($E^\star_{-+}$)] $p > 0$ is even, $q < 0 < r$ are odd, and $q + r > 0$.
\end{itemize}
By further combining previously known results on $3$-strand pretzel knots and general results on slice-torus invariants, we may determine $\nu(P)$ for a large part of ($O^\star_-$), and $s(P)$ for a large part of ($E^\star_{-+}$), as explained below:

\begin{remark}
    For type ($O^\star_-$) with the additional condition $p, -q, -r \geq 3$, it follows from \cite[Corollary 1.10]{BBG:2019} that $P$ is strongly quasi-positive. From \cite[Theorem 4]{Livingston:2004}, any strongly quasi-positive knot $K$ has $\nu(K) = g(K)$. Finally, \cite[Corollary 2.7]{Kim-Lee:2007} states that $g(P) = 1$. 
\end{remark}

\begin{remark}
    For type ($E^\star_{-+}$) with the additional condition $p < -q < r$, it follows from \cite[Theorem 1.4]{Greene:2010} that $P$ is quasi-alternating. From \cite[Theorem 1]{Manolescu-Ozvath:2008}, any quasi-alternating knot $K$ is $\Kh$-thin, hence $s(K)$ coincides with the knot signature $\sigma(K)$. Finally, we may compute from \cite[Theorem 1.18]{Jabuka:2010} that $\sigma(P) = q + r$. 
\end{remark}

In the following subsections, we determine $s$ for the remaining cases solely from the diagrammatic method, without any restriction or reference to previously known results.

\subsection{Notations and general strategy}
\label{subsec:pretzel-strategy}

First, we fix notations that will be used in the following subsections. For any $p, q, r \in \ZZ$, the 3-strand pretzel link $P = P(p, q, r)$ can be expressed as 
\[
    P(p, q, r) = D(T_p, T_q, T_r)
\]
where $T_p$, $T_q$, $T_r$ are the twist tangles introduced in \Cref{sec:twist-tangles} and $D$ is the obvious $3$-input planar arc diagram $D$, as depicted in \Cref{fig:intro-lee-decomp}. By composing the reductions of \Cref{prop:twist-tangle-retract}, we get a reduced complex
\[
    [P] = D([T_p], [T_q], [T_r]) \to 
    D(E_p, E_q, E_r) =: E.
\]

The structure of the complex $E$ can be described as a graph, where vertices $E(v)$ are given by summands of $E$, indexed by triples $v = (v_1, v_2, v_3)$ with $0 \leq |v_1| \leq p$,\ $0 \leq |v_2| \leq q$ and $0 \leq |v_3| \leq r$. Only eight diagrams appear in the summands of $E$, which we denote by
\[
\begin{gathered}
    \sfA   = D(\sfE_1, \sfE_1, \sfE_1), \\ 
    \sfB_1 = D(\sfE_0, \sfE_1, \sfE_1), \
    \sfB_2 = D(\sfE_1, \sfE_0, \sfE_1), \ 
    \sfB_3 = D(\sfE_1, \sfE_1, \sfE_0), \\
    \sfC_1 = D(\sfE_0, \sfE_0, \sfE_1), \
    \sfC_2 = D(\sfE_0, \sfE_1, \sfE_0), \ 
    \sfC_3 = D(\sfE_1, \sfE_0, \sfE_0), \\
    \sfD   = D(\sfE_0, \sfE_0, \sfE_0)
\end{gathered}
\]
as in \Cref{fig:pretzel-diagrams}. We also use the shorthand notation $\sfX(v_1, v_2, v_3)$ to indicate that $E$ has diagram $\sfX$ at index $(v_1, v_2, v_3)$. 

\begin{figure}[t]
    \centering
    \input{tikzpictures/pretzel-diagrams}
    \caption{Diagrams $\sfA$ - $\sfD$ appearing in the reduced complex $E$.}
    \label{fig:pretzel-diagrams}
\end{figure}

Each edge $E(v) \to E(v')$ is given by one of the morphisms $a, b, m$ or $\Delta$ composed using $D$ with two identify morphisms. We put subscript from $1$ to $3$ to these symbols to indicate the index of the input hole of $D$, for example $a_1 = D(a, I, I)$ and $b_2 = D(I, b, I)$. Edges are also assigned signs by the standard signing rule: put $(-1)^{v_1}$ for an edge $E(v_1,v_2,v_3) \to E(v_1,v_2+1,v_3)$, put $(-1)^{v_1+v_2}$ for an edge $E(v_1,v_2,v_3) \to E(v_1,v_2,v_3+1)$. We call edges of the form 
\[
    E(v_1, v_2, v_3) \to E(v_1 + 1, v_2, v_3)
\]
\textit{horizontal edges}. If we endow the lexicographical order on the indices and align the objects vertically in each homological grading so that the horizontal edges are drawn horizontally, then all other edges will necessarily have negative slopes, as in \Cref{fig:simplify-E}. A sequence of horizontal edges for a fixed $(v_2, v_3)$ is called a \textit{horizontal sequence}, and is denoted $E(*, v_2, v_3)$. 

The Lee cycle $\ca(P)$ of $P$ is mapped to 
\[
    \pm H^{k} z
\]
for some $k \geq 0$ determined by \Cref{prop:twist-tangle-lee-cycle}, and $z$ is a cycle in $E$ of the form $D(z^\pm_p, z^\pm_q, z^\pm_r)$ with target in $E(0, 0, 0)$. The basic strategy is to further apply delooping and Gaussian elimination to $E$, until the $H$-divisibility of $z$ can be determined. 

For a diagram $\sfX$ that contains a distinguished circle $C$, we denote by $\sfX^+$ and $\sfX^-$ the two objects that appear by delooping $C$. For the diagram $\sfA$, we choose $C$ to be the lower circle, and for $\sfC_i$ $(i = 1, 2, 3)$, we choose $C$ to be the inner circle. With this notation, \Cref{lem:Tq-lem1} can be expressed as follows: 
\[
\begin{tikzcd}[row sep=.15em]
    & & & &  && \sfY^+ \\
    \sfX\arrow[rr,"\Delta"]&& \sfY & \leadsto& \sfX \arrow[rru, "Y" description] \arrow[rrd, "I" description] && \\
    & & & & & & \sfY^-
\end{tikzcd}
\]
\[
\begin{tikzcd}[row sep=.15em]
    & & & &  \sfX^+\arrow[rrd,"I" description] &&  \\
    \sfX\arrow[rr,"m"]&& \sfY & \leadsto& && \sfY\\
    & & & & \sfX^- \arrow[rru,"X" description]&&
\end{tikzcd}
\]
\[
\begin{tikzcd}[row sep=.15em]
    & & & &  \sfX^+\arrow[rrdd,"I" description] \arrow[rr,"Y"] && \sfX^+ \\
    \sfX\arrow[rr,"a"]&& \sfX & \leadsto& &&\\
    & & & & \sfX^- \arrow[rr,"X"]&& \sfX^-
\end{tikzcd}
\]
\[
\begin{tikzcd}[row sep=.15em]
    & & & &  \sfX^+\arrow[rrdd,"I" description] \arrow[rr,"-X"] && \sfX^+ \\
    \sfX\arrow[rr,"b"]&& \sfX & \leadsto& &&\\
    & & & & \sfX^- \arrow[rr,"-Y"]&& \sfX^-
\end{tikzcd}
\]
The downward diagonal arrows on the right side with labels $I$ are the ones to which Gaussian elimination can be applied. Typically, we apply this reduction on horizontal edges. For instance, suppose we have a horizontal sequence
\[
\begin{tikzcd}
    \cdots \arrow[r] 
    & \sfA \arrow[r, "a"] 
    & \sfA \arrow[r, "b"] 
    & \sfA \arrow[r] 
    & \cdots.
\end{tikzcd}
\]
After delooping one the top circle on each diagram $\sfA$, the sequence transforms into 
\[
\begin{tikzcd}
    \cdots \arrow[r] 
    & \sfA^+ \arrow[r, "Y"] \arrow[rd, "I" description] 
    & \sfA^+ \arrow[r, "-X"] \arrow[rd, "I" description] 
    & \sfA^+ \arrow[r] & \cdots \\
    \cdots \arrow[r] 
    & \sfA^- \arrow[r, "X"] 
    & \sfA^- \arrow[r, "-Y"] 
    & \sfA^- \arrow[r] & \cdots
\end{tikzcd}
\]
By applying Gaussian elimination to the two diagonal arrows labeled $I$, the center object $\sfA$ can be completely eliminated as 
\[
\begin{tikzcd}
    \cdots \arrow[r, dotted] 
    & 0 \arrow[rd, dotted] \arrow[r, dotted] 
    & 0 \arrow[rd, dotted] \arrow[r, dotted] 
    & \sfA^+ \arrow[r] & \cdots \\
    \cdots \arrow[r] 
    & \sfA^- \arrow[r, dotted] 
    & 0 \arrow[r, dotted] 
    & 0 \arrow[r, dotted] 
    & \cdots.
\end{tikzcd}    
\]
Due to \Cref{prop:gauss-elim}, each elimination will generally produce new edges, but will not affect other horizontal edges. To see that this, suppose there is a part in $E$, 
\[
    \begin{tikzcd}[row sep=.5em, every arrow/.append style={start anchor=east, end anchor=west}]
    {E(u_1, u_2, u_3)\ \ \ } \arrow[rd, shift left] & \\
    {E(v_1 - 1, v_2, v_3)} \arrow[r] \arrow[rd] 
    & {E(v_1, v_2, v_3)} \\
    & {E(w_1, w_2, w_3)}
    \end{tikzcd}
\]
where $u_1 \geq v_1$ and $v_1 - 1 \geq w_1$ and the horizontal edge is invertible. After eliminating this edge, it will produce a new edge 
\[
    \begin{tikzcd}[row sep=.5em, every arrow/.append style={start anchor=east, end anchor=west}]
    {E(u_1, u_2, u_3)\ \ \ } \arrow[rdd] \arrow[rd, dotted, shift left] & \\
    {\qquad 0 \qquad} \arrow[r, dotted] \arrow[rd, dotted] 
    & {\qquad 0 \qquad} \\
    & {E(w_1, w_2, w_3)}
    \end{tikzcd}
\]
which has $u_1 \geq v_1 > w_1$, hence is not horizontal. 

More generally, if there is a \textit{zigzag} between two objects such that the horizontal edges are invertible, then after elimination of the horizontal edges, a new steep edge between the objects will be produced. 
\[
\begin{tikzcd}[row sep=.5em]
\sfX \arrow[rd, shift left] & & & \sfX \arrow[rdddd] \arrow[rd, dotted] &      \\
{*} \arrow[r] \arrow[rd] & {*} & & 0 \arrow[r, dotted] \arrow[rd, dotted] & 0    \\
{*} \arrow[r] \arrow[rd] & {*} & \leadsto & 0 \arrow[r, dotted] \arrow[rd, dotted] & 0   \\
{*} \arrow[r] \arrow[rd] & {*} & & 0 \arrow[r, dotted] \arrow[rd, dotted] & 0    \\ & \sfY & & & \sfY
\end{tikzcd}    
\]

In many cases we shall see, the horizontal and diagonal edges consisting the zigzag are simply $\pm I$, so the newly produced will also be $\pm I$. The following lemma will be repeatedly used, which describes how the non-horizontal edges are modified under the elimination of specific horizontal edges.

\begin{lemma}
\label{lem:even-pretzel-reduce}
    Suppose we have the following parts in a complex $E$:
    \[
\begin{tikzcd}[column sep=3em]
\sfA \arrow[r, "a"] \arrow[rd, "\pm b" description] 
& \sfA \arrow[rd, "\mp b" description] 
& 
& \sfA \arrow[r, "a"] \arrow[rd, "\pm a" description] 
& \sfA \arrow[rd, "\mp a" description] & \\ 
& \sfA \arrow[r, "a"] 
& \sfA & 
& \sfA \arrow[r, "a"] 
& \sfA
\end{tikzcd}
    \]
    By delooping, these parts transform into:
    \[
\begin{tikzcd}[column sep=4em]
\sfA^+ \arrow[r] \arrow[rd, "I" description, dashed] \arrow[rdd]  \arrow[rddd] 
& \sfA^+ \arrow[rdd]  \arrow[rddd] & 
& \sfA^+ \arrow[r] \arrow[rd, "I" description, dashed] \arrow[rdd] \arrow[rddd] 
& \sfA^+ \arrow[rdd]  \arrow[rddd] & \\
\sfA^- \arrow[r] \arrow[rdd] 
& \sfA^- \arrow[rdd] & 
& \sfA^- \arrow[r] \arrow[rdd] 
& \sfA^- \arrow[rdd] & \\ 
& \sfA^+ \arrow[r] \arrow[rd, "I" description, dashed] 
& \sfA^+ & 
& \sfA^+ \arrow[r] \arrow[rd, "I" description, dashed] 
& \sfA^+ \\ 
& \sfA^- \arrow[r] 
& \sfA^- & 
& \sfA^- \arrow[r] & \sfA^-
\end{tikzcd}
    \]
    By eliminating the dashed diagonal arrows labeled $I$, these parts transform into 
    \[
\begin{tikzcd}[column sep=3em]
0 \arrow[r, dotted] \arrow[rd, dotted] \arrow[rdd, dotted] \arrow[rddd, dotted] 
& \sfA^+ \arrow[rdd, "\pm U" description, dashed] \arrow[rddd, dotted] & & 
& 0 \arrow[r, dotted] \arrow[rd, dotted] \arrow[rdd, dotted] \arrow[rddd, dotted] 
& \sfA^+ \arrow[rdd, "0" description, dotted] \arrow[rddd, dotted] & \\
\sfA^- \arrow[r, dotted] \arrow[rdd, "\mp U" description, dashed] & 
0 \arrow[rdd, dotted] & & 
& \sfA^- \arrow[r, dotted] \arrow[rdd, "0" description, dotted] 
& 0 \arrow[rdd, dotted] 
& \\ 
& 0 \arrow[r, dotted] \arrow[rd, dotted] 
& \sfA^+ & & 
& 0 \arrow[r, dotted] \arrow[rd, dotted] 
& \sfA^+ \\ 
& \sfA^- \arrow[r, dotted] 
& 0. 
& & & \sfA^- \arrow[r, dotted] 
& {0}
\end{tikzcd}
    \]
    where $U = X + Y$. A similar statement also hold when $a$ and $b$ are swapped. 
\end{lemma}

\subsection{Odd pretzel knots}

The following proposition determines $s$ for type ($O^\star_-$). 

\begin{proposition}
\label{prop:odd-pretzel}
    For odd integers $p, q, r > 0$ with $p < \min\{q, r\}$, 
    \[
        s(P(p, -q, -r)) = 2.
    \]
\end{proposition}

\begin{proof}
    Consider the tangle decomposition
    \[
        P = P(p, -q, -r) = D(T^-_p, T^+_{-q}, T^+_{-r}).
    \]
    and the reduction
    \[
        [P] = D([T^-_p], [T^+_{-q}], [T^+_{-r}]) \to D(E^-_p, E^+_{-q}, E^+_{-r}) = E
    \]
    obtained by composing the reductions of \Cref{prop:twist-tangle-retract}. The graph representing the complex $E$ has vertices $E(v)$ indexed by $v = (v_1, v_2, v_3)$ with $-p \leq v_1 \leq 0$,\ $0 \leq v_2 \leq q$ and $0 \leq v_3 \leq r$, and the diagram for $E(v_1,v_2,v_3)$ is given by $D(\sfE_i, \sfE_j, \sfE_k)$, where
    \begin{align*}
        i = 0 &\iff v_1 = -p \\
        j = 0 &\iff v_2 = q \\
        k = 0 &\iff v_3 = r.
    \end{align*}
    See \Cref{fig:simplify-E} for the case $p = 3$. The Lee cycle $\ca(P)$ of $P$ is transformed by the reduction into 
    \[
        \pm H^{p - 1} z
    \]
    where $z$ is a cycle with target in $\underline{\sfA}(0, 0, 0)$. From $d_H(P) \geq p - 1$,\ $w(P) = -p + q + r$ and $r(P) = p + q + r -2$, we have 
    \[
        s_H(P) \geq 2(p - 1) + (-p + q + r) - (p + q + r - 2) + 1 = 0.
    \]
    Thus it suffices to show that the cycle $z$ is divisible exactly once in $E$. 

\begin{figure}[t]
    \centering
    \begin{tikzcd}[row sep=.3em, column sep=4em, every arrow/.append style={gray, no head, start anchor=east, end anchor=west}, cells={nodes={font=\small}}]
\underline{-1} & \underline{0} & \underline{1} \\
{\sfA(-1, 0, 0)} 
    \arrow[r, thick, red, "a" description] 
    \arrow[rd] 
& {\underline{\sfA}(0, 0, 0)\ \ } 
    \arrow[rd] \arrow[rdd] & \\
{\sfA(-2, 1, 0)} 
    \arrow[r, thick, blue, "b" description] 
    \arrow[rdd] 
    \arrow[rddd] 
& {\sfA(-1, 1, 0)} 
    \arrow[r, thick, red, "a" description] 
    \arrow[rdd] \arrow[rddd] 
& {\sfA(0, 1, 0)} \\
{\sfA(-2, 0, 1)} 
    \arrow[r, thick, blue, "b" description] 
    \arrow[rdd] 
    \arrow[rddd] 
& {\sfA(-1, 0, 1)} 
    \arrow[r, thick, red, "a" description] 
    \arrow[rdd] 
    \arrow[rddd] 
& {\sfA(0, 0, 1)} \\
{\sfB_1(-3, 2, 0)} 
    \arrow[r, thick, orange, "\Delta" description] 
    \arrow[rddd] 
    \arrow[rdddd] 
& {\sfA(-2, 2, 0)} 
    \arrow[r, thick, blue, "b" description] 
    \arrow[rddd] 
    \arrow[rdddd] 
& {\sfA(-1, 2, 0)} \\
{\sfB_1(-3, 1, 1)} 
    \arrow[r, thick, orange, "\Delta" description] 
    \arrow[rddd] 
    \arrow[rdddd] 
& {\sfA(-2, 1, 1)} 
    \arrow[r, thick, blue, "b" description] 
    \arrow[rddd] 
    \arrow[rdddd] 
& {\sfA(-1, 1, 1)} \\
{\sfB_1(-3, 0, 2)} 
    \arrow[r, thick, orange, "\Delta" description] 
    \arrow[rddd] 
    \arrow[rdddd] 
& {\sfA(-2, 0, 2)} 
    \arrow[r, thick, blue, "b" description] 
    \arrow[rddd] 
    \arrow[rdddd] 
& {\sfA(-1, 0, 2)} \\
& {\sfB_1(-3, 3, 0)} 
    \arrow[r, thick, orange, "\Delta" description] 
& {\sfA(-2, 3, 0)} \\
& {\sfB_1(-3, 2, 1)} 
    \arrow[r, thick, orange, "\Delta" description] 
& {\sfA(-2, 2, 1)} \\
& {\sfB_1(-3, 1, 2)} 
    \arrow[r, thick, orange, "\Delta" description] 
& {\sfA(-2, 1, 2)} \\
& {\sfB_1(-3, 0, 3)} 
    \arrow[r, thick, orange, "\Delta" description] 
& {\sfA(-2, 0, 3)} \\
& & \vdots 
\end{tikzcd}
    \caption{A part of the complex $E$ described as a graph. Objects are aligned lexicographically, so that the horizontal edges are drawn horizontally. The underlined object indicates the target of the cycle $z$. }
    \label{fig:simplify-E}
\end{figure}

    Observe that each object in homological grading $\leq 0$ are either 
    \[
    E(v_1, v_2, v_3) = 
    \begin{cases}
        \sfA   & \text{if\ } v_1 > -p,\\
        \sfB_1 & \text{if\ } v_1 = -p.
    \end{cases}
    \]
    Indeed, for $v_1 + v_2 + v_3 \leq 0$ to hold, we cannot have $v_2 = q$ or $v_3 = r$; otherwise we would have
    \[
        p \geq -v_1 \geq v_2 + v_3 \geq \min\{q, r\}
    \]
    contradicting the assumption. 

    As explained in \Cref{subsec:pretzel-strategy}, each horizontal sequence of the form 
    \[
\begin{tikzcd}
    \sfB_1 \arrow[r, "\Delta_1"] 
    & \sfA \arrow[r, "b_1"] 
    & \sfA \arrow[r, "a_1"] 
    & \cdots
\end{tikzcd}
    \]
    can be eliminated up to the rightmost term $\sfA$. Thus, we may eliminate all horizontal sequences appearing in homological grading $0$, except for the one ending with $\underline{\sfA}(0, 0, 0)$, which is the target of the cycle $z$. It has a unique incoming edge from $E(-1, 0, 0)$, which can be reduced to 
    \[
\begin{tikzcd}
\cdots \arrow[r, "0", dotted] & \underline{\sfA^+}(0, 0, 0).
\end{tikzcd}
    \]
    Let us observe how the cycle $z$ is transformed under these reductions. Initially, we have \[
    \begin{tikzcd}
& & \emptyset \arrow[d, "z"] \\
\cdots \arrow[r] & {E(-1,0,0)} \arrow[r] & {\underline{\sfA}(0, 0, 0)}
\end{tikzcd}
    \]
    This part is unchanged by the elimination of the horizontal sequences. After delooping $\sfA(0, 0, 0)$, we have 
    \[
\begin{tikzcd}[row sep=.5em]
 & & & \emptyset \arrow[ld, "\epsilon_1 \circ Y_1 \circ z"', end anchor=east] \arrow[lddd, "\epsilon_1 \circ z", bend left, end anchor=east] \\
 & & {\underline{\sfA^+}(0, 0, 0)} & \\
\cdots \arrow[r] & {E(-1,0,0)} \arrow[ru] \arrow[rd] & & \\
 & & {\underline{\sfA^-}(0, 0, 0)} &
\end{tikzcd}
    \]
    Then, applying elimination to the lower diagonal arrow gives 
    \[
\begin{tikzcd}
& \emptyset 
    \arrow[ld, "\epsilon_1 \circ Y_1 \circ z"', bend right, shift right, end anchor=east] 
    \arrow[ld, "\epsilon_1 \circ Y_2 \circ z", bend left, end anchor=east] \\
{\underline{\sfA^+}(0, 0, 0)}
\end{tikzcd}
\]
    Here, the subscript $1, 2$ indicates the index of the two circles of $\sfA$, with $1$ referring to the delooped circle. We see that either one of $\epsilon_1 \circ Y_1 \circ z$ and $\epsilon_1 \circ Y_2 \circ z$ is divisible by $H$, and the other one is $0$ (depending on the original orientation of $P$). Since there are no other incoming edges to the target object, it follows that $z$ is $H$-divisible exactly once. 
\end{proof}

\begin{proof}[Proof of \Cref{thm:pretzel} (1)]
    Combine \Cref{prop:st_pretzel,prop:odd-pretzel}.
\end{proof}

\Cref{prop:odd-pretzel} can be easily generalized to $l$-strand pretzel knots. Here, we only state the result. 

\begin{proposition}
\label{prop:l-strand-odd-pretzel-2}
    Consider an $l$-strand pretzel knot of the form 
    \[
        P = P(p_1, -p_2, \ldots, -p_l)
    \]
    where $l$ is odd, all $p_i$ are positive and odd, and $p_1 < \min\{p_2, \ldots, p_l\}$. Such $P$ has $s(P) = l - 1$. 
\end{proposition}

\subsection{Even pretzel knots}

Next, we determine $s$ for the case ($E^\star_{-+}$): $p > 0$ is even, $q < 0 < r$ are odd, and $q + r > 0$. The computation is divided into two cases: $p < |q|$ and $p > |q|$. 

\begin{proposition}
\label{prop:even-pretzel-1}
    Let $p>0$ be an even integer, and $q,r>0$ be odd integers with $p<q<r$. Then
    \[
    s(P(p,-q,r)) = -q+r
    \]
\end{proposition}

\begin{proof}
    From symmetry, it suffices to prove that $s(P(-q,p,r)) = -q+r$. Consider the tangle decomposition
    \[ 
        P=P(-q,p,r)=D(T_{-q}^-, T_p^-, T_r^+),
    \]
    and the reduction
    \[ 
        [P]=D([T_{-q}^-], [T_p^-], [T_r^+]) \to D(E_{-q}^-, E_p^-, E_r^+) = E.
    \]
    The vertices $E(v)$ of $E$ are indexed by $v=(v_1,v_2,v_3)$ with $-q\leq v_1\leq 0$, $-p\leq v_2 \leq 0$, and $0 \leq v_3 \leq r$, and $E(v_1,v_2,v_3) = D(\sfE_i, \sfE_j, \sfE_k)$, where
    \begin{align*}
        i = 0 &\iff v_1 = 0 \\
        j = 0 &\iff v_2 = -p \\
        k = 0 &\iff v_3 = 0.
    \end{align*}
    The Lee cycle $\alpha(P)$ of $P$ is mapped to
    \[ 
        \pm H^{p-1}z 
    \]
    where $z$ is a cycle of $E$ with target in $\sfC_2(0, 0, 0)$. We have $d_H(P)\geq p-1$, and with $w(P) = -p-q+r$ and $r(P) = p+1$, we have
    \[ 
        s_H(P) \geq 2(p-1) + (-p-q+r) - (p+1) + 1 = -q+r-2. 
    \]
    Thus it suffice to show that the cycle $z$ is divisible exactly once in $E$.

    Hereafter, we slice $E$ into full subgraphs consisting of vertices with fixed $v_3$, each of which is denoted $E(*, *, v_3)$ and called the \textit{$v_3$-th page} of $E$. First, let us focus on the $v_3$-th page for $v_3 > 0$. The horizontal sequence at $v_2 = -p$ has the following form:
    \[
\begin{tikzcd}[row sep=.5em]
{\sfB_2(-q)} \arrow[r, "U"] & {\sfB_2(-q+1)} \arrow[r, "0", dotted] & \cdots & & \\
{\cdots} \arrow[r, dotted] & {\sfB_2(-3)} \arrow[r, "U"] & {\sfB_2(-2)} \arrow[r, "0", dotted] & {\sfB_2(-1)} \arrow[r, "\Delta_1"] & {\sfC_1(0).}
\end{tikzcd}
    \]
    Note that since $\sfB_2$ is connected, endomorphisms $a_1, b_1$ turn into $U, 0$ respectively. For the other horizontal sequences with $-p < v_2 \leq 0$, we have 
    \[
\begin{tikzcd}[row sep=.5em, ]
{\sfA(-q)} \arrow[r, "a_1"] & {\sfA(-q+1)} \arrow[r, "b_1"] & \cdots & \\
\cdots \arrow[r, "a_1"] & {\sfA(-2)} \arrow[r, "b_1"] & {\sfA(-1)} \arrow[r, "m_1"] & {\sfB_1(0).}
\end{tikzcd}
    \]
    This sequence reduces into 
    \[
\begin{tikzcd}
    \sfA^-(-q, v_2) \arrow[r, dotted] 
    & 0 \arrow[r, dotted] 
    & \cdots \arrow[r, dotted]
    & 0.
\end{tikzcd}
    \]
    By stacking these sequences and adding in the non-horizontal edges, we obtain the $v_3$-th page of the reduced complex, which is partially described as: 

\vspace{1em}
\noindent
\resizebox{\textwidth}{!}{%
\begin{tikzcd}[row sep=.3em, ampersand replacement=\&]
{\sfB_2(-q, -p)} \arrow[r, "U"] \arrow[rd] \& {\sfB_2(-q + 1, -p)} \arrow[rdd] \& {\sfB_2(-q + 2, -p)} \arrow[r, "U"] \arrow[rddd] \& {\sfB_2(-q + 3, -p)} \arrow[rdddd] \& \cdots \\
 \& {\sfA^-(-q, -p+1)} \arrow[rd] \& \& \& \\
 \& \& {\sfA^-(-q, -p+2)} \arrow[rd] \& \& \\
 \& \& \& {\sfA^-(-q, -p+3)} \arrow[rd] \& \\
 \& \& \& \& {\sfA^-(-q, -p+4)}
\end{tikzcd}
}%
\vspace{1em}

    \noindent
    Using \Cref{lem:even-pretzel-reduce}, it can be shown that this page consists of disjoint squares
    \[
\begin{tikzcd}
{\sfB_2(-q + 2i, -p)} \arrow[r, "U"] \arrow[rd, "\pm I" description] & {\sfB_2(-q + 2i + 1, -p)} \arrow[rd, "\mp I" description] &                        \\
 & {\sfA^-(-q, -p + 2i + 1)} \arrow[r, "U"] & {\sfA^-(-q, -p + 2i + 2)}
\end{tikzcd}
    \]
    for $i = 0, 1, \ldots, \frac{p - 2}{2}$, and the remaining horizontal segments at the top:
    \[
\begin{tikzcd}[column sep=1.25em]
    \sfB_2(-q+p, -p) \arrow[r, "U"] 
    & \sfB_2(-q+p+1, -p)
    & \cdots
    & {\sfB_2(-1, -p)} \arrow[r, "\Delta_1"] 
    & {\sfC_1(0, -p).}
\end{tikzcd}
    \]
    Let us denote the $i$-th square on page $v_3 > 0$ by $\Box_{i, v_3}$. 

    Next, we focus on the $0$-th page. Since $p$ is even, the rightmost part can be depicted as:
    \[
\begin{tikzcd}
\cdots \arrow[rd, "a_2" description] \\
\cdots \arrow[r] & {\sfC_2(0, -1)} \arrow[rd, "b_2 = 0" description, dotted] & \\
\cdots \arrow[r, "b_1"] & {\sfB_3(-1, -0)} \arrow[r, "\Delta_1"] & {\underline{\sfC_2}(0, 0)}.
\end{tikzcd}
    \]
    The underlined object $\underline{\sfC_2}(0, 0)$ is the target of the cycle $z$; it has only one incoming arrow from $\sfB_3(-1, 0)$. After delooping the inner circle of $\underline{\sfC_2}(0, 0)$, we have 
    \[
\begin{tikzcd} [row sep=.5em]
& {\underline{\sfC^+_2}(0, 0)} \\
{B_3(-1, 0)} \arrow[ru, "Y" description] \arrow[rd, "I" description] &                  \\ & {\underline{\sfC^-_2}(0, 0).}
\end{tikzcd}        
    \]
    Eliminating the lower diagonal edge transforms this part into:
    \[
\begin{tikzcd} [row sep=.5em]
& {\underline{\sfC^+_2}(0, 0)} \\
{0} \arrow[ru, dotted] \arrow[rd, dotted] & \\ 
& {0.}
\end{tikzcd}        
    \]

    Finally, let us observe how the cycle $z$ is transformed by these reductions. Consider the following zigzag in the original complex $E$:  
    \[
\begin{tikzcd}[row sep=1.25em]
 & \emptyset \arrow[d, "z"] \\
{\sfB_3(-1,0,0)} \arrow[r, "\Delta_1"] \arrow[rd, "-\Delta_3" description] & {\underline{\sfC_2}(0,0,0)} \\
{\sfA(-2,0,1)} \arrow[r] \arrow[rd, "b_3" description] & {\sfA(-1,0,1)} \\
{\qquad \vdots \qquad} & {\qquad \vdots \qquad} \\
{\sfA(-q, 0, q-1)} \arrow[r] \arrow[rd, "a_3" description] & {\sfA(-q+1, 0, q-1)} \\ 
& {\quad \sfA(-q, 0, q) \quad }
\end{tikzcd}
    \]
    Note that the objects on the left side has no other outgoing edges than what are shown. After delooping and the horizontal eliminations, the cycle $z$ transforms into a sum of $z_1$ with target in $\underline{\sfC^+_2}(0, 0, 0)$ and $z_2$ with target in $\sfA^+(-q, 0, q)$. First, one can show as in the proof of \Cref{prop:odd-pretzel} that $z_1$ is $H$-divisible exactly once. Next, for the term $z_2$, observe that the target $\sfA^+(-q, 0, q)$ is at rightmost square $\Box_{\frac{p - 2}{2}, q}$ on page $q$:
    \[
\begin{tikzcd}
{\sfB_2(-q + p - 2, -p, q)} \arrow[r, "U"] \arrow[rd, "\pm I" description] & {\sfB_2(-q + p - 1, -p, q)} \arrow[rd, "\mp I" description] & \\
 & {\sfA^+(-q, -1, q)} \arrow[r, "U"] & {\sfA^+(-q, 0, q).}
\end{tikzcd}        
    \]
    Note that this square is connected to other parts in different pages. However, we claim that by eliminating all of the squares $\Box_{i, v_3}$ using the two diagonal edges $\pm I$, the image of $z_2$ will vanish. Indeed, on pages $v_3 > q$, the bottom right object of the rightmost square $\Box_{\frac{p - 2}{2}, v_3}$ has homological grading
    \[
        -p - q + 2(\frac{p - 2}{2}) + 2 + v_3 > 0.
    \]
    Thus, after eliminating all of the squares, there will be no homological grading $\leq 0$ objects on pages $v_3 > q$. By a similar zigzag tracing, one can see that the claim holds. 
\end{proof}

\begin{proposition}
\label{prop:even-pretzel-2}
    Let $p>0$ be an even integer, and $q,r>0$ be odd integers with $p > q < r$. Then
    \[
        s(P(p, -q, r)) = -q + r - 2
    \]
\end{proposition}

\begin{proof}
    It suffices to prove that the special case $q = p - 1,\ r = p + 1$ has 
    \[
        s(P(p, -(p-1), p+1)) = 0
    \]
    for any $p \geq 2$. Indeed, if this holds, then for $p = q + 1$ we have $s(P(q+1,-q,q+2)) = 0$. On the other hand, from \Cref{prop:even-pretzel-1} we have $s(P(q-1,-q,q+2)) = 2$. Thus from \Cref{lem:st-twist} (1), it follows that $s(P(p,-q,q+2)) = 0$ for any $p > q$. Moreover, we have $s(P(p,-q,q)) = 0$, so from \Cref{lem:st-twist} (2), it follows that $s(P(p,-q,r)) = -q + r - 2$ for any $r > q$. From symmetry, we may equivalently show that $s(P(p-1,-p,-p-1)) = 0$.

    Consider the tangle decomposition
    \[ 
        P = P(p-1,-p,-p-1) = D(T_{p-1}^+, T_{-p}^+, T_{-p-1}^-) 
    \]
    and the reduction
    \[ 
        [P] = D([T_{p-1}^+], [T_{-p}^+], [T_{-p-1}^-]) \rightarrow D(E_{p-1}^+, E_{-p}^+, E_{-p-1}^-) = E.
    \]
    The vertices $E(v)$ of the complex $E$ are indexed by $v = (v_1,v_2,v_3)$ with $0\leq v_1 \leq p-1$, $0 \leq v_2 \leq p$, and $-p-1 \leq v_3 \leq 0$, and $E(v_1,v_2,v_3) = D(\sfE_i, \sfE_j, \sfE_k)$, where
    \begin{align*}
        i = 0 &\iff v_1 = 0 \\
        j = 0 &\iff v_2 = p \\
        k = 0 &\iff v_3 = 0.
    \end{align*}
    
    The Lee cycle $\alpha(P)$ of $P$ is mapped to a cycle $\pm z$, where $z$ is a cycle of $E$ with target in $\sfC_2(0, 0, 0)$. With $d_H(P) \geq 0$, $w(P) = p-2$ and $r(P) = p+1$, we have 
    \[ 
        s_H(P) \geq (p-2) - (p+1) + 1 = -2.
    \]
    We prove that $z$ is $H$-divisible exactly once.
    
    First, let us focus on the $v_3$-th page pf $E$ for any $v_3 > 0$. The horizontal sequence at $v_2 < p$ is of the form 
    \[
\begin{tikzcd}
{\sfB_1(0)} \arrow[r, "\Delta_1"] & {\sfA(1)} \arrow[r, "b_1"] & {\sfA(2)} \arrow[r, "a_1"] & \cdots \arrow[r] & {\sfA(p-2)} \arrow[r] & {\sfA(p-1)} 
\end{tikzcd}
    \]
    and is reduced as 
    \[
\begin{tikzcd}
{0(0)} \arrow[r, dotted] & {0(1)} \arrow[r, dotted] & {0(2)}  \arrow[r, dotted] & \cdots \arrow[r, dotted] & {0(p-2)} \arrow[r, dotted] & {\sfA^+(p-1)} 
\end{tikzcd}
    \]
    The bottom horizontal sequence at $v_2 = p$ consists of disjoint segments
\[
\begin{tikzcd}[row sep=.5em]
{\sfC_1(0)} \arrow[r, "m_1"] & {\sfB_2(1)} \arrow[r, dotted] & {\sfB_2(2)} \arrow[r, "U"] & {\sfB_2(3)} \arrow[r, dotted] & \cdots \\ 
& & \arrow[r, dotted] & {\sfB_2(p-2)} \arrow[r, "U"] & {\sfB_2(p-1)}.
\end{tikzcd}
    \]
    The leftmost segment can be reduced to 
    \[
\begin{tikzcd}
{\sfC^-_1(0)} \arrow[r, dotted] & {0(1)}.
\end{tikzcd}
    \]
    
    By stacking the above sequences and adding the non-horizontal edges, the $v_3$-th page can be partially depicted as:
    
\vspace{1em}
\noindent
\resizebox{.98\textwidth}{!}{%
\begin{tikzcd}[row sep=.5em, ampersand replacement=\&]
{\sfA^+(p - 1, 0)} \arrow[rd] \arrow[rdddd] \& \& \& \& \\
\& {\sfA^+(p - 1, 1)} \arrow[rd] \arrow[rddd, dotted] \& \& \& \\ 
\& \& {\sfA^+(p - 2, 2)} \arrow[rd] \arrow[rdd] \& \& \\ 
\& \& \& \ddots \& \\ 
\& {\sfC^-_1(0, p)} \arrow[r, dotted] \& {0(1, p)} \& {\sfB_2(2, p)} \arrow[r] \& {\cdots}.
\end{tikzcd}        
}
\vspace{1em}

    \noindent
    Again, using \Cref{lem:even-pretzel-reduce}, one can show that this complex consists of the following disjoint squares:

\vspace{1em}
\noindent
\resizebox{\textwidth}{!}{%
\begin{tikzcd}[ampersand replacement=\&]
{\sfA^+(p - 1, 0)} \arrow[r, "U"] \arrow[rd, "\pm I" description] \& {\sfA^+(p - 1, 1)} \arrow[rd, dotted] \& {\sfA^+(p - 1, 2)} \arrow[rd, "\pm I" description] \arrow[r, "U"] \& {\sfA^+(p - 1, 3)} \arrow[rd, "\mp I" description] \& \cdots \\ 
\& {\sfC^-_1(0, p)} \arrow[r, dotted] \& 0 \& {\sfB_2(2, p)} \arrow[r, "U"] \& {\sfB_2(3, p)}
\end{tikzcd}
}%
\vspace{1em}

    Next, let us focus on the $0$-th page. Since $p$ is even, the leftmost part can be depicted as:
    \[
\begin{tikzcd}
{\underline{\sfC_2}(0, 0)} \arrow[r, "m_1"] \arrow[rd, "0" description, dotted] & {\sfB_1(1, 0)} \arrow[r, dotted] & {} \\ & {\sfC_2(0, 1)} \arrow[r] \arrow[rd, "U" description] & \cdots \\ & & \cdots
\end{tikzcd}
    \]
    The underlined object $\underline{\sfC_2}(0, 0)$ is the target of the cycle $z$. Using the unique outgoing edge, it can be reduced into 
    \[
\begin{tikzcd}
{\underline{\sfC^-_2}(0, 0)} \arrow[r, dotted] & {0.}
\end{tikzcd}
    \]

    % --- Full page figures
    \begin{figure}[tp]
        \centering
\resizebox{\textwidth}{!}{%
\begin{tikzcd}[ampersand replacement=\&]
\underline{-2} \& \underline{-1} \& \underline{-0} \& \underline{1} \\
{\sfA^+(p - 1, 0, -p - 1)} \arrow[r] \arrow[rd, "\pm I" description, dashed] \& {\sfA^+(p - 1, 1, -p - 1)} \arrow[rd, dotted] \arrow[rddd, "\pm I" description, dashed, bend right=20, pos=.45] \& {\sfA^+(p - 1, 1, -p - 1)} \arrow[r] \arrow[rd, "\pm I" description, dashed] \& \cdots \\
\& {\sfC^-_1(0, p, -p - 1)} \arrow[r, dotted] \& 0 \& \cdots \\ 
\& {\sfA^+(p - 1, 0, -p)} \arrow[r] \arrow[rd] \arrow[rdd, bend right] \arrow[rdddd, "\pm I" description, dashed, bend right] \& {\underline{\sfA^+}(p - 1, 1, -p)} \arrow[rd, dotted] \& \cdots \\ 
\& \& {\sfC^-_1(0, p, -p)} \arrow[r, dotted] \& \cdots \\ 
\& \& {\sfA^+(p - 1, 0, -p + 1)} \arrow[r] \arrow[rd, "\pm I" description, dashed] \& \cdots \\ \& \& \& \cdots \\ \& \& {\underline{\sfC^-_2}(0, 0, 0)} \& 
\end{tikzcd} 
}
        \caption{The homological grading $\leq 0$ part of $E'$.}
        \label{fig:pretzel-even2-graph}
        \vspace{2em}
\resizebox{.9\textwidth}{!}{%
\begin{tikzcd}[ampersand replacement=\&]
\underline{-1} \& \underline{-0} \& {} \arrow[dddddd, no head, dotted] \& \underline{-1} \& \underline{-0} \\
{\sfA(p - 1, 1)} \arrow[rd, "-" description] \arrow[rrrr, "a_3" description, bend right=15] \& \& \& {\sfA(p - 2, 1)} \arrow[r] \arrow[rd, "+" description] \& {\sfA(p - 1, 1)} \\
{\sfA(p - 2, 2)} \arrow[r] \arrow[rd, "+" description] \arrow[rrrr, "a_3" description, bend right=15] \& {\sfA(p - 1, 2)} \& \& {\sfA(p - 3, 2)} \arrow[r] \arrow[rd, "-" description] \& {\sfA(p - 2, 2)} \\
\qquad \vdots \qquad \arrow[rd, "+" description] \& \qquad \vdots \qquad \& \& \qquad \vdots \qquad \arrow[rd, "-" description] \& \qquad \vdots \qquad \\
{\sfA(1, p-1)} \arrow[r] \arrow[rrrr, "a_3" description, bend right=15] \arrow[rd, "-" description] \& {\sfA(2, p-1)} \& \& {\sfB_1(0, p-1)} \arrow[r] \arrow[rd, "+" description] \& {\sfA(1, p-1)} \\
{\sfC_1(0, p)} \arrow[r] \arrow[rrrr, "U_3" description, bend right=15] \& {\sfB_2(1, p)} \& \& \& {\sfC_1(0, p)} \\ \& \& {} \& \& 
\end{tikzcd}
}
        \caption{Zigzag paths from $E(p-1, 1, -p-1)$ (top left) to $E(0, p, -p)$ (bottom right). The left side is the $(-p-1)$-th page and the right side $(-p)$-th page. Diagonal edges increment $v_2$, and the bent edges increment $v_3$.}
        \label{fig:pretzel-even2-zigzag}
    \end{figure}
    % ---
    
    Now, let $E'$ denote the resulting reduced complex of $E$ and $z'$ the transformed cycle. \Cref{fig:pretzel-even2-graph} depicts the homological grading $\leq 0$ part of $E'$, where the bent arrows are those that bridges between different pages. We claim:
    \begin{enumerate}
        \item $\sfA^+(p-1, 1, -p-1) \to \sfC^-_1(0, p, -p)$ is $\pm I$, 
        \item $\sfA^+(p-1, 1, -p-1) \to \sfA^+(p-1, 1, -p)$ is 0, and
        \item $\sfA^+(p-1, 0, -p) \to \sfC^-_2(0, 0, 0)$ is $\pm I$.
    \end{enumerate}
    We use \Cref{lem:st-twist} to verify these claims. For the first claim, one needs to consider all zigzags connecting $E(p-1, 1, -p-1)$ to $E(0, p, -p)$ in the original complex; there are total of $p$ such paths (see \Cref{fig:pretzel-even2-zigzag}). After delooping and elimination, one can show that: $\frac{p}{2}$ contributes to $\pm I$; $(\frac{p}{2} - 1)$ contributes to $\mp I$; and one (which has $U$ in the final step) contributes to $0$. Thus in total, we have $\pm I$. We leave the details and verification of the other two claims to the reader. 
    
    By eliminating the five dashed arrows labeled $\pm I$ in the figure, all objects except for $\underline{\sfA^+}(p - 1, 1, -p)$ in homological grading $\leq 0$ are eliminated. After this reduction, the cycle $z'$ transforms into 
    \[
\begin{tikzcd}
{\emptyset} \arrow[r, "z'"] & {\underline{\sfC^-_2}(0, 0, 0)} \arrow[r, "\pm I"] & {\sfA^+(p - 1, 0, -p)} \arrow[r, "U"] & {\underline{\sfA^+}(p - 1, 1, -p)}
\end{tikzcd}
    \]
    which yields a single $H$. 
\end{proof}

\begin{proof}[Proof of \Cref{thm:pretzel} (2)] 
    Combine \Cref{prop:st_pretzel,prop:even-pretzel-1,prop:even-pretzel-2}.
\end{proof}

\subsection{Observations}

Finally, we present some observations on the sliceness and ribbonness of 3-strand pretzel knots, as corollaries of \Cref{thm:pretzel}.

Pretzel knots give examples of knots that are topologically slice but not smoothly slice. It is known that each such knot gives rise to an \textit{exotic $\RR^4$} (see \cite[Exercise 9.4.23]{Gompf-Stipsicz:1999}). The $s$-invariant gives an obstruction to smooth sliceness, whereas the Alexander polynomial detects topological sliceness, i.e.\ if $\Delta_K(t) = 1$, then $K$ is topologically slice. The converse does not hold in general for either of the statements. 

Fintushel and Stern \cite{Fintushel-Stern:1985} proved that any non-trivial odd 3-strand pretzel knot with $\Delta_K(t) = 1$ is never smoothly slice. Thus, for odd 3-strand pretzel knots, the Alexander polynomial alone detects topologically--but--not--smoothly slice knots. Recently, Belousov \cite{Belousov:2025} gave explicit formulae for the Alexander polynomial of pretzel knots, and in particular shows that an odd pretzel knot $P(p, q, r)$ has trivial Alexander polynomial if and only if 
\[
    pq + qr + rp = -1.
\]
This includes the well known example $P(-3, 5, 7)$, which is topologically--but--not--smoothly slice, as stated in \cite{Rudolph:1993}. In general, Belousov's result together with \Cref{thm:pretzel} reproves Fintushel and Stern's result.

\begin{corollary}
    Any non-trivial odd pretzel knot $P = P(p, q, r)$ satisfying $pq + qr + rp = -1$ has $s(P) = \pm 2$. 
\end{corollary}

\begin{proof}
    We assume that $p > 0$. From the assumption, we have 
    \[
        (p + q)(p + r) = p^2 + (pq + qr + rp) = p^2 - 1.
    \]
    If $p = 1$, then either $q = -1$ or $r = -1$ must hold, which implies that $P$ is trivial, contradicting the assumption. Thus $p \geq 3$, and $p + q$, $p + r$ must have the same signs. If $p + q < 0$ and $p + r < 0$, then from \Cref{thm:pretzel} we have $s(P) = 2$.
    
    Next, suppose $p + q > 0$ and $p + r > 0$. We have
    \[
        r = -\frac{1 + pq}{p + q}, \quad 
        q + r = \frac{q^2 - 1}{p + q}.
    \]
    If $q = \pm 1$, then $r = \mp 1$ and $P$ is trivial. Thus we must have $|q| > 1$ and $q + r > 0$. In this case, we have $s(P) = -2$ from \Cref{thm:pretzel}.
\end{proof}

Next, we consider topologically slice 3-strand pretzel knots that have non-trivial Alexander polynomial. For the odd-type, Miller proves the following, which implies that the topologically--but--not--smoothly slice odd 3-strand pretzel knots are precisely the non-trivial ones with trivial Alexander polynomial. 

\begin{proposition}[{\cite[Theorem 1.5]{Miller:2017}}]
\label{prop:miller-odd}
    Let $K$ be an odd 3-strand pretzel knot with nontrivial Alexander polynomial. Then $K$ is topologically slice iff $K$ is ribbon iff $K$ is of the form $\pm P(p, q, -q)$ or $\pm P(1, q, -q-4)$ with $q > 0$.
\end{proposition}

Next we consider even 3-strand pretzel knots. Belousov's formulae implies that a non-trivial even 3-strand pretzel knot never has trivial Alexander polynomial (see \cite[Remark 2]{Belousov:2025}). Thus the Alexander polynomial cannot be used to detect topologically slice even 3-strand pretzel knots. 

Consider the following family of even pretzel knots
\[
    P_a = P\left(a, -a-2, -\frac{(a + 1)^2}{2}\right)
\]
for $a \geq 3$ odd. Analogous to \Cref{prop:miller-odd}, we have the following. 

\begin{proposition}[{\cite[Theorem 1.6]{Miller:2017}, \cite[Theorem 1.1]{Kim-Lee-Song:2022}}]
\label{prop:miller-even}
    Let $K$ be an even 3-strand pretzel knot that is not of the form $\pm P_a$ for $a \equiv 1,97 \bmod{120}$. Then $K$ is topologically slice iff $K$ is ribbon iff $K$ is of the form $\pm P(p, q, -q)$.
\end{proposition}

The family of pretzel knots $\{P_a\}$ was introduced by Lecuona \cite{Lecuona:2015}, where they proved that the \textit{slice-ribbon conjecture} holds for all 3-strand pretzel knots except for a specific subset of this family. More than three quarters of this family was proved to be not algebraically slice (hence not topologically slice), and lately by works of Miller \cite{Miller:2017} and Kim--Lee--Song \cite{Kim-Lee:2007}, only knots for $a \equiv 1,97 \bmod{120}$ remain unknown whether they are topologically slice or not. If they are indeed not topologically slice, then the slice-ribbon conjecture holds for all 3-strand pretzel knots, and \Cref{prop:miller-odd,prop:miller-even} implies that the topologically--but--not--smoothly slice 3-strand pretzel knots are precisely the non-trivial ones with trivial Alexander polynomial.

Although all knots in the family $\{P_a\}$ are conjectured to be non-slice, well-known obstructions to sliceness, such as the signature, the determinant, and the $\tau$-invariant fail to detect them (see \cite[Section 4.2]{Lecuona:2015}). We prove that the same holds for the $s$-invariant. 

\begin{corollary}
\label{cor:s-Pa}
    All knots in $\{P_a\}$ have $s = 0$. 
\end{corollary}

\begin{proof}
    Put
    \[
        (p, q, r) = \left(\frac{(a + 1)^2}{2},\ -a,\ a + 2 \right)
    \]
    and consider $P = P(p, q, r)$. 
    We have 
    \[
        p + q = \frac{(a + 1)^2}{2} - a = \frac{a^2 + 1}{2} > 0
    \]
    and also $p + r > 0$,\ $q + r = 2$. Thus, from \Cref{thm:pretzel}, we have $s(P) = 0$.
\end{proof}

\begin{remark}
    There is another family of pretzel knots introduced in \cite{Lecuona:2015}, 
    \[
        K_a = P\left( a,\ -a - 2,\ -a - \frac{a^2 + 9}{2} \right)
    \]
    for $a \geq 3$, and are proved to be non-slice \cite[Theorem 4.3]{Lecuona:2015}. Similar to the proof of \Cref{cor:s-Pa}, one can show that $s(K_a) = 0$ for all $a \geq 3$. In general, any even 3-strand pretzel knot $P = P(p, q, r)$ with $p \geq 0$ even, $p + q > 0$,\ $p + r > 0$ and $q + r = 2$ has $s(P) = 0$, so its sliceness cannot be obstructed by $s$. 
\end{remark}

\begin{question}
    Is there a slice torus invariant $\nu$ that exhibits $\nu(P_a) \neq 0$ or $\nu(K_a) \neq 0$?
\end{question}
    \appendix
\section{Proof of \Cref{prop:ca-under-reidemeister}}
\label{sec:appendix}

\begin{restate-proposition}[prop:ca-under-reidemeister]
    Suppose $L, L'$ are link diagrams related by a Reidemeister move. Under the corresponding chain homotopy equivalence $f$, the Lee cycles correspond as 
    \begin{align*}
        f \circ \ca(L) &\htpy \epsilon H^j \ca(L'), \\
        f \circ \cb(L) &\htpy \epsilon' H^j \cb(L')
    \end{align*}
    where
    \[
        j = \frac{\delta w(L, L') - \delta r(L, L')}{2}
    \]
    and $\epsilon, \epsilon'$ are signs satisfying
    \[
        \epsilon\epsilon' = (-1)^j.
    \]
\end{restate-proposition}

\begin{proof}
    First, we fix the notations common to the three moves. Let $T, T'$ be the minimal tangle parts of $L, L'$, whose complements are not involved in the move. Then $L, L'$ and the move $R$ can be written as
    \[
        L = D(T, T_1, \ldots, T_d),\ 
        L' = D(T', T_1, \ldots, T_d),\
        R = D(R_0, I, \ldots, I)
    \]
    for some planar arc diagram $D$, and tangle diagrams $T_1, \ldots, T_d$ common to $L$ and $L'$. In the proof of \cite[Theorem 1]{BarNatan:2004}, an explicit chain homotopy equivalence $F_0: [T] \to [T']$ and its inverse $G_0: [T'] \to [T]$ corresponding to the move $R_0$ are given, together with chain homotopies. Using the planar algebra structure, the chain homotopy equivalences $F, G$ for the moves $R$ and $R^{-1}$ are given by 
    \[
        F = D(F_0, I, \ldots, I),\ 
        G = D(G_0, I, \ldots, I).
    \]
    Let $\ca(L), \ca(L')$ be the Lee cycles of $L, L'$, and $\ca(T), \ca(T')$ be those of $T, T'$ respectively. Here, the checkerboard colorings for $T, T'$ are inherited from the standard checkerboard colorings for $L, L'$, and are omitted from the notations. 

    \bigskip

    \noindent
    \textbf{R1}. Consider the move
    \begin{center}
        \tikzset{every picture/.style={line width=0.75pt}} %set default line width to 0.75pt        

\begin{tikzpicture}[x=0.75pt,y=0.75pt,yscale=-.75,xscale=.75]
%uncomment if require: \path (0,111); %set diagram left start at 0, and has height of 111

%Shape: Circle [id:dp21965207196201753] 
\draw  [dash pattern={on 0.84pt off 2.51pt}] (167,42.3) .. controls (167,23.91) and (181.91,9) .. (200.3,9) .. controls (218.68,9) and (233.59,23.91) .. (233.59,42.3) .. controls (233.59,60.68) and (218.68,75.59) .. (200.3,75.59) .. controls (181.91,75.59) and (167,60.68) .. (167,42.3) -- cycle ;
%Shape: Arc [id:dp35395161805976716] 
\draw  [draw opacity=0][line width=1.5]  (180.93,16.27) .. controls (190.76,21.19) and (197.44,31.13) .. (197.35,42.55) .. controls (197.25,54.1) and (190.24,64.03) .. (180.15,68.72) -- (167,42.3) -- cycle ; \draw  [line width=1.5]  (180.93,16.27) .. controls (190.76,21.19) and (197.44,31.13) .. (197.35,42.55) .. controls (197.25,54.1) and (190.24,64.03) .. (180.15,68.72) ;  

%Straight Lines [id:da12768714611405063] 
\draw    (103.57,42.3) -- (140.95,42.3) ;
\draw [shift={(142.95,42.3)}, rotate = 180] [color={rgb, 255:red, 0; green, 0; blue, 0 }  ][line width=0.75]    (10.93,-3.29) .. controls (6.95,-1.4) and (3.31,-0.3) .. (0,0) .. controls (3.31,0.3) and (6.95,1.4) .. (10.93,3.29)   ;
%Shape: Ellipse [id:dp9694931346190031] 
\draw  [dash pattern={on 0.84pt off 2.51pt}] (17.17,42.3) .. controls (17.17,23.91) and (32.08,9) .. (50.47,9) .. controls (68.85,9) and (83.76,23.91) .. (83.76,42.3) .. controls (83.76,60.68) and (68.85,75.59) .. (50.47,75.59) .. controls (32.08,75.59) and (17.17,60.68) .. (17.17,42.3) -- cycle ;
%Curve Lines [id:da4260312888837898] 
\draw [line width=1.5]    (30.25,17.84) .. controls (40.61,45.68) and (74.23,72.95) .. (73.18,41.25) ;
%Shape: Ellipse [id:dp4725275903530799] 
\draw  [draw opacity=0][fill={rgb, 255:red, 255; green, 255; blue, 255 }  ,fill opacity=1 ] (46.1,36.31) .. controls (50.07,36.26) and (53.34,39.44) .. (53.39,43.42) .. controls (53.44,47.39) and (50.26,50.66) .. (46.29,50.71) .. controls (42.31,50.76) and (39.04,47.58) .. (38.99,43.6) .. controls (38.94,39.63) and (42.12,36.36) .. (46.1,36.31) -- cycle ;
%Shape: Ellipse [id:dp5319163873688867] 
\draw  [draw opacity=0][fill={rgb, 255:red, 0; green, 0; blue, 0 }  ,fill opacity=1 ] (45.95,41.05) .. controls (47.22,41.03) and (48.26,42.05) .. (48.28,43.32) .. controls (48.29,44.59) and (47.28,45.64) .. (46.01,45.65) .. controls (44.74,45.67) and (43.69,44.65) .. (43.68,43.38) .. controls (43.66,42.11) and (44.68,41.07) .. (45.95,41.05) -- cycle ;

%Curve Lines [id:da4843626611914248] 
\draw [line width=1.5]    (73.18,41.25) .. controls (72.13,9.55) and (41.87,40.33) .. (32.25,68.43) ;

% Text Node
\draw (190,83.83) node [anchor=north west][inner sep=0.75pt]    {$T'$};
% Text Node
\draw (41,83.83) node [anchor=north west][inner sep=0.75pt]    {$T$};

\end{tikzpicture}
    \end{center}
    The complex $[T]$ is given by 
    \[
        \begin{tikzcd}
        \larc \thickcirc \arrow[r, "m"] & \larc 
        \end{tikzcd}
    \]
    As stated in \Cref{lem:Tq-lem1}, delooping the circle appearing on the left gives
    \[
        \begin{tikzcd}[row sep=0em]
        \larc \dotcircI \arrow[rd, "I", dashed]   &   \\
        & \larc. \\
        \larc \dotcircX \arrow[ru, "X"'] &  
        \end{tikzcd}        
    \]
    Eliminating along the dashed arrow $I$ gives
    \[
        \begin{tikzcd}[row sep=0em]
        0 \arrow[rd, dotted, no head] &   \\
        & 0 \\
        \larc \dotcircX \arrow[ru, dotted, no head] &  
        \end{tikzcd}        
    \]
    which is now isomorphic to $[T']$. The morphism $F_0$ is defined as the composition of the above chain homotopy equivalences. Post-composing $F_0$ to $\ca(T)$ gives
    \[
        \begin{tikzcd}[row sep=0em]
        & & \larc \dotcircI \arrow[rd, "0", dotted] & \\
        \larc \arrow[r, "{\ca(T')}"] & \larc \thickcirc \arrow[ru, "\epsilon_Y"] \arrow[rd, "\epsilon"'] & & \larc \\
        & & \larc \dotcircX \arrow[ru, "I"'] &
        \end{tikzcd}
    \]
    By identifying $T^\lin = T'{}^\lin$, one can see that 
    \[
        F_0 \circ \ca(T) = \ca(T')
    \]
    for either case of the labeling on the circle, since we have
    \[
        \epsilon \iota_X = \epsilon \iota_Y = 1.
    \]
    Thus the equivalence $F$ gives
    \[
        F \circ \ca(L) = \ca(L').
    \]
    With
    \[
        w(L) = w(L') + 1,\ 
        r(L) = r(L') + 1,
    \]
    we have $\delta w - \delta r = 0$, and the equations hold. 

    \bigskip

    \noindent
    \textbf{R1'}. Consider the move
    \begin{center}
        \tikzset{every picture/.style={line width=0.75pt}} %set default line width to 0.75pt        

\begin{tikzpicture}[x=0.75pt,y=0.75pt,yscale=-.75,xscale=.75]
%uncomment if require: \path (0,111); %set diagram left start at 0, and has height of 111

%Shape: Circle [id:dp09240350559029897] 
\draw  [dash pattern={on 0.84pt off 2.51pt}] (167,42.3) .. controls (167,23.91) and (181.91,9) .. (200.3,9) .. controls (218.68,9) and (233.59,23.91) .. (233.59,42.3) .. controls (233.59,60.68) and (218.68,75.59) .. (200.3,75.59) .. controls (181.91,75.59) and (167,60.68) .. (167,42.3) -- cycle ;
%Shape: Arc [id:dp3477911980738484] 
\draw  [draw opacity=0][line width=1.5]  (180.93,16.27) .. controls (190.76,21.19) and (197.44,31.13) .. (197.35,42.55) .. controls (197.25,54.1) and (190.24,64.03) .. (180.15,68.72) -- (167,42.3) -- cycle ; \draw  [line width=1.5]  (180.93,16.27) .. controls (190.76,21.19) and (197.44,31.13) .. (197.35,42.55) .. controls (197.25,54.1) and (190.24,64.03) .. (180.15,68.72) ;  

%Straight Lines [id:da5916944564623559] 
\draw    (103.57,42.3) -- (140.95,42.3) ;
\draw [shift={(142.95,42.3)}, rotate = 180] [color={rgb, 255:red, 0; green, 0; blue, 0 }  ][line width=0.75]    (10.93,-3.29) .. controls (6.95,-1.4) and (3.31,-0.3) .. (0,0) .. controls (3.31,0.3) and (6.95,1.4) .. (10.93,3.29)   ;
%Shape: Ellipse [id:dp08164041434482128] 
\draw  [dash pattern={on 0.84pt off 2.51pt}] (17.17,42.3) .. controls (17.17,60.68) and (32.08,75.59) .. (50.47,75.59) .. controls (68.85,75.59) and (83.76,60.68) .. (83.76,42.3) .. controls (83.76,23.91) and (68.85,9) .. (50.47,9) .. controls (32.08,9) and (17.17,23.91) .. (17.17,42.3) -- cycle ;
%Curve Lines [id:da026720433343995542] 
\draw [line width=1.5]    (30.25,66.75) .. controls (40.61,38.92) and (74.23,11.64) .. (73.18,43.34) ;
%Shape: Ellipse [id:dp8857641719965605] 
\draw  [draw opacity=0][fill={rgb, 255:red, 255; green, 255; blue, 255 }  ,fill opacity=1 ] (46.1,48.28) .. controls (50.07,48.33) and (53.34,45.15) .. (53.39,41.18) .. controls (53.44,37.2) and (50.26,33.93) .. (46.29,33.88) .. controls (42.31,33.83) and (39.04,37.01) .. (38.99,40.99) .. controls (38.94,44.96) and (42.12,48.23) .. (46.1,48.28) -- cycle ;
%Shape: Ellipse [id:dp49231078972251985] 
\draw  [draw opacity=0][fill={rgb, 255:red, 0; green, 0; blue, 0 }  ,fill opacity=1 ] (45.95,43.54) .. controls (47.22,43.56) and (48.26,42.54) .. (48.28,41.27) .. controls (48.29,40) and (47.28,38.96) .. (46.01,38.94) .. controls (44.74,38.92) and (43.69,39.94) .. (43.68,41.21) .. controls (43.66,42.48) and (44.68,43.52) .. (45.95,43.54) -- cycle ;

%Curve Lines [id:da6812069814895134] 
\draw [line width=1.5]    (73.18,43.34) .. controls (72.13,75.04) and (41.87,44.26) .. (32.25,16.16) ;

% Text Node
\draw (190,83.83) node [anchor=north west][inner sep=0.75pt]    {$T'$};
% Text Node
\draw (41,83.83) node [anchor=north west][inner sep=0.75pt]    {$T$};

\end{tikzpicture}
    \end{center}
    The complex $[T]$ is given by 
    \[
        \begin{tikzcd}
        \larc \arrow[r, "\Delta"] & \larc \thickcirc 
        \end{tikzcd}
    \]
    Delooping the circle on the right gives
    \[
        \begin{tikzcd}[row sep=0em]
        & \larc \dotcircI \\
        \larc \arrow[ru, "Y"] \arrow[rd, "I"', dashed] & \\
        & \larc \dotcircX
        \end{tikzcd}        
    \]
    and eliminating along the dashed arrow gives
    \[
        \begin{tikzcd}[row sep=0em]
        & \larc\dotcircI \\
        0 \arrow[ru, dotted, no head] \arrow[rd, dotted, no head] & \\
        & 0
        \end{tikzcd}        
    \]
    which is isomorphic to $[T']$. $F_0$ is defined by the composition of the above equivalences, and post-composing it to $\ca(T)$ gives:
    \[
        \begin{tikzcd}[row sep=0em]
        & & \larc \dotcircI \arrow[rd, "I"] & \\
        \larc \arrow[r, "{\ca(T)}"] & \larc \thickcirc \arrow[ru, "\epsilon_Y"] \arrow[rd, "\epsilon"'] & & \larc \\
        & & \larc \dotcircX \arrow[ru, "-Y"'] & 
        \end{tikzcd}
    \]
    Note the diagonal arrow labeled $-Y$, which is the $-ca^{-1}$ term of \Cref{prop:gauss-elim}, caused by the reversal of the arrow $I$. If $\ca(T), \ca(T')$ are labeled as 
    \[
        \larc^{e_Y} \thickcirc^X,\quad 
        \larc^{e_Y}
    \]
    then the above diagram shows that $F_0 \circ \ca(T) = H \ca(T')$, using $XY = 0,\ \epsilon X \iota = 1$ and $(-Y)e_Y = -Y = H e_Y$. For the other case, we can similarly see that $F_0 \circ \ca(T) = -H \ca(T')$. Thus we have 
    \[
        F \circ \ca(L) = \pm H \ca(L')
    \]
    and with
    \[
        w(L) = w(L') - 1,\ 
        r(L) = r(L') + 1
    \]
    we have $\delta w - \delta r = 2$. Note that the minus sign appears for only one of the two cases, thereby proving $\epsilon \epsilon' = -1$. 

    \bigskip

    \noindent
    \textbf{R2}. Consider the move
    \begin{center}
        \tikzset{every picture/.style={line width=0.75pt}} %set default line width to 0.75pt        

\begin{tikzpicture}[x=0.75pt,y=0.75pt,yscale=-.75,xscale=.75]
%uncomment if require: \path (0,113); %set diagram left start at 0, and has height of 113

%Straight Lines [id:da14410297930046945] 
\draw    (102.57,49.63) -- (139.95,49.63) ;
\draw [shift={(141.95,49.63)}, rotate = 180] [color={rgb, 255:red, 0; green, 0; blue, 0 }  ][line width=0.75]    (10.93,-3.29) .. controls (6.95,-1.4) and (3.31,-0.3) .. (0,0) .. controls (3.31,0.3) and (6.95,1.4) .. (10.93,3.29)   ;
%Shape: Ellipse [id:dp34585713490565784] 
\draw  [dash pattern={on 0.84pt off 2.51pt}] (14.17,45.3) .. controls (14.17,26.91) and (29.08,12) .. (47.47,12) .. controls (65.85,12) and (80.76,26.91) .. (80.76,45.3) .. controls (80.76,63.68) and (65.85,78.59) .. (47.47,78.59) .. controls (29.08,78.59) and (14.17,63.68) .. (14.17,45.3) -- cycle ;
%Shape: Arc [id:dp5227988727671137] 
\draw  [draw opacity=0][line width=1.5]  (75.72,25.64) .. controls (71.17,38.93) and (60.62,48.2) .. (48.43,48.07) .. controls (36.13,47.95) and (25.69,38.29) .. (21.49,24.71) -- (48.81,11.18) -- cycle ; \draw  [line width=1.5]  (75.72,25.64) .. controls (71.17,38.93) and (60.62,48.2) .. (48.43,48.07) .. controls (36.13,47.95) and (25.69,38.29) .. (21.49,24.71) ;  
%Shape: Ellipse [id:dp6509386249108922] 
\draw  [draw opacity=0][fill={rgb, 255:red, 255; green, 255; blue, 255 }  ,fill opacity=1 ] (70.58,43.06) .. controls (70.55,47.04) and (67.3,50.24) .. (63.32,50.2) .. controls (59.35,50.17) and (56.15,46.92) .. (56.18,42.95) .. controls (56.21,38.97) and (59.46,35.77) .. (63.44,35.8) .. controls (67.42,35.84) and (70.61,39.09) .. (70.58,43.06) -- cycle ;
%Shape: Ellipse [id:dp020569245994460972] 
\draw  [draw opacity=0][fill={rgb, 255:red, 0; green, 0; blue, 0 }  ,fill opacity=1 ] (65.84,42.81) .. controls (65.83,44.08) and (64.8,45.1) .. (63.53,45.09) .. controls (62.25,45.08) and (61.23,44.05) .. (61.24,42.78) .. controls (61.25,41.5) and (62.29,40.48) .. (63.56,40.49) .. controls (64.83,40.5) and (65.85,41.54) .. (65.84,42.81) -- cycle ;

%Shape: Ellipse [id:dp02742333596447477] 
\draw  [draw opacity=0][fill={rgb, 255:red, 255; green, 255; blue, 255 }  ,fill opacity=1 ] (39.59,41.81) .. controls (39.56,45.79) and (36.31,48.99) .. (32.33,48.96) .. controls (28.36,48.92) and (25.16,45.67) .. (25.19,41.7) .. controls (25.22,37.72) and (28.47,34.52) .. (32.45,34.56) .. controls (36.42,34.59) and (39.62,37.84) .. (39.59,41.81) -- cycle ;
%Shape: Ellipse [id:dp1643478572551702] 
\draw  [draw opacity=0][fill={rgb, 255:red, 0; green, 0; blue, 0 }  ,fill opacity=1 ] (34.85,41.56) .. controls (34.84,42.83) and (33.81,43.86) .. (32.53,43.85) .. controls (31.26,43.84) and (30.24,42.8) .. (30.25,41.53) .. controls (30.26,40.26) and (31.3,39.23) .. (32.57,39.24) .. controls (33.84,39.25) and (34.86,40.29) .. (34.85,41.56) -- cycle ;

%Shape: Arc [id:dp9697268238137627] 
\draw  [draw opacity=0][line width=1.5]  (19.77,61.77) .. controls (24.03,46.45) and (34.77,35.59) .. (47.3,35.65) .. controls (59.94,35.7) and (70.66,46.85) .. (74.7,62.42) -- (47.11,76.27) -- cycle ; \draw  [line width=1.5]  (19.77,61.77) .. controls (24.03,46.45) and (34.77,35.59) .. (47.3,35.65) .. controls (59.94,35.7) and (70.66,46.85) .. (74.7,62.42) ;  
%Shape: Circle [id:dp11996714138220577] 
\draw  [dash pattern={on 0.84pt off 2.51pt}] (164,45.75) .. controls (164,27.36) and (178.91,12.45) .. (197.3,12.45) .. controls (215.68,12.45) and (230.59,27.36) .. (230.59,45.75) .. controls (230.59,64.14) and (215.68,79.04) .. (197.3,79.04) .. controls (178.91,79.04) and (164,64.14) .. (164,45.75) -- cycle ;
%Shape: Arc [id:dp5321322738750331] 
\draw  [draw opacity=0][line width=1.5]  (173.14,66.31) .. controls (178.44,60.43) and (187.3,56.58) .. (197.34,56.6) .. controls (207.54,56.62) and (216.51,60.63) .. (221.74,66.69) -- (197.3,79.04) -- cycle ; \draw  [line width=1.5]  (173.14,66.31) .. controls (178.44,60.43) and (187.3,56.58) .. (197.34,56.6) .. controls (207.54,56.62) and (216.51,60.63) .. (221.74,66.69) ;  
%Shape: Arc [id:dp7312833580358863] 
\draw  [draw opacity=0][line width=1.5]  (221.75,25.17) .. controls (216.52,31.42) and (207.56,35.56) .. (197.38,35.6) .. controls (187.03,35.63) and (177.93,31.43) .. (172.69,25.06) -- (197.3,12.45) -- cycle ; \draw  [line width=1.5]  (221.75,25.17) .. controls (216.52,31.42) and (207.56,35.56) .. (197.38,35.6) .. controls (187.03,35.63) and (177.93,31.43) .. (172.69,25.06) ;  

% Text Node
\draw (190,87.83) node [anchor=north west][inner sep=0.75pt]    {$T'$};
% Text Node
\draw (41,87.83) node [anchor=north west][inner sep=0.75pt]    {$T$};

\end{tikzpicture}
    \end{center}
    The complex $[T]$ is given by 
    \[
    \begin{tikzcd}[row sep=0.5em]
        & \udarc\arrow[rd, "-s_r"] & \\
        \larc \ \rarc \arrow[ru, "s_l"] \arrow[rd, "\Delta_r"'] & & \larc \ \rarc \\
        & \larc \thickcirc \rarc \arrow[ru, "m_l"'] &
    \end{tikzcd}
    \]
    The subscript $l, r$ indicates which of the two resolutions are being changed. By applying delooping to the middle circle, and eliminating one of the two summands with the right end object, we get
    \[
    \begin{tikzcd}[row sep=0.5em]
        & \udarc \arrow[rd, dotted, no head] & \\
        \larc \ \rarc \arrow[ru, "s"] \arrow[rd, "I"', dashed] & & 0 \\
        & \larc \dotcircX \rarc \arrow[ru, dotted, no head] & 
    \end{tikzcd}
    \]
    Furthermore, eliminating along the lower diagonal arrow gives 
    \[
    \begin{tikzcd}[row sep=0.5em]
        & \udarc \arrow[rd, no head, dotted] & \\
        0 \arrow[ru, no head, dotted] \arrow[rd, no head, dotted] & & 0 \\
        & 0 \arrow[ru, no head, dotted] &  
    \end{tikzcd}
    \]
    which is now isomorphic to $[T']$. The morphism $F_0$ is defined as the composition of the above chain homotopy equivalences. 
    Regarding the Lee cycle $\ca(T)$ of $T$, we cannot tell whether its target $T^\lout$ is 
    \[
         \udarc \text{\quad or \ } \larc \thickcirc \rarc
    \]
    for it depends on the orientation of the two strands.

    \medskip

    \noindent
    \textbf{Case 1.} If the two strands are oriented to the same direction, then 
    \[
        T^\lout = \udarc
    \]
    and $F_0$ acts as identity on $T^\lout$, so $F \circ \ca(L) = \ca(L')$ trivially holds. In this case we have $\delta w = \delta r = 0$. 
    
    \medskip

    \noindent
    \textbf{Case 2.} If the two arcs are oriented oppositely, then 
    \[
        T^\lout = \larc \thickcirc \rarc
    \]
    and $F_0 \circ \ca(T)$ is the composition
    \[
        \begin{tikzcd}[row sep=0.75em]
        & & \larc \dotcircI \rarc \arrow[r, dotted] & 0 \arrow[rd, dotted] & \\
        \larc \ \rarc \arrow[r, "{\ca(T)}"] & \larc \thickcirc \rarc \arrow[ru, "\epsilon_Y"] \arrow[rd, "\epsilon"'] & & & \udarc \\
        & & \larc \dotcircX \rarc \arrow[r, "I"] & \larc \dotcircX \rarc \arrow[ru, "-s"'] & 
        \end{tikzcd}
    \]
    Again note the last diagonal arrow labeled $-s$ produced by the second elimination, where $s$ is a saddle cobordism connecting the two pairs of arcs. We see that $F_0 \circ \ca(T) = -s$. Note that $T^\lin$ and $T'{}^\lin$ are not isotopic, so we need to zoom out from the local picture and divide into subcases to compare the two cycles. 
    
    \smallskip 
    
    \noindent
    \textbf{Case 2-1.} The two arcs appearing in $T^\lout$ are parts of separate circles in $L^\lout$. In this case, the cobordism $-s$ merges two circles with the same label, so we have 
    \[
        F \circ \ca(L) = \pm H \ca(L')
    \]
    while $\delta w = 0, \delta r = -2$. 

    \smallskip 
    \noindent
    \textbf{Case 2-2.} The two arcs appearing in $T^\lout$ are parts of the same circle in $L^\lout$. In this case, $-s$ splits one circle into two circles labeled the same, so we have 
    \[
        F \circ \ca(L) = -\ca(L')
    \]
    while $\delta w = 0, \delta r = 0$. 
    
    \bigskip
    
    \noindent
    \textbf{R3}. Consider the move
    \begin{center}
        \tikzset{every picture/.style={line width=0.75pt}} %set default line width to 0.75pt        

\begin{tikzpicture}[x=0.75pt,y=0.75pt,yscale=-.75,xscale=.75]
%uncomment if require: \path (0,119); %set diagram left start at 0, and has height of 119

%Shape: Circle [id:dp3145483297824164] 
\draw  [dash pattern={on 0.84pt off 2.51pt}] (14,48.3) .. controls (14,29.91) and (28.91,15) .. (47.3,15) .. controls (65.68,15) and (80.59,29.91) .. (80.59,48.3) .. controls (80.59,66.68) and (65.68,81.59) .. (47.3,81.59) .. controls (28.91,81.59) and (14,66.68) .. (14,48.3) -- cycle ;
%Straight Lines [id:da1552399551869057] 
\draw    (100.57,49.63) -- (137.95,49.63) ;
\draw [shift={(139.95,49.63)}, rotate = 180] [color={rgb, 255:red, 0; green, 0; blue, 0 }  ][line width=0.75]    (10.93,-3.29) .. controls (6.95,-1.4) and (3.31,-0.3) .. (0,0) .. controls (3.31,0.3) and (6.95,1.4) .. (10.93,3.29)   ;
%Shape: Ellipse [id:dp9738420049846099] 
\draw  [dash pattern={on 0.84pt off 2.51pt}] (161.17,48.3) .. controls (161.17,29.91) and (176.08,15) .. (194.47,15) .. controls (212.85,15) and (227.76,29.91) .. (227.76,48.3) .. controls (227.76,66.68) and (212.85,81.59) .. (194.47,81.59) .. controls (176.08,81.59) and (161.17,66.68) .. (161.17,48.3) -- cycle ;
%Curve Lines [id:da21275279255570145] 
\draw [line width=1.5]    (34.33,77.62) .. controls (53.35,60.94) and (66.98,44.36) .. (56.27,16.32) ;
%Shape: Ellipse [id:dp6160921881071979] 
\draw  [draw opacity=0][fill={rgb, 255:red, 255; green, 255; blue, 255 }  ,fill opacity=1 ] (39.24,67.82) .. controls (37.7,64.15) and (39.42,59.93) .. (43.08,58.39) .. controls (46.75,56.85) and (50.97,58.57) .. (52.51,62.23) .. controls (54.06,65.9) and (52.34,70.12) .. (48.67,71.66) .. controls (45,73.2) and (40.78,71.48) .. (39.24,67.82) -- cycle ;
%Shape: Ellipse [id:dp7650633049675122] 
\draw  [draw opacity=0][fill={rgb, 255:red, 0; green, 0; blue, 0 }  ,fill opacity=1 ] (43.69,66.18) .. controls (43.2,65) and (43.75,63.65) .. (44.92,63.16) .. controls (46.09,62.67) and (47.44,63.22) .. (47.93,64.39) .. controls (48.42,65.56) and (47.87,66.91) .. (46.7,67.4) .. controls (45.53,67.9) and (44.18,67.35) .. (43.69,66.18) -- cycle ;

%Curve Lines [id:da7559682269821785] 
\draw [line width=1.5]    (34.27,17.05) .. controls (29.29,45.03) and (37.19,61.23) .. (60.1,78.43) ;
%Shape: Ellipse [id:dp9634203552476284] 
\draw  [draw opacity=0][fill={rgb, 255:red, 255; green, 255; blue, 255 }  ,fill opacity=1 ] (26.72,42.86) .. controls (23.81,40.15) and (23.64,35.6) .. (26.35,32.69) .. controls (29.06,29.78) and (33.62,29.61) .. (36.53,32.32) .. controls (39.44,35.03) and (39.6,39.59) .. (36.89,42.5) .. controls (34.18,45.41) and (29.63,45.57) .. (26.72,42.86) -- cycle ;
%Shape: Ellipse [id:dp28662048343708957] 
\draw  [draw opacity=0][fill={rgb, 255:red, 0; green, 0; blue, 0 }  ,fill opacity=1 ] (30.1,39.54) .. controls (29.17,38.67) and (29.12,37.22) .. (29.98,36.29) .. controls (30.85,35.36) and (32.3,35.3) .. (33.23,36.17) .. controls (34.17,37.04) and (34.22,38.49) .. (33.35,39.42) .. controls (32.49,40.35) and (31.03,40.4) .. (30.1,39.54) -- cycle ;

%Shape: Ellipse [id:dp0981698426085923] 
\draw  [draw opacity=0][fill={rgb, 255:red, 255; green, 255; blue, 255 }  ,fill opacity=1 ] (55.66,43.11) .. controls (52.75,40.41) and (52.59,35.85) .. (55.3,32.94) .. controls (58.01,30.03) and (62.56,29.86) .. (65.47,32.57) .. controls (68.39,35.28) and (68.55,39.84) .. (65.84,42.75) .. controls (63.13,45.66) and (58.58,45.82) .. (55.66,43.11) -- cycle ;
%Shape: Ellipse [id:dp8936235066098274] 
\draw  [draw opacity=0][fill={rgb, 255:red, 0; green, 0; blue, 0 }  ,fill opacity=1 ] (59.05,39.79) .. controls (58.12,38.92) and (58.06,37.47) .. (58.93,36.54) .. controls (59.8,35.61) and (61.25,35.56) .. (62.18,36.42) .. controls (63.11,37.29) and (63.16,38.74) .. (62.3,39.67) .. controls (61.43,40.6) and (59.98,40.66) .. (59.05,39.79) -- cycle ;

%Curve Lines [id:da6171433718741905] 
\draw [line width=1.5]    (14.25,45.61) .. controls (37.76,30.61) and (64.77,33.88) .. (80.87,48.87) ;
%Curve Lines [id:da05367422550409706] 
\draw [line width=1.5]    (183.57,78.14) .. controls (181.29,49.77) and (179.87,36.34) .. (204.34,17.24) ;
%Shape: Ellipse [id:dp9664170153194522] 
\draw  [draw opacity=0][fill={rgb, 255:red, 255; green, 255; blue, 255 }  ,fill opacity=1 ] (186.7,31.12) .. controls (185.14,27.46) and (186.85,23.23) .. (190.5,21.68) .. controls (194.16,20.12) and (198.39,21.82) .. (199.95,25.48) .. controls (201.51,29.14) and (199.8,33.37) .. (196.14,34.93) .. controls (192.48,36.48) and (188.25,34.78) .. (186.7,31.12) -- cycle ;
%Shape: Ellipse [id:dp9640860136086993] 
\draw  [draw opacity=0][fill={rgb, 255:red, 0; green, 0; blue, 0 }  ,fill opacity=1 ] (191.14,29.46) .. controls (190.64,28.29) and (191.19,26.94) .. (192.36,26.44) .. controls (193.53,25.94) and (194.88,26.49) .. (195.37,27.66) .. controls (195.87,28.83) and (195.33,30.18) .. (194.16,30.68) .. controls (192.99,31.17) and (191.64,30.63) .. (191.14,29.46) -- cycle ;

%Curve Lines [id:da5021004432216586] 
\draw [line width=1.5]    (182.35,18.05) .. controls (204.57,37.14) and (207.57,49.14) .. (206.57,80.14) ;
%Shape: Ellipse [id:dp5831417532886326] 
\draw  [draw opacity=0][fill={rgb, 255:red, 255; green, 255; blue, 255 }  ,fill opacity=1 ] (201.37,61.97) .. controls (198.37,59.36) and (198.05,54.81) .. (200.66,51.81) .. controls (203.28,48.81) and (207.82,48.49) .. (210.82,51.11) .. controls (213.82,53.72) and (214.14,58.26) .. (211.53,61.26) .. controls (208.92,64.26) and (204.37,64.58) .. (201.37,61.97) -- cycle ;
%Shape: Ellipse [id:dp22424907582767917] 
\draw  [draw opacity=0][fill={rgb, 255:red, 0; green, 0; blue, 0 }  ,fill opacity=1 ] (204.64,58.53) .. controls (203.68,57.7) and (203.58,56.24) .. (204.41,55.29) .. controls (205.25,54.33) and (206.7,54.23) .. (207.66,55.06) .. controls (208.62,55.9) and (208.72,57.35) .. (207.88,58.31) .. controls (207.05,59.27) and (205.6,59.37) .. (204.64,58.53) -- cycle ;

%Shape: Ellipse [id:dp6040847578820988] 
\draw  [draw opacity=0][fill={rgb, 255:red, 255; green, 255; blue, 255 }  ,fill opacity=1 ] (177.73,61.38) .. controls (174.73,58.77) and (174.41,54.22) .. (177.02,51.22) .. controls (179.63,48.22) and (184.18,47.91) .. (187.18,50.52) .. controls (190.18,53.13) and (190.5,57.68) .. (187.89,60.68) .. controls (185.27,63.68) and (180.73,63.99) .. (177.73,61.38) -- cycle ;
%Shape: Ellipse [id:dp7157580686800156] 
\draw  [draw opacity=0][fill={rgb, 255:red, 0; green, 0; blue, 0 }  ,fill opacity=1 ] (181,57.95) .. controls (180.04,57.11) and (179.94,55.66) .. (180.77,54.7) .. controls (181.61,53.74) and (183.06,53.64) .. (184.02,54.48) .. controls (184.98,55.31) and (185.08,56.76) .. (184.24,57.72) .. controls (183.41,58.68) and (181.96,58.78) .. (181,57.95) -- cycle ;

%Curve Lines [id:da18180939613990066] 
\draw [line width=1.5]    (160.68,48.13) .. controls (181.57,61.14) and (207.57,61.14) .. (227.38,49.18) ;

% Text Node
\draw (185,90.83) node [anchor=north west][inner sep=0.75pt]    {$T'$};
% Text Node
\draw (36,90.83) node [anchor=north west][inner sep=0.75pt]    {$T$};

\end{tikzpicture}
    \end{center}
    
    As explained in \cite[Section 4.3]{BarNatan:2004}, the complex $[T]$ may be regarded as the mapping cone of the saddle cobordism $s$ corresponding to the center crossing. The same applies to $[T']$. Then the two complexes are homotopy equivalent to a reduced complex $E$, depicted below. Here, morphisms $F, G, H$ and $F', G', H'$ are the chain maps and chain homotopies for the corresponding R2-moves. 

    \begin{center}
        \input{tikzpictures/R3-expand}
    \end{center}
    
    With these explicit maps, the two Lee cycles can be compared by those images in $E$. Details are left to the reader. 
\end{proof}

    \printbibliography

@article {Abe:2011,
    AUTHOR = {Abe, Tetsuya},
     TITLE = {The {R}asmussen invariant of a homogeneous knot},
   JOURNAL = {Proc. Amer. Math. Soc.},
  FJOURNAL = {Proceedings of the American Mathematical Society},
    VOLUME = {139},
      YEAR = {2011},
    NUMBER = {7},
     PAGES = {2647--2656},
      ISSN = {0002-9939,1088-6826},
   MRCLASS = {57M25},
  MRNUMBER = {2784833},
MRREVIEWER = {Kimihiko\ Motegi},
       DOI = {10.1090/S0002-9939-2010-10687-1},
       URL = {https://doi.org/10.1090/S0002-9939-2010-10687-1},
}

@article {Alishahi:2018,
    AUTHOR = {Alishahi, Akram and Dowlin, Nathan},
     TITLE = {The {L}ee spectral sequence, unknotting number, and the knight
              move conjecture},
   JOURNAL = {Topology Appl.},
  FJOURNAL = {Topology and its Applications},
    VOLUME = {254},
      YEAR = {2019},
     PAGES = {29--38},
      ISSN = {0166-8641},
   MRCLASS = {57M27 (57M25)},
  MRNUMBER = {3894208},
MRREVIEWER = {Sebastian Baader},
       DOI = {10.1016/j.topol.2018.11.020},
       URL = {https://doi.org/10.1016/j.topol.2018.11.020},
}

@article {BHL:2019,
    AUTHOR = {Baldwin, John A. and Hedden, Matthew and Lobb, Andrew},
     TITLE = {On the functoriality of {K}hovanov-{F}loer theories},
   JOURNAL = {Adv. Math.},
  FJOURNAL = {Advances in Mathematics},
    VOLUME = {345},
      YEAR = {2019},
     PAGES = {1162--1205},
      ISSN = {0001-8708,1090-2082},
   MRCLASS = {57M27 (18G60 57R58)},
  MRNUMBER = {3903915},
MRREVIEWER = {Nikolai\ N.\ Saveliev},
       DOI = {10.1016/j.aim.2019.01.026},
       URL = {https://doi.org/10.1016/j.aim.2019.01.026},
}

@article {Baldwin-Sivek:2021,
    AUTHOR = {Baldwin, John A. and Sivek, Steven},
     TITLE = {Framed instanton homology and concordance},
   JOURNAL = {J. Topol.},
  FJOURNAL = {Journal of Topology},
    VOLUME = {14},
      YEAR = {2021},
    NUMBER = {4},
     PAGES = {1113--1175},
      ISSN = {1753-8416},
   MRCLASS = {57K10 (57K18 57K31 57R58)},
  MRNUMBER = {4332488},
MRREVIEWER = {Nikolai N. Saveliev},
       DOI = {10.1112/topo.12207},
       URL = {https://doi.org/10.1112/topo.12207},
}

@article {BarNatan:2004,
    AUTHOR = {Bar-Natan, Dror},
     TITLE = {Khovanov's homology for tangles and cobordisms},
   JOURNAL = {Geom. Topol.},
  FJOURNAL = {Geometry and Topology},
    VOLUME = {9},
      YEAR = {2005},
     PAGES = {1443--1499},
      ISSN = {1465-3060},
   MRCLASS = {57M27 (57M25 57R56)},
  MRNUMBER = {2174270},
MRREVIEWER = {Justin Sawon},
       DOI = {10.2140/gt.2005.9.1443},
       URL = {https://doi.org/10.2140/gt.2005.9.1443},
}

@article {BarNatan:2007,
    AUTHOR = {Bar-Natan, Dror},
     TITLE = {Fast {K}hovanov homology computations},
   JOURNAL = {J. Knot Theory Ramifications},
  FJOURNAL = {Journal of Knot Theory and its Ramifications},
    VOLUME = {16},
      YEAR = {2007},
    NUMBER = {3},
     PAGES = {243--255},
      ISSN = {0218-2165},
   MRCLASS = {57M25},
  MRNUMBER = {2320156},
       DOI = {10.1142/S0218216507005294},
       URL = {https://doi.org/10.1142/S0218216507005294},
}

@misc{Belousov:2025,
      title={Explicit Formulas for the Alexander Polynomial of Pretzel Knots}, 
      author={Y. Belousov},
      year={2025},
      eprint={2502.10370},
      archivePrefix={arXiv},
      primaryClass={math.GT},
      url={https://arxiv.org/abs/2502.10370}, 
}

@article {BHP:2023,
    AUTHOR = {Beliakova, Anna and Hogancamp, Matthew and Putyra, Krzysztof K. and Wehrli, Stephan M.},
     TITLE = {On the functoriality of {$\mathfrak{sl}_2$} tangle homology},
   JOURNAL = {Algebr. Geom. Topol.},
  FJOURNAL = {Algebraic \& Geometric Topology},
    VOLUME = {23},
      YEAR = {2023},
    NUMBER = {3},
     PAGES = {1303--1361},
      ISSN = {1472-2747,1472-2739},
   MRCLASS = {57K18 (18N25)},
  MRNUMBER = {4598808},
MRREVIEWER = {Paola\ Cristofori},
       DOI = {10.2140/agt.2023.23.1303},
       URL = {https://doi.org/10.2140/agt.2023.23.1303},
}

@article {BBG:2019,
    AUTHOR = {Boileau, Michel and Boyer, Steven and Gordon, Cameron McA.},
     TITLE = {Branched covers of quasi-positive links and {L}-spaces},
   JOURNAL = {J. Topol.},
  FJOURNAL = {Journal of Topology},
    VOLUME = {12},
      YEAR = {2019},
    NUMBER = {2},
     PAGES = {536--576},
      ISSN = {1753-8416,1753-8424},
   MRCLASS = {57K10 (57M12)},
  MRNUMBER = {4072174},
MRREVIEWER = {Mattia\ Mecchia},
       DOI = {10.1112/topo.12092},
       URL = {https://doi.org/10.1112/topo.12092},
}

@article {CS:1993,
    AUTHOR = {Carter, J. Scott and Saito, Masahico},
     TITLE = {Reidemeister moves for surface isotopies and their
              interpretation as moves to movies},
   JOURNAL = {J. Knot Theory Ramifications},
  FJOURNAL = {Journal of Knot Theory and its Ramifications},
    VOLUME = {2},
      YEAR = {1993},
    NUMBER = {3},
     PAGES = {251--284},
      ISSN = {0218-2165},
   MRCLASS = {57M25 (57N05)},
  MRNUMBER = {1238875},
       DOI = {10.1142/S0218216593000167},
       URL = {https://doi.org/10.1142/S0218216593000167},
}

@article {Cavallo:2020,
    AUTHOR = {Cavallo, Alberto and Collari, Carlo},
     TITLE = {Slice-torus concordance invariants and {W}hitehead doubles of
              links},
   JOURNAL = {Canad. J. Math.},
  FJOURNAL = {Canadian Journal of Mathematics. Journal Canadien de
              Math\'{e}matiques},
    VOLUME = {72},
      YEAR = {2020},
    NUMBER = {6},
     PAGES = {1423--1462},
      ISSN = {0008-414X},
   MRCLASS = {57K10},
  MRNUMBER = {4176697},
MRREVIEWER = {Patrick Orson},
       DOI = {10.4153/s0008414x19000294},
       URL = {https://doi.org/10.4153/s0008414x19000294},
}

@article {CMW:2009,
    AUTHOR = {Clark, David and Morrison, Scott and Walker, Kevin},
     TITLE = {Fixing the functoriality of {K}hovanov homology},
   JOURNAL = {Geom. Topol.},
  FJOURNAL = {Geometry \& Topology},
    VOLUME = {13},
      YEAR = {2009},
    NUMBER = {3},
     PAGES = {1499--1582},
      ISSN = {1465-3060,1364-0380},
   MRCLASS = {57M27},
  MRNUMBER = {2496052},
       DOI = {10.2140/gt.2009.13.1499},
       URL = {https://doi.org/10.2140/gt.2009.13.1499},
}

@misc{DISST:2022,
  archivePrefix = {arXiv},
  eprint = {2209.05400},
  primaryClass = "math.GT",
  doi = {10.48550/ARXIV.2209.05400},
  url = {https://arxiv.org/abs/2209.05400},
  author = {Daemi, Aliakbar and Imori, Hayato and Sato, Kouki and Scaduto, Christopher and Taniguchi, Masaki},
  keywords = {Geometric Topology (math.GT), FOS: Mathematics, FOS: Mathematics, 57R58 57K18},
  title = {Instantons, special cycles, and knot concordance},
  publisher = {arXiv},
  year = {2022},
  copyright = {arXiv.org perpetual, non-exclusive license}
}

@article {Feller:2022,
    AUTHOR = {Feller, Peter and Lewark, Lukas and Lobb, Andrew},
     TITLE = {On the values taken by slice torus invariants},
   JOURNAL = {Math. Proc. Cambridge Philos. Soc.},
  FJOURNAL = {Mathematical Proceedings of the Cambridge Philosophical
              Society},
    VOLUME = {176},
      YEAR = {2024},
    NUMBER = {1},
     PAGES = {55--63},
      ISSN = {0305-0041,1469-8064},
   MRCLASS = {57K10 (57K18)},
  MRNUMBER = {4680480},
       DOI = {10.1017/s0305004123000403},
       URL = {https://doi.org/10.1017/s0305004123000403},
}

@article {Fintushel-Stern:1985,
    AUTHOR = {Fintushel, Ronald and Stern, Ronald J.},
     TITLE = {Pseudofree orbifolds},
   JOURNAL = {Ann. of Math. (2)},
  FJOURNAL = {Annals of Mathematics. Second Series},
    VOLUME = {122},
      YEAR = {1985},
    NUMBER = {2},
     PAGES = {335--364},
      ISSN = {0003-486X,1939-8980},
   MRCLASS = {57R19 (32C40 55N22 58G10)},
  MRNUMBER = {808222},
MRREVIEWER = {N.\ J.\ Hitchin},
       DOI = {10.2307/1971306},
       URL = {https://doi.org/10.2307/1971306},
}

@article {FGMW:2010,
    AUTHOR = {Freedman, Michael and Gompf, Robert and Morrison, Scott and
              Walker, Kevin},
     TITLE = {Man and machine thinking about the smooth 4-dimensional
              {P}oincar\'e{} conjecture},
   JOURNAL = {Quantum Topol.},
  FJOURNAL = {Quantum Topology},
    VOLUME = {1},
      YEAR = {2010},
    NUMBER = {2},
     PAGES = {171--208},
      ISSN = {1663-487X,1664-073X},
   MRCLASS = {57R60 (57M25 57N13)},
  MRNUMBER = {2657647},
       DOI = {10.4171/QT/5},
       URL = {https://doi.org/10.4171/QT/5},
}

@book {Gompf-Stipsicz:1999,
    AUTHOR = {Gompf, Robert E. and Stipsicz, Andr\'as I.},
     TITLE = {{$4$}-manifolds and {K}irby calculus},
    SERIES = {Graduate Studies in Mathematics},
    VOLUME = {20},
 PUBLISHER = {American Mathematical Society, Providence, RI},
      YEAR = {1999},
     PAGES = {xvi+558},
      ISBN = {0-8218-0994-6},
       DOI = {10.1090/gsm/020},
       URL = {https://doi.org/10.1090/gsm/020},
}

@article {Greene:2010,
    AUTHOR = {Greene, Joshua},
     TITLE = {Homologically thin, non-quasi-alternating links},
   JOURNAL = {Math. Res. Lett.},
  FJOURNAL = {Mathematical Research Letters},
    VOLUME = {17},
      YEAR = {2010},
    NUMBER = {1},
     PAGES = {39--49},
      ISSN = {1073-2780},
   MRCLASS = {57M25},
  MRNUMBER = {2592726},
       DOI = {10.4310/MRL.2010.v17.n1.a4},
       URL = {https://doi.org/10.4310/MRL.2010.v17.n1.a4},
}

@article {Jabuka:2010,
    AUTHOR = {Jabuka, Stanislav},
     TITLE = {Rational {W}itt classes of pretzel knots},
   JOURNAL = {Osaka J. Math.},
  FJOURNAL = {Osaka Journal of Mathematics},
    VOLUME = {47},
      YEAR = {2010},
    NUMBER = {4},
     PAGES = {977--1027},
      ISSN = {0030-6126},
   MRCLASS = {57M27 (11E12 11E81 57M25)},
  MRNUMBER = {2791566},
MRREVIEWER = {Nikolai\ N.\ Saveliev},
       URL = {http://projecteuclid.org/euclid.ojm/1292854315},
}

@misc{Iida-Taniguchi:2024,
      title={Monopoles and transverse knots}, 
      author={Nobuo Iida and Masaki Taniguchi},
      year={2024},
      eprint={2403.15763},
      archivePrefix={arXiv},
      primaryClass={math.GT},
      url={https://arxiv.org/abs/2403.15763}, 
}

@misc{ISST:2024,
      title={On the slice-torus invariant $q_M$ from $\mathbb{Z}_2$-equivariant Seiberg--Witten Floer cohomology}, 
      author={Nobuo Iida and Taketo Sano and Kouki Sato and Masaki Taniguchi},
      year={2025},
      eprint={2501.07788},
      archivePrefix={arXiv},
      primaryClass={math.GT},
      url={https://arxiv.org/abs/2501.07788}, 
}

@article {Ito-Yoshida:2021,
    AUTHOR = {Ito, Noboru and Yoshida, Jun},
     TITLE = {A cobordism realizing crossing change on {$\mathfrak{sl}_2$}
              tangle homology and a categorified {V}assiliev skein relation},
   JOURNAL = {Topology Appl.},
  FJOURNAL = {Topology and its Applications},
    VOLUME = {296},
      YEAR = {2021},
     PAGES = {Paper No. 107646, 31},
      ISSN = {0166-8641,1879-3207},
   MRCLASS = {57K18 (57K16)},
  MRNUMBER = {4243402},
MRREVIEWER = {William\ Rushworth},
       DOI = {10.1016/j.topol.2021.107646},
       URL = {https://doi.org/10.1016/j.topol.2021.107646},
}

@article {Khovanov:2000,
    AUTHOR = {Khovanov, Mikhail},
     TITLE = {A categorification of the {J}ones polynomial},
   JOURNAL = {Duke Math. J.},
  FJOURNAL = {Duke Mathematical Journal},
    VOLUME = {101},
      YEAR = {2000},
    NUMBER = {3},
     PAGES = {359--426},
      ISSN = {0012-7094},
   MRCLASS = {57M27 (57R56)},
  MRNUMBER = {1740682},
       DOI = {10.1215/S0012-7094-00-10131-7},
       URL = {https://doi.org/10.1215/S0012-7094-00-10131-7},
}

@article {Khovanov:2003,
    AUTHOR = {Khovanov, Mikhail},
     TITLE = {Patterns in knot cohomology. {I}},
   JOURNAL = {Experiment. Math.},
  FJOURNAL = {Experimental Mathematics},
    VOLUME = {12},
      YEAR = {2003},
    NUMBER = {3},
     PAGES = {365--374},
      ISSN = {1058-6458,1944-950X},
   MRCLASS = {57M27 (18G60 57M25 57R56)},
  MRNUMBER = {2034399},
MRREVIEWER = {Jacob\ Andrew\ Rasmussen},
       URL = {http://projecteuclid.org/euclid.em/1087329238},
}

@article {Khovanov:2004,
    AUTHOR = {Khovanov, Mikhail},
     TITLE = {Link homology and {F}robenius extensions},
   JOURNAL = {Fund. Math.},
  FJOURNAL = {Fundamenta Mathematicae},
    VOLUME = {190},
      YEAR = {2006},
     PAGES = {179--190},
      ISSN = {0016-2736},
   MRCLASS = {57M27 (57R56)},
  MRNUMBER = {2232858},
MRREVIEWER = {Jacob Andrew Rasmussen},
       DOI = {10.4064/fm190-0-6},
       URL = {https://doi.org/10.4064/fm190-0-6},
}

@article {Khovanov:2022,
    AUTHOR = {Khovanov, Mikhail and Robert, Louis-Hadrien},
     TITLE = {Link homology and {F}robenius extensions {II}},
   JOURNAL = {Fund. Math.},
  FJOURNAL = {Fundamenta Mathematicae},
    VOLUME = {256},
      YEAR = {2022},
    NUMBER = {1},
     PAGES = {1--46},
      ISSN = {0016-2736},
   MRCLASS = {57K18 (13B02 57K16)},
  MRNUMBER = {4361584},
       DOI = {10.4064/fm912-6-2021},
       URL = {https://doi.org/10.4064/fm912-6-2021},
}

@article {Kim-Lee:2007,
    AUTHOR = {Kim, Dongseok and Lee, Jaeun},
     TITLE = {Some invariants of pretzel links},
   JOURNAL = {Bull. Austral. Math. Soc.},
  FJOURNAL = {Bulletin of the Australian Mathematical Society},
    VOLUME = {75},
      YEAR = {2007},
    NUMBER = {2},
     PAGES = {253--271},
      ISSN = {0004-9727},
   MRCLASS = {57M25},
  MRNUMBER = {2312569},
MRREVIEWER = {Stefan\ K.\ Friedl},
       DOI = {10.1017/S0004972700039198},
       URL = {https://doi.org/10.1017/S0004972700039198},
}

@article{Kim-Lee-Song:2022,
    author = {Kim, Min Hoon and Lee, Changhee and Song, Minkyoung},
    title = {Non-slice 3-stranded pretzel knots},
    journal = {Journal of Knot Theory and Its Ramifications},
    volume = {31},
    number = {03},
    pages = {2250018},
    year = {2022},
    doi = {10.1142/S0218216522500183},
    URL = {https://doi.org/10.1142/S0218216522500183},
    eprint = {https://doi.org/10.1142/S0218216522500183},
}

@article {KM:1993,
    AUTHOR = {Kronheimer, P. B. and Mrowka, T. S.},
     TITLE = {Gauge theory for embedded surfaces. {I}},
   JOURNAL = {Topology},
  FJOURNAL = {Topology. An International Journal of Mathematics},
    VOLUME = {32},
      YEAR = {1993},
    NUMBER = {4},
     PAGES = {773--826},
      ISSN = {0040-9383},
   MRCLASS = {57R57 (57N13 57R40 57R55 58D29)},
  MRNUMBER = {1241873},
MRREVIEWER = {Ronald J. Stern},
       DOI = {10.1016/0040-9383(93)90051-V},
       URL = {https://doi.org/10.1016/0040-9383(93)90051-V},
}

@article {Lecuona:2015,
    AUTHOR = {Lecuona, Ana G.},
     TITLE = {On the slice-ribbon conjecture for pretzel knots},
   JOURNAL = {Algebr. Geom. Topol.},
  FJOURNAL = {Algebraic \& Geometric Topology},
    VOLUME = {15},
      YEAR = {2015},
    NUMBER = {4},
     PAGES = {2133--2173},
      ISSN = {1472-2747,1472-2739},
   MRCLASS = {57M25},
  MRNUMBER = {3402337},
MRREVIEWER = {Christopher\ William\ Davis},
       DOI = {10.2140/agt.2015.15.2133},
       URL = {https://doi.org/10.2140/agt.2015.15.2133},
}

@article {Lee:2005,
    AUTHOR = {Lee, Eun Soo},
     TITLE = {An endomorphism of the {K}hovanov invariant},
   JOURNAL = {Adv. Math.},
  FJOURNAL = {Advances in Mathematics},
    VOLUME = {197},
      YEAR = {2005},
    NUMBER = {2},
     PAGES = {554--586},
      ISSN = {0001-8708},
   MRCLASS = {57M27},
  MRNUMBER = {2173845},
MRREVIEWER = {Paola Cristofori},
       DOI = {10.1016/j.aim.2004.10.015},
       URL = {https://doi.org/10.1016/j.aim.2004.10.015},
}

@article {Lewark:2014,
    AUTHOR = {Lewark, Lukas},
     TITLE = {Rasmussen's spectral sequences and the
              {$\mathfrak{sl}_N$}-concordance invariants},
   JOURNAL = {Adv. Math.},
  FJOURNAL = {Advances in Mathematics},
    VOLUME = {260},
      YEAR = {2014},
     PAGES = {59--83},
      ISSN = {0001-8708},
   MRCLASS = {57M27},
  MRNUMBER = {3209349},
MRREVIEWER = {Mark A. C. Powell},
       DOI = {10.1016/j.aim.2014.04.003},
       URL = {https://doi.org/10.1016/j.aim.2014.04.003},
}

@article{LZ:2024,
   title={Rasmussen invariants of Whitehead doubles and other satellites},
   ISSN={1435-5345},
   url={http://dx.doi.org/10.1515/crelle-2024-0061},
   DOI={10.1515/crelle-2024-0061},
   journal={Journal für die reine und angewandte Mathematik (Crelles Journal)},
   publisher={Walter de Gruyter GmbH},
   author={Lewark, Lukas and Zibrowius, Claudius},
   year={2024},
   month=sep 
}

@misc{ILM:2021,
      title={Khovanov homology and rational unknotting}, 
      author={Damian Iltgen and Lukas Lewark and Laura Marino},
      year={2021},
      eprint={2110.15107},
      archivePrefix={arXiv},
      primaryClass={math.GT},
      url={https://arxiv.org/abs/2110.15107}, 
}

@article {Livingston:2004,
    AUTHOR = {Livingston, Charles},
     TITLE = {Computations of the {O}zsv\'{a}th-{S}zab\'{o} knot concordance
              invariant},
   JOURNAL = {Geom. Topol.},
  FJOURNAL = {Geometry and Topology},
    VOLUME = {8},
      YEAR = {2004},
     PAGES = {735--742},
      ISSN = {1465-3060},
   MRCLASS = {57M27 (57M25 57Q60)},
  MRNUMBER = {2057779},
MRREVIEWER = {James Roger Conant},
       DOI = {10.2140/gt.2004.8.735},
       URL = {https://doi.org/10.2140/gt.2004.8.735},
}

@article {Lobb:2009,
    AUTHOR = {Lobb, Andrew},
     TITLE = {A slice genus lower bound from {${\rm sl}(n)$}
              {K}hovanov-{R}ozansky homology},
   JOURNAL = {Adv. Math.},
  FJOURNAL = {Advances in Mathematics},
    VOLUME = {222},
      YEAR = {2009},
    NUMBER = {4},
     PAGES = {1220--1276},
      ISSN = {0001-8708},
   MRCLASS = {57M27 (18G60 55N35)},
  MRNUMBER = {2554935},
MRREVIEWER = {Paola Cristofori},
       DOI = {10.1016/j.aim.2009.06.001},
       URL = {https://doi.org/10.1016/j.aim.2009.06.001},
}

@article {Lobb:2012,
    AUTHOR = {Lobb, Andrew},
     TITLE = {A note on {G}ornik's perturbation of {K}hovanov-{R}ozansky
              homology},
   JOURNAL = {Algebr. Geom. Topol.},
  FJOURNAL = {Algebraic \& Geometric Topology},
    VOLUME = {12},
      YEAR = {2012},
    NUMBER = {1},
     PAGES = {293--305},
      ISSN = {1472-2747},
   MRCLASS = {57M25},
  MRNUMBER = {2916277},
MRREVIEWER = {Paola Cristofori},
       DOI = {10.2140/agt.2012.12.293},
       URL = {https://doi.org/10.2140/agt.2012.12.293},
}

@article {Manion:2014,
    AUTHOR = {Manion, Andrew},
     TITLE = {The rational {K}hovanov homology of 3-strand pretzel links},
   JOURNAL = {J. Knot Theory Ramifications},
  FJOURNAL = {Journal of Knot Theory and its Ramifications},
    VOLUME = {23},
      YEAR = {2014},
    NUMBER = {8},
     PAGES = {1450040, 40},
      ISSN = {0218-2165,1793-6527},
   MRCLASS = {57M27},
  MRNUMBER = {3261952},
MRREVIEWER = {Hao\ Wu},
       DOI = {10.1142/S0218216514500400},
       URL = {https://doi.org/10.1142/S0218216514500400},
}

@inproceedings {Manolescu-Ozvath:2008,
    AUTHOR = {Manolescu, Ciprian and Ozsv\'ath, Peter},
     TITLE = {On the {K}hovanov and knot {F}loer homologies of
              quasi-alternating links},
 BOOKTITLE = {Proceedings of {G}\"okova {G}eometry-{T}opology {C}onference
              2007},
     PAGES = {60--81},
 PUBLISHER = {G\"okova Geometry/Topology Conference (GGT), G\"okova},
      YEAR = {2008},
      ISBN = {978-1-57146-107-0},
   MRCLASS = {57M27},
  MRNUMBER = {2509750},
}

@article {Miller:2017,
    AUTHOR = {Miller, Allison N.},
     TITLE = {The topological sliceness of 3-strand pretzel knots},
   JOURNAL = {Algebr. Geom. Topol.},
  FJOURNAL = {Algebraic \& Geometric Topology},
    VOLUME = {17},
      YEAR = {2017},
    NUMBER = {5},
     PAGES = {3057--3079},
      ISSN = {1472-2747,1472-2739},
   MRCLASS = {57M25 (57N70)},
  MRNUMBER = {3704252},
MRREVIEWER = {Ana\ G.\ Lecuona},
       DOI = {10.2140/agt.2017.17.3057},
       URL = {https://doi.org/10.2140/agt.2017.17.3057},
}

@book {Milnor:1965,
    AUTHOR = {Milnor, John},
     TITLE = {Lectures on the {$h$}-cobordism theorem},
      NOTE = {Notes by L. Siebenmann and J. Sondow},
 PUBLISHER = {Princeton University Press, Princeton, NJ},
      YEAR = {1965},
     PAGES = {v+116},
}

@book {Milnor:1968,
    AUTHOR = {Milnor, John},
     TITLE = {Singular points of complex hypersurfaces},
    SERIES = {Annals of Mathematics Studies, No. 61},
 PUBLISHER = {Princeton University Press, Princeton, N.J.; University of Tokyo Press, Tokyo},
      YEAR = {1968},
     PAGES = {iii+122},
   MRCLASS = {57.20 (14.00)},
  MRNUMBER = {0239612},
MRREVIEWER = {J. P. Levine},
}

@article {Naot:2006,
    AUTHOR = {Naot, Gad},
     TITLE = {The universal {K}hovanov link homology theory},
   JOURNAL = {Algebr. Geom. Topol.},
  FJOURNAL = {Algebraic \& Geometric Topology},
    VOLUME = {6},
      YEAR = {2006},
     PAGES = {1863--1892},
      ISSN = {1472-2747,1472-2739},
   MRCLASS = {57M27 (57M25)},
  MRNUMBER = {2263052},
MRREVIEWER = {Stephen\ M.\ Budden},
       DOI = {10.2140/agt.2006.6.1863},
       URL = {https://doi.org/10.2140/agt.2006.6.1863},
}

@article {OS:2003,
    AUTHOR = {Ozsv\'{a}th, Peter and Szab\'{o}, Zolt\'{a}n},
     TITLE = {Knot {F}loer homology and the four-ball genus},
   JOURNAL = {Geom. Topol.},
  FJOURNAL = {Geometry and Topology},
    VOLUME = {7},
      YEAR = {2003},
     PAGES = {615--639},
      ISSN = {1465-3060},
   MRCLASS = {57R58 (57M25 57M27)},
  MRNUMBER = {2026543},
MRREVIEWER = {Stanislav Jabuka},
       DOI = {10.2140/gt.2003.7.615},
       URL = {https://doi.org/10.2140/gt.2003.7.615},
}

@article {Piccirillo:2020,
    AUTHOR = {Piccirillo, Lisa},
     TITLE = {The {C}onway knot is not slice},
   JOURNAL = {Ann. of Math. (2)},
  FJOURNAL = {Annals of Mathematics. Second Series},
    VOLUME = {191},
      YEAR = {2020},
    NUMBER = {2},
     PAGES = {581--591},
      ISSN = {0003-486X},
   MRCLASS = {57K10 (57R65)},
  MRNUMBER = {4076631},
MRREVIEWER = {Laurence R. Taylor},
       DOI = {10.4007/annals.2020.191.2.5},
       URL = {https://doi.org/10.4007/annals.2020.191.2.5},
}

@article {Rasmussen:2010,
    AUTHOR = {Rasmussen, Jacob},
     TITLE = {Khovanov homology and the slice genus},
   JOURNAL = {Invent. Math.},
  FJOURNAL = {Inventiones Mathematicae},
    VOLUME = {182},
      YEAR = {2010},
    NUMBER = {2},
     PAGES = {419--447},
      ISSN = {0020-9910},
   MRCLASS = {57M27},
  MRNUMBER = {2729272},
MRREVIEWER = {William D. Gillam},
       DOI = {10.1007/s00222-010-0275-6},
       URL = {https://doi.org/10.1007/s00222-010-0275-6},
}

@article {Rudolph:1993,
    AUTHOR = {Rudolph, Lee},
     TITLE = {Quasipositivity as an obstruction to sliceness},
   JOURNAL = {Bull. Amer. Math. Soc. (N.S.)},
  FJOURNAL = {American Mathematical Society. Bulletin. New Series},
    VOLUME = {29},
      YEAR = {1993},
    NUMBER = {1},
     PAGES = {51--59},
      ISSN = {0273-0979,1088-9485},
   MRCLASS = {57M25 (32S55)},
  MRNUMBER = {1193540},
MRREVIEWER = {Charles\ Livingston},
       DOI = {10.1090/S0273-0979-1993-00397-5},
       URL = {https://doi.org/10.1090/S0273-0979-1993-00397-5},
}

@article {Sano:2020,
    AUTHOR = {Sano, Taketo},
     TITLE = {A description of {R}asmussen's invariant from the divisibility
              of {L}ee's canonical class},
   JOURNAL = {J. Knot Theory Ramifications},
  FJOURNAL = {Journal of Knot Theory and its Ramifications},
    VOLUME = {29},
      YEAR = {2020},
    NUMBER = {6},
     PAGES = {2050037, 39},
      ISSN = {0218-2165},
   MRCLASS = {57K16 (57K18)},
  MRNUMBER = {4125174},
MRREVIEWER = {Mark A. C. Powell},
       DOI = {10.1142/S0218216520500376},
       URL = {https://doi.org/10.1142/S0218216520500376},
}

@article {Sano:2020-b,
    AUTHOR = {Sano, Taketo},
     TITLE = {Fixing the functoriality of {K}hovanov homology: a simple
              approach},
   JOURNAL = {J. Knot Theory Ramifications},
  FJOURNAL = {Journal of Knot Theory and its Ramifications},
    VOLUME = {30},
      YEAR = {2021},
    NUMBER = {11},
     PAGES = {Paper No. 2150074, 12},
      ISSN = {0218-2165,1793-6527},
   MRCLASS = {57K18},
  MRNUMBER = {4376719},
       DOI = {10.1142/S0218216521500747},
       URL = {https://doi.org/10.1142/S0218216521500747},
}

@misc{Sano:2025,
      title={Involutive Khovanov homology and equivariant knots}, 
      author={Taketo Sano},
      year={2025},
      eprint={2404.08568},
      archivePrefix={arXiv},
      primaryClass={math.GT},
      url={https://arxiv.org/abs/2404.08568}, 
}

@article{Sano-Sato:2023,
    title = {A family of slice-torus invariants from the divisibility of Lee classes},
    journal = {Topology and its Applications},
    pages = {109059},
    year = {2024},
    issn = {0166-8641},
    doi = {https://doi.org/10.1016/j.topol.2024.109059},
    url = {https://www.sciencedirect.com/science/article/pii/S016686412400244X},
    author = {Taketo Sano and Kouki Sato},
}

@article {Schuetz:2021,
    AUTHOR = {Sch\"{u}tz, Dirk},
     TITLE = {A fast algorithm for calculating {$S$}-invariants},
   JOURNAL = {Glasg. Math. J.},
  FJOURNAL = {Glasgow Mathematical Journal},
    VOLUME = {63},
      YEAR = {2021},
    NUMBER = {2},
     PAGES = {378--399},
      ISSN = {0017-0895},
   MRCLASS = {57K18 (57R56)},
  MRNUMBER = {4244204},
MRREVIEWER = {Lukas Lewark},
       DOI = {10.1017/S0017089520000257},
       URL = {https://doi.org/10.1017/S0017089520000257},
}

@article {Suzuki:2010,
    AUTHOR = {Suzuki, Ryohei},
     TITLE = {Khovanov homology and {R}asmussen's {$s$}-invariants for
              pretzel knots},
   JOURNAL = {J. Knot Theory Ramifications},
  FJOURNAL = {Journal of Knot Theory and its Ramifications},
    VOLUME = {19},
      YEAR = {2010},
    NUMBER = {9},
     PAGES = {1183--1204},
      ISSN = {0218-2165,1793-6527},
   MRCLASS = {57M25 (57M27)},
  MRNUMBER = {2726564},
MRREVIEWER = {Sabin\ Cautis},
       DOI = {10.1142/S0218216510008376},
       URL = {https://doi.org/10.1142/S0218216510008376},
}

@misc{Thompson:2018,
      title={Khovanov complexes of rational tangles}, 
      author={Benjamin Thompson},
      year={2018},
      eprint={1701.07525},
      archivePrefix={arXiv},
      primaryClass={math.GT},
      url={https://arxiv.org/abs/1701.07525}, 
}

@article {Turner:2008,
    AUTHOR = {Turner, Paul},
     TITLE = {A spectral sequence for {K}hovanov homology with an
              application to {$(3,q)$}-torus links},
   JOURNAL = {Algebr. Geom. Topol.},
  FJOURNAL = {Algebraic \& Geometric Topology},
    VOLUME = {8},
      YEAR = {2008},
    NUMBER = {2},
     PAGES = {869--884},
      ISSN = {1472-2747,1472-2739},
   MRCLASS = {57M25 (57M27)},
  MRNUMBER = {2443099},
MRREVIEWER = {Ronald\ M.\ Dotzel},
       DOI = {10.2140/agt.2008.8.869},
       URL = {https://doi.org/10.2140/agt.2008.8.869},
}

@article {Webster:2007,
    AUTHOR = {Webster, Ben},
     TITLE = {Khovanov-{R}ozansky homology via a canopolis formalism},
   JOURNAL = {Algebr. Geom. Topol.},
  FJOURNAL = {Algebraic \& Geometric Topology},
    VOLUME = {7},
      YEAR = {2007},
     PAGES = {673--699},
      ISSN = {1472-2747,1472-2739},
   MRCLASS = {57M27 (13D02)},
  MRNUMBER = {2308960},
       DOI = {10.2140/agt.2007.7.673},
       URL = {https://doi.org/10.2140/agt.2007.7.673},
}

@article {Wu:2009,
    AUTHOR = {Wu, Hao},
     TITLE = {On the quantum filtration of the {K}hovanov-{R}ozansky
              cohomology},
   JOURNAL = {Adv. Math.},
  FJOURNAL = {Advances in Mathematics},
    VOLUME = {221},
      YEAR = {2009},
    NUMBER = {1},
     PAGES = {54--139},
      ISSN = {0001-8708},
   MRCLASS = {57M27 (57M25)},
  MRNUMBER = {2509322},
MRREVIEWER = {Stanislav Jabuka},
       DOI = {10.1016/j.aim.2008.12.003},
       URL = {https://doi.org/10.1016/j.aim.2008.12.003},
}
    \addresses
\end{document}